\documentclass[reqno,12p]{amsart}
\usepackage[T1]{fontenc}
\usepackage{geometry}
\usepackage{mathtools,amssymb,amsthm,mathrsfs,color,lineno,paralist,graphicx,float}
\usepackage[colorlinks,
linkcolor=red,
anchorcolor=green,
citecolor=blue,
]{hyperref}
\usepackage{amsmath}
\usepackage{pdfpages}
\usepackage{amstext}
\usepackage{color}
\usepackage{stmaryrd}
\usepackage{bbm}
\usepackage{comment}
\usepackage{faktor,url}
\usepackage{xfrac}
\usepackage{mathabx}
\usepackage{enumitem}
\usepackage{mathtools,amssymb,amsthm,mathrsfs,color,lineno,paralist,graphicx,float}

\def\blue{\color{blue}}

\def\black{\color{black}}

\definecolor{orange}{rgb}{1,0.5,0}

\definecolor{green}{rgb}{0.5,0.7,0.3}

\numberwithin{equation}{section}
\numberwithin{figure}{section}
\theoremstyle{plain}
\newtheorem{thm}
{\protect\theoremname}[section]
  
  \theoremstyle{definition}
  \newtheorem{defn}[thm]{\protect\definitionname}
\newtheoremstyle{boldremark}
    {\dimexpr\topsep/2\relax} % space above
    {\dimexpr\topsep/2\relax} % space below
    {}          % body font
    {}          % indent amount
    {\bfseries} % theorem head font
    {.}         % punctuation after theorem head
    {.5em}      % space after theorem head
    {}          % theorem hed spec. (empty = "normal")
    
\theoremstyle{boldremark}
\newtheorem{rem}{Remark} % remarks are numbered within sections

  \theoremstyle{plain}
  \newtheorem{lem}[thm]{\protect\lemmaname}
  \theoremstyle{plain}
  
  \theoremstyle{plain}
\newtheorem{prop}[thm]{\protect\propositionname}
  \theoremstyle{plain}
  \newtheorem{cor}[thm]{\protect\corollaryname}

\newtheorem{thmx}{Theorem}
\renewcommand{\thethmx}{\Alph{thmx}} % "letter-numbered" theorems

  \providecommand{\definitionname}{Definition}
  \providecommand{\lemmaname}{Lemma}
  \providecommand{\propositionname}{Proposition}
  
\providecommand{\theoremname}{Theorem}
\providecommand{\corollaryname}{Corollary}

\newcommand{\bs}{\boldsymbol}

\renewcommand{\tt}{\tau}
\newcommand{\OLDtau}{g}

\newcommand{\de}{{\delta}}
\newcommand{\eps}{{\varepsilon}}

\renewcommand{\th}{\theta}

\def\CC{{\mathbb C}}

\newcommand{\cI}{{\mathcal I}}

\newcommand{\C}{\mathcal C}
\newcommand{\cU}{{\mathcal U}}
\newcommand{\cV}{{\mathcal V}}

\newcommand{\cA}{{\mathcal A}}
\newcommand{\bA}{{\bar \Lambda}}
\newcommand{\hA}{{\hat \Lambda}}

\def\<{\langle}
\def\>{\rangle}

\title{On the density of Lyapunov unstable elliptic equilibria}

\author[B. Fayad]{B. Fayad}
\address[BF]{Department of Mathematics, University of Maryland, 4176 Campus Drive, 20782, MD, USA}
\email{bassam@umd.edu}

\author[J. Paradela]{J. Paradela}
\address[JP]{ Department of Mathematics, University of Maryland, 4176 Campus Drive, 20782, MD, USA}
\email{paradela@umd.edu}

\author[M. Saprykina]{M. Saprykina}
\address[MS]{Department of Mathematics, KTH Royal Institute of Technology, SE-114 28 Stockholm, Sweden}
\email{masha@kth.se}

\author[T. M. Seara]{T.M. Seara}
\address[TS]{Departament de Matem\`atiques, Universitat Polit\`ecnica de Catalunya, Diagonal 647, 08028 Barcelona, Spain \& Centre de Recerca Matemàtica, Edifici C, Campus Bellaterra, 08193 Bellaterra, Spain}
\email{tere.m-seara@upc.edu }

\begin{document}

\begin{abstract}  We prove that any real-analytic Hamiltonian in five or more degrees-of-freedom, with a locally integrable  non-degenerate elliptic equilibrium with indefinite quadratic part, can be perturbed within the real analytic category, while preserving the Birkhoff normal form at the equilibrium  up to any arbitrary order, so that the equilibrium becomes Lyapunov unstable. 
\end{abstract}

\maketitle

\tableofcontents

\section{Introduction and main results}\label{sec:intro}

The question of Lyapunov stability of elliptic equilibria is one of the oldest and most fundamental problems in Hamiltonian dynamics. Let $H:(\mathbb R^{2d},0)\to\mathbb R$ be a Hamiltonian having an elliptic equilibrium at the origin with frequency vector $\omega\in\mathbb R^d$. The linearized dynamics is completely stable, being symplectically conjugate to a product of rotations. However, describing  the nonlinear dynamics near the equilibrium is considerably more intricate due to \textit{resonances} between linear and nonlinear effects.

When the frequency vector is non-resonant, i.e. rationally independent, Birkhoff normal form theory shows that the Hamiltonian is formally conjugate to an integrable system. As a consequence, nearby orbits remain close to the equilibrium for times that are superpolynomial, and under suitable arithmetic assumptions even doubly exponentially long \cite{MR3606478}. Moreover, if the Birkhoff normal form satisfies a suitable non-degeneracy condition, KAM theory implies that the equilibrium is accumulated by invariant Lagrangian tori whose relative measure tends to one near the origin. Thus, %despite 
 regardless of the possible presence of instability, non-resonant elliptic equilibria exhibit remarkable effective and probabilistic stability properties.

Despite these strong stability properties, Arnold conjectured that they do not, in general, imply Lyapunov stability. More precisely, he conjectured that, apart from the two exceptional situations where the quadratic part of the Hamiltonian is sign-definite (equivalently, all frequencies have the same sign) and the two degrees of freedom case, elliptic equilibria of generic Hamiltonian systems should be Lyapunov unstable. The latter conjecture remains one of the central open problems in Hamiltonian dynamics and motivates the present work.

Arnold's conjecture  %is strongly motivated 
was inspired by the instability mechanism introduced by Arnold in his seminal work on %Arnold 
diffusion \cite{Arnolddiffusion}. The guiding principle is that resonances create channels along which instability may develop. More precisely, near resonant regions, where the frequency vector satisfies an integer relation, one expects to find chains of partially  hyperbolic invariant objects connected by transverse homoclinic or heteroclinic  channels. Orbits shadowing such chains may then travel between distant regions of phase space, producing global instability.

Over the last decades, this mechanism has been extensively developed and refined, leading to remarkable advances in the understanding of diffusion in Hamiltonian systems (see, for instance,  \cite{MR3646879,GelfreichTuraevChaotic,MR4298716,MR4033892,MR4509324,MR4729212,MR4913967,MR3479576}). However, these works are primarily concerned with constructing diffusing orbits  along a single or finite collection of resonances, typically in nearly integrable settings. The problem of proving that Lyapunov instability is a \emph{generic} phenomenon near elliptic equilibria is of a rather different nature. Indeed, one seeks to perturb an arbitrary Hamiltonian while creating a diffusion mechanism through a sequence of resonances accumulating at the (possibly non-resonant)  equilibrium.

Here the distinction between the smooth and the real-analytic categories becomes fundamental. In the smooth category, compactly supported perturbations make it possible to modify the dynamics near one resonance while leaving the others essentially unaffected. In the real-analytic category, such localization is impossible: every perturbation has global consequences, so creating the desired dynamics near one resonance necessarily affects all the others. It is this local-versus-global dichotomy, rather than the construction of diffusion itself, that constitutes the principal obstacle to establishing generic Lyapunov instability in the analytic category.

In the smooth category, Douady \cite{MR944100} proved that Lyapunov unstable elliptic equilibria are dense among smooth Hamiltonian systems. His argument naturally decomposes into two independent ingredients. The first consists in approximating an arbitrary Hamiltonian by one that is locally integrable near the elliptic equilibrium. The second establishes that Lyapunov instability is itself dense within the class of locally integrable Hamiltonians. Together, these two steps yield the density of Lyapunov instability in the smooth category.

A natural approach to the analytic category is therefore to follow the same scheme. However, each of the two steps presents substantial new difficulties. The first one already requires a density result for locally integrable real analytic Hamiltonians. In the smooth category this follows immediately from Birkhoff normal form theory, since the Taylor polynomial of sufficiently high order can be realized by a compactly supported perturbation. In contrast, in the analytic category, Birkhoff normal forms only provide approximate integrability, either on shrinking neighborhoods of the equilibrium or on shrinking complex domains. This issue is closely related to the conjectural density of locally integrable real analytic Hamiltonians, a problem for which a proof has recently been announced by Krikorian.

The main contribution of the present work is to establish the second step in the real-analytic category. More precisely, we prove that Lyapunov instability is dense within the class of locally integrable real-analytic Hamiltonians. Combined with the announced density of local integrability, this would yield the real-analytic analogue of Douady's density theorem. In other words, assuming the announced density result, Arnold's conjecture would hold in the sense of density for real-analytic Hamiltonians with at least five degrees of freedom.

The principal difficulty is that Douady's construction is intrinsically confined to the smooth category. It relies on a sequence of compactly supported perturbations, each creating a diffusion mechanism along a resonant arc while leaving the dynamics near the other resonances essentially unchanged. Since the resonant arcs accumulate at the equilibrium, this localization is an essential feature of the argument.

In the real-analytic category such a strategy is unavailable. Any perturbation introduced to create diffusion along one resonant arc necessarily modifies the Hamiltonian globally, so that subsequent perturbations must be performed on a system that is no longer integrable. Thus, unlike in the smooth category, one cannot treat the resonances independently.

Our approach overcomes this obstruction by introducing additional degrees of freedom. We first construct a  parametric  family of real-analytic three-degrees-of-freedom Hamiltonians which, for a sequence of parameter values, exhibits diffusion along resonant arcs accumulating at the origin. This family is then embedded into a locally integrable Hamiltonian with five degrees of freedom, where the additional action variables play the role of parameters selecting the relevant resonant dynamics. This parameter-embedding mechanism, inspired by the construction of the first real-analytic examples of Lyapunov unstable elliptic equilibria by Fayad \cite{MR4495839}, allows us to recover, in the analytic category, the flexibility that compactly supported perturbations provide in the smooth setting.

We believe that the parameter-embedding approach developed in this paper provides a way of reconciling the global nature of real-analytic perturbations with the local mechanisms underlying diffusion. 
We hope that this point of view will find further applications to the study of diffusion in nearly integrable and quasi-integrable real-analytic Hamiltonian systems, where similar local-versus-global difficulties naturally arise.

\section{Main statements}  \label{sec.mainstatements}

The first main result of the present work, Theorem  \ref{thm:main}, establishes the density of Lyapunov instability within the class of locally integrable real-analytic Hamiltonians and therefore makes partial progress towards Arnold's conjecture.
 We show that for $d\geq 5$, any \textit{integrable} Hamiltonian with a non degenerate elliptic fixed point $m$ (with any frequency vector) with indefinite quadratic part, and which satisfies an explicit non-degeneracy condition, can be \textit{approximated}, in the \textit{real-analytic topology}, by a Hamiltonian with the same normal form up to any prescribed order and for which $m$ is Lyapunov unstable. As a corollary of Theorem \ref{thm:main} and the Birkhoff normal form theorem, we obtain that under the same hypothesis ($d\geq 5$ and indefinite quadratic part), \textit{any} Hamiltonian with an elliptic fixed point $m$ (with any frequency vector) can be \textit{locally-approximated} by a real-analytic Hamiltonian for which $m$ is Lyapunov unstable (see Theorem \ref{thm:main2}). The main idea behind our construction is a successful implementation of Arnold's instability mechanism along a sequence of resonances approaching the (possibly non-resonant) elliptic equilibrium.

\subsection{Functional setting}
Denote by $B^d_\rho\subset \mathbb R^{d}$ the ball of radius $\rho>0$ centered around the origin and let $\mathbb B_\rho^d\subset\mathbb C^{d}$ be the closed \black polydisk of radius $\rho$ centered at the origin. 

\subsection*{Elliptic equilibria}
Fix $\rho>0$. For $d\in\mathbb N$ we define the Banach space
\begin{equation}\label{eq:banachspaceelliptintro}
\mathcal P_\rho^{d}=\{F:B_\rho^{2d}\to \mathbb C\colon F\text{ is real-analytic, extends holomorphically to $\mathbb B_\rho^{2d}$  and }\sup_{(q,p)\in \mathbb B_\rho^{2d}}|F(q,p)|<\infty\}.
\end{equation}
For $(q,p)=(q_1, \dots q_d, p_1, \dots p_d)$, denote 
$$
I = (I_1,\dots , I_{d}) ; \quad  I_j =(q_j^2+p_j^2)/2, \quad j=1,\dots ,d,
$$
and for  $\delta>0$ we define the open ball
\begin{equation}\label{eq:balllagrangian-ell}
    \mathcal B^d_{\rho}(\delta)=\{F\in \mathcal P ^d_{\rho}\colon \sup_{(q,p)\in \mathbb B_\rho^{2d}}|F(q,p)|<\delta\}.
\end{equation}

Within $\mathcal P_\rho^{d}$ we consider the set of functions $F_0\in\mathcal P_\sigma^{d}$ that only depend on $(q,p)$ through the functions 
$I_1,\dots ,  I_{d}$. We denote this closed subspace by
\[
\mathcal M_\rho^{d}=\{F_0\in \mathcal P^{d}_\rho \colon F_0(q,p)=f(I) \}\subset \mathcal P_{\rho}^{d}.
\]
In addition, given any $\omega\in \mathbb R^d$ with $\omega_i\neq 0$ for all $i=1,\dots, d$ we let
\begin{equation}\label{eq:integrableswithfreqomega-ell}
\mathcal M_\rho^d(\omega)=\{F_0\in \mathcal M^d_\rho\colon F_0= \omega\cdot I +O_2(I)\}.
\end{equation}

\subsubsection*{Dynamics}
We endow $B_\rho^{2d}\subset\mathbb R^{2d}$ with the canonical symplectic form $\mathrm{d}p\wedge \mathrm dq$. 
In this setting, the equations of motion induced by a Hamiltonian $F\in\mathcal P^{d}_\rho$ read 
\begin{equation}\label{eq:eqsofmotion}
\dot q_j(t)=\partial_{p_j} F(q(t), p(t)), \qquad\qquad \dot p_j(t)=-\partial_{q_j} F(q(t), p(t)), \quad j=1,\dots ,d.
\end{equation}
We say that this vector field has an elliptic equilibrium point at the origin  if $\nabla F(0)=0$ and the linearization of the vector field at the origin, i.e., $JD^2F(0)$, has purely imaginary  eigenvalues and is diagonalizable. This implies in particular that the eigenvalues are nonzero.

We note that for $F_0\in \mathcal M_\rho^{d}$ all the $d$-dimensional tori $\{I =\operatorname{const}\}$ are left invariant by the flow of \eqref{eq:eqsofmotion}. Hence, we call these Hamiltonians \textit{integrable}. For $F_0\in \mathcal M_\rho^d(\omega)$, the origin is an {\it elliptic equilibrium} with frequency vector $\omega$.

\subsection*{Lagrangian invariant tori}
Fix $\rho,\sigma>0$, $d\in\mathbb N$, let $\mathbb {T}^d= (\mathbb{R}/\mathbb{Z})^d$ and  
$\mathbb{T}_\sigma ^d =\{\theta\in(\mathbb C/2\pi\mathbb Z)^d\colon |\mathrm{Im} (\theta)|<\sigma\}$. Then we define the Banach space 
\begin{equation}\label{spaceQdsigmarho}
\begin{split}
\mathcal Q_{\rho,\sigma}^d=\{&F:\mathbb T^d\times B^d\to \mathbb C\colon F\text{ is real-analytic, extends holomorphically to $\mathbb T_\sigma^d\times\mathbb B_\rho^d$ and }\\
&\sup_{(\theta,I)\in \mathbb T_\sigma^d\times\mathbb B_\rho^d}|F(\theta,I)|<\infty\},
\end{split}
\end{equation}
and for  $\delta>0$ we define the open ball
\begin{equation}\label{eq:balllagrangian}
    \mathcal B^d_{\rho,\sigma}(\delta)=\{F\in \mathcal Q^d_{\rho,\sigma}\colon \sup_{(\theta,I)\in \mathbb T_\sigma^d\times\mathbb B_\rho^d}|F(\theta,I)|<\delta\}.
\end{equation}
In addition, we let ${\mathcal N}^{d}_\rho$  be the closed subset of $\mathcal Q^{d}_{\rho,\sigma}$ composed of functions which do not depend on $\theta$  and,  given any $\omega\in \mathbb R^d$ with $\omega_i\neq 0$ for all $i=1,\dots, d$ we let
\begin{equation}\label{eq:integrableswithfreqomega}
\mathcal N_\rho^d(\omega)=\{F_0\in \mathcal N^d_\rho\colon F_0= \omega\cdot I+O_2(I)\}.
\end{equation}
\subsubsection*{Dynamics} We endow $\mathbb T^d\times B^d$ with the canonical symplectic form $\mathrm dI\wedge\mathrm d\theta$. In this setting, the equations of motion induced by a Hamiltonian $F\in \mathcal Q^d_{\rho,\sigma}$ read
\begin{equation}\label{eq:vfieldlag}
\dot\theta_j(t)=\partial_{I_j} F(\theta(t),I(t)),\qquad\qquad\dot I_j=-\partial_{\theta_j} F(\theta(t),I(t)),\quad j=1,\dots,d.
\end{equation}
Notice that for $F_0\in \mathcal N^d_\rho$ all the $d$-dimensional tori
$\{I=\mathrm{const}\}$ are invariant under the flow \eqref{eq:vfieldlag}. Hence, we also call these Hamiltonians integrable. For $F_0\in \mathcal N^d_\rho(\omega)$ the dynamics on the Lagrangian invariant torus $\mathcal T=\mathbb T^d\times\{0\}$ is driven by a linear translation with frequency vector $\omega$.

\medskip

\subsection{Lyapunov instability for elliptic equilibria}

\begin{defn}
    Consider a Hamiltonian  $F\in\mathcal P_\rho^{d}$  for which the origin is an elliptic fixed point. Let  $U$ be an open neighborhood $U\subset \mathbb R^{2d}$ of the origin.  We say that the origin is $U$-Lyapunov unstable for $F$ if, for any open neighborhood $V\subset U$ of the origin,  we have  
\[
\{\phi^t_F\}_{t\in\mathbb R^+}(V)\cap  (\mathbb R^{2d}\setminus U)\neq \emptyset.
\]
\end{defn}
The following is our main result. Given $\omega\in\mathbb R^d$, recall the definition of the set $\mathcal M^d_\rho(\omega)$ in  \eqref{eq:integrableswithfreqomega-ell}.

\begin{thmx}\label{thm:main}
     Let $d\geq 5$. Fix any $\rho>0$ and let  $\omega\in \mathbb R^d$ such that $\omega_i\neq 0$ for all $i=1,\dots, d$ and  its entries are not all of the same sign. Then, for $F_0$ in an open and dense subset of $\mathcal M^{d}_\rho(\omega)$ the following holds. There exists a neighborhood $U\subset B_\rho^{2d}$ of the origin such that, for any  $\delta>0$ \black and any $l\in\mathbb N$  there exists  a real-analytic Hamiltonian $F\in \mathcal P_{\rho}^{d}$ such that:
    \begin{itemize}
    \item the origin is an elliptic fixed point for $F$,
        \item 
        $F$ is uniformly $\delta$-close \black to $F_0$ in $\mathbb B_\rho^{2d}\subset\mathbb C^{2d}$,
        \item 
        the Birkhoff normal forms of $F_0$ and $F$ at the origin coincide up to order $l$,
        \item 
        the origin is $U$-Lyapunov unstable for $F$.
    \end{itemize}
\end{thmx}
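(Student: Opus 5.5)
The plan follows the parameter-embedding strategy announced in the introduction. First, I reduce Theorem \ref{thm:main} to a statement about Lagrangian invariant tori with parameters. Since $\omega$ has entries of both signs and $d\ge 5$, I reorder coordinates so that three of them, say $(I_1,I_2,I_3)$, carry frequencies that are not all of the same sign. These are the \emph{active} degrees of freedom. The remaining $d-3\ge 2$ actions $(I_4,\dots,I_d)$ play the role of parameters. The open and dense condition on $F_0\in\mathcal M^d_\rho(\omega)$ is a non-degeneracy (twist-type) condition on $f$, for instance non-vanishing of a suitable minor of $D^2 f(0)$ together with a transversality condition. Under it, as the parameter actions $(I_4,\dots,I_d)$ vary, the frequency map $\nabla f(I)$ restricted to the active block sweeps through a sequence of double resonances $k\cdot\partial_{(I_1,I_2,I_3)}f=0$ whose resonant arcs (segments on the energy level in the $3$-dimensional action space) approach the origin. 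Openness and density are clear for such a condition.

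Second, I construct the perturbation. Passing to action-angle variables away from the coordinate hyperplanes, I choose a sequence of resonances $k^{(n)}\in\mathbb Z^3$ and parameter values $a^{(n)}\to 0$ in the parameter actions. Then I build
\[
F=F_0+\sum_n \varepsilon_n\, g_n(I_4,\dots,I_d)\, P_n(q,p),
\]
where $P_n$ is a polynomial in $(q_j,p_j)_{j\le 3}$ realizing a resonant trigonometric term $I^{k/2}\cos(k^{(n)}\cdot\theta)$ of high degree ($\ge l+1$, so the Birkhoff normal form up to order $l$ is untouched). The factor $g_n$ is a real-analytic bump in the parameters, for instance a polynomial or entire function peaked near $a^{(n)}$ and exponentially small elsewhere. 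The coefficients $\varepsilon_n$ decay fast enough that the series converges in $\mathcal P^d_\rho$ with total norm $<\delta$. In addition, the parameter actions $I_4,\dots,I_d$ need to remain nearly conserved. I arrange this by making every term depend on $(q_j,p_j)_{j\ge4}$ only through $I_j$, so these actions are \emph{exactly} conserved. The dynamics on each slice $\{(I_4,\dots,I_d)=a\}$ is then a genuine three-degrees-of-freedom system depending analytically on $a$. I also check that the origin remains an elliptic fixed point, which holds because all added terms have high order.

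Third, I prove instability. For $a=a^{(n)}$, the slice Hamiltonian is $f(\cdot,a^{(n)})$ plus a dominant resonant term $\varepsilon_n g_n(a^{(n)})P_n$, plus the tail of all other terms, which is much smaller on the relevant region. Here I would invoke the three-degree-of-freedom diffusion result developed for the parametric family (Arnold's mechanism along a resonant arc: a normally hyperbolic cylinder with transverse heteroclinic chain, and the indefinite signature letting the arc reach a fixed-size neighborhood of the origin along an energy level). This gives an orbit starting arbitrarily close to the origin, within distance $\to0$ as $n\to\infty$, and leaving a fixed neighborhood $U$. Since every $V$ contains the point with parameter actions $a^{(n)}$ for $n$ large, the origin is $U$-Lyapunov unstable.

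The main obstacle is the third step: the diffusion argument has to be \emph{robust} to the analytic tail created by all other $n'\neq n$. This requires quantitative estimates showing that the splitting of separatrices induced by $P_n$, which is only polynomially small in the distance to the origin, beats the tail contributions from $g_{n'}(a^{(n)})$. I would first try choosing the $\varepsilon_n$ and the sharpness of $g_n$ inductively. At step $n$, all previous terms are fixed, and the cylinder and heteroclinic transversality, which are open conditions, are preserved by making all later terms sufficiently small on the region relevant for step $n$. In other words, I would run a diagonal induction in which each new term is small enough not to destroy the finitely many previously established diffusing orbits over finite time.
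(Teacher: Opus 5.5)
Your overall architecture matches the paper's: three active degrees of freedom with mixed-sign frequencies, the remaining actions as exactly conserved parameters, and one escaping orbit for each parameter value $a^{(n)}\to 0$. However, the way you isolate the $n$-th resonance fails, and this is the very point the paper's construction is built around.

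\textbf{Isolating the $n$-th resonance.} You need $\varepsilon_n g_n(a^{(n)})P_n$ to dominate at $a^{(n)}$, with all other terms forming a negligible tail. For $n'>n$ smallness gives this. For $n'<n$ it is impossible with analytic $g_{n'}$.
\begin{itemize}
\item As $a^{(n)}\to 0$ we have $\varepsilon_{n'}g_{n'}(a^{(n)})\to \varepsilon_{n'}g_{n'}(0)$, a fixed quantity.
\item Convergence of the series on the complex polydisk forces $\varepsilon_n g_n(a^{(n)})\to 0$. A sharp real-analytic bump also has a huge sup norm on the complex domain, which makes $\varepsilon_n$ even smaller.
\item $P_{n'}$ has lower degree than $P_n$, so it is larger near the origin.
\end{itemize}
Making all $g_{n'}$ vanish at $0$ in the same way just amounts to a common factor, i.e.\ the paper's construction. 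Otherwise the earlier terms are fixed and larger than the resonant one.

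Your induction only protects earlier structures against later terms. It says nothing about creating the $n$-th cylinder in the presence of the earlier, larger terms. Nor can those terms simply be averaged away, since their averaging remainder does not scale with the resonant coefficient.

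The paper uses no bumps. The whole series $g_0=\sum_j b_j f_j$ is multiplied by the single factor $I_4^{l+1}$, so at $I_4=\varepsilon_n^{1/(l+1)}$ every mode has strength $\varepsilon_n b_j$. The modes $j<n$ are non-resonant at the $n$-th resonance (Lemma \ref{lem:lt}) and average out to $O_n(\varepsilon_n^2)$. The modes $j>n$ are made harmless by choosing the $b_j$ small inductively. Since $\varepsilon_n$ is chosen after $b_1,\dots,b_n$, the pendulum term $\varepsilon_n b_n f_n$ wins and produces the hyperbolic periodic orbits (Lemma \ref{lem:periodicorbitselliptic}). The parameter action only dials a common size. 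The resonances themselves are located in the active actions via a Dirichlet sequence, not by moving the parameters.

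\textbf{The diffusion step.} This is also not available as you describe it. A single resonant harmonic does not split separatrices at all: $f+\varepsilon P_n$ is integrable and the pendulum separatrices coincide. The splitting produced by the other harmonics is typically exponentially small, so there is no ``polynomially small'' splitting to compare with the tail. Integrability also makes the inner dynamics on the cylinder foliated by invariant circles, which blocks drift.

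The paper creates both ingredients with further generic terms, again embedded through the parameter actions:
\begin{itemize}
\item $I_4^{2(l+1)}g_1$ with $g_1$ in a residual set. Parametric transversality (Proposition \ref{prop:surjective}, Theorem \ref{thm:genericityofnondegmax}) then gives non-degenerate extrema of the splitting potential, hence transverse homoclinic cylinders, with no quantitative splitting estimate.
\item $I_5^{l+1}g_2$ with $g_2$ generic and an independent parameter $\mu_n=I_5^{l+1}$. The Gelfreich--Turaev theorem applies to this term and yields boundary-to-boundary orbits.
\end{itemize}
This second independent parameter action is one reason $d\ge 5$ is needed. The monomial factors are also what preserve the Birkhoff normal form up to order $l$ for all added terms, not only for your $P_n$.
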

Before delving into the details of the proof, let us make a few comments. First,  we recall that a trivial argument, which makes use of energy preservation, precludes  (locally) convex Hamiltonians from being Lyapunov unstable.  
Hence, the condition on the frequency vector $\omega\in\mathbb R^d$ having not all of its entries of the same sign is a necessary condition for instability.  Second, as we will see below, our construction relies on certain invariant objects which, under suitable non-degeneracy conditions, appear at single resonances of the Hamiltonian $F_0$. These non-degeneracy conditions are explicit  and hold for an open and dense set of real-analytic integrable Hamiltonians, so any integrable $F_0$ can be slightly perturbed within this category to verify these conditions. It is worth highlighting that Theorem \ref{thm:main}, as well as Theorem \ref{thm:maintori} below, hold without any arithmetic condition on the frequency vector. Indeed, arithmetic conditions are not used in any of their proofs.

\subsection*{Density of Lyapunov unstable equilibria}
From Theorem \ref{thm:main} we trivially deduce the following approximation result by making use of the Birkhoff normal form theorem (see \cite{MR1345153}).
\begin{thmx}\label{thm:main2}
   Let $d\geq 5$. Fix any $\rho>0$  and let $F_0\in \mathcal P_\rho^{d}$ have an elliptic fixed point at the origin with rationally independent frequency $\omega\in\mathbb R^d$. Assume that the entries of the frequency vector at the origin are not all  of the same sign. 
    Then, for any $\delta>0$ and any $l\in\mathbb N$ there exists an open neighborhood of the origin $U\subset B_\rho^{2d}$ (depending on $\delta,l$) and a real-analytic Hamiltonian $F\in\mathcal P_\rho^{d}$ such that:
    \begin{itemize}
    \item the origin is an elliptic fixed point for $F$,
    \item $F$ is uniformly $\delta$-close to $F_0$ in some compact complex neighborhood of $U$.
         \item the Birkhoff normal forms of $F_0$ and $F$ at the origin coincide up to order $l$,
        \item the origin is $U$-Lyapunov unstable for $F$.
    \end{itemize} 
\end{thmx}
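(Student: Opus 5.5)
Theorem~\ref{thm:main2} will be deduced from Theorem~\ref{thm:main} via the analytic Birkhoff normal form theorem. Since $\omega$ is rationally independent, for any $N\ge l$ there is a real-analytic symplectic change of coordinates $\Phi_N$, defined near the origin and tangent to the identity, such that
\[
F_0\circ\Phi_N = G_N(I) + R_N,\qquad R_N=O(|(q,p)|^{2N+1}),
\]
where $G_N$ is a polynomial in $I$ with $G_N=\omega\cdot I+O_2(I)$. In particular $G_N\in\mathcal M^d_{\rho'}(\omega)$ for every $\rho'>0$, and its coefficients up to order $N$ are the Birkhoff invariants of $F_0$.

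First, fix $N$ and a small radius $r>0$ such that $\Phi_N$ is defined and close to the identity on $\mathbb B^{2d}_{2r}$. Choose $r$ so small that $\sup_{\mathbb B^{2d}_{r}}|R_N|\le C r^{2N+1}<\delta/3$. The Birkhoff normal form is determined only up to order $l$, so we may slightly perturb the coefficients of $G_N$ of order higher than $l$ (in $I$, i.e.\ degree $>2l$ in $(q,p)$). Because the good set in Theorem~\ref{thm:main} is open and dense, such a perturbation gives $\tilde G\in\mathcal M^d_r(\omega)$ in that good set, with the same normal form as $F_0$ up to order $l$ and $\sup_{\mathbb B^{2d}_r}|\tilde G-G_N|<\delta/3$. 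This step assumes Theorem~\ref{thm:main} is applied on the polydisk of radius $r$, which is allowed since $\rho$ there is arbitrary.

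Next, Theorem~\ref{thm:main} applied to $\tilde G$ on $\mathbb B^{2d}_r$ gives a neighborhood $U'\subset B^{2d}_r$ and, for tolerance $\delta/3$ and order $l$, a Hamiltonian $\tilde F\in\mathcal P^d_r$. This $\tilde F$ has an elliptic fixed point at $0$, is $\delta/3$-close to $\tilde G$, shares its normal form up to order $l$, and has the origin $U'$-Lyapunov unstable.

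Finally, the perturbation must be transported back and globalized. Set $P=\tilde F-\tilde G$, which is small on $\mathbb B^{2d}_r$ and $O(|z|^{2l+1})$-flat in the normal form sense. Define
\[
F = F_0 + P\circ\Phi_N^{-1} + (\tilde G-G_N)\circ\Phi_N^{-1}.
\]
Near the origin this gives $F\circ\Phi_N=\tilde F+R_N$. This exposes the main obstacle: $F$ should equal $\tilde F$ in normal coordinates, but the remainder $R_N$ is still present. Instability is not robust under adding $R_N$. The honest fix is to also subtract $R_N\circ\Phi_N^{-1}$, i.e.\ take $F=\tilde F\circ\Phi_N^{-1}$ locally. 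Then $F$ is exactly conjugate to $\tilde F$, so the origin is $U$-Lyapunov unstable with $U=\Phi_N(U')$, and $F-F_0$ is $<\delta$ on a complex neighborhood of $U$.

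The second bullet of Theorem~\ref{thm:main2} only requires closeness on a compact complex neighborhood of $U$, not on all of $\mathbb B^{2d}_\rho$. Still, $F$ must belong to $\mathcal P^d_\rho$, so the locally defined function $F_0+(\tilde F\circ\Phi_N^{-1}-F_0\circ\Phi_N\circ\Phi_N^{-1})$ must be extended. I would try first to extend this difference analytically to $\mathbb B^{2d}_\rho$ by approximating it with a polynomial, using Runge/Taylor truncation on the small polydisk. The danger is that an approximation error would destroy instability. To avoid this, I would instead exploit that the construction behind Theorem~\ref{thm:main} produces instability robustly, through transversal heteroclinic chains, so that it survives sufficiently small analytic perturbations flat to order $l$ on a neighborhood of $\bar U$. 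Securing this robustness, or shrinking $U$ so the extension error is negligible, is the step I expect to require the most care.

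The normal form statement is then immediate: $\Phi_N$ is tangent to the identity and symplectic, so the Birkhoff invariants of $F$ up to order $l$ equal those of $\tilde F$, hence those of $\tilde G$, hence those of $F_0$. The dependence of $U$ on $(\delta,l)$ enters only through the choice of $r$.
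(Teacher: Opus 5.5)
Your strategy (Birkhoff normal form, then Theorem~\ref{thm:main} applied to the normal form, then exact symplectic conjugacy back) is the natural one, and you are right that $R_N$ cannot be carried along. However, the proof stops at the globalization step, and neither of your fallbacks works.

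\textbf{Globalization.} A Runge/Taylor approximation of $\tilde F\circ\Phi_N^{-1}$ leaves a fixed nonzero error. The instability produced by Theorem~\ref{thm:main} is not robust against such an error:
\begin{itemize}
\item the $n$-th drifting orbit lives on cylinders of $\mathcal G_{\varepsilon_n,\mu_n}$ at $\hat I=\eta^{(n)}$, whose hyperbolicity and splitting degenerate as $\varepsilon_n,\mu_n\to 0$;
\item the reduction to three degrees of freedom needs $\hat I$ to be exactly conserved.
\end{itemize}
A fixed error, however flat at $0$, is not small compared with the $n$-th mechanism for large $n$. Shrinking $U$ does not help, because Lyapunov instability requires every $n$.

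The missing idea is to make the normalizing map global. After the linear symplectic normalization, realize the Birkhoff transformation to order $2N$ by an entire real polynomial symplectic automorphism $\Psi$. This is possible for two reasons. First, powers $\ell(z)^k$ of linear forms span the homogeneous polynomials of degree $k$. Second, $\ell$ is a first integral of $c\,\ell^k$, so the flow of $c\,\ell^k$ is the polynomial shear $z\mapsto z+tck\,\ell(z)^{k-1}J\nabla\ell$. Hence each Lie generator of the Birkhoff procedure is reproduced by finitely many shears, up to higher-order terms that are absorbed at later steps. This gives $F_0\circ\Psi=G_N+O(|z|^{2N+1})$.

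Since $\Psi^{-1}(\mathbb B^{2d}_\rho)\subset\mathbb B^{2d}_{\rho'}$ for some $\rho'$, and $\tilde G$ is a polynomial, you can apply Theorem~\ref{thm:main} on $\mathbb B^{2d}_{\rho'}$ and set $F=\tilde F\circ\Psi^{-1}\in\mathcal P^d_\rho$. This $F$ is exactly conjugate to $\tilde F$, and
\[
F-F_0=(\tilde F-\tilde G)\circ\Psi^{-1}+(\tilde G-G_N)\circ\Psi^{-1}-(F_0\circ\Psi-G_N)\circ\Psi^{-1}
\]
is $<\delta$ on a small polydisk $\mathbb B^{2d}_r$. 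Take $U=\Psi(U')\cap B^{2d}_r$; this works because $U'$-instability passes to smaller neighborhoods. If $F$ were only required on a neighborhood of $\bar U$, your local $\tilde F\circ\Phi_N^{-1}$ would already suffice.

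\textbf{Landing in the good set.} Perturbing only the coefficients of $G_N$ of order $>l$ and then invoking openness and density is a non sequitur. The good set of Theorem~\ref{thm:main} is cut out by \ref{it:assumption1}--\ref{it:assumption2}. These are conditions on $D^2\tilde G(0)$, that is, on the second-order Birkhoff invariants. For $l\ge 2$ these invariants are frozen by the requirement that the normal forms agree up to order $l$. An open dense set need not meet the affine slice of functions with a prescribed $l$-jet. For example, if all second-order invariants of $F_0$ vanish, no admissible $\tilde G$ satisfies \ref{it:assumption1}.

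So you must additionally assume that the Birkhoff invariants of $F_0$ satisfy \ref{it:assumption1}--\ref{it:assumption2} for some labeling of the actions as in the proof of Theorem~\ref{thm:main}. This is a generic condition, but it is not among the hypotheses of Theorem~\ref{thm:main2}. Without it, you can only obtain agreement of normal forms up to order $l\le 1$.
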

The proof of Theorem \ref{thm:main2} follows from Theorem \ref{thm:main} and a completely standard argument and is left to the reader.
Although Theorem \ref{thm:main2} applies to any, not necessarily integrable, real-analytic Hamiltonian (with an elliptic fixed point whose quadratic part is indefinite), the conclusion that we obtain is weaker than that in Theorem \ref{thm:main}. 
Indeed, given $\delta>0$ the corresponding Lyapunov unstable Hamiltonian $F$ is $O(\delta)$-close to $F_0$ only on a  $\delta$-dependent domain $U\subset U_0$. In particular, from this result  we cannot conclude density of Lyapunov instability within $\mathcal P_\rho^d$.
\medskip

However,   as mentioned in the introduction, density of Lyapunov instability within $\mathcal P_\rho^d$  follows from Theorem \ref{thm:main} and the conjectural density of locally integrable real analytic Hamiltonians, a problem for which a proof has recently been announced by Krikorian.

\subsection{Lyapunov instability for Lagrangian invariant tori}

\begin{defn}
Let $F\in\mathcal Q_{\rho,\sigma}^d$ and let $T=\mathbb T^d\times\{0\}$ be an invariant Lagrangian torus. Given a neighborhood $U\subset \mathbb T^d\times B_\rho^d$ of $T$, we say that $T$ is \emph{$U$-Lyapunov unstable} for $F$ if, for every neighborhood $V\subset U$ of $T$, one has
\[
\{\phi_F^t\}_{t\in\mathbb R}(V)\cap \bigl((\mathbb T^d\times B_\rho^d)\setminus U\bigr)\neq\emptyset.
\]
\end{defn}

From our construction we also deduce the following result. 

\begin{thmx}\label{thm:maintori}
    Let $d\geq 5$. Fix any $\rho,\sigma>0$ and let  $\omega\in \mathbb R^d$, $\omega_i\neq 0$ for all $i=1,\dots, d$. Then, for $F_0$ in an open and dense subset of $\mathcal N^{d}_\rho(\omega)$ the following holds.  There exists a neighborhood $U\subset \mathbb T^d\times B_\rho^{d}$ of the Lagrangian torus $\mathcal T=\mathbb T^d\times\{0\}$ such that, for any  $\delta>0$ and any $l\in\mathbb N$  there exists  a real-analytic Hamiltonian $F\in \mathcal Q_{\rho,\sigma}^{d}$ such that:
    \begin{itemize}
    \item $\mathcal T$ is invariant for $F$,
        \item $F$ is uniformly $\delta$-close to $F_0$ in $\mathbb T_\sigma^d\times \mathbb B_\rho^{d}\subset\mathbb C^{2d}$,
        \item the Birkhoff normal forms of $F_0$ and $F$ at $\mathcal T$ coincide up to order $l$,
        \item $\mathcal T$ is $U$-Lyapunov unstable for $F$.
    \end{itemize}
\end{thmx}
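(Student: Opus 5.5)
The plan is to run the same construction that underlies Theorem~\ref{thm:main}, now in action--angle coordinates $(\theta,I)\in\mathbb T^d\times B^d_\rho$. There is no need to pass through the polar change $I_j=(q_j^2+p_j^2)/2$, so the sign condition on $\omega$ disappears. In the elliptic case the sign condition is needed because energy conservation confines orbits when the quadratic part is definite. In the torus case the actions $I$ range over a full ball around $0$, including negative values. A diffusing orbit can therefore move along a level set of $F_0$ through a resonant arc regardless of the signs of $\omega_i$, since $\nabla F_0(0)=\omega\neq 0$ makes the energy surface a smooth hypersurface transverse to $\omega$.

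I would proceed in three steps.

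\emph{Step 1: the three-degrees-of-freedom family.} Split $I=(I',I'')$ with $I'\in\mathbb R^3$ and $I''\in\mathbb R^{d-3}$, $d-3\ge 2$. For $F_0$ in an open dense subset of $\mathcal N^d_\rho(\omega)$, the explicit non-degeneracy conditions hold at a sequence of single resonances $k_n\cdot\partial_{I'}F_0=0$ accumulating at $\mathcal T$. These conditions are a non-vanishing twist along the resonance and a non-degenerate maximum of the averaged potential. I would use them to build a real-analytic perturbation $\sum_n \varepsilon_n P_n(\theta',I)$, where each $P_n$ is a trigonometric polynomial in $\theta'$ with $\varepsilon_n$ decaying superexponentially. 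For a sequence of values of the parameter $I''$, this perturbation creates a chain of normally hyperbolic invariant cylinders along a resonant arc. The cylinders are connected by transverse heteroclinic channels, and the chain reaches from an $r_n$-neighbourhood of $\mathcal T$, with $r_n\to0$, out to a fixed distance $r_*$.

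\emph{Step 2: parameter embedding.} Using the extra actions $I''$ together with their angles $\theta''$, I would couple the $n$-th term to a resonance in the $I''$-directions. The $n$-th mechanism is then active only near a specific value $I''=I''_n\to0$, and at the other parameter values its effect is an exponentially small, non-resonant perturbation. This is the Fayad-type embedding from \cite{MR4495839}. Because every term of the perturbation vanishes to order $l$ in $I$ at $\mathcal T$ (one can multiply by a high power of a resonant monomial, or choose the frequencies so that averaging leaves nothing of order $\le l$), $\mathcal T$ stays invariant and the Birkhoff normal form is unchanged up to order $l$. Choosing $\varepsilon_n$ summable with total sup-norm below $\delta$ on $\mathbb T^d_\sigma\times\mathbb B^d_\rho$ gives the required closeness.

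\emph{Step 3: shadowing.} For every $n$, I would apply a standard shadowing lemma for transition chains of cylinders, in the form of a scattering-map argument, to obtain an orbit that starts $r_n$-close to $\mathcal T$ and exits $U=\mathbb T^d\times B^d_{r_*}$. Letting $n\to\infty$ then yields $U$-Lyapunov instability.

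I expect the main obstacle to be Step 2. The difficulty is controlling the tails $\sum_{m\neq n}\varepsilon_mP_m$ at the $n$-th resonance. These tails must not destroy the normal hyperbolicity or the transversality of the channels, whose sizes are themselves exponentially small in $\varepsilon_n$. My first attempt would choose $\varepsilon_{n+1}\ll\exp(-1/\varepsilon_n)$ so that later terms are negligible. Earlier terms would be absorbed into a modified integrable part by a finite-order normal form, which is possible precisely because they are non-resonant at $I''_n$. The remaining splitting estimates then reduce to the three-degree-of-freedom computation from Step 1.
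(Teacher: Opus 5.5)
There is a genuine gap, in Step~2. It is exactly the obstacle you flag at the end, and your proposed remedy does not remove it.

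\emph{Loss of the reduction.} Coupling the $n$-th term to a resonance in the $I''$-directions requires dependence on $\theta''$, so $I''$ is no longer conserved. You therefore lose the invariant three-degree-of-freedom subsystems on which Step~1 was supposed to act. Moreover, by analyticity the $n$-th term is never switched off away from $I''_n$; it is merely non-resonant there.

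\emph{Quantitative failure.} With fixed coefficients, the earlier terms are enormous compared with the new one at the $(n+1)$-st resonance. Under your choice $\varepsilon_{n+1}\ll e^{-1/\varepsilon_n}$, a normal form of any finite order $N$ for $\varepsilon_nP_n$ leaves a remainder of order $\varepsilon_n^{N+1}$. It may also produce resonant combination harmonics whose size is a power of $\varepsilon_n$. Both dwarf $\varepsilon_{n+1}P_{n+1}$. So the term meant to create the $(n+1)$-st cylinder is not the dominant resonant term there, and the splitting of its separatrices, which is exponentially small in $\varepsilon_{n+1}$, is not controlled at all. The same problem means that transversality of the channels is only asserted in Step~1. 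You give no mechanism producing it for an arbitrary $F_0$. Step~3 also presupposes that the inner and scattering dynamics do not share invariant curves, and you do not supply that either.

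\emph{What the paper does instead.} The missing idea is to use the extra actions not as selectors of individual modes but as exactly conserved \emph{overall} small parameters of one fixed perturbation. The paper takes
\[
F=F_0(I)+I_4^{l+1}g_0(\theta_1,\theta_2)+I_4^{2(l+1)}g_1(\theta_1,\theta_2,I_1,I_2)+I_5^{l+1}g_2(\theta_1,\theta_2,\theta_3,I_1,I_2,I_3).
\]
This is independent of $\theta_4,\dots,\theta_d$, so $I_4,\dots,I_d$ are first integrals. On $\{I_4=\varepsilon_n^{1/(l+1)},\ I_5=\mu_n^{1/(l+1)}\}$ the $(\theta_1,\theta_2,\theta_3,I_1,I_2,I_3)$-dynamics is exactly that of $G_0+\varepsilon_ng_0+\varepsilon_n^2g_1+\mu_ng_2$.

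All resonant modes sit in the single function $g_0=\sum_jb_j\cos(s^{(j)}\cdot(\theta_1,\theta_2))$. The coefficient $b_n$ is fixed first, small enough not to disturb the robust structures already built at resonances $1,\dots,n-1$. Only then is $\varepsilon_n$ chosen, so small that one averaging step leaves an $O(\varepsilon_n^2)$ remainder dominated by the resonant term $\varepsilon_nb_n\cos(s^{(n)}\cdot(\theta_1,\theta_2))$. This decoupling of the order in which $b_n$ and $\varepsilon_n$ are chosen is what the conserved action $I_4$ buys, and it is what your fixed-coefficient sum $\sum_n\varepsilon_nP_n$ lacks.

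The remaining ingredients then fall into place:
\begin{itemize}
\item Transversality is not computed. It is obtained for $g_1$ in a residual set, by parametric transversality applied to $(\Delta',\Delta'',\Delta''')$ of the splitting potential.
\item Drift along each cylinder is impossible when $g_2=0$, since then $I_3$ is conserved. It comes from a generic $g_2$ via Gelfreich--Turaev, with $\mu_n$ realized by the second conserved action $I_5$.
\item The factors $I_4^{l+1}$ and $I_5^{l+1}$ give the invariance of $\mathcal T$ and the order-$l$ normal form.
\item The need for two independent parameters on top of three dynamical degrees of freedom is where $d\ge5$ comes from.
\end{itemize}
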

Notice that in Theorem \ref{thm:maintori} there is no assumption on the signs of the entries of the frequency vector at $\mathcal T$ since, in this setting, there is no topological obstruction for the existence of diffusing orbits. 
\medskip

%\medskip

%\medskip

%\medskip
\subsection{Organization of the paper}
%\noindent\textit{Organization of the paper.}
In Section  \ref{sec:proofthmtori} we prove Theorem  \ref{thm:maintori} by reducing it, via a parameter-embedding argument loosely inspired by \cite{MR4495839}, to the construction of a suitable parametric family of three-degrees-of-freedom Hamiltonians, Theorem  \ref{thm:approxbydiffusion}. In Section  \ref{sec.proof.main} we adapt this embedding argument to elliptic equilibria and reduce the proof of Theorem  \ref{thm:main} to the corresponding parametric statement, Theorem  \ref{thm:approxbydiffusionellipticpoints}.

The remainder of the paper is devoted to the proofs of Theorems  \ref{thm:approxbydiffusion} and  \ref{thm:approxbydiffusionellipticpoints}. In Section  \ref{sec:outlineparametric} we present the overall strategy of the proof of Theorem  \ref{thm:approxbydiffusion} and reduce it to two principal ingredients, namely Theorems  \ref{thm:mainparametricoutline} and  \ref{thm:_apply_GToutline}.

In Section  \ref{sec:cylinders} we use averaging theory to successively activate the Fourier mode associated with each prescribed resonance while averaging out the remaining modes, thereby constructing a sequence of normally hyperbolic cylinders. Assuming the generic transversality result established later in Section  \ref{sec:mainsptlittingsection}, we then show that an additional perturbation produces transverse homoclinic cylinders.

In Section  \ref{sec:diffusion} we apply
 the Gelfreich-Turaev implementation of Moeckel's mechanism (see \cite{GelfreichTuraevChaotic}) to construct boundary-to-boundary drifting orbits inside these homoclinic cylinders.

Section  \ref{sec:mainsptlittingsection} establishes the generic transversality result required for the construction of the homoclinic cylinders. The proof follows the parametric-transversality approach introduced by Zehnder \cite{ZehnderHomoclinicpts} and adapted to normally hyperbolic cylinders by Delshams and Zhang  \cite{MR4913967}.

Finally, in Section  \ref{sec:proofElliptic} we explain the modifications needed to adapt the proof of Theorem  \ref{thm:approxbydiffusion} to obtain Theorem  \ref{thm:approxbydiffusionellipticpoints}, thereby completing the proof of Theorem  \ref{thm:main}. The appendices contain the auxiliary results on averaging and on the straightening of unstable foliations used in Sections  \ref{sec:cylinders} and  \ref{sec:diffusion}.

\section{Proof of Theorem \ref{thm:maintori}}\label{sec:proofthmtori}

%Theorems \ref{thm:main} and \ref{thm:main2} are obtained from a  modification of the proof of Theorem \ref{thm:maintori}, which is done  in Section \ref{sec:proofElliptic}. 
In this section we show how to deduce Theorem \ref{thm:maintori} from  Theorem \ref{thm:approxbydiffusion} below, which is a statement about parametric families of  3 degrees-of-freedom Hamiltonians.

\subsection{Embedding low-dimensional parametric families into higher-dimensional Hamiltonians}\label{sec:outlineproof}

The idea, which already appears in \cite{MR4495839}\black, is based on the embedding of a judiciously designed parametric family of 3 degrees-of-freedom Hamiltonians into a single higher dimensional ($d\geq 5$) $d$ degrees-of-freedom Hamiltonian (see Sections \ref{sec:mainparametricHams} and \ref{sec:embedding}). 
A short discussion of the main ideas involved in the construction of the parametric family  in Theorem \ref{thm:approxbydiffusion} can be found in Section \ref{sec:outlineparametric}.     
We fix, once and for all, any $d\geq 5$.
\medskip

\subsection*{Functional setting and external parameters}

Below, given $0<\tilde d\black<d$ and positive constants $\rho,\sigma$, we consider Hamiltonians in action-angle coordinates $(\tilde \theta,\tilde I)\in\mathbb T_\sigma^{  \tilde d}\times \mathbb B_\rho^{ \tilde d}$, which also \textit{depend on external parameters}. We let  
\[
Q^d_{\rho,\sigma}=\mathbb T^d_\sigma\times\mathbb B^d_\rho,
\]
and introduce the Banach space
\begin{equation}\label{eq:banachspace}
\begin{split}
{\mathcal  Q}^{\tilde d, d }_{\rho,\sigma}=\{& G(\tilde \theta,\tilde I;\eta):(\mathbb T^{\tilde d}\times B^{\tilde d}) \times  B_{\rho}^
{d-\tilde d}\to\mathbb C\colon G\text{ is real-analytic, 
extends holomorphically to  $Q^{\tilde d}_{\rho,\sigma} \times \mathbb B_{\rho}^{d-\tilde d}$} \\
&\qquad\qquad \text{and } \sup_{(\tilde \theta,\tilde I;\eta)\in Q^{\tilde d}_{\rho,\sigma}\times \mathbb B_{\rho}^{d-\tilde d}}|G( \tilde \theta,\tilde I; \eta)|<\infty\}
\end{split}
\end{equation}
of functions in $\mathcal Q^{\tilde d}_{\rho,\sigma}$ depending on an external $(d-\tilde d)$-dimensional parameter $\eta$\black.  Given $\omega\in \mathbb R^{\tilde d}$ with $\omega_i\neq 0$, we let:
\begin{enumerate}
    \item ${\mathcal N}^{\tilde d,d}_\rho$ be the closed subset of $\mathcal Q^{\tilde d,d}_{\rho,\sigma}$ composed of functions that do not depend on $\theta$;
    \item ${\mathcal N}^{\tilde d,d}_\rho(\omega)=\{G_0\in {\mathcal N}^{\tilde d,d}_\rho\colon G_0=\omega\cdot \tilde I+O_2(\tilde I)\}$.
\end{enumerate}

\subsection*{A sequence of 3 degrees-of-freedom Hamiltonians with diffusion}\label{sec:mainparametricHams}

In the following theorem we construct a parametric family of 3 degrees-of-freedom Hamiltonians exhibiting drifting orbits at suitable parameter values. In order to interpret this result in connection with Theorem \ref{thm:maintori}, it is helpful to think of:
\begin{itemize}
    \item  
    $(\tilde \theta, \tilde I)\in Q^3_{\rho,\sigma}$ as the first three action-angle pairs for the Hamiltonian $F_0$ in Theorem \ref{thm:maintori}; that is $(\tilde \theta, \tilde I)=(\theta_1, \theta_2,\theta_3, I_1,I_2,I_3)$.
    \item 
    $\eta\in B_\rho^{d-3}$ as the remaining $d-3$ action variables (which are conserved quantities along the flow of the Hamiltonians that we consider below), that is $\eta=(I_4,\dots, I_d)$.
    \end{itemize}
The following approximation result is the crucial ingredient in the proof of Theorem \ref{thm:maintori}.

\begin{thmx}\label{thm:approxbydiffusion}
Fix any $\rho,\sigma>0$ and any $\omega\in\mathbb R^3$ with $\omega_i\neq 0$ for all $i=1,2,3$. For $G_0$ in an  open and dense subset of ${\mathcal N}^{3,d}_\rho(\omega)$ the following holds. There exists $\rho_0>0$  (depending only on $G_0)$  such that for any   $\delta>0$ and $l\in\mathbb N$ there exist two sequences $\{\varepsilon_n\}_n$ and $\{\mu_n\}_n$ satisfying $0<\varepsilon_n,\mu_n\leq 1/n$, and a parametric family of Hamiltonians $G_{\varepsilon,\mu}\in \mathcal Q^{3,d}_{\rho,\sigma}$ of the form 
    \begin{equation}\label{eq:parametricfamily}
    G_{\varepsilon,\mu}(\tilde \theta, \tilde I;\eta)=G_0(\tilde I;\eta)+\varepsilon g_0(\tilde\theta)+\varepsilon^2 g_1(\tilde \theta, \tilde I)+\mu g_2(\tilde \theta, \tilde I), \quad\quad (\tilde \theta, \tilde I)=(\theta_1, \theta_2,\theta_3, I_1,I_2,I_3),
    \end{equation}
    for which we have the following:
    \begin{enumerate}
   \item 
   The functions $\{g_i\}_i$ satisfy\footnote{Indeed, the function $g_0$ 
   only depends on $(\theta_1,\theta_2)$ and $g_1$ on $(\theta_1,\theta_2,I_1,I_2)$. Although this fact has no relevance at all for the proof of Theorem \ref{thm:maintori}, keeping this information in mind will prove relevant to understand the construction of the parametric family \eqref{eq:parametricfamily} (see Section \ref{sec:outlineparametric}).}
   \begin{equation}\label{eq:kappabounds}
       \sup_{(\tilde \theta, \tilde I)\in  Q^3_{\rho,\sigma}} |g_i(\tilde \theta,\tilde I)|\leq \delta, \ \ i=0,1,2 ;
   \end{equation}
   
       \item 
       If we write $\eta^{(n)}=(\varepsilon_n^{\frac{1}{l+1}}, \mu_n^{\frac{1}{l+1}},0,\dots,0)$, for any $n\in\mathbb N$ there exists an orbit 
       $\{(\tilde \theta(t;\eta^{(n)} ),\tilde I(t;\eta^{(n)}))\}_{t\in\mathbb R}$ of the Hamiltonian $G_{\varepsilon_n,\mu_n}(\tilde \theta,\tilde I;\eta^{(n)})$ and a time $T_n>0$ such that 
\[
\lVert \tilde I(0;\eta^{(n)})\rVert \leq  \frac1n,\qquad\qquad \lVert \tilde  I(T_n;\eta^{(n)})\rVert \geq \rho_0.
\]
   \end{enumerate}
\end{thmx}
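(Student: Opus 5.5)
We outline the construction of the family $G_{\varepsilon,\mu}$ and of the drifting orbits. The guiding principle is to find a sequence of resonances $\Gamma_n$ of the unperturbed frequency map $\tilde I\mapsto\partial_{\tilde I}G_0(\tilde I;\eta^{(n)})$ that approach $\tilde I=0$. Along each one we then build an Arnold-type chain: a normally hyperbolic invariant cylinder, a transverse homoclinic channel, and a drifting orbit. The key point is that only finitely many functions $g_0,g_1,g_2$, fixed once and for all, are used for all $n$. We therefore choose $g_0(\theta_1,\theta_2)$ with all Fourier coefficients of a prescribed lacunary sequence of modes $k^{(n)}\in\mathbb Z^2\times\{0\}$ nonzero but exponentially small, so that $\sup|g_0|\le\delta$ on $\mathbb T_\sigma^3$.

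The parameter $\eta$ is what makes this possible. Since $G_0$ depends on $\eta$ and $\eta^{(n)}=(\varepsilon_n^{1/(l+1)},\mu_n^{1/(l+1)},0,\dots)$, the frequency $\partial_{\tilde I}G_0(0;\eta^{(n)})=\omega+\partial_\eta\partial_{\tilde I}G_0(0;0)\,\eta^{(n)}+O(|\eta^{(n)}|^2)$ moves with $n$. Generically the mixed derivative matrix has full rank, which we impose as one of the open-dense conditions. We can then choose $\varepsilon_n,\mu_n$ inductively so that the resonance $k^{(n)}\cdot\partial_{\tilde I}G_0(\tilde I;\eta^{(n)})=0$ passes through an arc $\Gamma_n$ starting within $1/n$ of $\tilde I=0$ and reaching size $\rho_0$. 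Here $\rho_0$ is determined by the twist/non-degeneracy of $G_0$ in $\tilde I$, which is again part of the open-dense conditions. Along this arc the mode $k^{(n)}$ of $\varepsilon_n g_0$ is dominant. The exponent $1/(l+1)$ guarantees that, after reinsertion of $\eta=(I_4,\dots)$, the perturbation terms are of order $\ge l+1$ in the actions, so the Birkhoff normal form is unchanged up to order $l$. That bookkeeping, however, belongs to Theorem~\ref{thm:maintori}; here we only need the orbit.

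Fix $n$. We perform resonant averaging (normal form) at $\Gamma_n$ to bring $G_{\varepsilon_n,\mu_n}$ to the form $G_0+\varepsilon_n\hat g_0^{(n)}(k^{(n)}\cdot\theta)+\varepsilon_n^2(\dots)+\text{remainder}$, with a remainder exponentially small in the averaging order. The role of $\varepsilon^2 g_1$ is to cancel the second-order contributions coming from nonresonant modes of $g_0$, so that the averaged one-degree-of-freedom pendulum in the slow angle $k^{(n)}\cdot\theta$ has a unique nondegenerate maximum. This produces a normally hyperbolic invariant cylinder $\Lambda_n$ over $\Gamma_n$, foliated by periodic orbits or 2-tori, with stable and unstable manifolds that coincide at the averaged level. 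The term $\mu g_2$, with $\mu_n\ll\varepsilon_n$ chosen after $\varepsilon_n$, splits these separatrices. Following Zehnder and Delshams--Zhang, a parametric transversality argument shows that for generic $g_2$ (a residual, and hence nonempty, set in a small ball, so a single $g_2$ works for all $n$ simultaneously) $W^u(\Lambda_n)\pitchfork W^s(\Lambda_n)$ along a homoclinic cylinder covering the whole of $\Gamma_n$.

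With the cylinder and the transverse homoclinic channel in hand, we apply the Gelfreich--Turaev version of Moeckel's mechanism. The inner map on $\Lambda_n$ and the scattering map generate an iterated function system, and transversality makes the scattering map move the action off the twist-invariant curves. This yields an orbit drifting from the $1/n$-end of $\Gamma_n$ to the $\rho_0$-end, which is exactly statement~(2).

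The main obstacle is \emph{uniformity over $n$ with fixed $g_0,g_1,g_2$}. Averaging at $\Gamma_n$ requires the resonant mode of $g_0$ to dominate every other harmonic and all $\mu_n g_2$ effects, while the Fourier coefficients of $g_0$ must decay for analyticity. I would handle this by letting $|k^{(n)}|$ grow very fast, choosing $\hat g_0(k^{(n)})=e^{-\sigma'|k^{(n)}|}$, and then choosing $\eta^{(n)}$ (that is, $\varepsilon_n,\mu_n$) only after $k^{(n)}$. This keeps the distance of $\Gamma_n$ to the lower-order resonances large compared with the small divisors, and lets $\mu_n$ be taken small enough to stay below the averaged pendulum scale while still exceeding the averaging remainder.
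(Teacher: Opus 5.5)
Your overall architecture (a resonant Fourier mode of $g_0$ producing, after averaging, a pendulum and a normally hyperbolic cylinder; parametric transversality in the spirit of Zehnder and Delshams--Zhang for the homoclinic channels; Gelfreich--Turaev for the drift; countable intersections of residual sets) matches the paper. However, the step that places the resonances near $\tilde I=0$ fails. You take an arbitrary lacunary sequence $k^{(n)}$ and propose to move the frequency with $\eta^{(n)}$ via a full-rank mixed-derivative condition. But $\eta^{(n)}=(\varepsilon_n^{1/(l+1)},\mu_n^{1/(l+1)},0,\dots,0)$ is not a free parameter. In your own scheme, $\varepsilon_n$ must be small compared with the resonant coefficient $e^{-\sigma'|k^{(n)}|}$ for the averaging at the $n$-th resonance to work, and $\mu_n\ll\varepsilon_n$. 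Hence $\partial_{\tilde I}G_0(0;\eta^{(n)})=\omega+O(|\eta^{(n)}|)$ with $|\eta^{(n)}|$ tiny. Meanwhile the resonance $\{k^{(n)}\cdot\nabla_{\tilde I}G_0=0\}$ stays at distance $\gtrsim |k^{(n)}\cdot\omega|/|k^{(n)}|-O(|\eta^{(n)}|)$ from the origin. Solving the resonance condition for $\eta^{(n)}$ would pin $\varepsilon_n$ or $\mu_n$ to a size dictated by $k^{(n)}$, which destroys exactly the freedom to shrink them that the averaging and splitting steps need.

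The missing idea is that the modes themselves must be good rational approximations. Take $k^{(n)}=(s^{(n)}_1,s^{(n)}_2,0)$ with $\{s^{(n)}\}$ a Dirichlet sequence for $\omega_2/\omega_1$ (Definition~\ref{defn:Dirichletseq}), so that $|k^{(n)}\cdot\omega|/|k^{(n)}|\lesssim |k^{(n)}|^{-2}$. Then the open-dense condition [H1] and the implicit function theorem (Lemma~\ref{lem:resnonatpaths}) give resonant curves $a^{(n)}$ of the \emph{unperturbed} $G_0$, on a uniform interval $I_3\in[-\rho_0,\rho_0]$, with $a^{(n)}(0)\to0$ for any small $\eta$. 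Condition [H2] then supplies the twist. In Theorem~\ref{thm:approxbydiffusion} the parameter $\eta$ is only carried along (Remark~\ref{rem.eta}). Its real role is in the proof of Theorem~\ref{thm:maintori}, where $\varepsilon$ and $\mu$ become the conserved quantities $I_4^{l+1}$ and $I_5^{l+1}$. Likewise, the uniformity over $n$ that you single out as the main obstacle is handled qualitatively by interleaving choices. One picks $\varepsilon_n$ after $b_1,\dots,b_n$ (one-step averaging at resonance $n$ leaves an $O_n(\varepsilon^2)$ error), and then $b_{n+1}$ after $\varepsilon_n$, small enough not to destroy the open structures already built.

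The roles you assign to $g_1$ and $g_2$ should also change. There is no reason a single analytic $g_1$ could cancel the $n$-dependent second-order averaging terms at every resonance, and nothing requires it: for $\varepsilon_n$ small, the pendulum potential of size $\varepsilon_n b_n$ dominates the $O(\varepsilon_n^2)$ remainder. In the paper, $g_1=g_1(\theta_1,\theta_2,I_1,I_2)$ is the generic perturbation that produces transversality. Because it is independent of $\theta_3$, at $\mu=0$ one has a two-degree-of-freedom system with $I_3$ as a parameter. The splitting is then encoded in a one-variable potential $\Delta(\tau;I_3)$, and parametric transversality with $\dim\mathbb A_{\rho_0}=2<3$ excludes degenerate global extrema (Theorem~\ref{thm:genericityofnondegmax}). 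This yields finitely many homoclinic cylinders over overlapping subintervals of $[-\rho_0,\rho_0]$, not a single one over the whole arc as you assert. That is why a multi-cylinder version of Gelfreich--Turaev (Theorem~\ref{thm:gelfreichturaevthm2}) is needed.

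Since these structures already exist at $\mu=0$ and are open, $\mathcal G_{\varepsilon_n}(g_1)+\mu_n g_2$ stays in the class $\mathcal V_{\rho,\sigma}(m)$ for every $g_2$ in the $\delta$-ball. The open dense Gelfreich--Turaev set then pulls back to an open dense set of $g_2$. If instead the $\theta_3$-dependent term $\mu g_2$ must create the splitting, two things break. You lose the integrable inner dynamics that underlies the one-variable potential argument. And the Gelfreich--Turaev density statement, which is relative to Hamiltonians already possessing the homoclinic cylinders, no longer applies as stated.
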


\black

Before proceeding,  let us make a few remarks.   
\medskip

\noindent\textit{Non-degeneracy conditions.} Integrable Hamiltonians belonging to the open and dense set in Theorem \ref{thm:approxbydiffusion} enjoy certain non-degeneracy properties which guarantee that:
$i)$  along a certain sequence of resonant surfaces approaching $\{I=0\}$, normally hyperbolic cylinders arise after a perturbation; $ii)$ the dynamics on these normally hyperbolic cylinders is driven by a twist map.  

Both ingredients are crucial for implementing (a modern version of)  Arnold's diffusion mechanism along the aforementioned sequence of resonances. Indeed, the orbits in Theorem \ref{thm:approxbydiffusion} correspond to orbits shadowing a pseudo-orbit made of pieces of heteroclinic and inner orbits on the resonant normally hyperbolic cylinders.

\begin{rem}Although not mentioned explicitly in the statement above, the conditions defining the open and dense subset  in Theorem \ref{thm:approxbydiffusion} are explicit: see \ref{it:assumption1}-\ref{it:assumption2} (in sections \ref{sec:Resonant_surfaces}--\ref{sec:Creation_cylinders}), and Lemma \ref{lemmaH1H2}.
\end{rem}
\medskip

\medskip

\medskip
\noindent\textit{Roadmap of the proof of Theorem \ref{thm:approxbydiffusion}.}
The perturbations appearing in Theorem \ref{thm:approxbydiffusion} play distinct roles. The perturbation
\[
g_0=g_0(\theta_1,\theta_2),
\]
depending only on the angle variables, together with a suitable sequence $\{\varepsilon_n\}$, is used to create a family of normally hyperbolic invariant cylinders associated with a prescribed sequence of resonances. Next, a generic perturbation
\[
g_1=g_1(\theta_1,\theta_2,I_1,I_2),
\]
depending both on the angle and the action variables of the resonant subsystem, is used to create transverse homoclinic cylinders simultaneously for all selected resonances. Finally, a generic perturbation
\[
g_2=g_2(\theta_1,\theta_2,\theta_3,I_1,I_2,I_3),
\]
together with a suitable sequence $\{\mu_n\}$, is used to produce boundary-to-boundary drifting orbits inside the corresponding cylinders. The proof of Theorem \ref{thm:approxbydiffusion} is obtained by combining these three steps.
\medskip

\subsection{Proof of Theorem \ref{thm:maintori} assuming Theorem \ref{thm:approxbydiffusion}}\label{sec:embedding}

Theorem \ref{thm:approxbydiffusion} does not guarantee the existence of any parameter value $(\varepsilon,\mu)$ such that   $G_{\varepsilon,\mu}$ exhibits a  Lyapunov unstable invariant torus. However, we now show how to embed the parametric family $G_{\varepsilon,\mu}$ into a single, higher dimensional, real-analytic Hamiltonian to deduce Theorem \ref{thm:maintori}.

Fix some $d\geq 5$, and denote 
\[
(\theta,I)=(\tilde\theta,\hat\theta,\tilde I,\hat I),\qquad\qquad(\tilde\theta,\tilde I)=(\theta_1,\theta_2,\theta_3,I_1, I_2,I_3),\qquad\qquad (\hat\theta,\hat I)=(\theta_4,\dots,\theta_d,\dots, I_4,\dots,I_{d}).
\]
Let $F_0\in \mathcal N_{\rho}^{d}$ be the integrable Hamiltonian in Theorem \ref{thm:maintori} and  let  $G_0\in \mathcal N^{3,d}_\rho$ be the 3 degrees-of-freedom integrable Hamiltonian depending on $d-3$ parameters $\eta=\hat I$:
\[
G_0 (\tilde I; \hat I)=F_0(\tilde I,\hat I).
\]
Fix any $l\in\mathbb N$. Let $\delta>0$ be given, let 
$\{\varepsilon_n\}_n$ and $\{\mu_n\}_n$ 
be as in Theorem \ref{thm:approxbydiffusion}, and  let $g_1,g_2\in \mathcal Q_{\rho,\sigma}^3$ be the real-analytic functions obtained after applying Theorem \ref{thm:approxbydiffusion} to the integrable Hamiltonian $G_0$  and the sequence 
$\eta^{(n)}=(\varepsilon_n^{\frac{1}{l+1}},\mu_n^{\frac{1}{l+1}},0,\dots,0)$.
Then, we define the  Hamiltonian $F\in \mathcal Q^d_{\rho,\sigma}$ by the formula
\begin{equation}\label{eq:finalHam} 
F(\theta,I)=F_0(I)+I_4^{l+1} g_0(\theta_1,\theta_2)+I^{2(l+1)}_4 g_1(\theta_1,\theta_2,I_1,I_2)+ I_5^{l+1} g_2(\theta_1,\theta_2,\theta_3,I_1,I_2,I_3).
\end{equation}
By construction, this   Hamiltonian satisfies the following:
\begin{enumerate}
\item 
The $d$-dimensional torus $\mathcal T=\{I=0\}$ is invariant for the flow of $F$;
\item 
$\hat  I=(I_4,\dots, I_d)$ is conserved along the flow of $F$;
\item  
By \eqref{eq:kappabounds}, the Hamiltonian $F$ is $O(\delta)$-close to  the integrable Hamiltonian $F_0$ on the complex region $Q^d_{\rho,\sigma}\subset \mathbb (\mathbb C/2\pi\mathbb Z)^d\times\mathbb C^d$;
\item 
The Birkhoff normal forms for $F_0$ and $F$ at the origin coincide up to order $l$;
\item 
For each $n\in\mathbb N$ there exists an orbit $\{(\theta(t),I(t))\}_{t\in\mathbb R}$ of the Hamiltonian $F$ and a time $T_n>0$ such that 
\[
\lVert I(0)\rVert \leq \lVert \tilde I(0)\rVert + \lVert \hat I(0)\rVert \leq  \frac1n +\varepsilon_n^{\frac{1}{l+1}} +\mu_n ^{\frac{1}{l+1}}\leq \frac{3}{n^{\frac{1}{l+1}}},
\]
and (for large enough $n\in\mathbb N$) we have:
\[
 \lVert I(T_n)\rVert \geq  \lVert \tilde I(T_n)\rVert - \lVert \hat I(T_n)\rVert \geq   \rho_0-\varepsilon_n^{\frac{1}{l+1}} -\mu_n ^{\frac{1}{l+1}}\geq \frac{\rho_0}{2}.
\]

\end{enumerate}
The  proof of Theorem \ref{thm:maintori} is complete (up to establishing Theorem \ref{thm:approxbydiffusion}).

%%%%%%%%%%%%%
%%%%%%%%%%%%%
%%%%%%%%%%%%%
\section{Proof of Theorem \ref{thm:main}} \label{sec.proof.main}

In this section we show how to adapt the proof of Theorem \ref{thm:maintori} (Lyapunov instability for tori)  to obtain Theorem \ref{thm:main} (Lyapunov instability for elliptic equilibria). 
%The proof of Theorem \ref{thm:main2} follows from Theorem \ref{thm:main} and a completely standard argument and is left to the reader.
\medskip

There are two main steps needed to complete the proof of Theorem \ref{thm:main}:
\begin{enumerate}
    \item Reduction to a statement about parametric families of 3 degrees-of-freedom Hamiltonians.
    \item Existence of a parametric family of 3 degrees-of-freedom Hamiltonians which exhibit diffusion along a sequence of parameter values.
\end{enumerate}

The first step does not need any significant modification and can be achieved by the same argument as in Section \ref{sec:proofthmtori}. The second step is accomplished in Theorem \ref{thm:approxbydiffusionellipticpoints} below. We start with the necessary notations that are analogous to those in Section \ref{sec:outlineproof}.

{\bf Functional setting and external parameters.}
In this section we consider Hamiltonians in Cartesian coordinates $(\tilde q,\tilde p)\in B_\rho^{6}$  that also \textit{depend on external parameters}. Namely, given $0<\tilde d\black<d$, we  introduce the Banach space
\begin{equation}\label{eq:banachspaceelliptic}
\begin{split}
{\mathcal  P}^{\tilde d, d }_{\rho}=\{& G(\tilde q,\tilde p;\eta):  B^{2\tilde d}_\rho \times  B_{\rho}^
{d-\tilde d}\to\mathbb C\colon G\text{ is real-analytic, 
extends holomorphically to  $\mathbb B^{2\tilde d}_{\rho} \times \mathbb B_{\rho}^{d-\tilde d}$} \\
&\qquad\qquad \text{and } \sup_{(\tilde q,\tilde p;\eta)\in \mathbb B^{2\tilde d}_\rho\times \mathbb B_{\rho}^{d-\tilde d}}|G( \tilde q,\tilde p; \eta)|<\infty\}
\end{split}
\end{equation}
of functions $(\tilde q,\tilde p)\mapsto G(\tilde q,\tilde p;\eta)$  depending on an external $(d-\tilde d)$-dimensional parameter $\eta$. For conciseness, let us keep the notation 
$$
I=( I_1,\dots ,  I_{ d}) , \quad    I_i=( q_i^2+ p_i^2)/2, \quad i=1,\dots ,  d.
$$
Given $\omega\in \mathbb R^{\tilde d}$ with $\omega_i\neq 0$ we let:
\begin{enumerate}
    \item ${\mathcal M}^{\tilde d,d}_\rho$ be the closed subset of $\mathcal P^{\tilde d,d}_{\rho,\sigma}$ composed of functions only depending on $\tilde q,\tilde p$ through $\tilde I_j=\tilde q_j^2+\tilde p_j^2$
    \item ${\mathcal M}^{\tilde d,d}_\rho(\omega)=\{G_0\in {\mathcal M}^{\tilde d,d}_\rho\colon G_0=\frac12\omega\cdot \tilde I+O_2(\tilde I)\}$.
\end{enumerate} 
The following statement is the natural adaptation of Theorem \ref{thm:approxbydiffusion} to the case of elliptic equilibria.

{\renewcommand{\thethmx}{D$'$}
\begin{thmx}\label{thm:approxbydiffusionellipticpoints}
Fix any $\rho>0$ and let  $\omega\in \mathbb R^3$ such that $\omega_i\neq 0$ for all $i=1,2,3$ and  its entries are not all of the same sign. Then, for any $G_0$ in an open and dense subset of $\mathcal M^{3,d}_\rho(\omega)$ there exists $\rho_0>0$  (depending only on $G_0$) such that for any  $\delta>0$  and $l\in\mathbb N$  there exist two sequences $\{\varepsilon_n\}_n$ and $\{\mu_n\}_n$ satisfying $0<\varepsilon_n,\mu_n\leq 1/n$,
%such that for any  $\%{\eta^{(n)}\}_{n\in\mathbb N}\subset %B_\rho^{d-3}$   
and a parametric family of Hamiltonians $G_{\varepsilon,\mu}\in \mathcal P^{3,d}_{\rho}$ of the form 
    \[
    G_{\varepsilon,\mu}(\tilde q,\tilde p;\eta)=G_0(\tilde q,\tilde p;\eta)+\varepsilon g_0(\tilde q,\tilde p)+\varepsilon^2 g_1(\tilde q,\tilde p)+\mu g_2(\tilde q,\tilde p),\]
for which we have the following:
   \begin{enumerate}
   \item 
   The functions $\{g_i\}_i$ satisfy
   \[
       \sup_{(\tilde q,\tilde p)\in  \mathbb B^6_{\rho}} |g_i(\tilde q,\tilde p)|\leq \delta, \ \ i=0,1,2 ;
   \]
       \item 
       If we write $\eta^{(n)}=(\varepsilon_n^{\frac{1}{l+1}}, \mu_n^{\frac{1}{l+1}},0,\dots,0)$,  for any $n\in\mathbb N$ there exists an orbit 
       $\{(\tilde q(t;\eta^{(n)} ),\tilde p(t;\eta^{(n)}))\}_{t\in\mathbb R}$ of the Hamiltonian $G_{\varepsilon_n,\mu_n}(\tilde q,\tilde p;\eta^{(n)})$ and a time $T_n>0$ such that 
\[
%(\tilde q^2+\tilde p^2)
\lVert \tilde I(0;\eta^{(n)})\rVert \leq  \frac1n,\qquad\qquad \lVert \tilde I(T_n;\eta^{(n)})\rVert \geq \rho_0.
\]
   \end{enumerate}
\end{thmx}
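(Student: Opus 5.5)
The plan is to reduce Theorem \ref{thm:approxbydiffusionellipticpoints} to the machinery behind Theorem \ref{thm:approxbydiffusion} by working in symplectic polar coordinates away from the coordinate axes. The resonances used, and all the diffusing orbits, will live in a region where $I_1,I_2,I_3$ are bounded away from zero relative to $\lVert\tilde I\rVert$; there $(\tilde q,\tilde p)\mapsto(\tilde\theta,\tilde I)$ is a real-analytic symplectic diffeomorphism. In this region $G_0(\tilde q,\tilde p;\eta)=g(\tilde I;\eta)$ with $g=\tfrac12\omega\cdot\tilde I+O_2(\tilde I)$ is integrable in action-angle form. The difference from the torus case is that the perturbations $g_0,g_1,g_2$ must be real-analytic (indeed polynomial, or entire) functions of $(\tilde q,\tilde p)$ on $\mathbb B^6_\rho$, rather than arbitrary functions of angles. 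So I will choose them as finite or rapidly convergent sums of monomials. A Fourier mode $e^{i(k\cdot\tilde\theta)}$ times a power of $\tilde I$ is realized by $\prod_j (q_j\pm ip_j)^{|k_j|}\,\tilde I^{m}$, whose size on $\mathbb B_\rho^6$ is controlled, and whose angular content is exactly the desired harmonic multiplied by $\prod_j (2I_j)^{|k_j|/2}$.

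Concretely, I will follow the three steps of the roadmap.

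Step 1. Fix a sequence of resonances $k^{(n)}\cdot\partial_{\tilde I}g(\tilde I;\eta^{(n)})=0$ in $(\theta_1,\theta_2)$, accumulating at $\tilde I=0$. Since $\omega$ has entries of mixed sign, resonant surfaces of the linear part $\omega\cdot\tilde I$ meet the positive octant. After reordering, take $\omega_1\omega_2<0$ or use the third direction; then the resonant arcs pass arbitrarily close to $0$ inside the positive octant. This is exactly where the sign hypothesis enters.

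Define $g_0=\sum_n c_n\,\mathrm{Re}\bigl((q_1+ip_1)^{a_n}(q_2-ip_2)^{b_n}\bigr)$ with super-exponentially small $c_n$, and choose $\varepsilon_n$ inductively. The averaging procedure of Section \ref{sec:cylinders} should then go through verbatim. The local size of the $n$-th harmonic near the $n$-th resonance is $c_n r_n^{(a_n+b_n)/2}$, where $r_n$ is the distance of the arc to $0$. Averaging at scale $\varepsilon_n$ on a ball of radius $\sim r_n$ kills all other harmonics up to errors that are exponentially small relative to the active one, which yields normally hyperbolic cylinders.

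Step 2. Choose $g_1$ generic in a finite-dimensional family of polynomials in $(q_1,p_1,q_2,p_2)$ of the same kind. The parametric transversality argument of Section \ref{sec:mainsptlittingsection} then gives transverse homoclinic cylinders for all $n$ simultaneously, since countably many open dense conditions intersect in a residual set.

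Step 3. Choose $g_2$ as a polynomial involving $(q_3,p_3)$ together with $\mu_n$. The Gelfreich–Turaev mechanism of Section \ref{sec:diffusion} then produces orbits drifting from $\lVert\tilde I\rVert\le 1/n$ to $\lVert\tilde I\rVert\ge\rho_0$. These orbits travel along a chain of cylinders, which requires the twist and non-degeneracy conditions. I define the open dense subset of $\mathcal M^{3,d}_\rho(\omega)$ by these explicit conditions on $g$ restricted to the relevant octant region.

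The main obstacle I expect is Step 1, namely the uniformity of averaging near $0$. In action-angle variables the analyticity width in $\tilde\theta$ is fixed, but the complex domain in $\tilde I$ around the $n$-th resonance shrinks like $r_n$. Moreover, the harmonics carry the factors $I_j^{|k_j|/2}$, which are not analytic at $I_j=0$ and so force us to stay in a sector $I_j\gtrsim r_n$. I would first rescale $\tilde I=r_n J$ and time by $r_n^{-1}$. I would then check that the rescaled Hamiltonian near the $n$-th resonance is a nearly integrable system with $O(1)$ twist, coming from the nondegeneracy of $\partial^2_{\tilde I}g$, and with perturbation size $\varepsilon_n c_n r_n^{(|a_n|+|b_n|)/2-1}$. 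This should allow the averaging lemmas of the appendix to be applied with constants uniform in $n$, after adjusting the choice of $\varepsilon_n$.
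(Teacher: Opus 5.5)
Your architecture matches the paper's. You use polar coordinates on the open octant and realize the resonant harmonics of $g_0$ by explicit polynomials; this is exactly Lemma \ref{lem:hompolynomial}, where $I_1^{s_1}I_2^{s_2}\cos(2(s_1\theta_1+s_2\theta_2))$ is a homogeneous polynomial. You choose $b_n$ and $\varepsilon_n$ inductively, then use parametric transversality for $g_1$ and Gelfreich--Turaev for $g_2$.

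The gap is in Steps 2 and 3, where you take $g_1$ generic in a \emph{finite-dimensional} family of polynomials and $g_2$ to be a polynomial. The statement requires one fixed $g_1$ and one fixed $g_2$ that work for every $n$ and every $\eta^{(n)}$.

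The argument of Section \ref{sec:mainsptlittingsection} (Proposition \ref{prop:surjective}) proves surjectivity of $h\mapsto(\Delta',\Delta'',\Delta''')$ only when $h$ ranges over the whole Banach ball. The three required directions are analytic approximations, in $C^r(\overline{U_n})$, of bump functions supported in the $n$-dependent set $U_n$ (resp.\ $U_n'$). They are chosen depending on $n$ and on the point $(\tau,I_3)$. No fixed finite-dimensional family is shown to supply them for all $n$. A fixed polynomial with angular harmonics $l$ acts on the $n$-th splitting potential only through harmonics of $\tau$-frequency $s^{(n)}_1l_2-s^{(n)}_2l_1$, which grows with $n$. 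Nothing in the paper, or in your proposal, shows that these still span the $3$-jet at every $(\tau,I_3)$ for every $n$.

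The same problem affects $g_2$. Gelfreich--Turaev gives, for each $n$, an open dense set $\mathcal V^{(n)}_\rho(\delta)$ of the Banach ball $\mathcal B^3_\rho(\delta)$, and $g_2$ must lie in the residual set $\bigcap_n\mathcal V^{(n)}_\rho(\delta)$. Your sentence ``countably many open dense conditions intersect in a residual set'' gives residuality only in the ambient Banach space. The traces of these sets on a finite-dimensional family need not be dense, or even nonempty. Moreover, polynomials form a meager subset of $\mathcal P^3_\rho$, being a countable union of finite-dimensional and hence nowhere dense subspaces. So the residual set of good perturbations need not contain any polynomial at all.

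The repair is what the paper does. Take $g_1$ in a residual subset of $\mathcal B^2_\rho(\delta)$, intersecting over all $n$ and all $\eta^{(n)}$. Take $g_2\in\bigcap_n\mathcal V^{(n)}_\rho(\delta)$. These are generic analytic functions, not polynomials; only $g_0$ needs an explicit polynomial form.

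Two smaller points.

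First, the ``main obstacle'' you identify is not one, because nothing has to be uniform in $n$. In the paper all averaging constants are $\lesssim_n$, with widths $\rho_n,\sigma_n$ depending on $n$. Also $\varepsilon_n$ is chosen after $b_1,\dots,b_n$, and $b_{n+1}$ after $\varepsilon_n$. For fixed $n$ one works on a compact piece of the $n$-th arc inside the open octant, where the polar change of coordinates is analytic. Your rescaling would not yield uniform constants anyway, since the fast frequency on the $n$-th resonance is $\sim\omega_2/s^{(n)}_1\to0$ by \eqref{eq:lowerboundfastfreq}.

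Second, the sign hypothesis is not what brings resonant surfaces near $0$ into the octant. The linear part $\omega\cdot\tilde I$ alone has no resonances, and the resonant surfaces come from $D^2\mathtt G_0$. Its role, in Lemma \ref{lem:resonatpathsellipt}, is different: with mixed signs the energy surface through a point near the origin is not a small sphere around it. Hence the arc $\{\mathtt G_0=e_n\}\cap\{\bs s^{(n)}\cdot\nabla\mathtt G_0=0\}$ can stay in the positive octant while reaching $\lVert\tilde I\rVert\geq\rho_0$.
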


The proof of Theorem \ref{thm:approxbydiffusionellipticpoints} is obtained as a modification of the proof of Theorem \ref{thm:approxbydiffusion}. The necessary changes are explained in Section \ref{sec:proofElliptic}.

\subsection{Proof of Theorem \ref{thm:main} assuming Theorem \ref{thm:approxbydiffusionellipticpoints}}

When Theorem \ref{thm:approxbydiffusionellipticpoints} is established, the proof of Theorem \ref{thm:main}  follows by adapting the embedding argument of Section \ref{sec:embedding}. The remainder of the paper is devoted to the proof of Theorem \ref{thm:approxbydiffusion}, while the additional modifications needed to establish Theorem \ref{thm:approxbydiffusionellipticpoints}  are presented in Section \ref{sec.82}.

%%%%%%%%%%%%%%%%%%
%%%%%%%%%%%%%%%%%%
%%%%%%%%%%%%%%%%%%

\section{Proof of Theorem \ref{thm:approxbydiffusion}}\label{sec:outlineparametric}

 We now  explain the main ideas which lead to the proof of Theorem \ref{thm:approxbydiffusion}. These are summarized in Theorems \ref{thm:mainparametricoutline} and \ref{thm:_apply_GToutline} below. At the end of this section (to be precise, in Section \ref{sec:proofD}) we show how to combine these results to obtain a proof of Theorem \ref{thm:approxbydiffusion}.
\medskip

\begin{rem} \label{rem.eta} For simplicity of notation, we suppress the dependence of the objects constructed below on the parameter $\eta$. In particular, the residual sets and the constants obtained in the construction may depend on $\eta$; this causes no difficulty, since in the proof of Theorem \ref{thm:approxbydiffusion}
 we only need the construction for a prescribed countable family of parameter values and therefore take the corresponding countable intersection of residual sets.
\end{rem}

\subsection{Resonant surfaces.}\label{sec:Resonant_surfaces}
As discussed above, the orbits realizing item $(2)$ of Theorem \ref{thm:approxbydiffusion} correspond to orbits that drift along certain resonant planes.  To construct a sequence of resonant planes converging to the origin, we introduce the following definition.

\begin{defn}\label{defn:Dirichletseq}
Given any  $\varpi\in\mathbb R$ we say that a sequence of integer vectors 
   $ s^{(n)}=\{(s_1^{(n)},s_2^{(n)})\}_{n\in\mathbb N}\subset \mathbb Z^2\setminus\{0\}$ is a \textit{Dirichlet sequence} for $\varpi$ if for all $n\in\mathbb N$ we have \begin{enumerate}
       \item 
       \textit{(Normalization).} $\mathrm{gcd}(s_1^{(n)},s_2^{(n)})=1$ for all $n\in\mathbb N$,
       \item 
       \textit{(Approximation).} $|s_1^{(n)}+s_2^{(n)}\varpi|\leq |s_2^{(n)}|^{-1}$.
       \end{enumerate}
\end{defn}

Observe that for $\varpi \in \mathbb R\setminus \mathbb Q$ the existence of a Dirichlet sequence is guaranteed by Dirichlet's theorem on Diophantine approximation. 
%\textcolor{magenta}{Add a reference for %Dirichlet theorem }\textcolor{blue}{In fact, %it can be found on Wikipedia.}
\begin{rem}\label{rem:rational}
For the ``degenerate'' situation in which 
$\varpi=\omega_2/\omega_1=p/q\in\mathbb Q$ we may simply take one integer vector $s=(-p,q)\in\mathbb Z^2$ instead of an infinite  sequence. As will be clear from our discussion below, in that case a Lyapunov unstable perturbation can be constructed much more easily, since a resonant surface traverses $\{I=0\}$. Hence, we do not elaborate on this case and assume from now on that the ratio $\omega_2/\omega_1\in\mathbb R\setminus\mathbb Q$.    
\end{rem}
\medskip 

Fix any $\omega=(\omega_1,\omega_2,\omega_3)\in\mathbb R^3$ with  
\begin{equation}\label{def:omega} 
\omega_i\neq 0 \quad \text{and} \quad \omega_2/\omega_1\in \mathbb R\setminus\mathbb Q,
\end{equation}  
and choose, once and for all, any Dirichlet sequence $\{s^{(n)}\}_n=\{(s^{(n)}_1,s^{(n)}_2)\}$ for $\omega_2/\omega_1$.
For notational purposes, it will also be convenient to introduce the extended Dirichlet sequence 
$$
\{\bs s^{(n)}\}_n=\{(s^{(n)}_1,s^{(n)}_2,0)\}\subset\mathbb Z^3\setminus\{0\}.$$ 
Given $\omega$ as in \eqref{def:omega} with a Dirichlet sequence 
$\{s^{(n)}\}_n\subset\mathbb Z^2\setminus\{0\}$, we say that $G_0\in \mathcal N^{3,d}_\rho(\omega)$ satisfies assumption \ref{it:assumption1} if

\begin{equation}\label{it:assumption1}
 \text{for } \bs v = \left(-\omega_2, \omega_1,0 \right) \text{  we have }\  \partial_{I_1} (\bs v\cdot \nabla G_0)(0)  \neq 0\  \text{ and }\     \bs v^T D^2G_0(0) \bs v\neq 0. \tag*{[H1]} 
\end{equation}
\medskip
%\begin{enumerate}[label=%{[H\arabic*]}]
 %   \item \label{it:assumption1} 
%   if $\partial_{I_1^2}^2 %G_0(0)\neq 0$ and for $\bs v = %\left(-\omega_2, \omega_1 \right)$ we have   $ \bs v^T %D^2G_0(0) \bs v\neq 0$.
%\end{enumerate}
%\medskip
Then, for each $n\in\mathbb N$ we define the energy level 
\begin{equation} \label{eq:energieleveln}
e_n=G_0(\hat a_1^{(n)},0,0),
\end{equation}
where $\hat a_1^{(n)}$ is the unique solution to the equation 
\begin{equation}\label{eq:hata1n}
\bs s^{(n)}\cdot \nabla G_0(\hat a_1^{(n)},0,0)=0.
\end{equation}
Notice that  this equation admits a solution, locally unique in a neighborhood of the origin, provided that the first part of  assumption \ref{it:assumption1}  is satisfied. 
Moreover, the point   $(\hat a_1^{(n)},0,0)\in B_\rho^3$ belongs to the resonance locus associated with the integer vector $\bs s^{(n)}\in\mathbb Z^3$, and $e_n$ in \eqref{eq:energieleveln} is the value of the integrable Hamiltonian $G_0$ at that point. 
We now observe that assumption \ref{it:assumption1}  guarantees the existence of a sequence of resonant curves (containing the point $(\hat a_1^{(n)},0,0)$) approaching the origin. 

\medskip

\begin{lem}\label{lem:resnonatpaths}
  Fix any $\rho>0$ and $\omega$ as in \eqref{def:omega} with a Dirichlet sequence $\{s^{(n)}\}_n\subset\mathbb Z^2\setminus\{0\}$ associated to the ratio $\frac{\omega_2}{\omega_1}$.    
  For any $G_0\in \mathcal N^{3,d}_\rho(\omega)$ satisfying \ref{it:assumption1}, there exists $\rho_0=\rho_0(G_0)>0$, such that for any sufficiently large $n\in\mathbb N$ the system of equations
    \begin{equation}\label{eq:anpaths}
          G_0(a^{(n)}(u))=e_n,\qquad\qquad \bs s^{(n)}\cdot  \nabla G_0(a^{(n)}(u))=0,
    \end{equation}
    where $e_n$ is given in \eqref{eq:energieleveln},
    defines a real-analytic embedded curve $a^{(n)}(u):[-\rho_0,\rho_0] \black \to B^{3}_\rho$ of the form
 \begin{equation}\label{eq:anpaths0}
    a^{(n)}(u)=(a^{(n)}_1(u),a^{(n)}_2(u),u)
    \end{equation}
    satisfying 
    \[
    \lim_{n\to \infty} \lVert a^{(n)}(0)\rVert \to 0. 
    \]
    \end{lem}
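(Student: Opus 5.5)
The plan is to apply the implicit function theorem to the map
\[
\Phi_n(a_1,a_2,u)=\Bigl(G_0(a_1,a_2,u)-e_n,\ \tfrac{1}{|s^{(n)}|}\,\bs s^{(n)}\cdot\nabla G_0(a_1,a_2,u)\Bigr),
\]
solving for $(a_1,a_2)$ as functions of $u$, uniformly in $n$. The normalization by $|s^{(n)}|$ is essential. Write $\hat s^{(n)}=\bs s^{(n)}/|\bs s^{(n)}|$. By the approximation property of the Dirichlet sequence, $|s_1^{(n)}\omega_1+s_2^{(n)}\omega_2|\lesssim |s^{(n)}|^{-1}$, and $|s^{(n)}|\to\infty$ since $\omega_2/\omega_1\notin\mathbb Q$. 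Hence $\hat s^{(n)}\to \pm \bs v/|\bs v|$ with $\bs v=(-\omega_2,\omega_1,0)$, after passing to the sign normalization (we may replace $s^{(n)}$ by $-s^{(n)}$, which changes nothing in \eqref{eq:anpaths}). So the second equation converges, in the analytic topology on a fixed complex neighborhood of the origin, to $\bs v\cdot\nabla G_0=0$ (up to normalization), and we reduce to a limiting problem.

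First I verify the base point. By \eqref{eq:hata1n}, $\hat a^{(n)}_1$ solves $\hat s^{(n)}\cdot\nabla G_0(a_1,0,0)=0$. At $a_1=0$ the left side equals $\hat s^{(n)}\cdot\omega=O(|s^{(n)}|^{-2})$, and its $a_1$-derivative tends to $\partial_{I_1}(\bs v\cdot\nabla G_0)(0)/|\bs v|\neq 0$ by [H1]. The one-dimensional IFT, with uniform bounds from Cauchy estimates, then gives $\hat a_1^{(n)}=O(|s^{(n)}|^{-2})\to 0$. Consequently $e_n=G_0(\hat a^{(n)}_1,0,0)\to 0$. Thus $(\hat a_1^{(n)},0,0)$ solves $\Phi_n=0$ at $u=0$, and it converges to the origin.

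Next I check the Jacobian of $\Phi_n$ with respect to $(a_1,a_2)$ at the base point. In the limit $n\to\infty$, evaluated at the origin, it is
\[
\begin{pmatrix}\omega_1 & \omega_2\\ \partial_{I_1}(\hat v\cdot\nabla G_0)(0) & \partial_{I_2}(\hat v\cdot\nabla G_0)(0)\end{pmatrix},\qquad \hat v=\bs v/|\bs v|.
\]
Its determinant equals, up to a nonzero factor, $\bs v^T D^2G_0(0)\,\bs v$, which is nonzero by the second part of [H1]. Since the Jacobian depends continuously on $n$ (through $\hat s^{(n)}$) and on the point, it is uniformly invertible, with uniform bounds, on a fixed neighborhood for $n$ large.

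I then apply a quantitative IFT (Newton/contraction argument with constants independent of $n$) on a fixed polydisk around the origin. This yields $\rho_0>0$, depending only on $G_0$ through the constants in [H1] and the size of $G_0$'s holomorphic extension, together with real-analytic functions $(a_1^{(n)}(u),a_2^{(n)}(u))$ on $[-\rho_0,\rho_0]$ solving \eqref{eq:anpaths}. These functions satisfy $a^{(n)}(0)=(\hat a_1^{(n)},0,0)$ by local uniqueness. The curve is an embedding because its third component is $u$, and $\|a^{(n)}(0)\|\to 0$. The main obstacle is keeping the IFT domain uniform in $n$. I expect to handle it through the normalization by $|s^{(n)}|$ and the convergence $\hat s^{(n)}\to \hat v$, which make the whole family $\Phi_n$ a small analytic perturbation of a single nondegenerate limiting map $\Phi_\infty$.
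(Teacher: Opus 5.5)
Your proposal is correct and is essentially the paper's proof. The paper likewise normalizes $\bs v^{(n)}=\bs s^{(n)}/\lVert \bs s^{(n)}\rVert$ and takes the zero $(\hat a_1^{(n)},0)$ of $(G_0-e_n,\ \bs v^{(n)}\cdot\nabla G_0)$ at $u=0$; it then uses $\bs v^{(n)}\to \bs v/\lVert\bs v\rVert$ and [H1] to bound the Jacobian determinant, which is proportional to $(\bs v^{(n)})^TD^2G_0\,\bs v^{(n)}$, away from zero uniformly in $n$, and concludes by the implicit function theorem. Your extra details (existence of $\hat a_1^{(n)}=O(|s^{(n)}|^{-2})$ from the first part of [H1], the sign ambiguity of the limit direction, and the uniform quantitative IFT that makes $\rho_0$ independent of $n$) only make explicit what the paper leaves implicit, with one minor caveat: $|s^{(n)}|\to\infty$ really relies on the Dirichlet sequence having pairwise distinct terms, which the paper also tacitly assumes.
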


  \begin{rem}
    The number $\rho_0=\rho_0(G_0)$ provided by the lemma above is important, as it will give a lower bound for  the size of diffusing trajectories for Hamiltonians close to $G_0$.
    \end{rem}
    
    \begin{proof}

For any $u\in [-\rho,\rho]$ define the map $f_u^{(n)}:B^2_\rho\to \mathbb R^2$ given by 
        \[
        f_u^{(n)}:(a_1,a_2)\mapsto (G_0(a_1,a_2,u)-e_n,\  \bs v^{(n)}\cdot  \nabla G_0(a_1,a_2,u)),
        \]
        where $\bs v^{(n)}=\bs s^{(n)}/\lVert \bs s^{(n)}\rVert$. Let $\hat a_1^{(n)}$ be the solution to \eqref{eq:hata1n} and observe that for $u=0$ the point $(\hat a_1^{(n)},0)\in B^2_\rho$ satisfies 
        \[
        f_0^{(n)}(\hat a_1^{(n)},0)=0.
        \]
        Using the fact that  
        \[
        \bs v^{(n)}\cdot \nabla G_0(\hat a^{(n)}_1,0,0)=0,
        \]
        it is straightforward to check that (the right hand side being evaluated at $(\hat a_1^{(n)},0,0)$)
\[
\mathrm{det}(Df_0^{(n)})(\hat a_1^{(n)},0)=-\frac{\partial_{I_2}G_0}{v_1^{(n)}} (\bs v^{(n)})^T D^2G_0 \bs v^{(n)} = \frac{\partial_{I_1}G_0}{v_2^{(n)}} (\bs v^{(n)})^T D^2G_0 \bs v^{(n)}.
\]
%we denote:
%\[
%D^2G_0=\left( 
%\begin{array}{ccc}
%\partial^2_{I_1^2} G_0   % & \partial^2_{I_1 \, %I_2} G_0 & 0 \\
%\partial^2_{I_1\, I_2} %G_0    & %\partial^2_{I_2^2} G_0 &0 \\
%0&0&0
%\end{array}
%\right)  (\hat a^{(n)}_1,0,0)
%\]
Using \eqref{eq:hata1n} we have that
$s^{(n)}_2= -\frac{\partial_{I_1} G_0  }{\partial_{I_2} G_0} (\hat a^{(n)}_1,0,0) s^{(n)}_1$, 
therefore
\[
\bs v^{(n)} = \frac{1}{\sqrt{(\partial_{I_2} G_0)^2+(\partial_{I_1} G_0)^2}} 
\left( -\partial_{I_2} G_0, \partial_{I_1} G_0 ,0 
\right) (\hat a^{(n)}_1,0,0)
\]
and, consequently, since $G_0$ is uniformly $\C^3$ on $B_\rho^3$ and $|\hat a_1^{(n)}|\to 0$ as $n\to \infty$
\[
\lim_{n\to \infty} \bs v^{(n)}=
\frac{ 1}{\sqrt{(\omega_2)^2+(\omega_1)^2}} 
\left( -\omega _2, \omega_1 ,0 \right),
\]
assumption
%Since $G_0$ is uniformly $\C^3$ on $B_\rho^3$ and $|\hat a_1^{(n)}|\to 0$ as $n\to \infty$,   we can compute the assumption 
\ref{it:assumption1} guarantees that there exists $c>0$, depending only on $G_0$ such that for $n$ large enough,
\[
|\mathrm{det} (Df_0^{(n)})(\hat a_1^{(n)},0)|\geq c.
\]
Therefore, the conclusion follows from a direct application of the implicit function theorem. 
    \end{proof}
    \medskip

From a dynamical point of view, the 4-dimensional submanifold 
\begin{equation}\label{eq:resonancelocus}
\mathcal R^{(n)}=\{(\tilde \theta,\tilde I)\in Q^3_{\rho,\sigma}\colon \tilde I=a^{(n)}(I_3), \ I_3\in[-\rho_0,\rho_0]\}\subset\{G_0=e_n\}
\end{equation}
constitutes a resonant submanifold foliated  by the  $3$-dimensional invariant tori 
\[
\mathcal T^{(n)}_u=\{(\tilde \theta,\tilde I)\in Q^3_{\rho,\sigma}\colon \tilde I=a^{(n)}(u )\},\qquad\qquad u\in [-\rho_0,\rho_0],
\] 
where the integrable dynamics is given by the resonant translation
\[
\phi^t_{G_0}|_{\mathcal T^{(n)}_u}:(\tilde \theta,a^{(n)}(u))\mapsto (\tilde \theta+\omega_n(u ) t,\  a^{(n)}(u))\]
with
\[
\omega_n(u)=(-\frac{s_2^{(n)}}{s_1^{(n)}}\partial_{I_2} G_0(a^{(n)}(u)) , \ \partial_{I_2} G_0(a^{(n)}(u)), \partial_{I_3} G_0(a^{(n)}(u)).
\]
Hence, for any $u\in\mathbb [-\rho_0,\rho_0]$ the torus $\mathcal T_u^{(n)}$ is itself foliated by two-dimensional invariant tori. 
To describe the dynamics in a neighborhood of $\mathcal R^{(n)}$ we  define $k^{(n)}\in\mathbb Z^2\setminus\{0\}$ as an
%any of the unique two 
integer vector
%\footnote{By Bezout's lemma infinitely many vectors $k\in\mathbb Z^2\setminus\{0\}$ exist satisfying $\mathrm{det} (s^{(n)}|k)=1$. Among these, only two satisfy the second requirement in \eqref{eq:Bezout}.} 
for which the pair $s^{(n)},k^{(n)}\in\mathbb Z^2\setminus\{0\}$ satisfies that 
 \begin{equation}\label{eq:Bezout}
 s_1^{(n)}k_2^{(n)}-k_1^{(n)}
 s_2^{(n)}=1 \qquad\qquad \text{and}\qquad\qquad |k_j^{(n)}|<|s_j^{(n)}|,\ j=1,2.
 \end{equation}
Note that, by Bezout's lemma, infinitely many vectors $k\in\mathbb Z^2\setminus\{0\}$ exist satisfying $\mathrm{det} (s^{(n)}|k)=1$. Among these, only two satisfy the second requirement in \eqref{eq:Bezout}.
From now on, one should think of $ s^{(n)}\cdot (\theta_1,\theta_2)$ as a {\it slow angle} in a neighborhood of the resonant submanifold $ \mathcal R^{(n)}$ and of $k^{(n)}\cdot(\theta_1,\theta_2)$ and $\theta_3$ as {\it fast angles} in a neighborhood of the resonant submanifold $\mathcal R^{(n)}$. \black
\medskip

\subsection{Creation of a family of transverse homoclinic cylinders.}\label{sec:Creation_cylinders}
We now show that a Hamiltonian $G_0$ satisfying \ref{it:assumption1} can be perturbed in such a way that, along each of the resonant paths $a^{(n)}$  in Lemma \ref{lem:resnonatpaths}, one can build a normally hyperbolic cylinder with a rich network of homoclinic cylinders. To obtain diffusing orbits along this structure, we introduce now our second non-degeneracy condition, which guarantees a twist property for the induced dynamics. Given $\omega$ as in \eqref{def:omega} with a Dirichlet sequence $\{s^{(n)}\}_n\subset\mathbb Z^2\setminus\{0\}$, we say that $G_0\in \mathcal N^{3,d}_\rho(\omega)$ satisfies  assumption \ref{it:assumption2} if

\begin{equation}
\text{ for } \bs v=\left(-\omega_2, \omega_1,0 \right) \text{ and }\tilde{\bs\omega}=(-\omega_3,0,\omega_1)
  \text{ we have }  
    \cA: = D^2G_0(0)\tilde{\bs\omega}\  \cdot\  \left(D^2 G_0(0) \bs v \wedge \omega \right)    \neq 0 ,
 \tag*{[H2]} \label{it:assumption2}
\end{equation}
where $\wedge $ stands for the usual vector product.
\medskip
%\begin{enumerate}[label={[H\arabic*]},start=2]
%    \item \label{it:assumption2} if for $\bs v=\lim_{n\to\infty}\bs s^{(n)}/\lVert \bs s^{(n)}\rVert$
%    \[
%    T: = D^2G_0(0)\begin{pmatrix} -\omega_3\\0\\ \omega_1 \end{pmatrix}\  \cdot\  \left(D^2 G_0(0) \bs v \wedge \omega \right)    \neq 0, 
%\]
%\end{enumerate}

\begin{lem}\label{lemmaH1H2}
Fix any $\rho>0$ and let $\omega\in\mathbb R^3$ satisfy
$\omega_i\neq 0$ for $i=1,2,3$.
Then the subsets of
\[
\mathcal N^{3,d}_\rho(\omega)
\qquad\text{and}\qquad
\mathcal M^{3,d}_\rho(\omega)
\]
consisting of Hamiltonians satisfying \ref{it:assumption1} and \ref{it:assumption2} are open
and dense.
\end{lem}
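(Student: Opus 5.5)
We plan to observe that both conditions depend only on finitely many Taylor coefficients of $G_0$ at $\tilde I=0$, namely the Hessian $D^2G_0(0)$ with respect to $\tilde I$. (The gradient $\nabla G_0(0)=\omega$ is fixed by membership in $\mathcal N^{3,d}_\rho(\omega)$.) For the elliptic space $\mathcal M^{3,d}_\rho(\omega)$ we write $G_0(\tilde q,\tilde p;\eta)=g(\tilde I;\eta)$ and read \ref{it:assumption1}--\ref{it:assumption2} as conditions on $D^2_{\tilde I}g(0)$, so the two cases are handled identically. If the parameter $\eta$ is present, the conditions are imposed at the relevant fixed value of $\eta$ (cf.\ Remark \ref{rem.eta}); in any case they are conditions on a finite jet. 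The proof therefore reduces to two facts: the map $G_0\mapsto D^2G_0(0)$ is continuous and affine surjective onto symmetric $3\times 3$ matrices, and the set of matrices satisfying the conditions is open and dense.

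\emph{Openness.} We will use Cauchy estimates on the polydisk $\mathbb B_\rho^3$ (respectively $\mathbb B^6_\rho$): if $\sup|G_0-G_0'|<\epsilon$, then $\|D^2G_0(0)-D^2G_0'(0)\|\le C_\rho\,\epsilon$. In the elliptic case the coefficients of $I_iI_j$ are Taylor coefficients of the function of $(q,p)$, so the same estimate applies. The quantities $\partial_{I_1}(\bs v\cdot\nabla G_0)(0)=(D^2G_0(0)\bs v)_1$, $\bs v^TD^2G_0(0)\bs v$ and $\mathcal A$ are polynomial, hence continuous, in $M=D^2G_0(0)$. The nonvanishing set is therefore open.

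\emph{Density.} Given $G_0$ and $\epsilon>0$, we set $G_0^\tau=G_0+\tfrac12\tau\,\tilde I^T B\tilde I$ for a symmetric matrix $B$ to be chosen, with $\tau$ small. This stays in the same class, since the linear part is unchanged, and it is $O(\tau)$-close in sup norm on the bounded polydisk. Each of the three quantities is a polynomial in $\tau$. It therefore suffices to exhibit a single symmetric matrix $M$ at which all three are nonzero: taking $B=M-D^2G_0(0)$, the polynomials $\tau\mapsto$ (quantity at $D^2G_0(0)+\tau B$) are not identically zero, so they vanish for only finitely many $\tau$, and we pick a small $\tau$ avoiding these values.

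\emph{Main step: exhibiting $M$.} This is the only genuine obstacle, namely checking that $\mathcal A$ is not identically zero on symmetric matrices, given that $\bs v=(-\omega_2,\omega_1,0)$, $\tilde{\bs\omega}=(-\omega_3,0,\omega_1)$ and $\omega$ are fixed. We will try $M$ diagonal, $M=\mathrm{diag}(m_1,m_2,m_3)$. Then $(M\bs v)_1=-m_1\omega_2\ne0$ if $m_1\neq0$, and $\bs v^TM\bs v=m_1\omega_2^2+m_2\omega_1^2$. Next,
\[
M\bs v\wedge\omega=(-m_1\omega_2,m_2\omega_1,0)\wedge(\omega_1,\omega_2,\omega_3)=(m_2\omega_1\omega_3,\ m_1\omega_2\omega_3,\ -\omega_1\omega_2(m_1+m_2)),
\]
and $M\tilde{\bs\omega}=(-m_1\omega_3,0,m_3\omega_1)$. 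Hence
\[
\mathcal A=-m_1m_2\omega_1\omega_3^2-m_3(m_1+m_2)\omega_1^2\omega_2 .
\]
Choosing $m_1=m_2=1$ and $m_3=0$ gives $\bs v^TM\bs v=\omega_1^2+\omega_2^2\ne0$, $(M\bs v)_1=-\omega_2\ne0$ and $\mathcal A=-\omega_1\omega_3^2\neq0$, using $\omega_i\ne0$. This completes the plan. The only points to verify are the routine Cauchy estimate in the elliptic setting, and that adding $\tfrac12\tau\tilde I^TB\tilde I$ keeps the function in the space $\mathcal M^{3,d}_\rho(\omega)$; it does, being a polynomial in $\tilde I$.
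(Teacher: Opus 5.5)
Your proposal is correct and follows the paper's own argument: both conditions depend only on $D^2G_0(0)$, which varies continuously and can be moved arbitrarily by adding a small quadratic polynomial in $\tilde I$, so openness and density follow from nonvanishing of nontrivial polynomials in the Hessian. You go slightly further than the paper by exhibiting the witness $M=\mathrm{diag}(1,1,0)$; note one harmless slip there. The third component of $M\bs v\wedge\omega$ is $-(m_1\omega_2^2+m_2\omega_1^2)$, not $-\omega_1\omega_2(m_1+m_2)$, so your general formula for $\mathcal A$ is wrong in the $m_3$-term. Since you take $m_3=0$, it does not matter, and $\mathcal A=-\omega_1\omega_3^2\neq0$ stands.
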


\begin{proof}
Conditions \ref{it:assumption1} and \ref{it:assumption2} only involve the Hessian
$D^2 G_0(0)$ with respect to the three-dimensional dynamical action
variable ${\widetilde I}$. They are therefore open conditions on
$\mathcal N^{3,d}_\rho(\omega)$ and
$\mathcal M^{3,d}_\rho(\omega)$.

To prove density, let $G_0$ belong to either of these spaces.
For any sufficiently small quadratic polynomial
$Q(\widetilde I)$ depending only on the three dynamical action variables,
the Hamiltonian
\[
\widetilde G_0(\widetilde I)
=
G_0(\widetilde I)+Q(\widetilde I)
\]
still belongs to the same functional space and has the same linear part.
Since the Hessian of $Q$ can be chosen arbitrarily, the Hessian
$D^2 \widetilde G_0(0)$ can be perturbed arbitrarily
within the space of symmetric $3\times3$ matrices.
The conditions \ref{it:assumption1} and \ref{it:assumption2} are given by the non-vanishing of
explicit polynomial expressions in this Hessian, and these polynomials are
not identically zero. Hence, an arbitrarily small perturbation can always
be chosen so that both \ref{it:assumption1} and \ref{it:assumption2} hold simultaneously.

Therefore, the subsets of
$\mathcal N^{3,d}_\rho(\omega)$ and
$\mathcal M^{3,d}_\rho(\omega)$ satisfying \ref{it:assumption1} and \ref{it:assumption2} are
open and dense.
\end{proof}

 In Section \ref{sec:cylinders} we show the following. 

\begin{thm}\label{thm:mainparametricoutline}
Fix any pair $\rho,\sigma>0$, let  
 $\omega$ be as in \eqref{def:omega} and fix a  Dirichlet sequence $\{s^{(n)}\}_n\subset\mathbb Z^2\setminus\{0\}$ associated to the ratio $\frac{\omega_2}{\omega_1}$.   
 Let  $G_0\in \mathcal N^{3,d}_\rho(\omega)$ satisfy \ref{it:assumption1}-\ref{it:assumption2},  let $\rho_0=\rho_0(G_0)$ be given by Lemma \ref{lem:resnonatpaths},    and consider a parametric Hamiltonian of the form 
\begin{equation}\label{eq:limitHamiltonianoutline} 
    G_{\varepsilon}  (\tilde \theta,\tilde I)=
G_0(\tilde I)+\varepsilon g_0(\tilde \theta),
\qquad\qquad
{where}\qquad\qquad g_0(\tilde \theta)=\sum_{j\in\mathbb N}b_j\cos( \bs s^{(j)}\cdot \tilde  \theta).
\end{equation}
Fix any $\delta>0$. 
Then,  if the sequence $\ \{b_n\}\subset\mathbb R_+$ decays to zero sufficiently fast, it is possible to find a sequence $\ \{\varepsilon_n\}_n$
with $0<\varepsilon_n\leq 1/n$ such that the following holds. 
For any fixed  
$\eta\in B_{\rho_0}^{d-3}$  there exists a residual subset $\mathcal U_{\rho,\sigma}(\delta)\subset \mathcal B^2_{\rho, \sigma} (\delta)\subset \mathcal{Q}_{\rho, \sigma}^2$ (see \eqref{spaceQdsigmarho}-\eqref{eq:balllagrangian}) such that for any $g\in \mathcal{U}_{\rho,\sigma}(\delta)$  the Hamiltonian 
\begin{equation}\label{eq:firstperturb}
\mathcal G_\varepsilon (g)=G_\varepsilon+\varepsilon^2 g 
\end{equation}
satisfies the following conditions:
\begin{enumerate}
    \item[(0)]  
    For any $\varepsilon\in[0,1]$ we have $\mathcal G_{\varepsilon}(g)\in \mathcal {Q}_{\rho,\sigma}^{3,d}$ (see \eqref{eq:banachspace}) and 
    \[
    \sup_{(\tilde \theta,\tilde I)\in  Q^3_{\rho,\sigma}} (|\mathcal G_\varepsilon(g)-G_0|)<2\delta;
    \]
    \item[(1)]
    For any $n\in\mathbb N$,
    the Poincar\'e map $\Phi_n:\Sigma_n\to \Sigma_n$ induced by the Hamiltonian $\mathcal G_{\eps_n}(g)$ on the section 
\begin{equation}\label{eq:4dsection}
\Sigma_n=\{  k_1^{(n)}\theta_1+k_2^{(n)}\theta_2=0, \ \mathcal G_{\varepsilon_n}(g)(\theta,I)=e_n\}
\end{equation}
is (locally) well-defined;

\item[(2)] 
The Poincar\'e map $\Phi_n$  
possesses a  normally hyperbolic invariant manifold (cylinder) which admits a parametrization of the form
\begin{equation}\label{eq:2dcylinderoutline}
{\Lambda}_n(g)= \{  (\theta_1^{(n)}(I_3),\theta_2^{(n)}(I_3), \theta_3,I_1^{(n)}(I_3),I_2^{(n)}(I_3),I_3)\colon \ \theta_3\in\mathbb T,\  I_3\in[-\rho_0,\rho_0]\}
\end{equation}
for some real-analytic functions $\theta_i^{(n)}, I_i^{(n)}$, $i=1,2$,  satisfying 
\[
|I_i^{(n)}-a_{i}^{(n)}|_{\C^1}\leq \frac 1n
\]
for $a^{(n)}=(a_1^{(n)},a_2^{(n)})$ as in \eqref{eq:anpaths}-\eqref{eq:anpaths0};
\item[(3)]
The restriction  $\Phi_n|_{\Lambda_n}$ is an integrable twist map which leaves invariant the foliation by  circles $\{I_3=\mathrm{const}\}$, given by
\begin{equation}\label{eq:twist}
\Phi_{n}|_{\Lambda_n}:(\th_3, I_3)\mapsto (\th_3  + \tilde \nu_n (I_3, \varepsilon_n),\ I_3); 
\end{equation}
\item[(4)]  
There exists $m\in\mathbb N$ (depending a priori on $n$) and a covering of $[-\rho_0,\rho_0]$ by intervals:
\[
[-\rho_0,\rho_0]\subset \bigcup_{0\leq i\leq m} [\cI_i^-,\cI_i^+],
\]
satisfying $\cI_{i+1}^-<\cI_{i}^+$ for all $i=0,\dots,m-1$,  such that  for any $i=0,\dots,m-1$ there exists a homoclinic cylinder $\Gamma_{n,i}=\Gamma_{n,i}(g)$ admitting a parametrization of the form
\[
\Gamma_{n,i}=\{  (\hat \theta_{1,i}^{(n)}(I_3), \hat \theta_{2,i}^{(n)}(I_3), \theta_3,\hat I_{1,i}^{(n)}(I_3),\hat I_{2,i}^{(n)}(I_3),I_3)\colon \ \theta_3\in\mathbb T,\  I_3\in[\cI_i^-,\cI_i^+]\}
\]
for certain real-analytic functions $\hat\theta_{j,i}^{(n)}, \hat I_{j,i}^{(n)}$ with $j=1,2$ and $i=0,\dots,m-1$.
\end{enumerate}

\end{thm}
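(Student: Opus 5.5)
We will construct $b_j$ and $\varepsilon_n$ inductively, working at one resonance at a time. Near $\mathcal R^{(n)}$ we pass to resonant coordinates: the unimodular matrix with rows $s^{(n)}$ and $k^{(n)}$ (this is where \eqref{eq:Bezout} is used) defines a slow angle $\varphi=\bs s^{(n)}\cdot\tilde\theta$, fast angles $\psi=k^{(n)}\cdot(\theta_1,\theta_2)$ and $\theta_3$, and conjugate actions $(J,K,I_3)$. In these coordinates the mode $b_n\cos(\bs s^{(n)}\cdot\tilde\theta)$ equals $b_n\cos\varphi$ and does not depend on the fast angles. Every other mode $b_j\cos(\bs s^{(j)}\cdot\tilde\theta)$ with $j\neq n$ has a nonzero component along $\psi$, because $s^{(j)}$ and $s^{(n)}$ are not parallel.

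We then apply the averaging results of the appendix to finite order in $\varepsilon$ on a neighborhood of $\mathcal R^{(n)}$ of width of order $\sqrt{\varepsilon_n}$. The frequency along $\psi$ has size $\gtrsim 1/|s^{(n)}|$. Choosing $\varepsilon_n\ll b_n^{2}|s^{(n)}|^{-C}$ makes the non-resonant modes averageable, and the remainder from the tail $\sum_{j>n} b_j$ is controlled by the rapid decay of $b_j$. For this to work, $b_j$ must be chosen only after $s^{(j)}$ is fixed, and $\varepsilon_n$ after $b_n$.

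After averaging, the normal form near $\mathcal R^{(n)}$ is, to leading order, a pendulum in $(\varphi,J)$:
\[
\tfrac12 (\bs v^{(n)})^TD^2G_0\,\bs v^{(n)}\, J^2+\varepsilon_n b_n\cos\varphi ,
\]
whose coefficients depend on $I_3$ and the fast actions. By [H1] the quadratic coefficient is bounded away from zero uniformly in $n$, so the hyperbolic fixed point $\varphi=0$ or $\pi$ gives, on the energy surface $\{\mathcal G=e_n\}$ and the section $\Sigma_n$, a normally hyperbolic cylinder parametrized by $(\theta_3,I_3)$ with $I_3\in[-\rho_0,\rho_0]$. This is a perturbation of the curve $a^{(n)}$ of Lemma \ref{lem:resnonatpaths}, which gives items (1) and (2) with the $\C^1$ closeness $1/n$. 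Item (0) is immediate from $\sum b_j\le\delta$ and $\varepsilon\le 1$.

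For item (3) we observe that the averaged system is independent of $\theta_3$, so $I_3$ is an exact first integral of the truncated normal form. We need to check that, under the averaging transformation, $\Phi_n|_{\Lambda_n}$ is exactly integrable. One way is to note that $g_0$ depends only on $(\theta_1,\theta_2)$, so $I_3$ is conserved exactly by $G_\varepsilon$. The $\varepsilon^2 g$ term must also preserve this, which we expect to follow from requiring $g\in\mathcal Q^2_{\rho,\sigma}$ to depend only on $(\theta_1,\theta_2,I_1,I_2)$. The return-time rotation $\tilde\nu_n(I_3)$ has derivative proportional to the quantity $\cA$ in [H2], computed along $a^{(n)}$ with the constraint $\mathcal G=e_n$, so the twist is nondegenerate.

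Item (4) is the main obstacle. The unperturbed pendulum has coinciding stable and unstable manifolds, and the perturbation $\varepsilon^2 g$ must split them transversally. We plan to follow the parametric transversality scheme of Section \ref{sec:mainsptlittingsection}, in the style of Zehnder and Delshams--Zhang. For each $n$ and each point of a compact piece of the separatrix, the map $g\mapsto$ (Melnikov-type splitting function) is a submersion, so transversality on a compact subinterval of $I_3$ is an open and dense condition. A compactness argument in $I_3$ gives finitely many intervals $[\cI_i^-,\cI_i^+]$ carrying homoclinic cylinders. Intersecting over $n$ gives a residual set $\mathcal U_{\rho,\sigma}(\delta)$. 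The delicate point is making the splitting, of order $\varepsilon_n^2$, dominate the averaging errors; this may force $\varepsilon_n$ smaller or require a higher averaging order.
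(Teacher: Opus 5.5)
The overall architecture matches the paper, but item (4) has a genuine gap. For items (0)--(3) the match is close: resonant coordinates via $A_n$, zero $\tau$-average of the modes $j\neq n$, averaging plus the implicit function theorem to continue the pendulum's hyperbolic periodic orbit for each fixed $I_3$, the union over $(\theta_3,I_3)$ cut by $\Sigma_n$, and the twist from $\mathrm dI^{(n)}/\mathrm dI_3$ and [H2]. The problem really is two degrees of freedom with $I_3$ as a parameter, because $g_0$ and $g\in\mathcal Q^2_{\rho,\sigma}$ do not involve $(\theta_3,I_3)$.

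Two small corrections there. First, (0) needs $\sum_j b_j\sup_{\mathbb T^2_\sigma}|\cos(s^{(j)}\cdot\theta)|<\delta$. These sups grow like $e^{\sigma|s^{(j)}|_1}$, so $\sum_j b_j\le\delta$ is not enough. Second, $b_{n+1}$ must be chosen after $\varepsilon_n$. Then the tail $\varepsilon_n\sum_{j>n}b_j\cos(s^{(j)}\cdot\theta)$ is a small translate inside the $\varepsilon_n^2\delta$-ball where the residual set for resonance $n$ lives.

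In item (4) you name the right tool, but the two places where it has to bite are misidentified or missing.

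(a) The ``delicate point'' you raise cannot be resolved as you suggest, and the paper never faces it. At resonance $n$ the hyperbolic rate is of order $\sqrt{\varepsilon_n b_n}$, while $\dot\tau\approx\omega_2/s^{(n)}_1$ does not depend on $\varepsilon_n$. So for analytic $g$ the first-order (Melnikov) $\tau$-dependent part of the splitting is exponentially small in $1/\sqrt{\varepsilon_n}$, not of order $\varepsilon_n^2$. Comparing it with the contributions of $\varepsilon_nP^{(n)}$ and the averaging remainders is an exponentially-small-splitting problem. Finite-order averaging does not solve it, and shrinking $\varepsilon_n$ only shrinks the signal.

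The paper makes no size comparison at all. It works with the exact generating functions $S^{u,s}(h)$ for every $h$, which are Fr\'echet differentiable (Proposition \ref{prop:frechet}). The derivative $T^u_f$ solves $\mathcal L^uT=-f$ exactly, and Lemma \ref{lem:straightening} turns it into an explicit integral along the straightened flow. Bump functions supported in $\{\varphi\in[\pi/2,\pi]\}$ leave $S^s$ on $\{\varphi\ge\pi\}$ unchanged and prescribe any $3$-jet of $T^u_f(\pi,\cdot)$ at any $\tau$. Density of analytic functions in $C^3$ then gives surjectivity of the differential at every $h$, which is what parametric transversality needs; genericity replaces estimates.

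(b) You do not say which finite-dimensional map is a submersion, and a scalar splitting function does not suffice. In the one-parameter family $\tau\mapsto\Delta(h)(\tau;I_3)$, degenerate critical points occur generically at isolated values of $I_3$. So ``transversality on a compact subinterval is open and dense'' is unjustified as stated.

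The paper applies Theorem \ref{thm:parametrictransvthm} to $(\tau,I_3,h)\mapsto(\Delta',\Delta'',\Delta''')\in\mathbb R^3$ over the two-dimensional $\mathbb A_{\rho_0}$. For residual $h$ this map never vanishes, so every degenerate critical point has $\Delta'''\neq0$ and cannot be a global extremum. Hence the global maximum and minimum of $\Delta(h)(\cdot;I_3)$ are nondegenerate for every $I_3$. Each gives a transverse homoclinic orbit that continues analytically on a short interval, and compactness yields the overlapping covering. The possible switching between branches is why several cylinders $\Gamma_{n,i}$ are needed.
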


The proof of Theorem \ref{thm:mainparametricoutline} is given in Section \ref{sec:cylinders}.  
It consists of two steps. 
In the first step we show that the Hamiltonian $G_\varepsilon$ in \eqref{eq:limitHamiltonianoutline} admits a sequence of normally hyperbolic invariant cylinders $\Lambda_n$ of the form \eqref{eq:2dcylinderoutline}. The idea is that the perturbation $g_0$ in \eqref{eq:limitHamiltonianoutline}  consists of a sum of Fourier modes such that, for any fixed  $n\in\mathbb N$,  on the resonant surface $\mathcal R^{(n)}$ defined in \eqref{eq:resonancelocus} we have: 
\begin{itemize}
\item 
the terms with $j\neq n$ are non-resonant and average out up to a $O(\varepsilon^2)$ remainder, 
\item 
the $n$-th term in the sum is resonant and creates the normally hyperbolic invariant cylinder $\Lambda_n$.
\end{itemize}
Moreover, since the perturbation is explicit, we can verify that the length of the cylinders $ \Lambda_n$ is uniform with respect to $n\in\mathbb N$. That is, the Hamiltonian $G_\varepsilon$ already satisfies items $(0)-(3)$ in Theorem \ref{thm:mainparametricoutline}. However, we are not able to show directly that the Hamiltonian $G_\varepsilon$ verifies item $(4)$ in Theorem \ref{thm:mainparametricoutline}.
At the end of this step we verify that, by the same arguments, conclusions (0)--(3) hold for a Hamiltonian $\mathcal G_\varepsilon(g)$ of the form \eqref{eq:firstperturb} for any  $g\in \mathcal B_{\rho,\sigma}^{2}(\delta)$.
\medskip

In the second step we show that, for any $g$ in a  generic subset $\mathcal{U}_{\rho,\sigma}(\delta)\subset \mathcal B_{\rho,\sigma}^{2}(\delta)$, the Hamiltonian $\mathcal G_\varepsilon(g)$ as in \eqref{eq:firstperturb} verifies 
item $(4)$ of Theorem \ref{thm:mainparametricoutline}. The idea of the proof is based on a nonlinear version of a recent parametric transversality argument developed by Delshams and Zhang \cite{MR4913967}.  The key observation that allows us to adapt their ideas to our problem  is that we only consider perturbations which do not depend on the angle $\theta_3$ since, in this case, the dynamics on the corresponding normally hyperbolic cylinders are integrable and admit a foliation by invariant tori. 

\begin{figure}
    \centering
\includegraphics[scale=0.65]{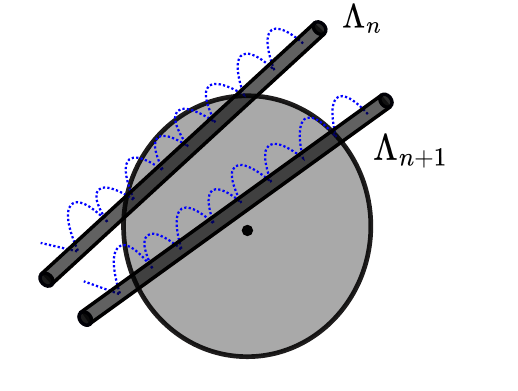}
    \caption{A sequence of resonant cylinders (in action space) which approach the origin. The blue dashed lines represent orbits drifting along them.
    %E. Figure 4.1 is helpful but oversimplifies one point
%The caption says that the dashed lines represent orbits drifting from one end of each normally hyperbolic cylinder to the other.
}
    \label{fig:fig1}
\end{figure}
\medskip

\begin{rem} \label{rem.bn} The sequences ${b_n}$ and ${\varepsilon_n}$ are chosen inductively. Once $b_1,\ldots,b_n$ and $\varepsilon_1,\ldots,\varepsilon_n$ have been fixed so that the required properties hold at the first $n$ resonances, we choose $b_{n+1}>0$ sufficiently small so that the perturbation associated with the additional Fourier mode does not destroy any of the previously constructed objects. We then choose $\varepsilon_{n+1}>0$ sufficiently small so that the estimates required in the construction at the $(n+1)$-st resonance are satisfied. By openness and persistence of the relevant hyperbolic and transverse structures, all properties obtained at the previous stages remain valid. Proceeding inductively yields the required sequences. See Section \ref{sec:cylinders} for details.
\end{rem}

\subsection{Diffusion along the transverse homoclinic cylinders.}

    Finally, we construct orbits drifting along the sequence of cylinders $\Lambda_n $ (see Figure \ref{fig:fig1}). 
   This part is more standard and the main ideas in the instability mechanism go back to the work of Moeckel \cite{MR1884898} in the context of Arnold diffusion (see also \cite{MR2184276,MR2383896}). The celebrated work by Gelfreich and Turaev \cite{GelfreichTuraevChaotic} (see also \cite{MR2086152}) provides a ready-to-use  implementation of Moeckel's mechanism which can be applied directly in our context. Indeed, in very rough terms, the main result in \cite{GelfreichTuraevChaotic} says that among the set of real-analytic Hamiltonian systems exhibiting a normally hyperbolic cylinder $\Lambda$ which displays a homoclinic cylinder to it, those for which there exists an orbit connecting the boundaries of $\Lambda$ form an open and dense subset. In Section \ref{sec:diffusion} we show the following.

\begin{thm}\label{thm:_apply_GToutline} 
 Fix any pair $\rho,\sigma>0$; let $\omega$, $G_0$ and $\rho_0=\rho_0(G_0)$ be as in the assumption of Theorem \ref{thm:mainparametricoutline}.
There exists a sequence $\{ \mu_n\}_n$,   $ 0<\mu_n\leq 1/n$  such that the following holds. Fix any $\delta>0$, any $\eta\in B^{d-3}_{\rho_0}$, let  
$\{\varepsilon_n\}_n$ and $\mathcal{U}_{\rho,\sigma}(\delta)\subset \mathcal B^2_{\rho,\sigma}(\delta)$ be as in the conclusion of Theorem \ref{thm:mainparametricoutline}  and denote
\[
\mathcal G_{\varepsilon,\mu}(g_1,g_2)=\mathcal G_\varepsilon(g_1)+\mu g_2, \qquad\qquad g_1\in \mathcal U_{\rho,\sigma}(\delta), \quad g_2\in \mathcal B^3_{\rho,\sigma}(\delta).
\]

 Then for any $g_1\in \mathcal U_{\rho,\sigma}(\delta)$ and any  $n\in\mathbb N$, there exists an open and dense set $\mathcal V^{(n)}_{\rho,\sigma}(\delta) \subset \mathcal B^3_{\rho,\sigma}(\delta)$ such that for any $g_2\in\mathcal V^{(n)}_{\rho,\sigma}(\delta)$   
the Hamiltonian $\mathcal G_{\varepsilon_n,\mu_n}(g_1,g_2)$  satisfies the following:
\begin{enumerate}
\item it displays a normally hyperbolic invariant manifold $\Lambda_n(g_1,g_2)$ which is $\frac 1n$-close to $\Lambda_n(g_1)$ in Theorem \ref{thm:mainparametricoutline} (see the parametrization  \eqref{eq:2dcylinderoutline}).
    \item If $U^\pm_n$ is any open neighborhood of $\partial^{\pm} \Lambda_n(g_1,g_2)$ then, there exists an orbit of this Hamiltonian  which connects $U^-_n$ to $U^+_n$.
\end{enumerate} 
\end{thm}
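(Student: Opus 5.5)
The plan is to treat each $n$ separately and apply the Gelfreich--Turaev result \cite{GelfreichTuraevChaotic} to the Poincaré map $\Phi_n$ given by Theorem \ref{thm:mainparametricoutline}.

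\textbf{Setting up the unperturbed structure.} Fix $g_1\in\mathcal U_{\rho,\sigma}(\delta)$ and $n$. For $\mu=0$, Theorem \ref{thm:mainparametricoutline} gives the following for $\mathcal G_{\varepsilon_n}(g_1)$ on the section $\Sigma_n$:
\begin{itemize}
\item a normally hyperbolic cylinder $\Lambda_n(g_1)$;
\item an integrable twist map \eqref{eq:twist} on the cylinder;
\item a chain of transverse homoclinic cylinders $\Gamma_{n,i}$ whose $I_3$-ranges $[\cI_i^-,\cI_i^+]$ overlap and cover $[-\rho_0,\rho_0]$.
\end{itemize}
First I choose $\mu_n\le 1/n$ small in terms of the normal hyperbolicity and transversality constants of this structure. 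Note these constants depend on $n$ and on the previously fixed $\varepsilon_n,b_j$. If the sequence must be independent of $g_1$ and $\delta$, I would take $\mu_n$ as a minimum of uniform bounds. These bounds exist because the cylinders and splitting estimates of Section \ref{sec:cylinders} are controlled uniformly over $\mathcal B^2_{\rho,\sigma}(\delta)$ for $\delta\le1$, with $\delta$ rescaled otherwise.

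\textbf{Persistence.} For every $g_2\in\mathcal B^3_{\rho,\sigma}(\delta)$, Fenichel/Hirsch--Pugh--Shub persistence gives a cylinder $\Lambda_n(g_1,g_2)$ that is $O(\mu_n)$-close, hence $\frac1n$-close, to $\Lambda_n(g_1)$. This is item (1). Its boundaries may need to be handled by a standard cutoff or by slightly enlarging the cylinder beyond $[-\rho_0,\rho_0]$; alternatively, one may work with locally invariant cylinders. The transverse intersections of $W^s,W^u$ persist as well, so the homoclinic cylinders $\Gamma_{n,i}(g_1,g_2)$ still exist and still overlap in $I_3$. They define scattering maps $S_i$ on open subsets of $\Lambda_n(g_1,g_2)$ whose domains together cover the cylinder. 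The inner map $f$ is now a $\mu_n$-perturbation of the twist map, and it is still a twist map.

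\textbf{Applying Gelfreich--Turaev.} Their theorem says that for a cylinder with an area-preserving inner map $f$ and scattering maps $S_i$, the iterated function system $\{f,S_i\}$ has an orbit from any neighborhood of one boundary to any neighborhood of the other, unless there is a common invariant essential curve. It also says that the absence of such a curve is open and dense, and that shadowing of IFS orbits by true orbits (Moeckel's mechanism) holds. So I need to show that the set of $g_2$ for which no essential curve is jointly invariant by $f$ and all $S_i$ is open and dense in $\mathcal B^3_{\rho,\sigma}(\delta)$.

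\emph{Openness.} A boundary-to-boundary pseudo-orbit of the IFS, together with the shadowing lemma (hyperbolicity plus transversality), gives a true orbit. This is a finite, robust property.

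\emph{Density.} Here I use real analyticity and the $\theta_3$-dependence of $g_2$. A Melnikov-type computation shows that $S_i-\mathrm{id}$ changes at first order in $g_2$ by a term of the form $\int \{g_2,\cdot\}$ along homoclinic orbits. Because $g_2$ depends on $\theta_3$, this term can be made to depend nontrivially on $\theta_3$. Meanwhile $f$ preserves almost all its invariant circles by KAM. The obstruction is a common invariant curve, which must be invariant by $f$ and therefore close to a KAM curve or to a curve inside a Birkhoff zone; the chosen perturbation moves such curves off themselves under $S_i$. Adding arbitrarily small multiples of a fixed analytic perturbation, for instance a trigonometric polynomial in $\theta_3$ times a function of $(\tilde\theta,\tilde I)$, then reaches the good set.

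\textbf{Main obstacle.} The main difficulty is density, namely ruling out common invariant curves for all $g_2$ near any given one, uniformly along the whole cylinder. I expect to quote \cite{GelfreichTuraevChaotic} directly here: for an analytic family of cylinders with homoclinic channels, their genericity statement is exactly open and dense in the analytic topology. What remains is to check its hypotheses, mainly normal hyperbolicity, transversal homoclinics and a twist condition (supplied by [H2]), which come from Theorem \ref{thm:mainparametricoutline} together with persistence.
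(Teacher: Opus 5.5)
Your overall route is the paper's (Lemma \ref{lem:openballcylinders} and Proposition \ref{prop_apply_GT}): choose $\mu_n$ so small that the cylinder and its homoclinic channels persist, which gives item (1), and then invoke Gelfreich--Turaev to get the open and dense set of $g_2$. The step that does not go through as stated is ``quote \cite{GelfreichTuraevChaotic} directly after checking normal hyperbolicity, transverse homoclinics and twist.'' Theorem~2 of \cite{GelfreichTuraevChaotic} needs two things you do not have or list. First, \emph{symmetric} normal hyperbolicity ($\alpha^2\lambda<1$). Second, a \emph{single} homoclinic cylinder $\Gamma$ that is \emph{simple} relative to nested sub-cylinders $\bar\Lambda\subset\mathrm{int}\,\hat\Lambda\subset\mathrm{int}\,\Lambda$ bounded by KAM curves, meaning conditions [S1]--[S3] together with $F_\Gamma(\hat\Lambda)\subset\mathrm{int}\,\Lambda$ and $F_\Gamma(\bar\Lambda)\subset\mathrm{int}\,\hat\Lambda$. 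Theorem \ref{thm:mainparametricoutline} only provides a chain of short cylinders $\Gamma_{n,i}$ over overlapping intervals $[\cI_i^-,\cI_i^+]$. The paper therefore first proves a multi-cylinder variant, Theorem \ref{thm:gelfreichturaevthm2}. For each $i$ it builds eight homoclinic cylinders as in \S3.3 of \cite{GelfreichTuraevChaotic} and uses their Theorem~4 to exclude common invariant curves of the corresponding scattering maps in $\bar\Lambda_i$. It then uses their Theorem~3 to join $\bar\gamma_i^-$ to $\bar\gamma_{i+1}^-$ by an IFS orbit, which the overlap $\gamma_{i+1}^-<\bar\gamma_i^+$ makes possible, and shadows the concatenation. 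Your IFS $\{f,S_i\}$ with domains covering the cylinder points this way. However, ``no common invariant essential curve'' has no meaning for maps defined only on sub-cylinders, so this piecewise chaining is what you must supply.

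The paper verifies these hypotheses at $\mu=0$, using the fact that $\mathcal G_{\varepsilon_n}(g_1)$ does not depend on $\theta_3$. Consequently the inner map $\Phi_n|_{\Lambda_n}$ is a shear, so rescaling the norm on $N^c$ brings $\alpha$ arbitrarily close to $1$ and gives $\alpha^2\lambda<1$. Also, each scattering map $F_{\Gamma_{n,i}}$ preserves every circle $\{I_3=\mathrm{const}\}$, so the simplicity conditions hold trivially for nested sub-cylinders bounded by KAM circles inside $\mathbb T\times[\cI_i^-,\cI_i^+]$. Hence $\mathcal G_{\varepsilon_n}(g_1)$ lies in the open class $\mathcal V_{\rho,\sigma}(m)$. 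One then takes $\mu_n$ so small that $\mathcal G_{\varepsilon_n}(g_1)+\mu_n g_2\in\mathcal V_{\rho,\sigma}(m)$ for all $g_2\in\mathcal B^3_{\rho,\sigma}(\delta)$. The set $\mathcal V^{(n)}_{\rho,\sigma}(\delta)$ is the preimage of the open dense set $\widetilde{\mathcal V}_{\rho,\sigma}(m)$ under the affine map $g_2\mapsto\mathcal G_{\varepsilon_n}(g_1)+\mu_n g_2$. This makes your Melnikov heuristic unnecessary, and quoting the open--dense statement is also what gives openness. Your shadowed pseudo-orbit argument only yields openness for fixed $U^\pm$, while the conclusion is required for all $U^\pm$.

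Two smaller points remain.
\begin{itemize}
\item The boundaries of $\Lambda_n$ should be circles with Diophantine rotation number. These persist as KAM curves and make the continuation, hence $\partial^\pm\Lambda_n(g_1,g_2)$, unique; a cutoff does not do this.
\item Your claim that $\mu_n$ can be chosen uniformly in $g_1$ is unfounded. The number $m$, the intervals $[\cI_i^-,\cI_i^+]$ and the transversality angles depend on $g_1\in\mathcal U_{\rho,\sigma}(\delta)$ and degenerate near the complement of this residual set. The paper's Lemma \ref{lem:openballcylinders} likewise lets $\mu_n$ depend on $g_1$. This is harmless because $g_1$ is fixed first in the proof of Theorem \ref{thm:approxbydiffusion}.
\end{itemize}
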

We denote by
\[
\partial^\pm\Lambda_n(g_1,g_2)
=
\{(\theta,I)\in\Lambda_n(g_1,g_2): I_3=\pm\rho_0\}
\]
the two boundary components of the cylinder $\Lambda_n(g_1,g_2)$.

A detailed discussion about how to use the main result in \cite{GelfreichTuraevChaotic} to deduce Theorem \ref{thm:_apply_GToutline} is given in Section \ref{sec:diffusion}.
\vspace{0.2cm}

\subsection{Proof of Theorem \ref{thm:approxbydiffusion}}\label{sec:proofD}
We now complete the proof of Theorem \ref{thm:approxbydiffusion}. 

\begin{proof}[Proof of Theorem \ref{thm:approxbydiffusion}]
Fix any $\eta\subset B_{\rho_0}^{d-3}$ and choose any $g_1\in \mathcal U_\sigma(\delta)$.  
Let $\{ \mu_n\}_n$ be the sequence in Theorem \ref{thm:_apply_GToutline} and, for the Hamiltonian $\mathcal G_\varepsilon(g_1)$, let   $\mathcal V^{(n)}_{\rho,\sigma}$ be the sequence of sets provided by Theorem \ref{thm:_apply_GToutline}.  
Choose any $g_2\in\bigcap _n \mathcal V_\sigma^{(n)}(\delta)$.  
Then, for any $n\in\mathbb N$, the Hamiltonian $\mathcal G_{\varepsilon_n,\mu_n} (g_1,g_2)$ satisfies the following.
For any open neighborhood $U^\pm_n$ of $\partial^{\pm} \Lambda_n(g_1,g_2)$ there exists an orbit of this Hamiltonian  which connects $U^-_n$ to $U^+_n$. 
The same conclusion can be guaranteed for an arbitrary sequence 
$\{\eta^{(n)}\}\subset B^{d-3}_{\rho_0}$ by choosing $(g_1,g_2)$ from a countable intersection of residual sets.
The proof of Theorem \ref{thm:approxbydiffusion} is completed.
\end{proof}
\medskip

\section{A sequence of large homoclinic cylinders: proof of Theorem \ref{thm:mainparametricoutline}}\label{sec:cylinders}

In this section we show how to construct a two-parameter family of real-analytic Hamiltonians which include the integrable Hamiltonian $G_0$ in Theorem \ref{thm:approxbydiffusion} and exhibit a family of transverse homoclinic cylinders at suitable parameter values. That is, we give a proof of Theorem \ref{thm:mainparametricoutline}. 

%\begin{rem}\label{rem:I3asparameter}
    The particular class of Hamiltonians which we use in this section  does not depend on the angle $\theta_3\in\mathbb T$, and therefore $I_3$ is a conserved quantity and will be considered as a parameter until Section \ref{sec:Hamtomaps} (in which we complete the proof of Theorem \ref{thm:mainparametricoutline}). 
    Thus, throughout Sections \ref{sec:2dof}-\ref {sec:induction}, we deal with  Hamiltonians with only \textit{two degrees-of-freedom}. 
    
 \subsection{Hamiltonians with 2 degrees-of-freedom}\label{sec:2dof}   
 The following notations will be used in Sections \ref{sec:2dof}-\ref {sec:induction}. (Note that here we use the same names for the two-dimensional objects as the ones we used for the three-dimensional ones in the rest of the paper. Hope this does not lead to confusion.) Fix $\omega$ as in Theorem  \ref{thm:approxbydiffusion}, and let $G_0(I_1,I_2,I_3;\eta) \in \mathcal N^{3,d}_\rho(\omega)$ satisfy  \ref{it:assumption1}.  
 \begin{itemize}

\item Here we denote: 
\[ \theta=(\theta_1,\theta_2)\in \mathbb T^d, \quad I=(I_1,I_2) \in\mathbb B^2_\rho, \quad 
\omega=(\omega_1,\omega_2)\in\mathbb R^2,\qquad\qquad\omega_2/\omega_1\in\mathbb R\setminus\mathbb Q.
\]
 
     \item 
    Let  $\rho_0=\rho_0(G_0)$ be the constant provided by  Lemma \ref{lem:resnonatpaths}.

 \item  We will study
    \[
H_0(I;I_3,\eta)=G_0(I_1,I_2,I_3;\eta ),\qquad\qquad I_3\in[-\rho_0,\rho_0], \quad \eta \in \mathbb{B}_\rho^{d-3}.
    \]
 In other words, we consider a parametric family of real-analytic integrable Hamiltonians $H_0\in \mathcal N_{\rho}^{2,d}(\omega)$ (recall the definition of this Banach space in \eqref{eq:banachspace})
    \begin{equation}\label{eq:unperturbedparam}
   I\mapsto H_0(I;I_3), \qquad\qquad I=(I_1,I_2) \in\mathbb B^2_\rho, \qquad\qquad I_3\in [-\rho_0,\rho_0],
    \end{equation}
    satisfying (compare to \ref{it:assumption1} and see Remark \ref{rem:remarkonnondeg})
   \begin{equation}\label{eq:nondegh}
\partial_{I_1} ( (-\omega _2,\omega_1)\cdot \nabla  H_0)(0;0) > 0.
    \end{equation}

%\magenta should be $\partial_{I_1} ( (-\omega _2,\omega_1)\cdot \nabla  H_0)(0;0) > 0.
%$ \blue OK \black
%We denote the conjugate angle variables by $\theta=(\theta_1,\theta_2)$.

\item 
We will say that $\rho_0$ defined above corresponds to $H_0$ through Lemma \ref{lem:resnonatpaths}, or, in other words, that
$\rho_0=\rho_0(H_0)$ is given by Lemma \ref{lem:resnonatpaths}.

%\begin{rem}\label{rem:Notations_for_H_0}
\item 
Here we denote by 
$$a^{(n)}(I_3)=(a^{(n)}_1(I_3),a^{(n)}_2(I_3))$$  the first two components of the curve in \eqref{eq:anpaths0}.
\end{itemize}
%\end{rem}
\medskip

In this section we think of $I_3\in[-\rho_0,\rho_0]$ and $\eta$ as being fixed. For the construction in Section \ref{sec:diffusion}, where we embed 2 degrees-of-freedom Hamiltonians in 3 degrees-of-freedom ones, it will be relevant to understand the dependence of the objects constructed in this section (hyperbolic periodic orbits and transverse homoclinics) on $I_3$. 
The dependence on $\eta$ will often be omitted in the notation.

\medskip

 In the following, it will be important to keep in mind that 
\[
\varphi:=s^{(n)}\cdot\theta
\]
 is a \textit{slow angle} in a neighborhood of the resonant submanifold $\mathcal R^{(n)}$ (with an abuse of notation, we denote by $\mathcal R^{(n)}$ the section $\mathcal R^{(n)}\cap \{\theta_3=0,\ I_3=\mathrm{const}\}$ with $\mathcal R^{(n)}$ as in \eqref{eq:resonancelocus}).  We define $k^{(n)}\in\mathbb Z^2\setminus\{0\}$ as  in \eqref{eq:Bezout}. One should think of 
\[
\tt:= k^{(n)}\cdot \theta
\]
 as a \textit{fast angle} in a neighborhood of $ \mathcal R^{(n)}$.

\begin{rem}\label{rem:remarkonnondeg}
    Clearly, the non-degeneracy condition in \eqref{eq:nondegh} is not dense within $\mathcal N^{2,d}_\rho(\omega)$. We will see below that having a positive sign in \eqref{eq:nondegh} guarantees that the perturbed Hamiltonian \eqref{eq:limitHamiltonian} bellow  displays a hyperbolic periodic orbit close to $\mathcal R^{(n)}\cap \{\varphi=0\}$. It is straightforward to observe that  for the negative case  we can replace $h_0$ in Theorem \ref{thm:mainparametric}  by $h_0=-\sum_j b_j \cos(s^{(j)}\cdot \theta)$ to obtain the same conclusion. Alternatively, we could leave $h_0$ unchanged and consider the corresponding fixed point close to $\mathcal R^{(n)}\cap \{\varphi=\pi\}$. Hence, from now on we restrict to the case \eqref{eq:nondegh}. 
\end{rem}
    
 The following Theorem \ref{thm:mainparametric}  is just a reformulation of Theorem \ref{thm:mainparametricoutline} in terms of Hamiltonians of 2 degrees-of-freedom.% (see Remark \ref{rem:I3asparameter}). 

\begin{thm}\label{thm:mainparametric}
Fix arbitrary $\delta, \rho,\sigma>0$.  Let $H_0\in\mathcal N_\rho^{2,d}(\omega)$ be as in \eqref{eq:unperturbedparam} satisfying \eqref{eq:nondegh}, and let $\rho_0=\rho_0(H_0)$ be given by Lemma \ref{lem:resnonatpaths} (see the beginning of Section \ref{sec:2dof}). 
Consider a parametric Hamiltonian of the form 
\begin{equation}\label{eq:limitHamiltonian} 
H_{\varepsilon}(\theta, I;I_3,\eta)=
H_0(I;I_3,\eta)+\varepsilon  h_0(\theta),\qquad\text{where}\qquad
h_0(\theta)=\sum_{j\in\mathbb N}b_j\cos( s^{(j)}\cdot \theta).
\end{equation}
Then,  if the sequence $\ \{b_n\}_n\subset\mathbb R_+$ decays to zero sufficiently fast, it is possible to find  a sequence$\ \{\varepsilon_n\}_n$
with $0<\varepsilon_n\leq 1/n$ such that the following holds. 
There exists a residual subset  $\mathcal U_{\rho, \sigma}(\delta)\subset \mathcal B^2_{\rho, \sigma}(\delta)$ such that for any $h\in \mathcal{U}_{\rho, \sigma}(\delta)$  the Hamiltonian 
\begin{equation}\label{eq:firstperturbH}
\mathcal H_\varepsilon (h)=H_\varepsilon+\varepsilon^2 h
\end{equation}
satisfies:
\begin{enumerate}
\item  
For any $\varepsilon\in[0,1]$ we have $\mathcal H_{\varepsilon}(h)\in \mathcal {Q}_{\rho,\sigma}^{2,d}$ and 
\[
\sup_{(\theta,I)\in   Q_{\rho,\sigma}^2} (|\mathcal H_\varepsilon(h)-H_0|)<2\delta;
\]
\item  
For any $n\in\mathbb N$ and 
$I_3\in[-\rho_0,\rho_0]$ the  Hamiltonian $\mathcal H_{\varepsilon_n}(h)(\cdot;I_3)$  possesses a  hyperbolic periodic orbit $\gamma_n(h)(I_3)$ which admits a parameterization of the form
\begin{equation}\label{eq:parametrizationperiodicproposition}
\gamma_n(h)(I_3)= \{  k^{(n)}\cdot \theta=\tt,\ s^{(n)}\cdot \theta=\varphi^{(n)}(h)(\tt;I_3),\ I=\mathtt I^{(n)}(h)(\tt;I_3)\colon \tt\in\mathbb T\}
\end{equation}
for some real-analytic functions $ \varphi^{(n)}(h)(\cdot;I_3)$ and $\mathtt I^{(n)}(h)(\cdot;I_3)$ that satisfy
\[
|\varphi^{(n)}(h)|_{\C^1}\leq \frac 1n,\qquad\qquad |\mathtt I_i^{(n)}(h)-a_{i}^{(n)}|_{\C^1}\leq \frac 1n,
\]
where $a^{(n)}$ is given in \eqref{eq:anpaths0}, and depend on $I_3$ in a real-analytic fashion;
\item 
We have $\gamma_n(h)(I_3)\subset \{\mathcal H_{\varepsilon_n}(h)=e_n\}$ for all $I_3\in[-\rho_0,\rho_0]$;
\item  
There exists $m\in\mathbb N$ (depending a priori on $n$) and a covering 
\[
[-\rho_0,\rho_0]\subset \bigcup_{0\leq i\leq m} [\cI_i^-,\cI_i^+],
\]
satisfying $\cI_{i+1}^-< \cI_{i}^+$ for all $i=0,\dots,m-1$  such that  for any $i=0,\dots,m-1$ and $I_3\in[\cI_i^-,\cI_i^+]$ there exists a transverse homoclinic orbit 
$z_{n,i}(h)(I_3)$ to $\gamma_n(h)(I_3)$ that depends on $I_3$ in a real-analytic fashion (for $I_3\in[\cI_i^-,\cI_i^+]$).
\end{enumerate}
\end{thm}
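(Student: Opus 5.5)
We will prove Theorem \ref{thm:mainparametric} in two stages: first for the unperturbed family $H_\varepsilon$ (with $h$ arbitrary in $\mathcal B^2_{\rho,\sigma}(\delta)$, i.e.\ items (1)--(3)), and then item (4) via a parametric transversality argument. Item (1) is immediate once $\sum_j b_j\le \delta/2$, since $|\cos(s^{(j)}\cdot\theta)|\le \cosh(|s^{(j)}|\sigma)$ on $\mathbb T^2_\sigma$; we simply require $b_j\cosh(\lVert s^{(j)}\rVert\sigma)\le 2^{-j}\delta/2$. The remaining items are obtained inductively in $n$, as in Remark \ref{rem.bn}.

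For fixed $n$, we pass to the resonant coordinates $(\varphi,\tt)=(s^{(n)}\cdot\theta,k^{(n)}\cdot\theta)$, which form a symplectic change by \eqref{eq:Bezout}, with conjugate actions $(J_\varphi,J_\tt)$. We localize near $\mathcal R^{(n)}$ in a strip of width $\sqrt{\varepsilon}$ around $a^{(n)}(I_3)$. Near this strip the $j\neq n$ Fourier modes $\cos(s^{(j)}\cdot\theta)$ depend on $\tt$ with a frequency bounded below by a constant $c_n>0$, since $s^{(j)}$ is not parallel to $s^{(n)}$ and the small divisor $|s^{(j)}\cdot\nabla H_0(a^{(n)})|$ is nonzero. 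One step of averaging (the appendix results) therefore removes them up to an $O(\varepsilon^2 C_n)$ error, where $C_n$ depends on $b_j$ and these divisors. Writing $b_{>n}=\sum_{j>n}b_j\cosh(\cdots)$, we choose $b_{n+1},b_{n+2},\dots$ small relative to $\varepsilon_n$ so that the tail is negligible even without averaging, and so that already constructed objects for indices $\le n$ persist by openness. We also choose $\varepsilon_n$ small so that the averaging errors for the finitely many $j<n$ are $o(\varepsilon_n)$ in the slow scaling. After rescaling $J_\varphi-a=\sqrt{\varepsilon}\,y$, the averaged system becomes a pendulum
\[
\tfrac12 \partial^2_{J_\varphi}H_0\, y^2+b_n\cos\varphi
\]
plus a fast rotation in $\tt$. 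Condition \eqref{eq:nondegh}, together with the computation in Lemma \ref{lem:resnonatpaths}, gives the sign making $\varphi=0$ the saddle. Its hyperbolic equilibrium, times the $\tt$-circle, is a hyperbolic periodic orbit of the normal form. This orbit persists under the $O(\varepsilon^{3/2})$ remainder and under $\varepsilon^2 h$ by normal hyperbolicity, real-analytically in $I_3$ and uniformly for $I_3\in[-\rho_0,\rho_0]$, because $\rho_0$ is independent of $n$. We then fix the energy $e_n$ by adjusting $J_\tt$ via the implicit function theorem, which yields the parametrization \eqref{eq:parametrizationperiodicproposition}, the $\frac1n$ bounds, and item (3).

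Item (4) is the main obstacle. The averaged pendulum has coinciding separatrices, so transversality must come from $h$. For each $I_3$ we consider the Melnikov-type splitting function on a section of the separatrix. Following Zehnder and Delshams--Zhang, we show that the map
\[
(h,I_3,\text{point on }W^u)\mapsto \text{splitting}
\]
is a submersion in $h$, because perturbations $h\in\mathcal Q^2_{\rho,\sigma}$ can prescribe arbitrary first-order variations along a given homoclinic orbit. Since $I_3$ is a one-dimensional parameter, the parametric transversality theorem gives, for a residual set of $h$, that the splitting function has only nondegenerate zeros except at finitely many $I_3$ (a codimension-one phenomenon in a one-parameter family), and that zeros continue analytically between these points. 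By compactness of $[-\rho_0,\rho_0]$ this produces the overlapping covering $[\cI_i^-,\cI_i^+]$ with analytic families of transverse homoclinics. The delicate point is that the first-order splitting from $\varepsilon^2h$ must dominate the exponentially small intrinsic splitting and the averaging remainders. We handle this by taking the residual set first (it is open in the relevant seminorm for each fixed $n$) and then shrinking $\varepsilon_n$ and $b_{j>n}$ so that the transversality persists. Intersecting the residual sets over $n$ and over the countable set of parameters $\eta$ (Remark \ref{rem.eta}) gives $\mathcal U_{\rho,\sigma}(\delta)$.
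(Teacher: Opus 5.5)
Your construction of the hyperbolic periodic orbits is essentially the paper's route: resonant coordinates, one averaging step for the modes $j<n$, the tail $j>n$ controlled by choosing $b_j$ after $\varepsilon_n$, a pendulum saddle at $\varphi=0$ from \eqref{eq:nondegh}, and the implicit function theorem to land on $\{=e_n\}$. The genuine gap is in item (4).

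\textbf{The gap in item (4).} You state that, generically, the splitting function has only nondegenerate zeros except at finitely many $I_3$, and that zeros continue analytically in between. This does not produce the required cover, for two reasons.

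\emph{Existence of zeros.} You never explain why zeros exist for every $I_3$. In the paper this comes from the exact Lagrangian structure. Both $W^u$ and $W^s$ are graphs of $\beta+dS^{u,s}$, so homoclinic points on $\{\varphi=\pi\}$ are exactly the critical points of the \emph{periodic} function $\Delta(\tau;I_3)=S^u-S^s$. Such a function always has a global maximum and a global minimum.

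\emph{The exceptional values.} At an exceptional $I_3^*$, your genericity statement still allows every homoclinic point to be degenerate. For example, a degenerate global extremum with $\Delta'=\Delta''=\Delta'''=0$ is not excluded. In that case no closed interval $[\cI_i^-,\cI_i^+]$ containing $I_3^*$ carries an analytic family of transverse homoclinics. ``Continuation between these points'' plus compactness only gives open intervals accumulating at $I_3^*$. It does not give the overlapping closed cover, and the overlaps are what is used later to chain the cylinders via Gelfreich--Turaev.

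\textbf{The missing idea.} Show that no degenerate critical point can be an extremum. The paper does this by proving that $h\mapsto(\Delta',\Delta'',\Delta''')(\tau;I_3)$ has surjective differential. Since the codimension $3$ exceeds $\dim\{(\tau,I_3)\}=2$, parametric transversality then gives, for residual $h$:
\[
\Delta'=\Delta''=0 \ \Longrightarrow\ \Delta'''\neq 0 .
\]
Such a point is an inflection, not an extremum. Hence the global extrema of $\Delta(\cdot;I_3)$ are nondegenerate for \emph{every} $I_3$. The implicit function theorem continues them analytically to a neighbourhood of each $I_3$, and compactness then gives the cover.

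An equivalent route uses $0$ as a regular value of the 2-jet $(\tau,I_3)\mapsto(\Delta',\Delta'')$. At a zero of this map the Jacobian is $-\partial_{I_3}\Delta'\,\Delta'''$, so you again obtain $\Delta'''\neq0$. Either way, you have to draw this conclusion and combine it with the extremum argument.

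\textbf{The surjectivity claim.} Your one-line assertion that $h$ ``can prescribe arbitrary first-order variations'' needs the paper's localization argument. One uses perturbations supported in $\{\varphi\in[\pi/2,\pi]\}$ near $W^u$, which leave $S^s$ at $\varphi=\pi$ unchanged. The resulting variation of $S^u$ is computed by the straightened transport operator. Finally, the smooth perturbations are approximated by analytic $h$, using that surjectivity onto $\mathbb R^3$ is an open condition.

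\textbf{A secondary error.} Your ``delicate point'' about $\varepsilon^2h$ having to dominate the exponentially small intrinsic splitting is not an issue in this approach. Surjectivity of the differential is insensitive to the factor $\varepsilon^2$, so for each \emph{fixed} $\varepsilon\le\varepsilon_n$ a residual set of $h$ works.

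Your remedy, choosing the residual set first and then shrinking $\varepsilon_n$, reverses the quantifiers. The residual set depends on $\varepsilon$. Changing $\varepsilon$ changes the Hamiltonian by $O(\varepsilon)$, far more than the $\varepsilon^2h$ term, so transversality obtained for a given $h$ does not survive the change. Only the coefficients $b_{j>n}$ may be shrunk afterwards, by openness, as in the paper's induction.
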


The remainder of this section is devoted to obtaining a proof of Theorem \ref{thm:mainparametric}, which follows from Proposition \ref{prop:periodicorbits} and \ref{prop:splitting} below, and is completed in Section \ref{sec:induction}.
\medskip

\subsection{Existence of hyperbolic periodic orbits}\label{sec:hypperorbs}  
The following is the first ingredient in the proof of Theorem \ref{thm:mainparametric}.

\begin{prop}\label{prop:periodicorbits}
   Fix arbitrary $\delta, \rho,\sigma>0$.  Let $H_0\in\mathcal N_\rho^{2,d}(\omega)$ satisfy \eqref{eq:nondegh}, and let $\rho_0=\rho_0(H_0)$ be given by Lemma \ref{lem:resnonatpaths}.   Let  
\begin{equation}\label{eq:truncatedHamiltonianper}
H_{\varepsilon}^{(n)}=H_0+\varepsilon h_0^{(n)},\qquad\qquad \text{where}\qquad\qquad h_0^{(n)}(\theta)=\sum_{j\leq n} b_j\cos(s^{(j)}\cdot\theta)
   \end{equation}
   for $\{b_j\}_{j\leq n}\subset \mathbb R_+$ chosen in such a way that   \begin{equation}\label{eq:smallnesstruncated}
     \sup_{\theta\in \mathbb T^2_\sigma} |h_0^{(n)}(\theta)|<\delta.
   \end{equation}
   Then  there exists $\varepsilon_n>0$
such that for all $0<\varepsilon\leq\varepsilon_n$ and all $I_3\in [-\rho_0,\rho_0]$ the Hamiltonian $H_{\varepsilon}^{(n)}(\cdot;I_3)$ in \eqref{eq:truncatedHamiltonianper} possesses a hyperbolic periodic orbit $ \gamma_n(I_3;\varepsilon)=\gamma_n(I_3)$ that admits a parametrization of the form \eqref{eq:parametrizationperiodicproposition} in terms of some real-analytic functions $\varphi^{(n)}(\cdot;I_3)$ and $\mathtt I_i^{(n)}(\cdot;I_3)$,$i=1,2$, from $\mathbb T$ to $\mathbb R$,  
satisfying
\[
|\varphi^{(n)}|_{\C^1}=O(\varepsilon),\qquad\qquad|\mathtt I^{(n)}_i-a^{(n)}_i|_{\C^1}=O(\varepsilon)
\]
with $a_i^{(n)}$, $i=1,2$, as in \eqref{eq:anpaths0}. 
Moreover, $\gamma_n(I_3)\subset\{H^{(n)}_{\varepsilon_n}=e_n\}$,  and the dependence of $ \gamma_{n}(I_3)$ on  $I_3$ is real-analytic. 

Furthermore, the same conclusion holds for any Hamiltonian of the form
\begin{equation}\label{eq:epssquareHamiltonian}
\mathcal H^{(n)}_\varepsilon(h):=H^{(n)}_\varepsilon+\varepsilon^2 h= H_0+\varepsilon h_0^{(n)}+\varepsilon^2 h
\end{equation}
with $h\in \mathcal B^2_{\rho, \sigma}(\delta)$. We denote the corresponding hyperbolic periodic orbits by $\gamma_n(h)(I_3)$.
\end{prop}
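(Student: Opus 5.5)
The plan is to pass to resonant coordinates adapted to $s^{(n)}$, average out the non-resonant Fourier modes, and find the periodic orbit as a continuation of a hyperbolic equilibrium of the resulting pendulum-like system.

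\emph{Step 1: resonant coordinates.} Apply the unimodular change of angles $(\varphi,\tt)=(s^{(n)}\cdot\theta,k^{(n)}\cdot\theta)$, which is symplectic once completed by the dual action change $I=M^T J$ with $M$ the matrix with rows $s^{(n)},k^{(n)}$; this is an integer change by \eqref{eq:Bezout}. In these coordinates
\[
H^{(n)}_\varepsilon=H_0(J)+\varepsilon b_n\cos\varphi+\varepsilon\sum_{j<n}b_j\cos(s^{(j)}\cdot\theta).
\]
For $j<n$ the vectors $s^{(j)}$ are not parallel to $s^{(n)}$ (normalization and distinct directions), so each such mode depends nontrivially on the fast angle $\tt$. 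Near $\mathcal R^{(n)}$ we have $\partial_{J_2}H_0=k^{(n)}\cdot\nabla H_0\neq0$, while $s^{(n)}\cdot\nabla H_0=0$ on $\mathcal R^{(n)}$. Hence the frequency of each mode $j<n$, namely $s^{(j)}\cdot\nabla H_0$, is bounded away from zero on a fixed neighborhood of $a^{(n)}([-\rho_0,\rho_0])$. The bound is uniform in $I_3$, but only for $n$ fixed.

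\emph{Step 2: averaging.} Since only finitely many modes $j<n$ occur, one step of analytic averaging (the near-identity symplectic transformation from the appendix on averaging) removes them. The result is
\[
H_0(J)+\varepsilon b_n\cos\varphi+O_n(\varepsilon^2)
\]
on a neighborhood of size $O(\sqrt\varepsilon)$ around the resonance. Rescale $J_1-J_1^*=\sqrt\varepsilon\,y$, where $J^*$ corresponds to $a^{(n)}(I_3)$. The system becomes a fast rotation in $\tt$ with frequency $\nu=\partial_{J_2}H_0$, coupled to a slow pendulum
\[
\tfrac12\partial^2_{J_1J_1}H_0\,y^2+b_n\cos\varphi,
\]
with $O(\sqrt\varepsilon)$ corrections that are $\tt$-dependent. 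Condition \eqref{eq:nondegh} fixes the sign of $\partial^2_{J_1J_1}H_0=(s^{(n)})^TD^2H_0\,s^{(n)}$ (up to normalization, and up to the corrections computed in the proof of Lemma \ref{lem:resnonatpaths}). Therefore $\varphi=0$, the maximum of $b_n\cos\varphi$, is the hyperbolic saddle of the pendulum, with eigenvalues $\pm\sqrt{\varepsilon b_n|\partial^2_{J_1J_1}H_0|}$.

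\emph{Step 3: continuation of the periodic orbit.} In the full system, the circle $\{\varphi=0,y=0\}\times\mathbb T_\tt$ is an approximately invariant, normally hyperbolic periodic orbit. Restrict to the energy level $e_n$ and take the Poincaré map on $\{\tt=0\}$. This map is $O(\varepsilon)$-close to the time-$2\pi/\nu$ map of the pendulum, which has a hyperbolic fixed point whose eigenvalues $e^{\pm c\sqrt\varepsilon}$ are separated from $1$ by order $\sqrt\varepsilon$. The implicit function theorem, or a persistence-of-hyperbolic-fixed-point argument, then gives a hyperbolic fixed point. This requires choosing $\varepsilon_n$ small so that the $O(\varepsilon^{3/2})$ perturbation in rescaled variables is dominated by the $\sqrt\varepsilon$ hyperbolicity.

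Undoing the transformations yields the graph parametrization \eqref{eq:parametrizationperiodicproposition} with $|\varphi^{(n)}|_{\C^1},|\mathtt I^{(n)}-a^{(n)}|_{\C^1}=O(\varepsilon)$. One then fixes the energy $e_n$. The periodic orbit lies near the point of $\mathcal R^{(n)}$ determined by $I_3$, and the relation $G_0(a^{(n)}(u))=e_n$ from \eqref{eq:anpaths} lets one adjust the base action within the family of periodic orbits by $O(\varepsilon)$ so that the orbit lies in $\{H^{(n)}_\varepsilon=e_n\}$. Real-analytic dependence on $I_3$ comes from analytic dependence in the implicit function theorem, applied uniformly over the compact interval $[-\rho_0,\rho_0]$ via a complex neighborhood. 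Adding $\varepsilon^2h$ with $|h|<\delta$ only adds an $O(\varepsilon^2)$ term, already of the size of the averaging remainder, so the same argument covers \eqref{eq:epssquareHamiltonian} after shrinking $\varepsilon_n$.

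\emph{Main obstacle.} The delicate point is the quantitative balance in the averaging step. The small divisors $s^{(j)}\cdot\nabla H_0$ for $j<n$ and the size $b_n$ both depend on $n$. The averaged remainder, of size $\varepsilon^2/\min_j|s^{(j)}\cdot\nabla H_0|$, must be small relative to the hyperbolicity $\varepsilon b_n$ on the whole $O(\sqrt\varepsilon)$ neighborhood, uniformly in $I_3$. I would first verify a uniform lower bound on these divisors over the compact curve $a^{(n)}$, and then choose $\varepsilon_n\ll b_n\cdot\min_{j<n}|s^{(j)}\cdot\nabla H_0|$. The energy normalization also has to be handled carefully, since it is slightly subtle on the level $e_n$.
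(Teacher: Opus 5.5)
Your proposal is correct and is essentially the paper's argument (Lemma \ref{lem:auxiliaryperiodicorbits2}, proved in Appendix \ref{sec:appendixhyperbolic}): the lattice change $\phi_n$, one averaging step removing the finitely many modes $j<n$ via the $n$-dependent lower bound on the divisors near $\mathcal R^{(n)}$, and the implicit function theorem for the Poincar\'e map on $\{\tt=0\}$ at the saddle of $N^{(n)}_{\omega_n}+\varepsilon b_n\cos\varphi$. Your $\sqrt\varepsilon$-rescaling and a~posteriori energy adjustment simply replace the paper's explicit computation \eqref{eq:claim} of $D\Psi_{\mathrm{av}}$, in which $H-e_n$ is built into $\Psi$ as one of its components.

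One small caveat, which the paper shares. Condition \eqref{eq:nondegh} controls $\partial_{I_1}\bigl((-\omega_2,\omega_1)\cdot\nabla H_0\bigr)(0;0)$. It does not control the sign of $(s^{(n)})^TD^2H_0\,s^{(n)}$, which is what decides whether the pendulum saddle sits at $\varphi=0$ or at $\varphi=\pi$. That quantity is only guaranteed nonzero, for large $n$, by the second part of \ref{it:assumption1}. If it is negative, you must flip the sign of $h_0$ as in Remark \ref{rem:remarkonnondeg} to keep $|\varphi^{(n)}|_{\C^1}=O(\varepsilon)$.
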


\begin{rem}
    The reason for discussing also the more general class of Hamiltonians \eqref{eq:epssquareHamiltonian} will become clear in Section \ref{sec:transvershomorbits} where we study the existence of transverse homoclinic orbits to $\gamma_n(I_3)$.
\end{rem}

\begin {rem} \label{rem.hn} 
The coefficients $b_n$ may be chosen arbitrarily small. Consequently, although the Fourier frequencies of $h_0^{(n)}$ tend to infinity, their analytic norms on the fixed analyticity domain $\mathbb T^2_\sigma$ can be made uniformly as small as desired. This does not affect the  hyperbolic nature  of the associated resonant dynamics, but only the size of the corresponding hyperbolic eigenvalues. 
%\magenta I don't understand the last sentence \blue It says that resonant dynamics remains hyperbolic but with weaker hyperbolicity. \black
\end{rem} 
\medskip

The rest of Section \ref{sec:hypperorbs} will be devoted to the proof of  Proposition \ref{prop:periodicorbits}.

%\blue Jaime, another idea: first introduce %the coordinate changes, and then formulate %Proposition 4.2. It will become long, but %will contain all the nice expressions: %$\gamma, W^s, W^u$... \black

\subsubsection{A good coordinate system at the resonant surface $ \mathcal R^{(n)} $}
Let  $A_n$ be the matrix \begin{equation}\label{eq:anmatrix}
A_n=\begin{pmatrix}    s^{(n)}_1&k^{(n)}_1\\
   s^{(n)}_2&k^{(n)}_2
\end{pmatrix}
\end{equation}
with $k^{(n)}\in\mathbb Z^2$ as in \eqref{eq:Bezout} (which, by construction, satisfies $\mathrm{det}A_n=1$), and consider the symplectic linear change of variables
$\phi_n:(\varphi,\tt,J,E)\in\mathbb T^2\times\mathbb R^2\to (\theta,I)\in\mathbb T^2\times\mathbb R^2$ given by 
\begin{equation}\label{eq:symplecticlatticechange}
\begin{pmatrix}
 \varphi \\ \tt
\end{pmatrix}=A_n^T\begin{pmatrix}
    \theta_1\\ \theta_2
\end{pmatrix},\qquad\qquad \begin{pmatrix}
J\\ E
\end{pmatrix}=A_n^{-1} \begin{pmatrix}
        I_1\\I_2
    \end{pmatrix}.
\end{equation}
For convenience, we  introduce the notation $\bs J=(J,E)$, denote
\begin{equation}\label{eq:potential}
V^{(n)}=b_n\cos  \varphi,\qquad\qquad P^{(n)}= h_0^{(n)}\circ\phi_n- V^{(n)},
\end{equation}
and recast the Hamiltonian $H^{(n)}_{\varepsilon}$ in the form%\footnote{   We will see below that $P^{(n)}$ contains only non-resonant terms, i.e. it has zero average with respect to $\tt$.}

\begin{equation}\label{eq:pendulumplusperturb}
H_\varepsilon ^{(n)}\circ\phi_n=\underbrace{H_{\mathrm{\mathrm{av}},\varepsilon}^{(n)}(\varphi,J,E)}_{\mathrm{resonant}}+\underbrace{\varepsilon P^{(n)}(\varphi,\tt)}_{\text{non-resonant}}
\end{equation}
with $ H_{\mathrm{\mathrm{av}},\varepsilon}^{(n)}$ being an integrable, pendulum-like Hamiltonian 
\begin{equation}\label{eq:integrablen}
H_{\mathrm{\mathrm{av}},\varepsilon}^{(n)}=\underbrace{H_0 (A_n \bs J)}_{N^{(n)}_{\omega_n}(J,E)}+\varepsilon V^{(n)}(\varphi). 
\end{equation}
We will see below that $P^{(n)}$ contains only non-resonant terms, i.e., it has zero average with respect to $\tt$.
We denote 
$$
N_{\omega_n}^{(n)}(\bs J):=H_0 (A_n \bs J).
$$
\begin{rem}\label{rem:fastangle}
In the new coordinate system, the  unperturbed dynamics (i.e., for $\varepsilon=0$) is given by 
\[
\dot \varphi (t)=\nabla N_{\omega_n}^{(n)}(\bs J(t))\cdot s^{(n)},\qquad\qquad \dot \tau (t)=\nabla N_{\omega_n}^{(n)}(\bs J(t))\cdot k^{(n)},\qquad\qquad \dot J(t)=0=\dot E(t)
\]
An observation that will be used in the sequel is that, since $\det A_n=1$, we have
\begin{equation}\label{eq:sk}
\nabla H_0(A_n \bs J)\cdot k^{(n)}=
-\frac{\partial_{I_1}H_0(A_n\bs J)}{s_2^{(n)}}+\frac{k_2^{(n)}}{s_2^{(n)}}\nabla H_0(A_n \bs J)\cdot s^{(n)}=
\frac{\partial_{I_2}H_0(A_n\bs J)}{s_1^{(n)}}+\frac{k_1^{(n)}}{s_1^{(n)}}\nabla H_0(A_n \bs J)\cdot s^{(n)}
\end{equation}
In particular, on the resonance locus $\nabla H_0\cdot s^{(n)}=0$, the $\tau$-component of the integrable vector field
\begin{equation}\label{eq:lowerboundfastfreq}
\dot \tt=\partial_{E}H^{(n)}\circ\phi_n =\nabla H_0(A_n \bs J)\cdot k^{(n)}=\frac{\partial_{I_2}H_0(a^{(n)}) (I_3)} {s_1^{(n)}}\sim \frac{\omega_2}{s_1^{(n)}}= -\frac{\omega_1}{s_2^{(n)}} 
\end{equation}
is bounded away from zero by a constant which depends only on $n\in\mathbb N$ (in particular, does not vanish for $\varepsilon$ in a small neighborhood of zero). 
\end{rem}
\medskip

\subsubsection{The averaged vector field and approximate periodic orbits} 

Before studying the whole Hamiltonian $ H^{(n)}_{\varepsilon}$, let us first  study the integrable part $H_{\mathrm{\mathrm{av}},\varepsilon}^{(n)}$, whose equations of motion are:
\begin{equation}\label{eq:vectorfieldN}
\begin{aligned}
\dot \varphi & =\partial _J  N_{\omega_n}^{(n)}(\bs J)= \nabla H_0(A_n \bs J)\cdot s^{(n)}\qquad\qquad &\dot J& =\eps b_n  \sin \varphi   \\
\dot \tt & =\partial _E  N_{\omega_n}^{(n)}(\bs J)=  \nabla H_0(A_n \bs J)\cdot k^{(n)}\qquad\qquad &\dot E & = 0 .
\end{aligned}
\end{equation}
Take $\rho_0=\rho_0(H_0)$  given by Lemma \ref{lem:resnonatpaths}.
Then, the vector field \eqref{eq:vectorfieldN} displays, for any $I_3 \in [-\rho_0,\rho_0]$, a hyperbolic periodic orbit which, in coordinates $(\varphi,\tt,J,E)$, can be parametrized as
\begin{equation}\label{eq:unperturbedperorbits}
\gamma^{\mathrm{av}}_{n}(I_3)=\{(0,\tt, \tilde a^{(n)}_J(I_3),\tilde a^{(n)}_E(I_3)) \colon \tt\in\mathbb T\}
\end{equation}
with 
\begin{equation}\label{eq:jn*}
\begin{split}
( \tilde a^{(n)}_J(I_3),\tilde a^{(n)}_E(I_3)) =\tilde a^{(n)} (I_3)=A_n^{-1} a^{(n)}(I_3),
\end{split}
 \end{equation}
where $a^{(n)}(I_3)= ( a^{(n)}_1(I_3), a^{(n)}_2(I_3))  $ stands for the first two components of the resonant path given in \eqref{eq:anpaths0}.
The dynamics on the curve $\gamma^{\mathrm{av}}_{n}(I_3)$ is defined by \eqref{eq:vectorfieldN}. 
Therefore, it is a periodic orbit of period
\begin{equation} \label{eq:nun}
\nu_n(I_3) =  \frac{s_1^{(n)}) } {\partial_{I_2}H_0(a^{(n)}(I_3))} \sim \frac {s_1^{(n)}}{\omega_2}\sim -\frac{s_2^{(n)}}{\omega_1}.
\end{equation}

\begin{comment}
Observe that the vector field \eqref{eq:vectorfieldN} has two first integrals: $E$ and $H_{\mathrm{av},\varepsilon}^{(n) }$. 
Therefore, the stable and unstable manifolds of $\gamma^{\mathrm{av}}_{n}(I_3)$ are given by
\[
E = E_n:=  \tilde  a _E^{(n)} (I_3), \quad H_{\mathrm{av},\varepsilon}^{(n)} 
(J,E,\varphi) = H_{\mathrm{av},\varepsilon}^{(n) } (\tilde a^{(n)} (I_3),0)
\]
that is:
\[
E = E_n:=  \tilde  a _E^{(n)} (I_3), \quad 
N^{(n)}_{\omega_n}(J,E)+\varepsilon V^{(n)}(\varphi)= N^{(n)}_{\omega_n}(\tilde a^{(n)} (I_3))+\varepsilon b_n.
\]
%-s_2^{(n)} a_1^{(n)} (I_3)+ s_1^{(n)} a_2^{(n)}(I_3) \\    = 
Taylor expanding the second equation, and using the fact that 
$ \partial_{J} N^{(n)}_{\omega_n}(\tilde a^{(n)} (I_3))=0$,
we obtain a pendulum-like  homoclinic connection to the periodic orbit $\gamma^{\mathrm{av}}_{n}(I_3)$ given by:
\begin{equation} \label{eq:homoclinicpendulum}
E = E_n , \quad C_n (J-\tilde a_J^{(n)})^2+ \varepsilon b_n (\cos \varphi -1) \simeq 0 , \quad \varphi  \in [0, 2\pi],
\end{equation} 
where $C_n= \frac 1 2 \partial_{J^2} N^{(n)}_{\omega_n}(\tilde a^{(n)} (I_3))$.
\end{comment}

\subsubsection{Structure of the perturbation}

In order to construct hyperbolic periodic orbits for the Hamiltonian $ H_{\varepsilon}^{(n)}$ which continue those in \eqref{eq:unperturbedperorbits} we need to study the structure of the ``perturbing term'' $ P^{(n)}$ in \eqref{eq:potential}.

\begin{lem}\label{lem:lt}
The function $P^{(n)}$ in \eqref{eq:potential} has zero average with respect to $\tt$.
\end{lem}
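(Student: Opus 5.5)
The plan is to compute $P^{(n)}$ explicitly in the coordinates $(\varphi,\tt)$. Each Fourier mode $\cos(s^{(j)}\cdot\theta)$ of $h_0^{(n)}$ becomes a single trigonometric mode in $(\varphi,\tt)$. Its $\tt$-frequency is a $2\times2$ determinant, which vanishes only for $j=n$.

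\emph{Step 1: rewrite each mode.} By \eqref{eq:symplecticlatticechange} we have $(\varphi,\tt)^T=A_n^T(\theta_1,\theta_2)^T$, hence $\theta=A_n^{-T}(\varphi,\tt)^T$. Therefore
\[
s^{(j)}\cdot\theta=\bigl(A_n^{-1}s^{(j)}\bigr)\cdot(\varphi,\tt).
\]
Since $\det A_n=1$,
\[
A_n^{-1}=\begin{pmatrix} k_2^{(n)} & -k_1^{(n)}\\ -s_2^{(n)} & s_1^{(n)}\end{pmatrix}.
\]
This gives $s^{(j)}\cdot\theta=\ell_j\varphi+m_j\tt$, where
\[
\ell_j=k_2^{(n)}s_1^{(j)}-k_1^{(n)}s_2^{(j)},\qquad m_j=s_1^{(n)}s_2^{(j)}-s_2^{(n)}s_1^{(j)}=\det\bigl(s^{(n)}\,|\,s^{(j)}\bigr).
\]
For $j=n$, \eqref{eq:Bezout} gives $\ell_n=1$ and $m_n=0$, so the $n$-th mode is exactly $b_n\cos\varphi=V^{(n)}$. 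Consequently
\[
P^{(n)}(\varphi,\tt)=\sum_{j<n}b_j\cos(\ell_j\varphi+m_j\tt).
\]

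\emph{Step 2: the $\tt$-frequencies are nonzero.} For $j\neq n$ we need $m_j\neq0$, i.e.\ $s^{(j)}$ is not parallel to $s^{(n)}$. Both vectors are primitive by the normalization in Definition~\ref{defn:Dirichletseq}, so they are parallel only if $s^{(j)}=\pm s^{(n)}$. This is the one point that needs care, because the definition does not literally forbid repetitions. I would settle it by noting that the Dirichlet sequence is implicitly taken with pairwise non-proportional entries, with $|s_2^{(n)}|\to\infty$. Such a choice is always possible for irrational $\omega_2/\omega_1$, by passing to a subsequence, and it is needed anyway so that $\hat a_1^{(n)}\to0$ in Lemma~\ref{lem:resnonatpaths}. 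Under this choice $m_j\neq0$ for every $j<n$.

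\emph{Step 3: conclude.} For $m_j\neq0$,
\[
\frac1{2\pi}\int_0^{2\pi}\cos(\ell_j\varphi+m_j\tt)\,d\tt=0 .
\]
Summing the finitely many terms with $j<n$ shows that $P^{(n)}$ has zero $\tt$-average, which proves Lemma~\ref{lem:lt}. The only step that is not routine bookkeeping is the non-parallelism in Step~2; the rest is linear algebra.
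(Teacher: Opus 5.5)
Your proof is correct and follows the paper's argument. The paper's one-line proof is exactly your observation that the $\tau$-frequency $m_j=s_1^{(n)}s_2^{(j)}-s_2^{(n)}s_1^{(j)}$ of the $j$-th mode is nonzero for $j\neq n$; your Steps 1 and 3 supply the change-of-variables bookkeeping the paper leaves implicit. Your Step 2 caveat is a fair refinement: the paper asserts $m_j\neq 0$ without comment, and this does require the Dirichlet sequence to have pairwise non-proportional terms (for instance strictly increasing $|s_2^{(n)}|$), which can always be arranged by passing to a subsequence.
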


\begin{proof}
The proof boils down to the observation that for $j\neq n$ we have 
\[
s_1^{(n)}s_2^{(j)}-s_2^{(n)}s_1^{(j)}\neq 0.\qedhere
\]
\end{proof}

\subsubsection{Existence of hyperbolic periodic orbits for the Hamiltonians $ H^{(n)}_{\varepsilon}$ and  $ \mathcal{H}^{(n)}_{\varepsilon}$}
Since the perturbing term  
$P^{(n)}$ in \eqref{eq:potential} has zero average with respect to the fast angle $\tt$, it is well known that, provided that $\varepsilon$ is small enough (depending on $n$), the hyperbolic periodic orbit \eqref{eq:unperturbedperorbits} of the averaged Hamiltonian admits a continuation for the full Hamiltonian $H^{(n)}_{\varepsilon}$. 
In particular, we deduce the following. \black 

\medskip 

\begin{rem}One may compare the parameterization in \eqref{eq:parametrizationdistorted} with that in \eqref{eq:unperturbedperorbits} for the hyperbolic periodic orbits of the Hamiltonian $H_{\mathrm{\mathrm{av}},\varepsilon}^{(n)}$.
\end{rem}

\begin{lem}\label{lem:auxiliaryperiodicorbits2}
   Fix any $\delta, \rho,\sigma>0$, and let $H_0\in\mathcal N_\rho^{2,d}(\omega)$ satisfy \eqref{eq:nondegh} with $\rho_0=\rho_0(H_0)$ given by Lemma \ref{lem:resnonatpaths}.  
   Choose any sequence  $\{b_j\}_{j\leq n}\subset\mathbb R_+$ such that $h_0^{(n)}$ defined by \eqref{eq:truncatedHamiltonianper} satisfies \eqref{eq:smallnesstruncated}. 
   Then, there exists $\varepsilon_n>0$ such that, for any $0<\varepsilon\leq \varepsilon_n$ and any $I_3\in[-\rho_0,\rho_0]$, the Hamiltonian $H^{(n)}_{\varepsilon}(\cdot;I_3)$ in \eqref{eq:truncatedHamiltonianper}  possesses a hyperbolic periodic orbit 
   $\gamma_n(I_3)\subset \{H_{\varepsilon}=e_n\}$ 
   \color{black} which, in the coordinate system introduced in \eqref{eq:symplecticlatticechange}, admits a parametrization of the form 
\begin{equation}\label{eq:parametrizationdistorted}
\gamma_n(I_3)=\{(\varphi^{(n)}(\tt;I_3),\tt,J^{(n)}(\tt;I_3),E^{(n)}(\tt;I_3))\colon \tt\in\mathbb T\}
\end{equation}
for some real-analytic functions satisfying
\begin{equation}\label{eq:parametrizationdistortedestimates}
|\varphi^{(n)}|_{\C^1}=O(\varepsilon),\qquad\qquad |J^{(n)}-\tilde a^{(n)}_J|_{\C^1}=O(\varepsilon),\qquad\qquad |E^{(n)}-\tilde a^{(n)}_E|_{\C^1}=O(\varepsilon)
\end{equation}
with $\tilde a_J^{(n)}, \tilde a_E^{(n)}$ as in \eqref{eq:jn*}. 
Moreover, the same conclusion holds for any Hamiltonian $\mathcal{H}_\varepsilon^{(n)}$ of the form \eqref{eq:epssquareHamiltonian} and we denote the corresponding periodic orbits by $\gamma_n(h)(I_3)$.
\end{lem}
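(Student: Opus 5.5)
We work with the pendulum-like normal form \eqref{eq:pendulumplusperturb} in the coordinates \eqref{eq:symplecticlatticechange}, with $n$ and $I_3\in[-\rho_0,\rho_0]$ fixed; all estimates will be uniform in $I_3$ by compactness. The idea is to use the fast angle $\tt$ as time and reduce to a fixed-point problem for a Poincaré map on a section transverse to the flow. By \eqref{eq:lowerboundfastfreq}, near the resonance $\dot\tt$ is bounded below by a constant $c_n>0$ depending only on $n$. So on the energy level $\{H^{(n)}_\varepsilon=e_n\}$ the section $\{\tt=0\}$ is a global transversal, and we can eliminate $E$ through the energy equation by the implicit function theorem, since $\partial_E N^{(n)}_{\omega_n}\neq 0$.

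The first step is averaging. Lemma \ref{lem:lt} says $P^{(n)}$ has zero $\tt$-mean, and it is a finite trigonometric polynomial. One near-identity symplectic change of variables, generated by $\varepsilon\chi$ with $\chi$ solving $\partial_E N^{(n)}_{\omega_n}\,\partial_\tt\chi=P^{(n)}$, therefore removes it up to $O(\varepsilon^2)$, with constants depending on $n$ through $c_n$ and $b_j$, $j\le n$. The resulting Hamiltonian is $H^{(n)}_{\mathrm{av},\varepsilon}+\varepsilon^2R_n(\varphi,\tt,\bs J)$, and the term $\varepsilon^2h$ of \eqref{eq:epssquareHamiltonian} is simply absorbed into $R_n$. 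This is why the same proof covers $\mathcal H^{(n)}_\varepsilon(h)$ uniformly for $h\in\mathcal B^2_{\rho,\sigma}(\delta)$, with Cauchy estimates on a slightly smaller complex domain.

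The second step is to rescale to pendulum variables, $J=\tilde a^{(n)}_J+\sqrt\varepsilon\,y$, and time by $\sqrt\varepsilon$. The averaged system becomes the pendulum $\tfrac12\partial^2_JN^{(n)}_{\omega_n}y^2+b_n\cos\varphi$ plus $O(\sqrt\varepsilon)$. The second derivative here is nonzero with the correct sign by \eqref{eq:nondegh}, using $\bs v^{(n)}\to\bs v$ as in the proof of Lemma \ref{lem:resnonatpaths}. Hence $(\varphi,y)=(0,0)$ is a saddle, with eigenvalues $\pm\sqrt{b_n\partial^2_JN}$ uniformly away from zero. For the time-$\nu_n(I_3)$ map of the averaged system, the point $\gamma^{\mathrm{av}}_n(I_3)\cap\{\tt=0\}$ is then a hyperbolic fixed point of the reduced 2D return map. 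Its multipliers are $e^{\pm\lambda\sqrt\varepsilon\,\nu_n}$, which leaves a gap of order $\sqrt\varepsilon$ from $1$, while the full map differs from it by $O(\varepsilon^{3/2})$ in $C^1$ in the rescaled variables.

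The main obstacle is exactly this: the hyperbolicity gap $\sqrt\varepsilon$ has to dominate the perturbation in the rescaled variables. We handle it by checking that the remainder $\varepsilon^2R_n$ contributes $O(\varepsilon^{3/2})$ to the rescaled vector field over the bounded time $\nu_n$. The implicit function theorem applied to $\Phi(x)-x$, whose linearization is invertible with inverse of norm $O(\varepsilon^{-1/2})$, then gives a unique nearby fixed point. Its displacement is $O(\varepsilon)$ in the rescaled variables, which is even better than $O(\varepsilon)$ in the original ones.

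Flowing this fixed point over $\tt\in\mathbb T$ and undoing the averaging transformation, which is $O(\varepsilon)$-close to the identity in $C^1$, gives the graph parametrization \eqref{eq:parametrizationdistorted} together with the estimates \eqref{eq:parametrizationdistortedestimates}. Since we fixed the energy at $e_n$, the orbit lies in $\{H_\varepsilon=e_n\}$; here $e_n$ is approximately the value of $H^{(n)}_{\mathrm{av},\varepsilon}$ at $\varphi=0$, up to a shift of the base point absorbed in the $O(\varepsilon)$ estimates. Real-analytic dependence on $I_3$ follows from the analytic implicit function theorem, and $\varepsilon_n$ is fixed small enough to satisfy all the $n$-dependent constants.
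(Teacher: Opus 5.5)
This is essentially the paper's proof in Section \ref{sec:proofaveraging}: one averaging step brings the Hamiltonian to $H_{\mathrm{av},\varepsilon}+\varepsilon^2\widehat P$ (with $\varepsilon^2 h$ absorbed into $\widehat P$), and then the implicit function theorem gives the fixed point of the return map to $\{\tt=0\}$ on the level $e_n$; your $\sqrt\varepsilon$-rescaling of $J$ makes explicit the balance the paper leaves implicit, since its Jacobian \eqref{eq:claim} has determinant of order $\varepsilon$ while the perturbation is $O_{\C^1}(\varepsilon^2)$. The one thing to fix is the homological equation, which must be the full $\{N^{(n)}_{\omega_n},\chi\}+P^{(n)}=0$ as in the paper (its divisors $\nabla N^{(n)}_{\omega_n}\cdot l$ with $l_2\neq 0$ stay bounded below near $\mathcal R^{(n)}$): solving only $\partial_E N^{(n)}_{\omega_n}\,\partial_\tt\chi=P^{(n)}$ leaves the term $\varepsilon\,\partial_J N^{(n)}_{\omega_n}\,\partial_\varphi\chi=O(\varepsilon^{3/2})$ in the zone $|J-\tilde a^{(n)}_J|\lesssim\sqrt\varepsilon$, which enters your rescaled $y$-equation at order $\varepsilon$ and, without an extra argument using its zero $\tt$-mean, only gives $|\varphi^{(n)}|=O(\sqrt\varepsilon)$ instead of $O(\varepsilon)$.
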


The proof of Lemma \ref{lem:auxiliaryperiodicorbits2} is obtained by a standard averaging argument and is  deferred to Appendix \ref{sec:appendixhyperbolic}.\black 

\subsubsection{Proof of Proposition \ref{prop:periodicorbits}}

With Lemma \ref{lem:auxiliaryperiodicorbits2} at hand, we complete the proof of  Proposition \ref{prop:periodicorbits} by setting 
\[
\binom{\mathtt I^{(n)}_1}{\mathtt I^{(n)}_2}=A_n \binom{J^{(n)}}{E^{(n)}}.
\]
\medskip

\subsection{Existence of transverse homoclinic points}\label{sec:transvershomorbits}
  In Proposition \ref{prop:periodicorbits}, for any $n\in\mathbb N$,  we have established the existence of $\varepsilon_n>0$ such that for any $0<\varepsilon\leq \varepsilon_n$ and any $I_3\in[-\rho_0,\rho_0]$,  any Hamiltonian $\mathcal H_{\varepsilon}^{(n)}(h)(\cdot;I_3)$ as in \eqref{eq:epssquareHamiltonian} admits a hyperbolic periodic orbit $\gamma_n(h)(I_3)$. 
    Moreover, in \eqref{eq:parametrizationperiodicproposition} we gave a parametrization of this orbit which depends on   $I_3$ in a real-analytic fashion. 
    %Let us recall here that this periodic was obtained as a perturbation of the periodic orbit $\gamma_n^{\mathrm{av}}(I_3)$ (see \eqref{eq:unperturbedperorbits}) of the averaged system in \eqref{eq:integrablen}. An important observation is that the periodic orbit $\gamma_n^{\mathrm{av}}(I_3)$ had an homoclinic connection (see \eqref{eq:homoclinicpendulum}). It is then  natural to understand the behaviour of the stable and unstable manifolds of the perturbed periodic orbit $\gamma_n(h)(I_3)$.
\medskip
    
    In the following result we show that, for a generic $h\in \mathcal B_{\rho, \sigma}^2(\delta)$, this hyperbolic periodic orbit admits a rich family of transverse homoclinic orbits. 

\begin{prop}\label{prop:splitting}
 Fix any $\delta, \rho,\sigma>0$, let $H_0\in\mathcal N_\rho^{2,d}(\omega)$ satisfy \eqref{eq:nondegh} with $\rho_0=\rho_0(H_0)$ given by Lemma \ref{lem:resnonatpaths}, and choose any sequence  $\{b_j\}_{j\leq n}\subset\mathbb R_+$ such that $h_0^{(n)}$ defined by \eqref{eq:truncatedHamiltonianper} satisfies \eqref{eq:smallnesstruncated}.   There exists $\varepsilon_n>0$  such that for any $0<\varepsilon\leq \varepsilon_n$ there exists an open and dense  subset $\mathcal U^{(n)}_{\rho,\sigma}(\delta)\subset \mathcal B^2_{\rho,\sigma}(\delta)$ such that the following holds.  
 
 For any  $h\in \mathcal U^{(n)}_{\rho,\sigma}(\delta)$ there exists $m\in\mathbb N$ (depending a priori on $n$ and $h$) and a covering 
 \[
 [-\rho_0,\rho_0]\subset \bigcup_{0\leq i\leq m} [\cI_i^-,\cI_i^+]
 \]
satisfying $\cI_{i+1}^-<\cI_{i}^+$ for all $i=0,\dots,m-1$, such that  for any $i=0,\dots,m-1$ and $I_3\in[\cI_i^-,\cI_i^+]$ the Hamiltonian $\mathcal H^{(n)}_{\varepsilon}(h)(\cdot;I_3)$ in \eqref{eq:epssquareHamiltonian} admits a  transverse\footnote{Within the corresponding 3-dimensional energy level.} homoclinic orbit 
 $z_{n,i}(h)(I_3)$ to $\gamma_n(h)(I_3)$ which depends on $I_3$ in a real-analytic fashion for $I_3\in[\cI_i^-,\cI_i^+]$. 
\end{prop}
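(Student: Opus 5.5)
We will follow the parametric-transversality strategy of Zehnder and Delshams--Zhang that the paper points to. For fixed $n$ and $\varepsilon\le\varepsilon_n$, the averaged Hamiltonian $H^{(n)}_{\mathrm{av},\varepsilon}$ in \eqref{eq:integrablen} has a pendulum-like separatrix. On $\{E=\tilde a^{(n)}_E(I_3)\}$, the stable and unstable manifolds of $\gamma^{\mathrm{av}}_n(I_3)$ coincide along a two-dimensional homoclinic manifold. After adding the non-resonant term $\varepsilon P^{(n)}$ and the term $\varepsilon^2 h$, these manifolds persist as $W^{u,s}(\gamma_n(h)(I_3))$ (Lemma \ref{lem:auxiliaryperiodicorbits2}), but need not intersect transversally.

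\textbf{Step 1: splitting function.} Restrict to the energy level $\{\mathcal H^{(n)}_\varepsilon(h)=e_n\}$ and take the Poincar\'e section $\{\tt=0\}$, which is legitimate by \eqref{eq:lowerboundfastfreq}. On this 2-dimensional section the invariant manifolds are curves. Parametrize them as graphs over $\varphi$ in a fundamental domain away from $\varphi=0$, and define the real-analytic splitting function
\[
\Delta(h)(\varphi;I_3)=J^u(h)(\varphi;I_3)-J^s(h)(\varphi;I_3).
\]
It depends analytically on $(\varphi,I_3)$ in a complex neighbourhood of $[\varphi_-,\varphi_+]\times[-\rho_0,\rho_0]$, and $C^1$-smoothly (indeed analytically) on $h\in\mathcal B^2_{\rho,\sigma}(\delta)$. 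Transverse homoclinic points correspond to simple zeros of $\Delta(h)(\cdot;I_3)$. By the implicit function theorem, a simple zero at $I_3^*$ continues analytically for $I_3$ in an interval around $I_3^*$.

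\textbf{Step 2: the derivative with respect to $h$.} Differentiating in the direction $\hat h$ gives a Melnikov-type integral of $\varepsilon^2\{\hat h,\cdot\}$ along the perturbed homoclinic orbit:
\[
D_h\Delta(h)[\hat h](\varphi;I_3)=\varepsilon^2\int_{-\infty}^{\infty}\bigl(\hat h(z^u(t))-\hat h(\gamma^u(t))\bigr)\,dt-(\text{same for }s),
\]
up to a non-vanishing normalizing factor. Here $z^{u,s}(t)$ are the orbits through the point with coordinate $\varphi$, and $\gamma^{u,s}(t)$ are the asymptotic phases on the periodic orbit. Choosing $\hat h$ to be a trigonometric polynomial in $\theta$ localized in $\varphi$-frequency, we show that the finite-dimensional family $h+\sum_{k}c_k\hat h_k$ makes the map $(\varphi,I_3,c)\mapsto\Delta$ a submersion. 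The homoclinic orbit is a non-periodic curve, so the linear functionals $\hat h\mapsto D_h\Delta[\hat h](\varphi)$ separate points and derivatives.

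\textbf{Step 3: transversality and compactness.} By the parametric transversality theorem, for generic $c$ the function $(\varphi,I_3)\mapsto\Delta$ has $0$ as a regular value in the $\varphi$-direction. More precisely, we need that for every $I_3$ there is some simple zero. Consider the one-parameter family over the compact interval $[-\rho_0,\rho_0]$. Here codimension-two degeneracies ($\Delta=\partial_\varphi\Delta=0$ together with a second condition) are avoided generically. At codimension-one folds, other zeros exist, because $\Delta$ has zero average over a fundamental domain (exact symplecticity and flux), so it must change sign. Hence each $I_3$ has a simple zero. Simple zeros are open, so each one persists on an open interval; compactness of $[-\rho_0,\rho_0]$ then yields the finite overlapping covering $[\cI_i^-,\cI_i^+]$. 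Openness of the resulting set $\mathcal U^{(n)}_{\rho,\sigma}(\delta)$ follows from persistence of simple zeros under $C^1$-small changes, uniformly on the compact set. Density follows because $h$ can be approximated by $h+\sum c_k\hat h_k$ with small generic $c$.

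The main obstacle is Step 2–3: proving that the Melnikov functionals are rich enough uniformly in $I_3$, and that the zero-average (sign change) property gives a simple zero for every $I_3$ rather than for generic $I_3$. The issue is that $\varepsilon^2 h$ is smaller than the exponentially small splitting produced by $\varepsilon P^{(n)}$, so $\Delta$ is not dominated by the $h$-term. We will handle this by working with the nonlinear parametric transversality directly, treating the splitting as a whole, not via a first-order Melnikov approximation. The only inputs are analyticity and the submersion property of $h\mapsto\Delta(h)$ at each point.
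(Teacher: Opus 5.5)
Your architecture matches the paper's: a splitting function on a section, parametric transversality excluding a codimension-three jet degeneracy over the two-dimensional set $(\text{section variable},I_3)$, a topological argument forcing a zero of the right type, and compactness in $I_3$. Your sign-change argument on $\{\tau=0\}$ is the derivative form of the paper's argument on $\{\varphi=\pi\}$. There, $\tau\mapsto\Delta(h)(\tau;I_3)=(S^u-S^s)(\pi,\tau;I_3)$ is genuinely periodic, so its global extrema always exist, and they are non-degenerate as soon as $(\Delta',\Delta'',\Delta''')$ never vanishes. Two small corrections to your version. On your section the $\varphi$-fundamental domains of $W^u$ and $W^s$ only coincide when anchored at a homoclinic point, so your ``zero average'' claim needs such an anchor. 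And the map that must be a submersion is the $2$-jet $(\Delta,\partial_\varphi\Delta,\partial_\varphi^2\Delta)$, not $\Delta$ alone.

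The genuine gap is Step~2, the analogue of Proposition~\ref{prop:surjective}. You assert that the functionals $\hat h\mapsto D_h\Delta[\hat h]$ and their derivatives are independent ``because the orbit is non-periodic'', then concede this is the main obstacle. Your remedy (``the only inputs are analyticity and the submersion property'') is circular, since the submersion property is exactly what must be proved. The concrete route you sketch also fails. For analytic $\hat h$ oscillating in the fast angle $\tau$, the Melnikov coefficients along a separatrix with time scale $\varepsilon^{-1/2}$ are exponentially small. The true manifolds differ from the averaged separatrix by $O(\varepsilon)$, so no first-order computation along the separatrix can decide the rank. Your remark that $\varepsilon^2h$ is smaller than an exponentially small quantity is backwards; the real issue is that its \emph{effect} on the splitting is exponentially small. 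Also, the unstable contribution is an integral over $(-\infty,0]$ and the stable one over $[0,\infty)$, not both over $\mathbb R$.

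The missing idea is to localize first and pass to analytic perturbations only afterwards.
\begin{enumerate}
\item Take $f$ smooth with compact support in $U_n$ from \eqref{eq:Un}, i.e.\ in $\{\pi/2\le\varphi\le\pi\}$ near the separatrix. Such an $f$ moves neither $\gamma_n(h)$ nor $W^s$ over $\varphi\in[\pi,2\pi]$, because forward orbits from there never enter the support.
\item Hence at the section $\{\varphi=\pi\}$ only $S^u$ changes. Its first variation solves $\mathcal L^uT^u_f=-f$ with vanishing data at the periodic orbit.
\item After the straightening of Lemma~\ref{lem:straightening}, this first variation becomes $-\int_{-\infty}^0\tilde f(v+s,\xi+\tilde\omega_n s)\,ds$.
\item Bumps $\chi_\rho(v)\eta_i(\xi)$ then realise any prescribed $\tau$-jet of order three at $\varphi=\pi$. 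The effect has size $O(1)$, with no exponentially small factor.
\item This derivative depends continuously on the perturbation in the $C^r$ topology on a compact neighbourhood of the relevant pieces of $W^{u,s}$ and $\gamma_n(h)$. Surjectivity onto $\mathbb R^3$ is an open condition. So $C^r$-approximating these bumps by elements of $\mathcal B_{\rho,\sigma}$ gives surjectivity in analytic directions.
\end{enumerate}
With this in hand, your Step~3 (or the paper's global-extremum version) closes the argument.
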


The proof of Proposition \ref{prop:splitting} is presented in Section \ref{sec:mainsptlittingsection} as it involves a number of steps. We now make use of an inductive argument to deduce Theorem \ref{thm:mainparametric} from Propositions \ref{prop:periodicorbits} and \ref{prop:splitting}.

\medskip

\subsection{Proof of Theorem \ref{thm:mainparametric}}\label{sec:induction}

Given $r>0$ and $H\in\mathcal Q_{\rho,\sigma}^{2,d}$ we let $\mathcal B_{\rho,\sigma}^2(r;H)\subset \mathcal Q_{\rho,\sigma}^{2,d}$ be the ball of radius $r$ centered at $H$. The proof relies on the following definition.

\begin{defn} Let $\rho_0=\rho_0(H_0(I;I_3))$ be given by Lemma \ref{lem:resnonatpaths}.
    Let $n\in\mathbb N$, $\zeta\geq 0$ and $r>0$. We say that a Hamiltonian $H_*\in\mathcal Q_{\rho,\sigma}^{2,d}$ is $(n,\zeta,r)$\textit{-good} if there exists a residual subset $\mathcal U(n,r,H_*)\subset \mathcal B_{\rho,\sigma}^2(r;H_*)$ such that any $H\in \mathcal U(n,r,H_*)$ satisfies the following:
    \begin{enumerate}
        \item For any $I_3\in [-\rho_0,\rho_0]$ the Hamiltonian $H(\cdot;I_3)$ displays a hyperbolic periodic orbit $\gamma_n(I_3)$ which is $\zeta$-close in the $\C^1$ topology to the submanifold $\{I=a^{(n)}(I_3)\}$;
        \item There exists $m\in\mathbb N$ (depending a priori on $n$) and a covering $
[-\rho_0,\rho_0]\subset \bigcup_{0\leq i\leq m} [I_i^-,I_i^+]$
satisfying $I_{i+1}^-<I_{i}^+$ for all $i=0,\dots,m-1$  such that  for any $i=0,\dots,m-1$ and $I_3\in[I_i^-,I_i^+]$ there exists a transverse homoclinic orbit 
 $z_{n,i}(I_3)$ to $\gamma_n(I_3)$ which depends on $I_3$ on a real-analytic fashion (for $I_3\in[I_i^-,I_i^+]$).
    \end{enumerate}
\end{defn}

\noindent\textit{Initialization.} Observe that for $n=1$, by Propositions \ref{prop:periodicorbits} and \ref{prop:splitting}, there exists $0<\varepsilon_1,\zeta_1<1$ such that  the Hamiltonian $H_{\varepsilon_1}^{(1)}$ of the form \eqref{eq:truncatedHamiltonianper} is $(1,\zeta_1,r_1^{(1)})$-good with $r_1^{(1)}=\varepsilon_1^2 \delta$.
\medskip

\noindent\textit{The inductive hypothesis.} Let  $n\in\mathbb N$ and suppose that we have found 
$0<\varepsilon_j,\zeta_j<1/j$ and $ \delta^{(n)}>\delta/2$ 
such that for all $j\leq n$ the Hamiltonian $H^{(n)}_{\varepsilon_j}$ of the form \eqref{eq:truncatedHamiltonianper} is $(j,\zeta_j,r_{j}^{(n)})$-good with $r_j^{(n)}=\varepsilon_j^2 \delta^{(n)}$. 
\medskip

\noindent\textit{The inductive step.} Assume the inductive hypothesis are true up to step $n\in\mathbb N$. Then, choosing $b_{n+1}>0$ small enough, it is clear that there exists some $\delta^{(n+1)}>\delta/2$ such that the Hamiltonian $H^{(n+1)}_{\varepsilon_j}$ is $(j,\zeta_j,r_j^{(n+1)})$-good with $r_j^{(n+1)}=\varepsilon_j^2 \delta^{(n+1)}$. On the other hand, by Propositions \ref{prop:periodicorbits} and \ref{prop:splitting}, there exists $
0<\varepsilon_{n+1},\zeta_{n+1}<1/(n+1)$ such that the Hamiltonian $H_{\varepsilon_{n+1}}^{(n+1)}$ is $(n+1,\zeta_{n+1},r_{n+1}^{(n+1)})$-good with $r_{n+1}^{(n+1)}=\varepsilon_{n+1}^2 \delta^{(n+1)}$.
\medskip

\noindent\textit{The limit Hamiltonian.}  The Cauchy sequence $\{H_{\varepsilon}^{(n)}\}_n\subset \mathcal Q_{\rho,\sigma}^{2,d}$ converges uniformly to a Hamiltonian $H_\varepsilon\in \mathcal Q_{\rho,\sigma}^{2,d}$ which, by construction, satisfies that for all $n\in\mathbb N$  the Hamiltonian $H_{\varepsilon_n}$ is $(n,\zeta_n,r_n)$-good with $r_n=\varepsilon_n^2\delta/2$.
\medskip

The proof of Theorem \ref{thm:mainparametric} is complete.

\medskip

\subsection{Proof of Theorem \ref{thm:mainparametricoutline}: normally hyperbolic cylinders with twist}\label{sec:Hamtomaps}
%\noindent {\it Proof of Theorem \ref{thm:mainparametricoutline}. }
We finally complete the proof of Theorem \ref{thm:mainparametricoutline} using Theorem \ref{thm:mainparametric}. 
To do so, given $h\in\mathcal U_{\rho,\sigma}(\delta) $ (this is the set constructed in Theorem  \ref{thm:mainparametric}), we define the 3 degrees-of-freedom Hamiltonian
\begin{equation}\label{eq:3dofHam}
\mathcal G_\varepsilon(h)(\theta_1,\theta_2,I_1,I_2,I_3)=\mathcal H_{\varepsilon}(h)(\theta_1,\theta_2,I_1,I_2;I_3)
\end{equation}
with $\mathcal H_\varepsilon(h)$ as in \eqref{eq:firstperturbH}.  Note that $\mathcal G_\varepsilon(h)$ thus defined has exactly the form (5.8)-(5.9).

\medskip

\noindent Item (0) of Theorem \ref{thm:mainparametricoutline} holds by construction, since $\mathcal G_\eps (h)=G_0+\eps g_0 +\eps^2 h$ where $h\in \cU_{\rho, \sigma}(\delta)$ and the norm of $g_0$ can be made smaller than $\de$ by an appropriate choice of the sequence $\{b_j\}$. 

\medskip

\noindent (1)  
The Hamiltonian $\mathcal G_\varepsilon$  has the form 
$$
\mathcal G_{\varepsilon}(\theta_1,\theta_2,I_1,I_2,I_3) = G_0(I_1, I_2,I_3) + O_{\C^2}(\varepsilon) .
$$
(In fact, the remainder can be assumed to be $O_{\C^r}(\varepsilon)$ for any $r\geq 2$, but $r=2$ will suffice for our arguments.)
The corresponding vector field  keeps $I_3$ constant and, on the resonant surface $\mathcal  R^{(n)}$ (see \eqref{eq:resonancelocus}), the restricted vector field associated to the integrable Hamiltonian $G_0$ verifies (see \eqref{eq:sk} and \eqref{eq:lowerboundfastfreq}})
\[
k_1^{(n)}\dot\theta_1+k_2^{(n)}\dot \theta_2= k_1^{(n)}\partial_{I_1} G_0+k_2^{(n)}\partial_{I_2} G_0%=(k_1^{(n)}s_2^{(n)}-k_2^{(n)}s_1^{(n)}) \frac{1}{s_2^{(n)}}\partial_{I_1}G_0
=-\frac{1}{s_2^{(n)}}\partial_{I_1}G_0\neq 0.
\]
Hence, the Poincar\'e map  in $\Sigma_n$ is locally well-defined around $ \mathcal R^{(n)}$.

\medskip

\noindent (2) 
 Let $\gamma_n(I_3)$ be the family of hyperbolic periodic orbits (for $\mathcal H_\varepsilon$) in  Theorem \ref{thm:mainparametric}. Then it follows from our construction that the union of $\gamma_n(I_3)$ over $(\theta_3,I_3)\in \mathbb T\times [-\rho_0,\rho_0]$ forms an invariant cylinder
 $\tilde\Lambda_n$ of the form 
 \[
\tilde\Lambda_n= \{  (\tilde\theta_1^{(n)}(\theta_2,I_3),\theta_2,\theta_3, \tilde I_1^{(n)}(\theta_2,I_3), \tilde I_2^{(n)}(\theta_2,I_3),I_3)\colon \ (\theta_2,\theta_3)\in\mathbb T^2,\  I_3\in[-\rho_0,\rho_0]\}\subset \{\mathcal G_{\varepsilon_n}(h)=e_n\}
 \]
 for certain real-analytic functions $\tilde \theta_1^{n)},\tilde I_1^{(n)},\tilde I_2^{(n)}$ satisfying
 \[
|\tilde\theta_1^{(n)}+\frac{s_2^{(n)}}{s_1^{(n)}}\theta_2|_{\C^1}\leq \frac1n,\quad |\tilde I_1^{(n)}-a_{1}^{(n)}|_{C^1}\leq \frac 1n,\quad  |\tilde I_2^{(n)}-a_{2}^{(n)}|_{C^1}\leq \frac 1n
 \]
   with $a_1^{(n)},a_2^{(n)}$ as in  \eqref{eq:anpaths0}-\eqref{eq:anpaths}. 
We define the cylinder $\Lambda_n$ in \eqref{eq:2dcylinderoutline} as
 \[
 \Lambda_n=\widetilde\Lambda_n\cap \{k_1^{(n)}\theta_1+k_2^{(n)}\theta_2=0\}.
 \]

\noindent(3)  
In the following, we denote  
\[
(\OLDtau_1(I),\OLDtau_2(I),\OLDtau_3(I)):=\nabla G_0(I),
\]
and write $  I^{(n)}(I_3)=(  I_1^{(n)}(I_3),  I_2^{(n)}(I_3),I_3)$.  By the calculation in item (1) above, combined with the fact that the vector field keeps $I_3$ constant, the return time (under the integrable dynamics) to the Poincar\'e section $\Sigma_n$ on the cylinder is  (see \eqref{eq:nun})
\[
\nu_n(I_3):=\frac{ s_2^{(n)}}{\OLDtau_1 (I^{(n)}(I_3))}=-\frac{ s_1^{(n)}}{\OLDtau_2 (I^{(n)}(I_3))}.
\]
Therefore, we observe that the  restriction  of  the Poincar\'e map $\Phi_{n}$ to the  two-dimensional cylinder $\Lambda_n$  in \eqref{eq:2dcylinderoutline} is given by 
\begin{equation}%\label{eq:twist}
\Phi_{n}|_{\Lambda_n}:(\th_3, I_3)\mapsto (\th_3  - \nu_n (I_3) \OLDtau_3  (  I^{(n)}(I_3))+ O_{\C^1}(\varepsilon),\ I_3):= (\th_3  + \tilde \nu_n (I_3, \varepsilon),\ I_3). 
\end{equation}
  Hence, if we denote
  \[
  L\OLDtau_1= \nabla \OLDtau_1 \cdot \frac{\mathrm{d}I^{(n)}}{\mathrm dI_3},\qquad\qquad L\OLDtau_3= \nabla \OLDtau_3 \cdot \frac{\mathrm{d}I^{(n)}}{\mathrm dI_3},
  \]
  and let $\pi_{\theta_3}\Phi_{n}$ stand for the $\theta_3$-component of $\Phi_n$,  the twist of the map $\Phi_{n}$ is given by the following expression:
\[
\partial_{I_3}( \pi_{\theta_3}\Phi_{n})(\th_3, I_3)=\frac{2\pi s_2^{(n)} }{\OLDtau_1^2}(\underbrace{\OLDtau_1  L\OLDtau_3-\OLDtau_3 L\OLDtau_1\black}_{T_n(I_3)})+O(\varepsilon), 
\]
where the right hand side is evaluated at $I=I^{(n)}(I_3)$.   
Denote
$$
T_n(I_3):= \OLDtau_1  L\OLDtau_3-\OLDtau_3 L\OLDtau_1 .
$$
In the following, we show that $T_n(I_3)\neq 0$ for all $I_3$ sufficiently close to the origin. 
To do so, the first step is to compute the vector $\mathrm d I^{(n)}/\mathrm d I_3$.
From the definition of  the functions $  I^{(n)}_1$ and   $I^{(n)}_2$,  a straightforward computation shows that at $I=  I^{(n)}(I_3)$ we have:
\[
\binom{-\OLDtau_3}{-s_1^{(n)}\partial_{3}\OLDtau_1-s_2^{(n)}\partial_3\OLDtau_2}=\underbrace{\begin{pmatrix} \OLDtau_1&\OLDtau_2\\ s_1^{(n)} \partial_1 \OLDtau_1+s_2^{(n)}\partial_1\OLDtau_2& s_1^{(n)} \partial_2 \OLDtau_1+s_2^{(n)}\partial_2\OLDtau_2\end{pmatrix}}_{B_n}\binom{\frac{\mathrm d   I^{(n)}_1}{\mathrm d I_3}}{\frac{\mathrm d   I^{(n)}_2}{\mathrm d I_3}},
\]
We have already checked in Lemma \ref{lem:resnonatpaths} that, for an open and dense subset of $G_0\in\mathcal N_\rho^{3,d}(\omega)$ we have $\mathrm{det}(B_n)\neq 0$. Moreover,   
\begin{align*}
\binom{\frac{\mathrm d   I^{(n)}_1}{\mathrm d I_3}}{\frac{\mathrm d   I^{(n)}_2}{\mathrm d I_3}}=& B_n^{-1}\binom{-\OLDtau_3}{-s_1^{(n)}\partial_{3}\OLDtau_1-s_2^{(n)}\partial_3\OLDtau_2}=\frac{1}{\mathrm{det}(B_n)}\binom{
-\OLDtau_3(s_1^{(n)}\partial_2\OLDtau_1+s_2^{(n)}\partial_2\OLDtau_2)+\OLDtau_2(s_1^{(n)}\partial_3\OLDtau_1+s_2^{(n)}\partial_3\OLDtau_2)}{\OLDtau_3(s_1^{(n)}\partial_1\OLDtau_1+s_2^{(n)}\partial_1\OLDtau_2)-\OLDtau_1(s_1^{(n)}\partial_3\OLDtau_1+s_2^{(n)}\partial_3\OLDtau_2)},
\end{align*}
where all the functions are evaluated at $I=  I^{(n)}(I_3)$. Using the equalities $\partial_i \OLDtau_j=\partial_j \OLDtau_i$ for $i=1,2,3$, one can deduce by a calculation that the following holds:
\[
\frac{\mathrm{d} I^{(n)}}{\mathrm{d }I_3}=-\frac{1}{\mathrm{det}(B_n)}\  D^2 G_0\  \bs s^{(n)}\wedge \nabla G_0.
\]
Hence, for $i=1,3$ \black we have:
\[
\mathrm{det} (B_n) L\OLDtau_i=  D^2 G_0 e_i\cdot (D^2G_0 \bs s^{(n)}\wedge \nabla G_0 )=\lVert \bs s^{(n)}\rVert  D^2 G_0 e_i\cdot (D^2G_0 \bs v^{(n)}\wedge \nabla G_0 ),
\]
where $e_1=(1,0,0)$, $e_3=(0,0,1)$ and we have defined $\bs v^{(n)}=\bs s^{(n)}/\lVert \bs s^{(n)}\rVert$. 
Then, if we assume that 
\[
T: = D^2G_0(0)\begin{pmatrix} -\omega_3\\0\\ \omega_1 \end{pmatrix}\  \cdot\  \left(D^2 G_0(0) \bs v \wedge \omega \right)  \neq 0, 
\]
where $\bs v= (-\omega_2,\omega_1,0)$,
which is, obviously, an open and dense property, we get $T_n(I)\neq 0$ on a sufficiently small ball around the origin.
\medskip

(4) This fact follows from  Proposition  \ref{prop:splitting} and the definition of the Poincar\'e section $\Sigma_n$.\qedhere
%\end{proof}

We have completed the proof of Theorem \ref{thm:mainparametricoutline}.

\medskip

 \section{Diffusion along  homoclinic cylinders: proof of Theorem \ref{thm:_apply_GToutline}}\label{sec:diffusion}

In this section we complete the proof of Theorem \ref{thm:_apply_GToutline}, which follows plainly from Lemma \ref{lem:openballcylinders} and Proposition \ref{prop_apply_GT} below.

\subsection{Arnold-Moeckel's \black diffusion mechanism}  
The proof of Theorem \ref{thm:_apply_GToutline} relies on ideas developed by Moeckel \cite{MR1884898} in the context of Arnold diffusion  and later extended by  Gelfreich and Turaev \cite{GelfreichTuraevChaotic}.

\subsection*{A family of 3 degrees-of-freedom Hamiltonians with diffusion}
Fix $\rho,\sigma>0$. Given $\delta>0$, let 
 $\mathcal U_{\rho,\sigma}(\delta)$ be the set in Theorem \ref{thm:mainparametricoutline} and, for $g_1\in \mathcal U_{\rho,\sigma}(\delta)$, let   $\mathcal G_{\varepsilon}(g_1)\in \mathcal Q^{3,d}_{\rho,\sigma}$ be as in Theorem \ref{thm:mainparametricoutline}.  Then, given  $g_2\in \mathcal B^3_{\rho,\sigma}(\delta)$ and $\mu\in\mathbb R$, we let 
\begin{equation}\label{eq:muhamiltonian}
\mathcal G_{\varepsilon,\mu}(g_1,g_2)=\mathcal G_{\varepsilon}(g_1)+\mu g_2.
\end{equation}

Our first observation is that for $\mu>0$ small enough and $g_2\in \mathcal B^3_{\rho,\sigma}(\delta)$, the cylinders $\Lambda_n$ in %Proposition \ref{prop:Poincare_map} 
Theorem \ref{thm:mainparametricoutline} 
admit a {\it continuation} as normally hyperbolic cylinders for the flow of the Hamiltonian \eqref{eq:muhamiltonian}.

%\blue Our first observation is that for $\mu>0$ small %enough and $g_2\in \mathcal B^3_{\rho,\sigma}(\delta)$, %the Hamiltonian \eqref{eq:muhamiltonian} admits normally %hyperbolic cylinders that are $\mu$-close to $\Lambda_n$ %obtained  in Proposition \ref{prop:Poincare_map}. \black

\begin{rem}
   By {\it continuation} we mean that 
   these invariant objects are deformed smoothly when the parameter $\mu$ varies close to zero. In the case where the normally hyperbolic manifold under consideration is a compact manifold without boundary, the smoothness of the continuation  is a consequence of the classical results \cite{MR501173}. Observe, however, that $\Lambda_n$ are manifolds with boundary so, a priori, this continuation is not unique.  In order to have uniqueness of the continuation we will proceed as in \cite{GelfreichTuraevChaotic} and assume from now on that the  boundaries  of $\Lambda_n$  correspond to curves at which the rotation number is Diophantine (since in this case one can guarantee that the continuation is also  unique).
\end{rem}
\black

\begin{lem}\label{lem:openballcylinders}
    Fix any $k\in\mathbb N$. There exists  a positive sequence $\mu_n\to 0$ such that, for any $n\in\mathbb N$, any $g_2\in \mathcal B^3_{\rho,\sigma}(\delta)$ and all $0<\mu\leq\mu_n$, the Hamiltonian \eqref{eq:muhamiltonian} has a  normally hyperbolic invariant cylinder $\Lambda_n(g_1,g_2;\mu)$ with a family of 
$\C^k$ homoclinic cylinders $\{\Gamma_{n,i}(g_1, g_2;\mu)\}_{i=0}^m$ \black corresponding to the unique continuation of $\Lambda_n (g_1)$ and $\{\Gamma_{n,i} (g_1)\}_{i=0}^m$ (with $\Lambda_n (g_1),\Gamma_{n,i}(g_1)$ as in Theorem \ref{thm:mainparametricoutline}).
\end{lem}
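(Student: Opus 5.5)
The plan is to treat each $n$ separately and apply the persistence theory of normally hyperbolic invariant manifolds with boundary to the Poincar\'e map $\Phi_n$ on $\Sigma_n$. Then $\mu_n$ is defined as the size of the resulting persistence neighbourhood, shrunk if necessary so that $\mu_n\le 1/n$ and $\mu_n\to0$. Nothing needs to be uniform in $n$: for fixed $n$, Theorem \ref{thm:mainparametricoutline} gives $\varepsilon_n$ and the objects $\Lambda_n(g_1)$ and $\Gamma_{n,i}(g_1)$, $i=0,\dots,m$. Since $g_2$ ranges over the bounded ball $\mathcal B^3_{\rho,\sigma}(\delta)$, Cauchy estimates on a slightly smaller complex domain show that $\mu g_2$ is $O(\mu\delta)$-small in $\C^{k+1}$ uniformly in $g_2$. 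Hence the Poincar\'e map of $\mathcal G_{\varepsilon_n,\mu}(g_1,g_2)$ on the (deformed) section $\Sigma_n$ is $O(\mu)$-close in $\C^{k+1}$ to $\Phi_n$ on a neighbourhood of $\Lambda_n\cup\bigcup_i\Gamma_{n,i}$. Here the section is $\{k^{(n)}\cdot(\theta_1,\theta_2)=0\}$ in the energy level $e_n$; it stays transverse by item (1) of Theorem \ref{thm:mainparametricoutline} and \eqref{eq:lowerboundfastfreq}.

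First, the cylinder. $\Lambda_n(g_1)$ is normally hyperbolic for $\Phi_n$. Its normal rates are of order $\sqrt{\varepsilon_n b_n}$ and come from the pendulum \eqref{eq:integrablen}, while the inner dynamics is the integrable twist map \eqref{eq:twist}, which has zero Lyapunov exponents. So the spectral gap gives $r$-normal hyperbolicity for every $r$, in particular for $r=k$.

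To deal with the boundary, I follow \cite{GelfreichTuraevChaotic}. I choose the endpoints $\pm\rho_0$ (shrinking $\rho_0$ slightly if needed) so that the boundary circles $\{I_3=\pm\rho_0\}$ carry Diophantine rotation numbers $\tilde\nu_n(\pm\rho_0,\varepsilon_n)$; the twist condition $T_n\neq0$ ensures such values are dense. I then enlarge $\Lambda_n$ slightly beyond these circles. By KAM for the twist map restricted to the continued cylinder, invariant boundary circles survive for $\mu$ small. Fenichel theory for overflowing/inflowing manifolds, or equivalently the classical results \cite{MR501173} applied to a modified map that agrees with $\Phi_n$ near $\Lambda_n$, gives a $\C^k$ locally invariant continuation. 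Capping it with the persisting KAM circles yields a genuinely invariant cylinder $\Lambda_n(g_1,g_2;\mu)$, unique and depending $\C^k$ on $\mu$. The stable and unstable manifolds $W^{s,u}(\Lambda_n)$ also continue in $\C^k$.

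Second, the homoclinic cylinders. By item (4) of Theorem \ref{thm:mainparametricoutline} and Proposition \ref{prop:splitting}, for each $I_3\in[\cI_i^-,\cI_i^+]$ the periodic orbit $\gamma_n(I_3)$ has a transverse homoclinic orbit inside the energy level. In the $3$-degrees-of-freedom setting this means $W^u(\Lambda_n)$ and $W^s(\Lambda_n)$ intersect transversally within $\Sigma_n$ along $\Gamma_{n,i}$. I will also need strong transversality: the intersection must be transverse to the strong stable and unstable foliations, which here follows from the $\theta_3$-independence, as the foliations are by products with $\theta_3$. I work on slightly enlarged compact intervals, using the overlaps $\cI_{i+1}^-<\cI_i^+$ so that each $\Gamma_{n,i}$ is a compact cylinder with margin. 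Then transversality is an open condition in $\C^1$, and the intersection persists as a $\C^k$ cylinder $\Gamma_{n,i}(g_1,g_2;\mu)$ by the implicit function theorem applied to $\C^k$ graphs. Since there are finitely many $i$, one $\mu$-threshold works for all of them. I then set $\mu_n$ to be the minimum of the thresholds, $1/n$, and $\mu_{n-1}/2$.

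The main obstacle is the uniqueness and invariance of the continuation of a cylinder with boundary, uniformly over $g_2$ in the ball. Here I would rely on the Diophantine-boundary argument of \cite{GelfreichTuraevChaotic}, checking that KAM smallness depends only on the $\C^{k+1}$-size of $\mu g_2$ and not on the particular $g_2$.
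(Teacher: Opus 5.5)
Your proposal is correct and follows essentially the same route as the paper. The shear (unipotent) form of $\Phi_n|_{\Lambda_n}$ gives $r$-normal hyperbolicity for every $r$, so Hirsch--Pugh--Shub persistence yields a $\C^k$ continuation uniformly over the bounded ball of $g_2$, with uniqueness secured by Diophantine boundary circles as in Gelfreich--Turaev; the finitely many compact transverse homoclinic pieces then persist because transversality is an open condition. You add details the paper leaves implicit (Cauchy estimates for uniformity in $g_2$, KAM capping at the boundary, reduction of transversality in the $3$-degrees-of-freedom energy level to the $2$-degrees-of-freedom splitting). The one thing to tidy is to run the persistence and KAM step in a class $\C^{r}$ with $r\ge k$ large enough for KAM, which the arbitrary-$r$ normal hyperbolicity permits.
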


\begin{proof}
We start by observing  that, for the normally hyperbolic compact invariant cylinders $\Lambda_n (g_1)$ in \eqref{eq:2dcylinderoutline},
%{eq:2dcylinder}, 
the restricted dynamics is such that the rate of expansion/contraction is, at most, linear. Indeed,$\Phi_n|_{\Lambda_n}$  is a symplectic twist map preserving the foliation $I_3=\mathrm{const}$ (see Theorem \ref{thm:mainparametricoutline}); hence,  %Proposition\ref{prop:Poincare_map})
in coordinates $(\theta_3,I_3)\in \mathbb T\times[-\rho_0,\rho_0]$, it must be of the form (see \eqref{eq:twist}
\[
\Phi_n|_{\Lambda_n}(\theta_3,I_3)=(\theta_3+\tilde\nu_n(I_3),I_3)
\]
for some real-analytic function $\tilde \nu_n:[-\rho_0,\rho_0]\to \mathbb R$, see \eqref{eq:twist}. Hence, its differential is a unipotent matrix, and for any iterate $k\in \mathbb Z$ we have 
\[
D\Phi_n^k|_{\Lambda_n}(\theta_3,I_3)=\begin{pmatrix}
    1&k \tilde \nu_n'(I_3)\\
    0&1
\end{pmatrix} .
\]
\black
Controlling the ratio between normal expansion/contraction and tangential expansion (i.e., for the restricted dynamics)  is crucial to control the regularity of the continuation of the normally hyperbolic invariant manifold $\Lambda_n$ (see \cite{MR501173}). \black

Fix any $k\in\mathbb N$. 
Then,  by virtue of the above observation, given $n\in\mathbb N$ for $\mu$ sufficiently small (depending on $k$ and $n$), for any $g_2\in\mathcal B_{\rho,\sigma}^3(\delta)$, the Hamiltonian \eqref{eq:muhamiltonian} has a $\C^k$ normally hyperbolic  invariant cylinder $\Lambda_n(g_1, g_2;\mu)$ which is $O_{\C^k}(\mu)$ close to $\Lambda_n(g_1)$ (see, for example, \cite{MR501173}). Since, moreover, the stable and unstable invariant manifolds of $\Lambda_n$ behave regularly under smooth perturbations, the proof is now a straightforward consequence of the compactness of the family of cylinders $\{\Gamma_{n,i}\}_{i=0}^m$.
\end{proof}

We now construct a large set of perturbations $g_2$ such that the  Hamiltonians $\mathcal G_{\varepsilon,\mu}(g_1,g_2)$ as in \eqref{eq:muhamiltonian} exhibit orbits drifting along the cylinder $\Lambda_n(g_2; \mu)$ obtained in Lemma \ref{lem:openballcylinders}. 
Given a cylinder $\Lambda_n$ as in \eqref{eq:2dcylinderoutline} we denote by $\partial^{\pm}\Lambda_n$  the circles  $\Lambda_n\cap \{I_3= \pm\rho_0  \}$.

\begin{prop}\label{prop_apply_GT}
Let $\mu_n$ be the sequence in Lemma \ref{lem:openballcylinders}. For any $n\in\mathbb N$  there exists an open and dense set $\mathcal V^{(n)}_{\rho,\sigma} (\delta) \subset \mathcal B^3_{\rho,\sigma}(\delta)$ such that for any $g_2\in\mathcal V^{(n)}_{\rho,\sigma}$   
the Hamiltonian in \eqref{eq:muhamiltonian} satisfies the following.

If $U^\pm_n$ is an open neighborhood of $\partial^{\pm} \Lambda_n(g_1)$, then there exists an orbit of the Hamiltonian $\mathcal G_{\varepsilon_n,\mu_n}(g_1, g_2)$ \black  connecting $U^-_n$ to $U^+_n$.
\end{prop}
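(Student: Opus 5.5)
The plan is to verify that, for each fixed $n$, the Poincar\'e map $\Phi_n$ of $\mathcal G_{\varepsilon_n,\mu}(g_1,g_2)$ on $\Sigma_n$ meets the hypotheses of the Gelfreich--Turaev theorem \cite{GelfreichTuraevChaotic}. We then take $\mathcal V^{(n)}_{\rho,\sigma}(\delta)$ to be the set of $g_2$ for which the conclusion of that theorem holds. Lemma \ref{lem:openballcylinders} gives, for $0<\mu\le\mu_n$ and every $g_2\in\mathcal B^3_{\rho,\sigma}(\delta)$, a $\C^k$ normally hyperbolic cylinder $\Lambda_n(g_1,g_2;\mu)$ and transverse homoclinic cylinders $\Gamma_{n,i}(g_1,g_2;\mu)$, $i=0,\dots,m$. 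These continue the objects of Theorem \ref{thm:mainparametricoutline}, and their $I_3$-projections $[\cI_i^-,\cI_i^+]$ overlap and cover $[-\rho_0,\rho_0]$. By Theorem \ref{thm:mainparametricoutline}(3), at $\mu=0$ the inner map is the integrable twist map $(\theta_3,I_3)\mapsto(\theta_3+\tilde\nu_n(I_3),I_3)$ with $\tilde\nu_n'\neq0$; for $\mu$ small it remains a $\C^1$-small exact symplectic perturbation of a twist map. Each $\Gamma_{n,i}$ defines a scattering map $S_{n,i}$ on the strip $\{I_3\in[\cI_i^-,\cI_i^+]\}\subset\Lambda_n$. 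Together, the inner map and the finitely many scattering maps form an iterated function system on $\Lambda_n$, which is the setting of \cite{GelfreichTuraevChaotic}. I would shrink $\mu_n$ if needed so that all of this holds uniformly in $g_2$ in the ball. Following the remark preceding Lemma \ref{lem:openballcylinders}, I would also trim the cylinder slightly so that its boundary circles carry Diophantine rotation numbers, which makes the continuation unique.

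The key steps are as follows.
\begin{enumerate}
\item Use the overlap $\cI_{i+1}^-<\cI_i^+$ to chain the strips, so that every level $I_3$ in the cylinder lies in the domain of some scattering map. In $\mathbb T\times[-\rho_0,\rho_0]$ I would then define the set $\mathcal D$ of points reachable from a neighbourhood of $\partial^-\Lambda_n$ by compositions of the inner map and the $S_{n,i}$.
\item Recall the Gelfreich--Turaev (Moeckel) mechanism. If no orbit of the system travels from $U^-_n$ to $U^+_n$, then the boundary of the forward-invariant region generated from $\partial^-\Lambda_n$ contains an essential curve $\mathcal C$ invariant under both the inner map and all the scattering maps. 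Since the system is area-preserving and twist, Birkhoff's theorem makes $\mathcal C$ a Lipschitz graph over $\mathbb T$.
\item Note that at $\mu=0$ such common invariant curves may well exist: both the inner map and the scattering maps preserve the $I_3=\mathrm{const}$ foliation, because $g_1$ does not depend on $\theta_3$. This is exactly why the $\theta_3$-dependent perturbation $\mu g_2$ is needed.
\item Prove that the set of $g_2$ for which the maps share no common invariant essential curve is open and dense. Openness follows from the robustness of the drifting orbit: connecting the open sets $U^\pm_n$ by a finite orbit segment is an open condition in $g_2$. Density is the content of the Gelfreich--Turaev genericity result. For any $g_2$ and any neighbourhood of it, I would add a small analytic trigonometric polynomial in $\theta_3$. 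Its first-order (Melnikov-type) effect on one scattering map $S_{n,i}$ differs from its effect on the inner map. This difference displaces each $S_{n,i}$ by a nonconstant shift in the $I_3$ direction, which destroys any curve invariant for both maps.
\end{enumerate}

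I expect the last step to be the main obstacle: showing that real-analytic perturbations $g_2$, which act globally rather than through bump functions, are rich enough to break every potential common invariant curve. The difficulty is that a common invariant curve for the inner map and all scattering maps is not a single object but can sit at any level of the cylinder. I would first try to invoke \cite{GelfreichTuraevChaotic} directly, since its main theorem is stated for real-analytic families with a normally hyperbolic cylinder and a transverse homoclinic cylinder. The remaining work would then be checking its hypotheses: the twist condition from (H2), transversality from Proposition \ref{prop:splitting}, and the uniqueness of the continuation. Finally I would replace $\partial^\pm\Lambda_n(g_1)$ by the nearby boundaries of the continued cylinder, which is harmless because $U^\pm_n$ are open.
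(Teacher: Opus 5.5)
Your route is the paper's: check, for the Poincar\'e map on $\Sigma_n$, the hypotheses of Gelfreich--Turaev, using the twist from (H2), the transversality of Proposition \ref{prop:splitting} and the $\theta_3$-independence of $\mathcal G_{\varepsilon_n}(g_1)$. Two steps, however, do not go through as you wrote them.

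\textbf{Direct use of Theorem 2.} Theorem 2 of \cite{GelfreichTuraevChaotic} cannot be invoked directly. It needs a \emph{single} homoclinic cylinder that is simple relative to nested KAM-bounded cylinders $\bA\subset\hA\subset\Lambda$. Then one scattering map is defined on all of $\bA$, and its boundary curves are the ones being connected. Here the $\Gamma_{n,i}$ exist only over the overlapping strips $[\cI_i^-,\cI_i^+]$. For the same reason, your step 2 cannot produce a curve invariant under ``all'' the scattering maps, since $S_{n,i}$ is only defined on its own strip.

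The paper replaces Theorem 2 by the chained version, Theorem \ref{thm:gelfreichturaevthm2}. Inside each strip one chooses KAM-bounded $\bA_i\subset\hA_i\subset\Lambda_i$, with consecutive ones overlapping so that $\bar\gamma_{i+1}^-$ lies in $\bA_i$. One checks that $\Gamma_{n,i}$ is simple relative to $(\hA_i,\Lambda_i)$ and to $(\bA_i,\hA_i)$, and builds eight disjoint homoclinic cylinders per strip as in Section 3.3 of \cite{GelfreichTuraevChaotic}. Their Theorems 4 and 3 are then applied strip by strip to connect $\bar\gamma_i^-$ to $\bar\gamma_{i+1}^-$, and a shadowing argument concludes.

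The simplicity conditions are exactly where your observation in step 3 is needed. At $\mu=0$ every scattering map preserves each circle $\{I_3=\mathrm{const}\}$, so it maps each KAM-bounded sub-cylinder into itself, and this persists for small $\mu$. You must also check \emph{symmetric} normal hyperbolicity. It holds because the inner differential is a shear, so rescaling the norm on $N^c$ makes $\alpha$ as close to $1$ as needed.

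\textbf{Openness.} Your openness argument fails. Robustness of a finite orbit segment gives openness in $g_2$ only for a \emph{fixed} pair $U^\pm_n$. The proposition asks for a connecting orbit for every pair of neighbourhoods. That is an intersection over a neighbourhood basis, a $G_\delta$ condition that need not be open. It also says nothing about openness of ``no common invariant curve''.

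In the paper, openness comes from the form of the Gelfreich--Turaev statement, combined with the uniform choice of $\mu_n$ you already make. The class $\mathcal V_{\rho,\sigma}(m)$ is open and contains $\mathcal G_{\varepsilon_n}(g_1)$, so for $\mu_n$ small the whole ball $\mathcal G_{\varepsilon_n}(g_1)+\mu_n\mathcal B^3_{\rho,\sigma}(\delta)$ lies inside it. One then takes $\mathcal V^{(n)}_{\rho,\sigma}(\delta)$ to be the preimage of the open dense set $\widetilde{\mathcal V}_{\rho,\sigma}(m)$ under $g_2\mapsto\mathcal G_{\varepsilon_n}(g_1)+\mu_n g_2$. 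If you want to prove openness of ``no common invariant curve'' yourself, use the uniform Lipschitz bound from Birkhoff's theorem and Arzel\`a--Ascoli to pass to limits of common invariant graphs.

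\textbf{Boundary replacement.} Your final sentence is not justified by openness of $U^\pm_n$. Since $\mu_n$ is fixed, a small $U^\pm_n$ need not meet the continued cylinder. What the argument actually delivers is a connection between neighbourhoods of KAM curves of the continued cylinder lying near its boundary. That is all the application in Theorem \ref{thm:approxbydiffusion} uses.
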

The result of Proposition \ref{prop_apply_GT} follows directly from Theorem 2 and Remark 3 in the paper \cite{GelfreichTuraevChaotic}.  We now verify that all the hypotheses needed to apply  Theorem 2 of  \cite{GelfreichTuraevChaotic} are satisfied  as a consequence of Theorem \ref{thm:mainparametricoutline}.
%Proposition \ref{prop:Poincare_map}. \black
\medskip

\subsection{Proof of Proposition \ref{prop_apply_GT}}\label{sec:appendixGT}

 Here we verify that all the assumptions needed to apply Theorem 2 in \cite{GelfreichTuraevChaotic} are satisfied in the setting of Proposition \ref{prop_apply_GT}. These are assumptions on  the Poincar\'e map $\Phi_n:\Sigma_n\to \Sigma_n$ induced on the four-dimensional section
 \begin{equation}\label{eq:4dsec}
 \Sigma_n=\{k_1^{(n)}\theta_1+k_2^{(n)}\theta_2=0,\ \mathcal{G}_{\varepsilon_n}(\theta,I)=e_n\},
 \end{equation}
 introduced in Theorem \ref{thm:mainparametricoutline}.
 %Proposition \ref{prop:Poincare_map}.
 Recall that in Theorem \ref{thm:mainparametricoutline}
 %Proposition \ref{prop:Poincare_map}
 we have shown that the map $\Phi_n$ exhibits a two-dimensional invariant cylinder $\Lambda_n$ on which the restricted dynamics $\Phi_n|_{\Lambda_n}$ is given by an integrable twist map. 
 
 \subsection*{Notation and definitions}
 We start by recalling some of the definitions used in \cite{GelfreichTuraevChaotic}. Below $N=\mathbb T^2\times V$  where $V\in\mathbb R^2$ is an open set containing the origin.

\begin{defn}
%symmetrically normally-hyperbolic
A smooth invariant  two-dimensional cylinder $\Lambda$ of a map $\Phi:N\to \mathbb T^2\times\mathbb R^2$ is called {\it symmetrically normally-hyperbolic} if 
at each point $v\in \Lambda$ the tangent space is decomposed into a direct sum of three non-zero subspaces:
$T_vN= N_v^{c}\oplus N_v^{u}\oplus N_v^{s}$, where $N_v^{c}$ is the two-dimensional plane tangent to $\Lambda$ at the point $v$. The subspaces $N_v^{u,s}$ depend continuously on $v$ and are invariant with respect to the derivative $D \Phi$ of the map. We assume that for some choice of norms in $N_v^{c,u,s}$ there exist $\alpha>1$ and $\lambda \in (0,1)$ such that for every $v\in \Lambda$ we have:
\begin{equation}\label{eq:hyperbolicityestimates}
\begin{split}
\|D\Phi (v)_{\mid{N_v^c}}\|<\alpha, \quad &\|(D \Phi (v))^{-1}_{\mid {N_v^c}}\|<\alpha, \\
\|D\Phi(v)_{\mid_{N_v^{s}}}\|<\lambda, \quad &\|(D\Phi(v))^{-1}_{\mid_{N_v^{u}}}\|<\lambda,
\end{split}
\end{equation} 
where $\alpha^2 \lambda<1$.
The word {\it symmetrically} in this definition refers to the fact that in the estimates above {\it the same} pair of exponents, $\alpha$ and $\lambda$, bound both $D\Phi $ and
$(D\Phi )^{-1}$.
    
\end{defn}

\begin{defn}% KAM-curve
A smooth
invariant essential (non-contractible to a point) simple curve $\gamma\subset \Lambda$ is called  {\it KAM-curve} of the map $\Phi$ if  the map $\Phi$ restricted to $\gamma$ is
smoothly conjugated to the rigid rotation by a Diophantine frequency and the map $\Phi _{\mid_\Lambda}$
near $\gamma$ satisfies the twist condition. 
\end{defn}

\begin{defn} % homoclinic cylinder, simple relative to the cylinders $\bA$ and $A$
Let $\bA\subset \text{int} (\Lambda)$ be a compact invariant sub-cylinder in $\Lambda$, i.e., it is a closed region
in $\text{int} (\Lambda)$ bounded by two non-intersecting invariant essential simple curves $\gamma^+$ and $\gamma^-$. 
Let the set of points homoclinic to $\Lambda$ contain a smooth two-dimensional manifold
$\Gamma \subset W^u (\text{int} (\Lambda)) \cap W^s \text{int} (\Lambda) ) \setminus \Lambda$.  Then, $\Gamma$ is called  {\it homoclinic cylinder, simple relative to the cylinders $\bA$ and $\Lambda$}, if the following assumptions hold:
\begin{enumerate}[label={[S\arabic*]}]
\item  The strong transversality condition 
\[
T_y E^{ss}_y\oplus T_y E^{uu}_y\oplus T_y(W^s(\Lambda)\cap W^u(\Lambda))=\mathbb R^{4}
\]
holds for all $y\in \Gamma$. Here, for $y\in \Gamma$ we have denoted by $E^{ss}_y$ (resp. $E^{uu}_y$) the unique leave of the strong stable (resp. unstable) invariant foliation passing through $y$;

\item For every point $x\in \bar\Lambda$, the corresponding leaf of the foliation $E^{uu}$ intersects
the homoclinic cylinder $\Gamma$ at exactly one point each, and no two points in $\Gamma$
belong to the same leaf of the stable foliation, that is, if $y_1\in W^s_{x} \cap \Gamma$, $y_2\in W^s_{x} \cap \Gamma $, then $y_1=y_2$. In other words, the scattering map
\begin{equation}
F_\Gamma = \Omega^{+}_\Gamma \circ (\Omega _\Gamma^-)^{-1}: \bA \to \text{int}(\Lambda) 
\end{equation}
is well-defined; 

\item The set $F_\Gamma (\bA)$, image of $\bA$  by the scattering map $F_\Gamma $ contains an essential curve.

\end{enumerate}
\end{defn}

Given a real-analytic Hamiltonian $G:\mathbb T^3\times B_\rho\to \mathbb R$ we denote by $\Phi$  its induced Poincar\'e map on a transverse four-dimensional section $\Sigma_{G}$ contained in a constant energy level $\{G=e\}$. 
Given $\rho,\sigma>0$, Gelfreich and Turaev consider the set $\cV_{\rho,\sigma}$ of real-analytic Hamiltonians $G: \mathbb T^3_\sigma\times \mathbb B_\rho\to \CC$ for which the associated Poincar\'e maps
\begin{equation}\label{eq:poincmapfinal}
\Phi:= \Phi(G):\Sigma_G\to \Sigma_G
\end{equation}
on $\Sigma_G$
%\simeq N$\footnote{$\Sigma_H$ can be any real-analytic deformation of $\Sigma_{G_0}$ contained in a constant energy level of $G$.} 
satisfy the following:
\begin{enumerate}
\item 
Each map $\Phi$ with $G \in \cV_{\rho,\sigma}$  has an invariant, bounded by KAM-curves, symmetrically normally-hyperbolic, two-dimensional closed cylinder $\Lambda$;

\item  In $\Lambda$ there exist two invariant sub-cylinders $\bA$ and $\hA$ such that
$\bA\subset \text{int} (\hA)\subset \text{int} (\Lambda)$ each of the cylinders being bounded by KAM-curves;

\item  $\Phi$ has a homoclinic cylinder $\Gamma$ simple relative to $\hA$ and $\Lambda$, i.e., $F_\Gamma(\hA)\subset \mathrm{int}(\Lambda)$;
 
\item  The cylinder $\Gamma$ is simple relative to $\bA$ and $\hA$, i.e., 
$F_\Gamma(\bA)\subset \mathrm{int}(\hA)$;

\item  The map $\Phi \mid_\Lambda$ 
has the twist property.
\end{enumerate}

Note that the set $\mathcal V_{\rho,\sigma}$ is open with respect to the natural topology  in the Banach space $\mathcal Q^3_{\rho,\sigma}$ which was introduced in \eqref{spaceQdsigmarho} %{eq:banachspace}. 
Theorem 2  from \cite{GelfreichTuraevChaotic} states the following (see also Remark 3 in that paper). Let $\gamma^\pm$ be the boundary curves of $\bar \Lambda$.

\begin{thm}[Theorem 2 in \cite{GelfreichTuraevChaotic}]\label{thm:gelfreichturaevthm} 
In $\cV_{\rho,\sigma}$ there is an open and dense subset  $\tilde {\mathcal V}_{\rho,\sigma}$ such that for each Poincaré map
 $\Phi(G)$ as in \eqref{eq:poincmapfinal} with  $G\in \tilde  \cV_{\rho,\sigma}$
and for every two open neighborhoods $U^-$ of $\gamma^-$ and $U^+$ of $\gamma^+$, the image of $U^-$  by
some forward iteration of $\Phi(G)$ intersects $U^+$.
\end{thm}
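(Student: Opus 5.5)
The plan is to reduce the statement to a property of the two-dimensional dynamics on $\Lambda$, in the spirit of Moeckel \cite{MR1884898}. On $\Lambda$ we have two maps:
\begin{itemize}
\item the inner map $f=\Phi|_{\Lambda}$, which is a twist map;
\item the scattering map $F_\Gamma$ of the homoclinic cylinder $\Gamma$, defined on $\hA$ and taking values in $\operatorname{int}(\Lambda)$.
\end{itemize}
Both are exact symplectic, since $\Phi$ is the Poincar\'e map of a Hamiltonian flow. The argument will have two halves.

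\textbf{Shadowing half.} We show that any finite pseudo-orbit of the iterated function system $\{f, F_\Gamma\}$ staying inside $\hA$ is shadowed by a true orbit of $\Phi$, up to any prescribed $\epsilon$. By \eqref{eq:hyperbolicityestimates} and $\alpha^2\lambda<1$, the strong stable and strong unstable foliations of $W^{s,u}(\Lambda)$ are $\C^1$. Condition [S1] makes $\Gamma$ a transverse intersection. One then constructs true orbits by a standard "windows"/Conley–Easton or lambda-lemma argument: follow a leaf of $E^{uu}$ near $\Lambda$, pass along $\Gamma$ near a homoclinic excursion, and land near $F_\Gamma(x)$. This part is essentially standard once [S1]–[S3] hold, and it is robust under small perturbations.

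\textbf{Reduction to invariant curves.} Suppose $\Phi$ has no orbit from $U^-$ to $U^+$. Let $\mathcal O$ be the union of the forward images of $U^-\cap\bA$ under all finite words in $f$ and $F_\Gamma$, with $F_\Gamma$ applied only at points of $\hA$. By the shadowing half, $\mathcal O$ avoids $U^+$. Area preservation, the twist property and a Birkhoff-type argument then show the following. The boundary of the "filled" component of $\mathcal O\cap \bA$ containing $\gamma^-$ contains an essential curve that is invariant under $f$ and whose image under $F_\Gamma$ does not cross it. Area preservation of $F_\Gamma$ (exactness) upgrades this to $F_\Gamma$-invariance. Finally, Birkhoff's theorem for twist maps shows the curve is a Lipschitz graph over $\theta$, with Lipschitz constant bounded uniformly on $\bA$. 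So the mechanism fails only if $f$ and $F_\Gamma$ share an invariant essential Lipschitz graph in $\bA$. The hypothesis $F_\Gamma(\bA)\subset\operatorname{int}(\hA)$ guarantees that all the compositions above are defined.

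\textbf{Genericity half (main obstacle).} Let $\tilde{\mathcal V}_{\rho,\sigma}$ be the set of $G\in\mathcal V_{\rho,\sigma}$ with no common invariant essential curve in $\bA$.
\begin{itemize}
\item \emph{Openness.} The set of uniformly Lipschitz essential graphs in $\bA$ is compact in the Hausdorff topology. Both $f$ and $F_\Gamma$ depend continuously on $G$, because $\Lambda$, its foliations and $\Gamma$ persist. A limit of common invariant curves for $G_k\to G$ is therefore a common invariant curve for $G$, which gives openness.
\item \emph{Density.} This is the delicate step, because analytic perturbations cannot be localized. First I would try to perturb $G$ by $\mu g$ and compute the first-order variation of $F_\Gamma$ relative to $f$ through a Melnikov-type integral of $g$ along $\Gamma$. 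Generic $g$ make this variation a non-constant function on each $f$-invariant curve. Then I would exploit the structure of the set of $f$-invariant curves: it is closed, Lipschitz and totally ordered. Each such curve must be destroyed for $F_\Gamma$. To handle all curves at once, I would use that a curve invariant under both maps must carry a flux/rotation-number compatibility, and show that this compatibility fails after perturbation on a dense set. This is done by an argument on a finite cover of the compact family of curves combined with openness.
\end{itemize}
I expect this uniform destruction of all common invariant curves to be the hardest point. Gelfreich–Turaev handle it by producing a perturbation for which $F_\Gamma$ moves every candidate curve off itself.
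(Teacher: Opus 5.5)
\textbf{The gap is the density half of your genericity step, which is the actual content of the theorem.} Your shadowing half and your reduction to common invariant curves are the standard Moeckel-type ingredients and are acceptable in outline. One imprecision: a true orbit cannot realize $x\mapsto F_\Gamma(x)$ immediately. One shadows words $f^{m}\circ F_\Gamma\circ f^{n}$ with $m,n$ large, and recovers arbitrary IFS words, e.g.\ by Poincar\'e recurrence of $f$ on the invariant sub-cylinders. That is where openness of $U^\pm$ enters.

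What you write for density is not an argument. Asking that the first-order (Melnikov) variation of $F_\Gamma$ be non-constant on each unperturbed $f$-invariant curve does not exclude common invariant curves after perturbation:
\begin{itemize}
\item An analytic perturbation $\mu g$ also moves $f$ and its invariant curves.
\item First-order control of a persisting curve exists only for Diophantine (KAM) curves, with a smallness threshold that degenerates along the family.
\item In any continuous family of common curves, curves with rational or Liouville rotation number are dense. Such families are exactly what occurs here: $\mathcal G_{\eps_n}(g_1)$ does not depend on $\theta_3$, so every circle $\{I_3=\mathrm{const}\}$ is invariant under both $f$ and $F_\Gamma$.
\item The ``flux/rotation-number compatibility'' is never defined. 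Both maps are exact, so flux gives no constraint.
\item The finite-cover-plus-openness step only reduces global density to density near each common curve, which is precisely what remains unproved.
\end{itemize}

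\textbf{The route the paper follows is organized differently, and that difference is what makes density tractable.} Following \cite{GelfreichTuraevChaotic}, one first produces from the given homoclinic cylinder several homoclinic cylinders with pairwise disjoint orbits; eight suffice (Section 3.3 there). Each is simple relative to $(\hA,\Lambda)$ and to $(\bA,\hA)$, and one works with the IFS of their scattering maps. Disjointness of the orbits lets the individual scattering maps be perturbed essentially independently. Theorem 4 of \cite{GelfreichTuraevChaotic} then gives an open dense set on which these scattering maps share no invariant curve in $\bA$. Theorem 3 there gives an IFS orbit from $\gamma^-$ to $\gamma^+$, and shadowing finishes.

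Your proposal has no counterpart of this multiplication step. Your closing attribution to Gelfreich--Turaev is also not how the paper describes their argument: the genericity result it invokes concerns the family of scattering maps of several disjoint homoclinic cylinders, not the pair $(f,F_\Gamma)$. To close your own route you would need a genuine proof that, in the analytic class, $(f,F_\Gamma)$ generically has no common essential invariant curve in $\bA$. That proof must hold uniformly over the compact family of candidate curves, including those near resonant circles.
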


For our purposes, we need a slightly more general result that allows for multiple (small) homoclinic cylinders instead of a large one. More precisely, in the very same setting above, given $m\in\mathbb N$, we let $\mathcal V_{\rho,\sigma}(m)$ be the set of real-analytic Hamiltonians for which the associated Poincar\'e maps satisfy items $(1)$ and $(5)$ above and, in addition, satisfy the following:
\begin{enumerate}[label=(\arabic*$'$), start=2]
    \item In $\Lambda$ there exist $m$ triples of invariant sub-cylinders $\bA_i$, $\hA_i$ and $\Lambda_i$, $i=0,\dots,m-1$ such that
$\bA_i\subset \text{int} (\hA_i)\subset \text{int} (\Lambda_i)$, each of the cylinders being bounded by KAM-curves, which we denote by $\bar \gamma_i^{\pm}$, $\hat\gamma_i^\pm$ and $\gamma_i^{\pm}$. Moreover, for all $i=0,\dots,m-1$ we have\footnote{Given two KAM curves $\gamma, \gamma'$ we say that $\gamma<\gamma'$ if the graph of $\gamma$ is strictly below that of $\gamma'$.} 
\[
\gamma_{i+1}^-<\bar\gamma_i^+<\bar\gamma_{i+1}^+ ;
\]
%\color{blue} Jaime, here we must explain the notation %$\gamma_{i+1}^-<\bar\gamma_i^+$. \orange I have added a %footnote.  \color{black}
\item  $\Phi$ has $m$ homoclinic cylinders $\Gamma_i$, $i=0,\dots,m-1$ such that $\Gamma_i$ is simple relative to $\hA_i$ and $\Lambda_i$, i.e., $F_{\Gamma_i}(\hA_i)\subset \mathrm{int}(\Lambda_i)$;
 
\item  For all $i=0,\dots,m-1$ the cylinder $\Gamma_i$ is simple relative to $\bA_i$ and $\hA_i$, i.e., $F_{\Gamma_i}(\bA_i)\subset \mathrm{int}(\hA_i)$.
\end{enumerate}

A straightforward adaptation  
%\color{blue} Jaime, here we must explain the modification %\orange Done, let me know if you think we should add more. %\color{black}  
of the argument used to prove Theorem 2 in \cite{GelfreichTuraevChaotic} provides the following statement.

\begin{thm}[After Theorem 2 in \cite{GelfreichTuraevChaotic}]\label{thm:gelfreichturaevthm2} 
Fix any $m\in\mathbb N$. In $\cV_{\rho,\sigma}(m)$ there is an open and dense subset  $\tilde {\mathcal V}_{\rho,\sigma}(m)$ such that for each Poincaré map
 $\Phi(G)$ as in \eqref{eq:poincmapfinal} with  $G\in \tilde  \cV_{\rho,\sigma}(m)$
and for every two open neighborhoods $U^-$ of $\bar\gamma^-_0$ and $U^+$ of $\bar\gamma^+_{m}$, the image of $U^-$  by
some forward iteration of $\Phi(G)$ intersects $U^+$.
\end{thm}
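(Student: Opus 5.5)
The approach is to reduce Theorem \ref{thm:gelfreichturaevthm2} to repeated use of the mechanism behind Theorem \ref{thm:gelfreichturaevthm}, one homoclinic cylinder at a time, and then to chain the resulting drifts through the overlaps $\gamma_{i+1}^-<\bar\gamma_i^+$.

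First I would fix $G\in\mathcal V_{\rho,\sigma}(m)$. For each $i$, the data $(\Lambda_i,\hA_i,\bA_i,\Gamma_i)$ together with the twist property on $\Lambda$ satisfy exactly hypotheses (1)--(5) of $\mathcal V_{\rho,\sigma}$. The only difference is that the normally hyperbolic cylinder is the sub-cylinder $\Lambda_i\subset\Lambda$. This sub-cylinder is bounded by KAM curves and inherits symmetric normal hyperbolicity from $\Lambda$.

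The Gelfreich--Turaev argument then runs in two steps. In the first step, for an open and dense set of perturbations, a Moeckel-type combination of the inner twist map and the scattering map $F_{\Gamma_i}$ destroys every invariant essential curve of the iterated function system $\{\Phi|_{\Lambda},F_{\Gamma_i}\}$ inside $\bA_i$. Genericity here comes from real-analytic perturbations moving the scattering map relative to the twist. In the second step, a shadowing lemma converts pseudo-orbits of this system into true orbits of $\Phi$. I would verify that their proof is local to the triple $(\Lambda_i,\hA_i,\bA_i)$. The openness condition $F_{\Gamma_i}(\hA_i)\subset\mathrm{int}(\Lambda_i)$ keeps all scattering images inside the region where normal hyperbolicity and the strong stable/unstable foliations are controlled. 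This yields an open dense set $\tilde{\mathcal V}^{(i)}\subset\mathcal V_{\rho,\sigma}(m)$ with the following property: for every pair of neighborhoods of $\bar\gamma_i^-$ and $\bar\gamma_i^+$, some pseudo-orbit of the iterated function system on $\bA_i$ connects them.

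Next I set $\tilde{\mathcal V}_{\rho,\sigma}(m)=\bigcap_{i=0}^{m-1}\tilde{\mathcal V}^{(i)}$, which is a finite intersection and hence still open and dense. To concatenate, I use the ordering $\gamma_{i+1}^-<\bar\gamma_i^+<\bar\gamma_{i+1}^+$ together with the fact that, inside the transition zones of consecutive triples, the inner dynamics is a twist map with no obstruction. A pseudo-orbit for $\Gamma_i$ reaching a neighborhood of $\bar\gamma_i^+$ has then entered $\bA_{i+1}$, or at least the region between $\gamma_{i+1}^-$ and $\bar\gamma_{i+1}^+$. There the pseudo-orbit built from $\Gamma_{i+1}$ can start, since the Moeckel construction yields drift from any point of $\bA_{i+1}$, not merely from a neighborhood of its lower boundary. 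Gluing these pieces gives a single finite pseudo-orbit from $U^-\ni\bar\gamma_0^-$ to $U^+\ni\bar\gamma^+_m$. It uses finitely many scattering maps, each applied within its simplicity domain.

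The last step is to shadow this concatenated pseudo-orbit. Since only finitely many cylinders $\Gamma_i$ are involved and each satisfies [S1]--[S3], the shadowing lemma for finite chains of scattering maps and inner iterates applies directly: transversality [S1] is uniform by compactness. The resulting true orbit of $\Phi(G)$ starts in $U^-$ and some forward iterate lies in $U^+$.

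The main obstacle is the junction step. I must ensure that the drift produced at stage $i$ ends at a point from which stage $i+1$ can start, rather than only near the curve $\bar\gamma_i^+$, which might be an invariant obstruction for the system generated by $\Gamma_{i+1}$. I would handle this by strengthening the per-cylinder conclusion to the statement that $\bA_i$ contains no invariant essential curve of the iterated function system. Transitivity of pseudo-orbits within $\bA_i$ then holds in both directions, so an endpoint inside $\bA_i\cap\bA_{i+1}$, which is nonempty by the ordering, suffices.
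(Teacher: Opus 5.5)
Your architecture matches the paper's. You run the Gelfreich--Turaev machinery \cite{GelfreichTuraevChaotic} on each triple $(\bar\Lambda_i,\hat\Lambda_i,\Lambda_i)$ to rule out, on an open dense set, common invariant essential curves of an IFS in $\bar\Lambda_i$. You then intersect finitely many open dense sets, chain IFS orbits through the overlaps, and shadow. The genericity input, however, is misstated. You credit Gelfreich--Turaev with the claim that the IFS $\{\Phi|_{\Lambda},F_{\Gamma_i}\}$ (the inner map plus the \emph{single} scattering map) generically has no common invariant essential curve in $\bar\Lambda_i$. The result the paper actually uses (their Theorem 4, the paper's Step 2) concerns the scattering maps of \emph{several} pairwise disjoint homoclinic cylinders. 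They first manufacture these from one homoclinic cylinder (their Section 3.3). Producing them is the paper's Step 1: for each $i$, one obtains eight disjoint homoclinic cylinders $\Gamma_i^{(1)},\dots,\Gamma_i^{(8)}$, each simple relative to both $(\hat\Lambda_i,\Lambda_i)$ and $(\bar\Lambda_i,\hat\Lambda_i)$. Your plan to ``verify that their proof is local to the triple'' must include this step, because it is where $(3')$--$(4')$ are really used. As written, the one-scattering-map claim is not something their paper provides.

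The junction also fails as stated. Having no common invariant essential curve in $\bar\Lambda_{i+1}$ does not give drift from \emph{any} point of $\bar\Lambda_{i+1}$, nor two-sided transitivity. It does not exclude a non-essential region, for example an island of the twist map, that is invariant under every map of the IFS. Their Theorem 3 is a statement connecting (neighbourhoods of) the invariant essential curves bounding the annulus. In addition, the ordering $\gamma_{i+1}^-<\bar\gamma_i^+<\bar\gamma_{i+1}^+$ does not imply $\bar\Lambda_i\cap\bar\Lambda_{i+1}\neq\emptyset$; that requires $\bar\gamma_{i+1}^-<\bar\gamma_i^+$. On $\Lambda_{i+1}\setminus\bar\Lambda_{i+1}$, stage $i+1$ gives you nothing.

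The paper's Step 3 avoids the first problem by placing every junction on a KAM curve. At stage $i$ it applies their Theorem 3 to the sub-annulus of $\bar\Lambda_i$ between $\bar\gamma_i^-$ and $\bar\gamma_{i+1}^-$, so stage $i$ ends on exactly the curve where stage $i+1$ starts. The inner dynamics on that curve is a Diophantine rotation, so inner iterates bring the landing point as close as needed to the next starting point. This still needs $\bar\gamma_{i+1}^-\subset\bar\Lambda_i$, essentially the same overlap you need, and the paper uses it tacitly too. It is harmless in the application: there the scattering maps preserve every circle $I_3=\mathrm{const}$, so the nested sub-cylinders can be chosen almost equal.
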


\begin{proof}
Here we give the indications which allow us to establish Theorem \ref{thm:gelfreichturaevthm2} using the techniques developed in \cite{GelfreichTuraevChaotic}. The proof has  four parts:
    \medskip
    
    \noindent\textit{Step 1.} In view of assumptions $(2')-(4')$, for each $G\in \mathcal V_{\rho,\sigma}(m)$ and  for each $i=0,\dots,m-1$, one can define a family of eight  disjoint\footnote{By disjoint  we mean that their orbits are disjoint.}  homoclinic cylinders $\{\Gamma_i^{(n)}\}_{n=1}^8$  which are simple relative to $\hat\Lambda_i$ and $\Lambda_i$, i.e., $F_{\Gamma_i^{(n)}}(\hat\Lambda_i)\subset \mathrm{int}(\Lambda_i)$ and are also simple relative to $\bar\Lambda_i$ and $\hat\Lambda_i$, i.e., $F_{\Gamma_i^{(n)}}(\bar\Lambda_i)\subset \mathrm{int}(\hat\Lambda_i)$.  

    This can be proved exactly the same way as in Section 3.3 of \cite{GelfreichTuraevChaotic}. Indeed, for each $i=0,\dots,m-1$ the argument of Gelfrich and Turaev yields a countable family of disjoint homoclinic cylinders, but only eight  are needed for their construction. 
\medskip

\noindent\textit{Step 2.} Let $\{F_i^{(n)}\}_{i,n}$ with $i=0,\dots,m-1$ and $n=1,\dots, 8$ be the Iterated Function System (IFS) in $\Lambda$ generated by the scattering maps $F_i^{(n)}=F_{\Gamma_i^{(n)}}:\hat\Lambda_i\to \Lambda_i$.  Then Theorem 4 in \cite{GelfreichTuraevChaotic} shows that  for an open and dense subset  $\widetilde {\mathcal V}_{\rho,\sigma}(m)\subset \mathcal V_{\rho,\sigma}(m)$, for any $i=0,\dots,m-1$ the maps $\{F_i^{(n)}\}_{n=1}^8:\hat \Lambda_i\to \Lambda_i$ of the  corresponding family do not share any common invariant curve contained in  $\bar\Lambda_i$.
\medskip

\noindent\textit{Step 3.}
Observe that, in view of assumption $(2')$, for any $i=0,\dots,m-1$, both $\gamma_{i}^-$ and $\gamma_{i+1}^-$ are KAM invariant curves (for the inner dynamics, i.e., the restriction of the Poincaré map $\Phi(G)$ to $\Lambda$ with $G\in \mathcal V_{\rho,\sigma}(m)$) contained in $\bar\Lambda_i$. Hence, Theorem 3 in \cite{GelfreichTuraevChaotic} says that for $G\in \widetilde {\mathcal V}_{\rho,\sigma}(m)$ and for each $i=0,\dots,m-1$ there exists an $M_i\in\mathbb N$ and  a finite  orbit segment $\{z_k^{(i)}\}_{k=0}^{M_i}$ %(for some $M\in\mathbb N$) 
of the IFS generated by  $\{F_i^{(n)}\}_{n=1}^8:\hat \Lambda_i\to \Lambda_i$ such that $z_0^{(i)}\in \bar\gamma_i^-$ and $z_{M_i}\in \bar\gamma_{i+1}^-$. 
On the other hand, for $i=m$, the same argument implies that that for $G\in \widetilde {\mathcal V}_{\rho,\sigma}(m)$  there exists a finite  orbit segment $\{z_k^{(m)}\}_{k=0}^{M_{m}}$ %(for some $M=M(m)\in\mathbb N$)
of the IFS generated by  $\{F_m^{(n)}\}_{n=1}^8:\hat \Lambda_m\to \Lambda_m$ such that $z_0^{(m)}\in \bar\gamma_m^-$ and $z_{M_m}^{(m)}\in \bar\gamma_{m}^+$.

Thus, we have a collection of $m+2$ KAM invariant curves for the inner dynamics -- formed by $\{\bar\gamma_i^-\}_{i=0}^m$ and $\bar\gamma_{m}^+$ -- such that 
$\bar\gamma_i^-$ is connected to $\bar\gamma_{i+1}^-$ 
by an orbit of the IFS generated by $\{F_i^{(n)}\}_{n=1}^8:\hat \Lambda_i\to \Lambda_i$ and $\bar\gamma_m^-$ is connected to $\bar\gamma_m^+$ by an orbit of the IFS generated by $\{F_m^{(n)}\}_{n=1}^8:\hat \Lambda_m\to \Lambda_m$.
      \medskip

       \noindent\textit{Step 4.} The existence of a true orbit of the Poincaré map $\Phi(G)$ with $G\in\widetilde{\mathcal V}_{\rho,\sigma}(m)$ which connects a neighborhood of $\bar\gamma_0^-$ to a neighborhood of $\bar\gamma_m^+$ is now obtained by means of a standard shadowing argument (see for instance Section 4.2 in \cite{GelfreichTuraevChaotic} or \cite{MR4033892}).\qedhere
\end{proof}

\medskip

We now show how to complete the Proof of Proposition \ref{prop_apply_GT}.
\begin{proof}[Proof of Proposition \ref{prop_apply_GT}]

  Let 
$\mathcal U_{\rho,\sigma}(\delta)$  be the set in Theorem \ref{thm:mainparametricoutline} and, for $g_1\in \mathcal U_{\rho,\sigma}(\delta)$, let   $\mathcal G_{\varepsilon}(g_1)\in \mathcal Q^{3,d}_{\rho,\sigma}$ (see \eqref{eq:banachspace}) be as in Theorem \ref{thm:mainparametricoutline}. Fix $n\in\mathbb N$, and let $m\in\mathbb N$ be as in item $(4)$ of Theorem \ref{thm:mainparametricoutline}. 
 
 Let $\Sigma_n\subset M$ be the four-dimensional section introduced in \eqref{eq:4dsec}.  We just need to show that $\mathcal G_{\varepsilon_n}(g_1)\in \mathcal V_{\rho,\sigma}(m)$. Denote by $\Phi_n$ the Poincar\'e map that $\mathcal G_{\varepsilon_n}(g_1)$ induces on $\Sigma_n$. We have shown in Theorem \ref{thm:mainparametricoutline}
 %Proposition \ref{prop:Poincare_map} 
  that the map $\Phi_n$ is locally well-defined and admits an invariant cylinder $\Lambda_n$.  This invariant cylinder is symmetrically normally hyperbolic since the differential of the restriction $\Phi_n|_{\Lambda_n}$ is given by a shear parabolic matrix. In particular, the rates of expansion/contraction displayed by the inner dynamics depend, at most, polynomially on the number of iterates.  \black Hence, since $\Lambda_n$ is compact, the expansion/contraction along $N_u/N_s$ is bounded from below by some $\lambda_n>1$, while, by a suitable rescaling of the norm on $N_c$, we can achieve that the rate $\alpha_n$ in \eqref{eq:hyperbolicityestimates} is arbitrarily close to $1$ and, consequently, verify the inequality $\alpha_n^2\lambda_n<1$.  Moreover, since by construction $\mathcal G_{\varepsilon_n}(g_1)$ does not depend on $\theta_3$, the map $\Phi_n|_{\Lambda_n}$ leaves invariant each of the circles
\begin{equation}\label{eq:invariantcircles}
\gamma_{n}(I_3^*)=\Lambda_n\cap\{I_3=I_3^*\},\qquad\qquad I_3^*\in[-\rho_0,\rho_0],
\end{equation}
and satisfies the  twist condition (see Theorem \ref{thm:mainparametricoutline}).
%Proposition \ref{prop:Poincare_map}).
In particular,  $\Lambda_n$ contains a full measure set of invariant KAM-curves.

Consider now the covering $[-\rho_0,\rho_0]\subset \bigcup_{i=0}^{m-1} [I_i^-,I_i^+]$ in item $(4)$ of Theorem \ref{thm:mainparametricoutline} and choose $m$ triples of invariant sub-cylinders as in $(2')$ above such that $\Lambda_i\subset \mathbb T\times [I_i^-,I_i^+]$. Then, for each $i=0,\dots, m-1$, the homoclinic cylinder $\Gamma_{n,i}\subset W^s(\Lambda_n)\pitchfork W^u(\Lambda_n)$ is simple relative to the pair  $\hat \Lambda_i,\Lambda_i$ and also to the pair $\bar\Lambda_i,\hat\Lambda_i$. Indeed, each of the curves in \eqref{eq:invariantcircles} are also invariant under the scattering map dynamics $F_{\Gamma_i}:\mathrm{int}(\Lambda_i)\to\mathrm{int}(\Lambda_i)$. We thus conclude that $\mathcal G_{\varepsilon_n} (g_1)\in\mathcal V_{\rho,\sigma}(m)$. The proof of Proposition \ref{prop_apply_GT} is completed.
\end{proof}
\black

\section{Generic transversality of invariant manifolds: proof of Proposition \ref{prop:splitting}}\label{sec:mainsptlittingsection}

We now complete the proof of Proposition \ref{prop:splitting}. 
Throughout this section $\delta>0$ will be fixed so we simply write $\mathcal B_{\rho,\sigma}=B_{\rho,\sigma}^2(\delta)\subset \mathcal Q^2_\sigma$ to refer to the  ball of radius $\delta>0$ centered around the origin.  
Recall that in Lemma \ref{lem:auxiliaryperiodicorbits2} we have shown that there exists $\varepsilon_n>0$ such that, for any $0<\varepsilon\leq \varepsilon_n$ and any $h\in\mathcal B_{\rho,\sigma}$, the Hamiltonian $\mathcal H^{(n)}_\varepsilon(h)(\cdot;I_3)$ in \eqref{eq:epssquareHamiltonian} displays a hyperbolic periodic orbit $\gamma_n(h;I_3)$ admitting a parametrization
\begin{equation}\label{eq:parametrizationperiodic}
\gamma_n(h;I_3)=\{(\varphi^{(n)}(h)(\tt;I_3),t,J^{(n)}(h)(t;I_3),E^{(n)}(h)(\tt;I_3))\colon \tt\in\mathbb T\},
\end{equation}
which is of the form \eqref{eq:parametrizationdistorted}-\eqref{eq:parametrizationdistortedestimates} and depends smoothly (in fact, real-analytically) on $I_3$ for $I_3\in[-\rho_0,\rho_0]$.

\subsection{Lagrangian parametrizations and the splitting potential}

We now obtain suitable parametrizations of the stable and unstable manifolds of the hyperbolic periodic orbit $\gamma_n(h;I_3)$ above. 
More concretely we will exploit the fact that, since the stable and unstable invariant manifolds $W^s(\gamma_n(h;I_3))$ and $W^u(\gamma_n(h;I_3))$ of the Hamiltonian $\mathcal{H}_\varepsilon^{(n)} $
are Lagrangian submanifolds, they can be (locally) parametrized in terms of  a generating function. 

\begin{rem}
    Both the stable and unstable manifolds of this periodic orbit have two branches. 
    For our purposes, it will be enough to consider the left branch of the stable manifold and the right branch of the unstable manifold, which we denote by $W^{s}(\gamma_n(h;I_3))$ and $W^u(\gamma_n(h;I_3))$ (see Figure \ref{fig:fig2}).

\end{rem}

\begin{comment}
 Recall that, for the averaged system  $H_{\mathrm{\mathrm{av}},\varepsilon}^{(n)}$    in $ \eqref{eq:integrablen} $, the periodic orbit  $\gamma^{\mathrm{av}}_{n}(I_3)$ has a ``pendulum like''   homoclinic connection given by \eqref{eq:homoclinicpendulum} that can also be written in terms of a generating function $S^h (\varphi)\sim  \pm
\sqrt{\varepsilon}
\sqrt{\frac{2b_n}{C_n} } \sin \frac{\varphi}{2}$ as:
\[
E = \tilde a_E^{(n)} +\partial_\tau S^h (\varphi), \quad J= \tilde a_J^{(n)}+\partial_\varphi S^h (\varphi), 
%\quad C_n (J-\tilde a_1^{(n)})^2+ \varepsilon b_n (\cos \varphi -1) \simeq 0 
\quad \varphi  \in [0, 2\pi].
\]
Therefore, it is natural to expect that the generating functions of the (un)stable manifolds of the perturbed periodic orbit can be continued for $\varphi$ near $\pi$ and will be close to these unperturbed homoclinic connections.
Although each of these manifolds has two branches, it will suffice for our purposes to just consider the right branch of $W^u(\gamma_n(h))$ and the left branch of $W^s(\gamma_n(h))$.
This is the content of the next proposition, whose proof is standard using classical perturbation theory applied to system $\mathcal{H}_\varepsilon^{(n)}$ as a perturbation of the averaged system
$H_{\mathrm{\mathrm{av}},\varepsilon}^{(n)}$.
\end{comment}
%
\begin{prop}\label{prop:LagrangianW^su}
For any $h\in\mathcal B_{\rho,\sigma}$ the stable and the unstable invariant manifolds of the hyperbolic periodic orbit $\gamma_n(h;I_3)$ associated to the Hamiltonian \eqref{eq:epssquareHamiltonian} admit a graph parametrization of the form

\begin{equation}\label{eq:Laggraphparametrizations}
    \begin{split}
	        W^u(\gamma_n(h;I_3))=&\{(\varphi,\tt,\beta_\varphi+\partial_\varphi S^u(h)(\varphi,\tt;I_3),\beta_\tt+\partial_\tt S^u(h)(\varphi,\tt;I_3))\colon \varphi^{(n)}(\tt)<\varphi<5\pi/4,\ \tt\in \mathbb T \},\\
         W^s(\gamma_n(h;I_3))=&\{(\varphi,\tt,\beta_\varphi+\partial_\varphi S^s(h)(\varphi,\tt;I_3),\beta_\tt+\partial_\tt S^s(h)(\varphi,\tt;I_3))\colon  \pi/4<\varphi<2\pi+\varphi^{(n)}(\tt),\ \tt\in \mathbb T\}\\
    \end{split}
\end{equation}
in terms of some $\beta=(\beta_\varphi,\beta_\tt)\in\mathbb R^2$ and real-analytic functions $S^u(h)$ and $S^s(h)$ that depend on $I_3$ in a real-analytic fashion for $I_3\in[-\rho_0,\rho_0]$.
\end{prop}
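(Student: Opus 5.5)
The plan is to obtain the parametrizations by comparison with the integrable pendulum-like system $H^{(n)}_{\mathrm{av},\varepsilon}$ in \eqref{eq:integrablen}, and then to invoke the Lagrangian character of the invariant manifolds to produce generating functions. First I would analyze the averaged system. Since $E$ and $H^{(n)}_{\mathrm{av},\varepsilon}$ are first integrals of \eqref{eq:vectorfieldN}, the stable and unstable manifolds of $\gamma^{\mathrm{av}}_n(I_3)$ are the sets
\[
E=\tilde a^{(n)}_E(I_3),\qquad N^{(n)}_{\omega_n}(J,E)+\varepsilon b_n\cos\varphi=N^{(n)}_{\omega_n}(\tilde a^{(n)}(I_3))+\varepsilon b_n .
\]
By \eqref{eq:nondegh} and Lemma \ref{lem:resnonatpaths}, $\partial_J^2N^{(n)}_{\omega_n}(\tilde a^{(n)}(I_3))\neq0$ uniformly in $I_3$, so the implicit function theorem solves for $J$ as a graph over $\varphi\in(0,2\pi)$. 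This gives the separatrix branches in the form $J=\tilde a^{(n)}_J+\partial_\varphi S^{\mathrm{av}}(\varphi)$ with $S^{\mathrm{av}}\sim\pm\sqrt{\varepsilon}\,c_n\sin(\varphi/2)$, together with $E=\tilde a^{(n)}_E+\partial_\tau S^{\mathrm{av}}$ where $\partial_\tau S^{\mathrm{av}}=0$. I would take $\beta=\tilde a^{(n)}(I_3)$. These graphs are uniformly transverse to $\{\varphi=\mathrm{const}\}$ away from $\varphi\in\{0,2\pi\}$, and in particular on the overlapping domains $(0,5\pi/4)$ and $(\pi/4,2\pi)$.

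Next I would pass to the full system $\mathcal H^{(n)}_\varepsilon(h)$. It differs from the averaged one by $\varepsilon P^{(n)}+\varepsilon^2 h$, where $P^{(n)}$ has zero $\tau$-average by Lemma \ref{lem:lt}. I would apply the normal form of Appendix \ref{sec:appendixhyperbolic} (the averaging used for Lemma \ref{lem:auxiliaryperiodicorbits2}) to push the $\tau$-dependence to a remainder that is small relative to the pendulum size $\sqrt{\varepsilon b_n}$, after shrinking $\varepsilon_n$. The local stable and unstable manifolds of $\gamma_n(h;I_3)$ from \eqref{eq:parametrizationperiodic} are then close to those of $\gamma^{\mathrm{av}}_n$. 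Because they sit near a hyperbolic orbit with nonzero $\dot\varphi$ on the branch, they are graphs over $(\varphi,\tau)$ for $\varphi$ slightly larger than $\varphi^{(n)}(\tau)$ (unstable) and slightly smaller than $2\pi+\varphi^{(n)}(\tau)$ (stable). I would extend these local graphs by the flow to $\varphi<5\pi/4$ and $\varphi>\pi/4$ respectively, which takes finite time independent of $\varepsilon$ after rescaling time by $\sqrt\varepsilon$. Gronwall-type estimates on this finite interval keep them $o(\sqrt\varepsilon)$-close in $\mathcal C^1$ to the averaged separatrices, so they remain graphs. Real-analyticity in $(\varphi,\tau,I_3)$ comes from analytic dependence of invariant manifolds on parameters, since $\gamma_n(h;I_3)$ depends analytically on $I_3$ and the flow is analytic.

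Finally I would produce the generating functions. $W^{u,s}$ are Lagrangian because they are invariant manifolds of hyperbolic periodic orbits in an energy level, and the symplectic form vanishes on them by the standard $\lim_{t\to\mp\infty}$ argument. A Lagrangian graph $(J,E)=\beta+\Psi(\varphi,\tau)$ over the annulus $(\varphi_-,\varphi_+)\times\mathbb T$ has a closed 1-form $\Psi_\varphi d\varphi+\Psi_\tau d\tau$. Its primitive is single-valued in $\tau$ only once the cohomology class is subtracted, and the constant vector $\beta$ absorbs that class. Because the manifolds accumulate on $\gamma_n(h;I_3)$, the $\tau$-period $\int_{\mathbb T}\Psi_\tau\,d\tau$ equals the corresponding action of $\gamma_n$, namely $\int_{\gamma_n}(E-\beta_\tau)\,d\tau$. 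This is the same for $W^u$ and $W^s$, so a common $\beta$ works. The $\varphi$ direction is contractible in the domain, so nothing else is needed.

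I expect the main obstacle to be the uniform graph property along the whole extension, because the non-resonant remainder, although $O(\varepsilon)$, has large frequencies $|s^{(n)}|$ compared with pendulum scale $\sqrt\varepsilon$. I would handle it by fixing $n$ first, averaging to a remainder of order $\varepsilon^{2}$ or exponentially small, and only then choosing $\varepsilon_n$ small.
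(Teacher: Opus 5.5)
Your proposal is correct and follows essentially the paper's own argument (Appendix~\ref{sec:appendixhyperbolic}). After the one-step averaging, $\mathcal H^{(n)}_\varepsilon(h)$ is an $O(\varepsilon^2)$ perturbation of the pendulum-like $H^{(n)}_{\mathrm{av},\varepsilon}$, whose separatrix is a graph over $(\varphi,\tau)$; standard perturbation theory keeps $W^{u,s}(\gamma_n(h;I_3))$ as nearby graphs, and the Lagrangian property yields $S^{u,s}$. Your cohomology argument that a single $\beta$ serves both manifolds, because both periods equal the action of $\gamma_n(h;I_3)$, makes explicit a step the paper leaves implicit; note only that this period also contains the term $\oint_{\gamma_n}J\,d\varphi$, and that it is $\partial_\varphi S^{\mathrm{av}}$ (not $S^{\mathrm{av}}$) that behaves like $\sqrt{\varepsilon}\sin(\varphi/2)$.
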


\begin{rem}
    For simplicity of notation, we omit the index $n$ in the notation of $\beta$, $S^u$, $S^s$ and $\Delta$ in \eqref{eq:splittingpotential}. 
\end{rem}

The proof of Proposition \ref{prop:LagrangianW^su} is rather standard and is deferred to Appendix \ref{sec:appendixhyperbolic}. Proposition \ref{prop:LagrangianW^su} reduces the problem of the existence of intersections between $W^{s}(\gamma_n(h;I_3))$ and $W^{u}(\gamma_n(h;I_3))$ to that of the existence of critical points of the \textit{splitting potential}, defined by
\begin{equation}\label{eq:splittingpotential}
\tt\mapsto \Delta(h)(\tt;I_3)=S^u(h)(\pi,\tt;I_3)-S^s(h)(\pi,\tt;I_3).
\end{equation}
Notice that since $\tt\mapsto\Delta(\tt;I_3)$ is periodic and differentiable (in fact, even real-analytic), for any $I_3\in[-\rho_0,\rho_0]$ there exist at least two critical points of this function corresponding to two global extreme values.  However, we do not know a priori if these are non-degenerate (which would imply that the corresponding intersection is transverse). Nevertheless, following an idea of Delshams and Zhang (see \cite{MR4913967}), we will be able to show that for a generic $h\in\mathcal B_{\rho,\sigma}$ the corresponding splitting potential does not admit degenerate global extreme values. 
More precisely, we will show the following.
\begin{thm}\label{thm:genericityofnondegmax}
     There exists $\varepsilon_n>0$ such that for any $0<\varepsilon\leq \varepsilon_n$  there exists a residual subset  $\mathcal U^{(n)}_{\rho,\sigma}\subset\mathcal B_{\rho,\sigma}$ such that the splitting potential \eqref{eq:splittingpotential} associated to the Hamiltonian $\mathcal H_{\varepsilon}^{(n)} (h)$ in \eqref{eq:epssquareHamiltonian} with $h\in \mathcal U_{\rho,\sigma}^{(n)}$ does not admit degenerate global extrema for any $I_3\in[-\rho_0,\rho_0]$.
\end{thm}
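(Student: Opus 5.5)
We follow the parametric transversality approach of Zehnder and Delshams--Zhang. First, we compute the first variation of the splitting potential with respect to the perturbation $h$. Since $\mathcal H^{(n)}_\varepsilon(h)$ depends on $h$ through the term $\varepsilon^2 h$, the variation of $S^u(h)-S^s(h)$ in a direction $\dot h\in\mathcal Q^2_{\rho,\sigma}$ is given by a Melnikov-type integral:
\[
D_h\Delta(h)[\dot h](\tt;I_3)=\varepsilon^2\int_{-\infty}^{+\infty}\bigl(\dot h(z^{u,s}(t;\tt,I_3))-\dot h(\gamma\text{-asymptote})\bigr)\,\mathrm dt
\]
along the (un)stable solutions through $\varphi=\pi$ (up to the constant $\beta$ normalization). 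This follows from differentiating the Hamilton--Jacobi equations satisfied by $S^{u,s}$. The key structural claim is that the map $\dot h\mapsto D_h\Delta(h)[\dot h](\cdot;I_3)$ is rich enough. Taking $\dot h$ to be trigonometric polynomials in $\theta$ localized near the homoclinic excursion (e.g.\ Fourier modes in $\varphi,\tt$ times smooth-in-$I$ factors), we expect that for each fixed $(\tt_0,I_3)$ the jet map $\dot h\mapsto (D_h\Delta,\partial_\tt D_h\Delta,\partial_\tt^2D_h\Delta)(\tt_0;I_3)$ is surjective onto $\mathbb R^3$. This can be checked first for the averaged pendulum, where the homoclinic is explicit and the integrals reduce to Fourier transforms of $\operatorname{sech}$-type functions. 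We then extend it to $\varepsilon\le\varepsilon_n$ by openness of surjectivity.

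Second, we run a transversality argument. Consider the evaluation map
\[
\mathcal E:(h,\tt,I_3)\mapsto \bigl(\partial_\tt\Delta(h)(\tt;I_3),\ \partial^2_\tt\Delta(h)(\tt;I_3)\bigr)\in\mathbb R^2
\]
on $\mathcal B_{\rho,\sigma}\times\mathbb T\times[-\rho_0,\rho_0]$. By the surjectivity above, $0$ is a regular value of $\mathcal E$. By the parametric transversality theorem (Sard--Smale, using that the parameter space $\mathbb T\times[-\rho_0,\rho_0]$ is two-dimensional so the relevant Fredholm index is zero), for $h$ in a residual set the map $(\tt,I_3)\mapsto\mathcal E(h,\tt,I_3)$ is transverse to $0$. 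Its zero set is then a discrete, hence finite, set of points $(\tt_j,I_3^j)$ where degenerate critical points occur. This alone does not exclude degenerate global extrema at isolated $I_3$. Following Delshams--Zhang, we therefore add a third condition: at a degenerate critical point that is a global extremum, the value $\Delta(\tt_0)$ must equal the value at another critical point, or the cubic term $\partial^3_\tt\Delta$ must vanish (an inflection point cannot be an extremum unless $\partial_\tt^3\Delta=0$). Imposing $(\partial_\tt\Delta,\partial^2_\tt\Delta,\partial^3_\tt\Delta)=0$ gives three conditions on two parameters, so generically there are no solutions. This requires surjectivity of the 3-jet map, which we obtain by enlarging the jet in the first step to order $3$. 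Hence, for generic $h$, every degenerate critical point has $\partial_\tt^3\Delta\ne0$ and is therefore not an extremum.

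Third, we pass from residuality for fixed data to the stated form. Residuality in $\mathcal B_{\rho,\sigma}$ follows by intersecting over a countable exhaustion of compact pieces of $[-\rho_0,\rho_0]$, which gives open dense sets by compactness and the $C^3$-continuity of $\Delta$ in $(h,\tt,I_3)$. The dependence of $\varepsilon_n$ is chosen so that Proposition \ref{prop:LagrangianW^su} and the surjectivity estimate hold uniformly.

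The main obstacle is the first step: proving that the Melnikov variation map has surjective $3$-jets uniformly in $I_3$ despite the restriction $h\in\mathcal Q^2_{\rho,\sigma}$ (real-analytic, global perturbations). We would first try explicit $\dot h=\cos(\ell\tt+\varphi)$-type modes. For these the pendulum integrals give nonzero coefficients (Fourier transforms of the separatrix never vanish at the relevant frequencies), and choosing three distinct harmonics $\ell$ should yield a Vandermonde-type nondegeneracy.
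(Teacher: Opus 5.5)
\paragraph{The gap: how you prove surjectivity.} Your second and third steps match the paper's argument. The paper proves surjectivity of $h\mapsto(\Delta',\Delta'',\Delta''')(\tau;I_3)$ at every point. It then applies parametric transversality with the $2$-dimensional parameter space $\mathbb A_{\rho_0}$ and target $\mathbb R^3$ (Corollary \ref{cor:corollarytransv}), so that generically $\Delta'=\Delta''=0$ forces $\Delta'''\neq 0$.

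The gap is in your first step, the proof of this surjectivity (Proposition \ref{prop:surjective}). For the averaged pendulum, $\varphi_{\mathrm{av}}$ evolves on the time scale $\varepsilon^{-1/2}$, and its complex singularities in $t$ lie at distance $\sim\varepsilon^{-1/2}$ from the real axis. Meanwhile $\tau$ rotates with a frequency $\omega_n$ that is independent of $\varepsilon$. Hence for a fixed analytic mode such as $\cos(\ell\tau+\varphi)$ with $\ell\neq 0$, the Melnikov coefficient is of size $e^{-c|\ell|/\sqrt{\varepsilon}}$: nonzero, but exponentially small. Modes with $\ell=0$ contribute nothing to the $\tau$-jets.

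The true Hamiltonian differs from $H^{(n)}_{\mathrm{av},\varepsilon}$ by $\varepsilon P^{(n)}+\varepsilon^2 h$. The first term is of the same order as the pendulum potential $\varepsilon V^{(n)}$, and the second is arbitrary in the ball. No parameter independent of $\varepsilon$ measures this discrepancy, and its effect on $D_h\Delta$ is a priori only polynomially small in $\varepsilon$. So ``openness of surjectivity'' cannot transfer your averaged Vandermonde computation to $\mathcal H^{(n)}_\varepsilon(h)$ for all $0<\varepsilon\le\varepsilon_n$ and all $h\in\mathcal B_{\rho,\sigma}$. You do need all such $h$, since $0$ must be a regular value of the full map on $\mathbb A_{\rho_0}\times\mathcal B_{\rho,\sigma}$. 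Closing this would require an exponentially-small-splitting analysis that your proposal does not contain.

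\paragraph{How the paper avoids it.} The paper never compares with an averaged Melnikov integral; it works directly on the true unstable manifold of $\gamma_n(h)$.
\begin{enumerate}
\item It takes $\C^\infty$ directions $f$ supported in a small set $U_n$ that meets only the unstable branch, for $\varphi\in[\pi/2,\pi]$. Such $f$ leave $\gamma_n(h)$ and the relevant piece of $W^s$ untouched, so only $T^u_f$ contributes.
\item After straightening $\mathcal L^u$ to $\partial_v+\tilde\omega_n\partial_\xi$ (Lemma \ref{lem:straightening}), $T^u_f$ becomes an explicit integral of $f$ along straight lines.
\item Choosing $\tilde f_i=\eta_i(\xi)\chi_\rho(v)$, with $\chi_\rho$ concentrated near $v=0$, gives $\tau$-jets close to $\delta_{ij}$. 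These are of size $O(1)$, not exponentially small.
\item Analyticity is recovered only at the level of the differential, which is linear. By Proposition \ref{prop:frechet} the differential is continuous in the $\C^3$ topology, analytic functions are $\C^3$-dense, and surjectivity onto $\mathbb R^3$ is an open condition. This openness is applied at fixed $\varepsilon$ and fixed system, varying only the direction, so it is legitimate.
\end{enumerate}
This localization-plus-density step is the missing idea in your proposal.
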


Now,  Proposition \ref{prop:splitting} follows easily.
\begin{proof}[Proof of Proposition \ref{prop:splitting}]
By compactness of the interval $[-\rho_0, \rho_0]$, Theorem \ref{thm:genericityofnondegmax} implies that there exists $m\in\mathbb N$ and a covering 
\[
[-\rho_0,\rho_0]\subset \bigcup_{0\leq i\leq m} [\cI_i^-,\cI_i^+]
\]
satisfying $\cI_{i+1}^-<\cI_{i}^+$ for all $i=0,\dots,m-1$  such that  for any $i=0,\dots,m-1$ there exists a transverse homoclinic orbit $z_{n,i}(h)(I_3)$ to $\gamma_n(h)(I_3)$ which depends real-analytically on $I_3$ for $I_3\in[\cI_i^-,\cI_i^+]$.  Hence,  the proof is complete.
\end{proof}

\subsection{Parametric transversality: proof of Theorem \ref{thm:genericityofnondegmax}}

The main step in the proof of Theorem \ref{thm:genericityofnondegmax} is to establish Proposition \ref{prop:surjective} below. We denote 
\[
\mathbb A_{\rho_0}=\mathbb T\times[-\rho_0,\rho_0],
\]
and let $\Delta',\Delta''$ and $\Delta'''$ stand for the first, second and third derivatives of the function $\tt\mapsto \Delta(h)(\tt;I_3)$.

\begin{prop}\label{prop:surjective}
    At every $(\tt,I_3)\in\mathbb A_{\rho_0}$ the map
    \begin{equation}\label{eq:surjmap}
h\in \mathcal B_{\rho,\sigma} \mapsto F(\tt,I_3,h):=\begin{pmatrix}
    \Delta'(h)(\tt;I_3)\\
    \Delta''(h)(\tt;I_3)\\
    \Delta'''(h)(\tt;I_3)\\
\end{pmatrix}\in\mathbb R^3
\end{equation}
is Fréchet differentiable, and its differential is surjective. 

\end{prop}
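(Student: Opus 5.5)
The plan is to compute the Fréchet derivative of $h\mapsto \Delta(h)(\tt;I_3)$ through a Melnikov-type formula and then test it on a small explicit family of perturbations. Fréchet differentiability itself follows from Proposition \ref{prop:LagrangianW^su}. The generating functions $S^{u,s}(h)$ come from the invariant-manifold theorem applied to a real-analytic family of Hamiltonians $\mathcal H^{(n)}_\varepsilon(h)$. That family depends affinely, hence analytically, on $h\in\mathcal B_{\rho,\sigma}$, so $S^{u,s}(h)$ and their $\tt$-derivatives at $\varphi=\pi$ depend analytically on $h$. The normalisation constants $\beta$ must be fixed consistently; I would choose them so that they are determined by the energy $e_n$ and the periodic orbit.

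\textbf{Step 1: a Melnikov formula for the derivative.} Fix the homoclinic point in the chart $\varphi=\pi$, $\tt$ given. Let $t\mapsto z^{u}(t;\tt)$ and $z^s(t;\tt)$ be the orbits on $W^u$ and $W^s$ through the graph points above $(\pi,\tt)$. The standard variational argument gives
\[
D_h\Delta(h)(\tt;I_3)[\hat h]=\varepsilon^2\Big(\int_{-\infty}^{0}\big(\hat h(z^u(t))-\hat h(\gamma^u(t))\big)\,dt-\int_0^{\infty}\big(\hat h(z^s(t))-\hat h(\gamma^s(t))\big)\,dt\Big),
\]
where $\gamma^{u,s}(t)$ are the asymptotic phases on $\gamma_n(h;I_3)$. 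This is the usual formula, obtained by differentiating the Hamilton--Jacobi equation satisfied by $S^{u,s}$ along characteristics. The integrals converge exponentially because of hyperbolicity. This expression is a continuous linear functional on $\mathcal Q^2_{\rho,\sigma}$, and differentiating in $\tt$ gives the derivatives of $\Delta',\Delta'',\Delta'''$.

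\textbf{Step 2: surjectivity via localization.} Since $\hat h$ ranges over all analytic functions on $\mathbb T^2_\sigma\times\mathbb B^2_\rho$, I would reduce surjectivity to the following claim: the three functionals $\hat h\mapsto \partial_\tt^k D_h\Delta[\hat h]$, $k=1,2,3$, are linearly independent. Suppose instead that $\sum_k c_k\partial^k_\tt D_h\Delta[\hat h]=0$ for all $\hat h$. Take $\hat h$ depending only on $\theta$ and localized in Fourier space, say $\hat h=e^{i\ell\cdot\theta}$ (taking real parts), or take $\hat h$ depending on $I$. In the coordinates $(\varphi,\tt)$, the orbit $z^{u,s}(t;\tt)$ satisfies $\tt(t)\approx\tt+\nu t$ with $\nu\neq0$ by \eqref{eq:lowerboundfastfreq}. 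The dependence on the base point $\tt$ is therefore essentially a translation, so the functional for a Fourier mode in $\tt$ of index $q$ acquires the factor $e^{iq\tt}\,M_q$. Here $M_q$ is a Melnikov integral along the pendulum separatrix in $\varphi$, and it is nonzero for suitable choices of the $\varphi$- and $I$-dependence of $\hat h$. The relation then forces $\sum_k c_k(iq)^k M_q=0$ for many $q$. Choosing three distinct values $q$ with $M_q\neq0$ gives a Vandermonde system, so $c_k=0$.

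\textbf{Main obstacle.} Making the translation argument rigorous is the main difficulty. The true homoclinic orbits are only $O(\varepsilon)$-close to those of the averaged system $H^{(n)}_{\mathrm{av},\varepsilon}$, and the translation symmetry in $\tt$ is exact only for the averaged system. I would handle this by computing the functional at leading order with the averaged separatrix and proving $M_q\neq0$ explicitly there. Because linear independence is an open condition, I would then pass to the full system for $\varepsilon\le\varepsilon_n$ after shrinking $\varepsilon_n$. It remains to check uniformity over the compact set $\mathbb A_{\rho_0}$ and over $h\in\mathcal B_{\rho,\sigma}$. That uniformity holds because all the estimates depend continuously on $(\tt,I_3,h)$, and the $\varepsilon^2 h$ term only contributes at higher order.
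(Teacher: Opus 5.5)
There is a genuine gap, and it sits exactly where you place your ``main obstacle'': the passage from the averaged system to the true one by openness cannot work.

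\textbf{A sign slip in Step 1.} Both $S^u$ and $S^s$ solve the same Hamilton--Jacobi equation, so their variations satisfy the same linear equation $\mathcal L T=-\varepsilon^2\hat h$ along the respective manifolds. You integrate backward on $W^u$ and forward on $W^s$, and then subtract. This gives, up to $\tau$-independent constants, $D_h\Delta[\hat h]=-\varepsilon^2\bigl(\int_{-\infty}^0[\hat h(z^u)-\hat h(\gamma^u)]\,dt+\int_0^{\infty}[\hat h(z^s)-\hat h(\gamma^s)]\,dt\bigr)$. That is a full-line integral, not a difference of half-line integrals.

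\textbf{Why this matters.} Take $\hat h=a(\varphi,\bs J)e^{iq\tau}$ with $q\neq 0$ and use the correct sign. Your coefficient $M_q$ is then an integral over $\mathbb R$ of an analytic function evaluated along the averaged separatrix, times a factor rotating with the fast frequency $\omega_n=O(1)$ of \eqref{eq:averagedfreq}. The separatrix itself evolves on the slow time scale $1/\sqrt\varepsilon$, since its hyperbolicity is $\lambda\sqrt\varepsilon$. Shifting the contour into the strip $|\mathrm{Im}\,t|\lesssim 1/\sqrt\varepsilon$ shows that $M_q$ is of order $e^{-c|q|/\sqrt\varepsilon}$, up to polynomial factors, for every $\hat h$ in the fixed analytic class $\mathcal Q^2_{\rho,\sigma}$. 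This is the classical exponentially small splitting phenomenon. Your sign slip hides it: with a half-line difference, the boundary term at $t=0$ makes $M_q$ of order one.

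\textbf{Why openness fails.} All you control is that the true manifolds are $O(\varepsilon)$-close, in a real domain, to the averaged separatrix. So the discrepancy between the true functional and its averaged model is bounded only polynomially in $\varepsilon$. Linear independence is an open condition, but the size of the admissible neighbourhood is set by $|M_q|$, which collapses exponentially as $\varepsilon\to 0$. Shrinking $\varepsilon_n$ therefore makes matters worse, not better.

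\textbf{The Vandermonde step.} A single $|q|$ spans only the two-dimensional space of $3$-jets of $\cos q\tau$ and $\sin q\tau$, so you need some $|q|\ge 2$. For such $q$ the putative leading term, of size $e^{-2c/\sqrt\varepsilon}$, is dominated by lower harmonics of size about $\varepsilon^k e^{-c/\sqrt\varepsilon}$. These come from the $\tau$-dependent deformation of the manifolds, for instance through the term $\varepsilon^2 h$.

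\textbf{How the paper avoids this.} It never compares with the averaged system and never needs uniformity in $\varepsilon$. At fixed $\varepsilon,\tau,I_3$ it first uses perturbations $f\in\C^\infty_c(U_n)$ supported near a piece of $W^u$ with $\varphi\in[\pi/2,\pi]$. Such $f$ do not move $\gamma_n(h)$ and do not change $S^s$ for $\varphi\ge\pi$. Lemma \ref{lem:straightening} straightens the \emph{true} operator $\mathcal L^u$, so $T^u_f$ is given exactly by $-\int_{-\infty}^0\tilde f(v+s,\xi+\tilde\omega_n s)\,ds$. Bumps $\tilde f=\eta_i(\xi)\chi_\rho(v)$, concentrated on a short time window, then prescribe the $\tau$-jet up to order three at will (Lemma \ref{lem:surjectivetechnical}).

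Analyticity enters only at the end, and only qualitatively. The derivative is continuous in the $\C^r$ topology (Proposition \ref{prop:frechet}), and trigonometric polynomials in $\theta$ times polynomials in $I$ are $\C^r$-dense. Hence the image of $\mathcal Q^2_{\rho,\sigma}$ is a dense linear subspace of $\mathbb R^3$, and therefore all of $\mathbb R^3$. These analytic approximants have enormous $\mathcal Q^2_{\rho,\sigma}$-norm, which is exactly how they escape the exponential smallness your Fourier modes suffer from.

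If you replace your Fourier-mode test functions by such time-localized perturbations, your argument goes through, and the translation/Vandermonde step becomes unnecessary. Your treatment of Fréchet differentiability is fine.
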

The proof of this result is given below. Now we show how to complete the proof of Theorem \ref{thm:genericityofnondegmax} assuming Proposition \ref{prop:surjective}. We do so using the following version of the parametric transversality theorem, which can be easily read from Theorem 19.1 in \cite{MR240836}.
\begin{thm}[Parametric transversality theorem]\label{thm:parametrictransvthm}
    Given $r>0$, let $X,Y$ and $\mathcal Z$ be $\C^r$ Banach manifolds, and let $\mathcal F:X\times \mathcal Z \to Y$ be a $\C^r$  map. Let $y\in Y$, and suppose that 
    \begin{enumerate}
        \item $\mathrm{dim}(X)=p<\infty$,
        \item $\mathrm{dim}(Y)=q<\infty$,
        \item $r>\max\{0,p-q\}$,
        \item $y$ is a {\it regular value} of the map $\mathcal F$. 
        %\blue (better $\mathcal F(X,S)$ intersects transversally the set $\{y\}$. \black
    \end{enumerate}
    Then 
    \[
    \mathcal Z_y=\{z\in \mathcal Z\colon y\ \text{ is a regular value of the map }x\mapsto \mathcal F(x,z)\} 
    \]
    is a residual subset of $\mathcal Z$.
\end{thm}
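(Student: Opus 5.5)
The plan is to reduce the statement to the Sard--Smale theorem applied to a projection map, following the standard route of Abraham--Robbin. First I will show that the preimage $W:=\mathcal F^{-1}(y)\subset X\times\mathcal Z$ is a $\C^r$ Banach submanifold. Since $y$ is a regular value of $\mathcal F$, at each $(x,z)\in W$ the differential $D\mathcal F(x,z):T_xX\times T_z\mathcal Z\to T_yY$ is surjective. Because $\dim Y=q<\infty$, its kernel has finite codimension $q$ and therefore splits. The implicit function theorem for Banach manifolds then gives that $W$ is a $\C^r$ submanifold with $T_{(x,z)}W=\ker D\mathcal F(x,z)$.

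Next I will consider the restricted projection $\pi:=\mathrm{pr}_{\mathcal Z}|_W:W\to\mathcal Z$, which is $\C^r$, and prove two facts by finite-dimensional linear algebra at each point $(x,z)\in W$. Write $A=D_x\mathcal F(x,z):T_xX\to T_yY$ and $B=D_z\mathcal F(x,z)$.
\begin{enumerate}
\item[(a)] $D\pi(x,z)$ is Fredholm of index $p-q$. Its kernel is $\{(\xi,0):A\xi=0\}\cong\ker A$. Its range is $\{\zeta:B\zeta\in\operatorname{Im}A\}$, which has codimension $\operatorname{codim}\operatorname{Im}A$, using surjectivity of $(A,B)$. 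Hence the index equals $\dim\ker A-\operatorname{codim}\operatorname{Im}A=p-q$.
\item[(b)] $D\pi(x,z)$ is surjective if and only if $A$ is surjective. Given $\zeta$, surjectivity of $A$ lets us solve $A\xi=-B\zeta$. Conversely, for any $\eta\in T_yY$ write $\eta=A\xi+B\zeta$; choosing $\xi'$ with $(\xi',\zeta)\in T W$ gives $\eta=A(\xi-\xi')$.
\end{enumerate}
Consequently $z$ is a regular value of $\pi$ exactly when $y$ is a regular value of $x\mapsto\mathcal F(x,z)$, that is, $\mathcal Z_y=\{\text{regular values of }\pi\}$.

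Finally I will apply the Sard--Smale theorem. A $\C^r$ Fredholm map of index $p-q$ with $r>\max\{0,p-q\}$ has a residual set of regular values. This requires the standing separability (second countability) assumption on the Banach manifolds, which I will add explicitly; it holds in the application, since $\mathcal B_{\rho,\sigma}$ is an open subset of a separable space of analytic functions. Given (b), this gives the conclusion directly.

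The main obstacle is the codimension count in (a) together with the verification that Sard--Smale applies globally on $W$. The theorem is local, so I will cover $W$ by countably many charts on which $\pi$ is Fredholm of constant index, and take the countable intersection of the corresponding open dense sets. Since the paper says the result ``can be easily read from Theorem 19.1 in \cite{MR240836}'', I expect to cite that theorem for the Sard--Smale step and to give only the linear-algebra lemma in detail.
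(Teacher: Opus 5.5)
Your reduction is correct and is the proof of Abraham--Robbin's Theorem~19.1, which the paper cites rather than proves. $W=\mathcal F^{-1}(y)$ is a $\C^r$ submanifold, your computation (a) shows that $\pi=\mathrm{pr}_{\mathcal Z}|_W$ is Fredholm of index $p-q$, (b) identifies $\mathcal Z_y$ with the regular values of $\pi$, and $r>\max\{0,p-q\}$ is exactly the Sard--Smale hypothesis.

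The weak point is separability. The statement does not assume $\mathcal Z$ separable, but your global use of Sard--Smale on $W$ (the countable cover of $W$) needs it. Your justification that it holds in the application is also not correct. With the norm the paper actually uses (supremum over the \emph{open} strip $|\mathrm{Im}\,\theta|<\sigma$), the space $\mathcal Q^2_{\rho,\sigma}$ contains, through functions of $\theta_1$ alone, an isometric copy of $H^\infty$ of an annulus, which is not separable. So as written you prove a weaker theorem, and it does not visibly cover $\mathcal Z=\mathcal B_{\rho,\sigma}$.

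You can drop the separability assumption on $\mathcal Z$ altogether. Only $\sigma$-compactness of the finite-dimensional (second countable) factor $X$ is needed, and the idea is to localize over compact pieces of $X$ rather than over countably many charts of $W$.

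For compact $K\subset X$, let $\Sigma_K=\{z\colon \exists\, x\in K,\ \mathcal F(x,z)=y,\ D_x\mathcal F(x,z)\text{ not onto}\}$.
\begin{itemize}
\item $\Sigma_K$ is closed: it is the projection along the compact factor $K$ of a closed subset of $K\times\mathcal Z$, so the tube lemma applies.
\item $\Sigma_K$ has empty interior. Given $z_0$, cover the compact set $\{x\in K\colon \mathcal F(x,z_0)=y\}$ by finitely many open $U_i$, with neighbourhoods $V_i$ of $z_0$, such that the critical values of $\pi$ on $W\cap(U_i\times V_i)$ are nowhere dense. This is the local step of Sard--Smale, and it is where $r>\max\{0,p-q\}$ enters.
\item By the tube lemma applied to the compact set $K\setminus\bigcup_i U_i$, there is a neighbourhood $V\subset\bigcap_i V_i$ of $z_0$ such that, for $z\in V$, every solution $x\in K$ of $\mathcal F(x,z)=y$ lies in $\bigcup_i U_i$.
\item By your (b), $\Sigma_K\cap V$ is then contained in a finite union of nowhere dense sets.
\end{itemize}
Hence $\mathcal Z\setminus\Sigma_K$ is open and dense. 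Taking a compact exhaustion $(K_j)$ of $X$, the set $\mathcal Z_y=\bigcap_j(\mathcal Z\setminus\Sigma_{K_j})$ is residual, with no countability assumption on $\mathcal Z$.

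In the paper's application, $X=\mathbb A_{\rho_0}$ is compact and $p<q$. Then every point of $W$ is critical, and the same argument shows directly that $\mathcal Z_y=\mathcal Z\setminus\pi(W)$ is open and dense.
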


We recall that, given smooth Banach manifolds $M$ and $N$ and a smooth enough function $G: M\to N$, we say that $n \in N$ is a {\it regular value} of  $G$ if and only if $G(M)$ intersects transversally the set 
$\{n\}$ (denoted $G(M) \pitchfork \{n\}$).  This means  that one of the following holds:
\begin{itemize}
    \item either $n\notin G(M)$
    \item or there exists $m\in M$ such that $G(m)=n$ and the differential at $m$ has full rank. 
\end{itemize}

A straightforward corollary of Theorem \ref{thm:parametrictransvthm} is the following.
\begin{cor}\label{cor:corollarytransv}
    Under the hypothesis of Theorem \ref{thm:parametrictransvthm}, if $p-q<0$, then for any $z\in\mathcal Z_y$ we have $\mathcal F(x,z)\neq y$ for all $x\in X$.
\end{cor}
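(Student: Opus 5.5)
The statement is Corollary \ref{cor:corollarytransv}: if $p<q$ and $z\in\mathcal Z_y$, then $y\notin\mathcal F(X,z)$. I would argue by contradiction, using only the definition of regular value and finite-dimensional linear algebra.

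Fix $z\in\mathcal Z_y$, which is well defined because the hypotheses of Theorem \ref{thm:parametrictransvthm} hold, and write $f_z:=\mathcal F(\cdot,z):X\to Y$. By the definition of $\mathcal Z_y$, the point $y$ is a regular value of $f_z$. Suppose, for contradiction, that some $x\in X$ satisfies $f_z(x)=y$. The point $y$ then lies in the image, so the definition of regular value recalled above forces the differential $Df_z(x):T_xX\to T_yY$ to have full rank. Here this means it is surjective, since transversality of $f_z(X)$ to the point $\{y\}$ requires $\mathrm{Im}\,Df_z(x)+T_y\{y\}=T_yY$.

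The contradiction comes from dimensions. We have $\dim T_xX=p$ and $\dim T_yY=q$, so the image of $Df_z(x)$ has dimension at most $p$. Since $p<q$, the map $Df_z(x)$ cannot be surjective, which contradicts the previous paragraph. Hence no such $x$ exists, and $\mathcal F(x,z)\neq y$ for every $x\in X$.

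There is no real obstacle. The one point to handle carefully is the meaning of ``full rank'' in the definition: it must be read as surjectivity, which is rank $q$ and is what transversality to a point requires, not as maximal possible rank $\min(p,q)$. With the reading rank $\min(p,q)$ the dimension argument gives nothing. The second alternative in the recalled definition (the existence of a preimage with full-rank differential) should also be understood as holding at every preimage, as in the standard notion of regular value; the argument above applies at each preimage separately.
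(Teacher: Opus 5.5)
Your proposal is correct and is essentially the paper's own argument: since $p<q$, the differential of $x\mapsto\mathcal F(x,z)$ can never be surjective, so the only way $y$ can be a regular value is $y\notin\mathcal F(X,z)$. Your caveat about applying the second alternative of the definition at every preimage is not needed, because even a single preimage with surjective differential is already impossible when $p<q$.
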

\begin{proof}
    If $p<q$ then, at any $z \in \mathcal Z$, the differential of the map $x\mapsto \mathcal F(x,z)$ fails to be surjective. 
    Hence, we deduce  that for all $z\in\mathcal Z_y$ we must have $\mathcal F(x,z)\neq y$ for all $x\in X$.
\end{proof}

\begin{proof}[Proof of Theorem \ref{thm:genericityofnondegmax}]
In view of Proposition \ref{prop:surjective}, $0\in \mathbb R^3$ is a regular value of the map $\mathcal F:\mathbb A_{\rho_0}\times\mathcal B_\sigma\to \mathbb R^3$ given by $
\mathcal F(\tt,I_3,h)\mapsto F(\tt,I_3,h)
$ with $F$ as in \eqref{eq:surjmap}. Hence, since $\mathrm{dim}(\mathbb A_{\rho_0})=2<3=\mathrm{dim}(\mathbb R^3)$, a direct application of Theorem \ref{thm:parametrictransvthm} and Corollary \ref{cor:corollarytransv} shows that there exists a residual subset  $\mathcal U_{\rho,\sigma}\subset \mathcal B_{\rho,\sigma}$ such that for any $h\in \mathcal U_{\rho,\sigma}$ we have $F(\tt,I_3,h)\neq 0$ for all $(\tt,I_3)\in \mathbb A_\rho$.
\medskip

The desired conclusion is obtained after noticing that for $(\tt,I_3,h)\in \mathbb A_{\rho_0}\times{\mathcal U}_{\sigma}$, if $\Delta'(h)(\tt;I_3)=0=\Delta''(h)(\tt;I_3)$, then we must have $\Delta'''(h)(\tt;I_3)\neq 0$ so that this critical point cannot correspond to a global extremizer.
\end{proof}

\subsection{Proof of Proposition \ref{prop:surjective}}

The argument in  \cite{MR4913967} is based on approximating compactly supported perturbations by real-analytic ones. Namely, we will first establish a version of Proposition \ref{prop:surjective} for compactly supported functions, and then conclude by appealing to the density of real-analytic functions within the space of smooth compactly supported ones.

To do so, we start by recalling some well-known facts about the smooth dependence of invariant manifolds with respect to  perturbations. Let $B_{\C^n}\subset \C^n(\mathbb T^2,\mathbb R)$ stand for the unit ball centered at the origin in $\C^n(\mathbb T^2,\mathbb R)$. 

\begin{prop}\label{prop:frechet}
    There exists $\varepsilon_n>0$ such that for any $0<\varepsilon \leq \varepsilon_n$, any $I_3\in [-\rho_0,\rho_0]$ and any $h\in B_{\C^3}$ the Hamiltonian $\mathcal H^{(n)}_\varepsilon(h)$ in \eqref{eq:epssquareHamiltonian} has a hyperbolic periodic orbit $\gamma_n(h;I_3)$ whose stable and unstable manifolds admit a graph parametrization of the form  \eqref{eq:Laggraphparametrizations} 
in terms of some $\beta(h)\in\mathbb R^2$ and $\C^3$ functions $S^u(h)$ and $S^s(h)$ which depend on $I_3$ in a real-analytic fashion. Moreover, the maps
\begin{equation}\label{eq:frechetmaps}
\begin{split}
\Omega^\star:B_{\C^3}&\to \C^3(\mathbb T^2,\mathbb R)\\
h&\mapsto S^\star(h), \qquad \star=s, \ u,
    \end{split}
\end{equation}
are Fréchet differentiable.
\end{prop}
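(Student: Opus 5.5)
The plan is to treat $\mathcal H^{(n)}_\varepsilon(h)=H^{(n)}_\varepsilon+\varepsilon^2 h$ as a $\C^3$-small perturbation of the real-analytic Hamiltonian $H^{(n)}_\varepsilon$. Everything rests on the standard invariant manifold theory for hyperbolic periodic orbits, which applies with parameters, and on the Lagrangian structure of the manifolds. Since $h$ is now only $\C^3$, I cannot reuse Proposition \ref{prop:LagrangianW^su} directly; instead I will rerun its construction with the implicit function theorem in a $\C^r$ framework.

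\textbf{Step 1 (periodic orbit).} I first fix $\varepsilon_n$ as in Lemma \ref{lem:auxiliaryperiodicorbits2}, possibly smaller. For $h\in B_{\C^3}$ the vector field of $\mathcal H^{(n)}_\varepsilon(h)$ is $\C^2$ and differs from that of $H^{(n)}_\varepsilon$ by $O_{\C^2}(\varepsilon^2)$. I then consider the Poincaré map on the section $\{\tt=0\}$ inside the energy level $e_n$; this section is transverse because $\dot\tt$ is bounded away from zero by \eqref{eq:lowerboundfastfreq}. The periodic orbit $\gamma_n(0;I_3)$ corresponds to a hyperbolic fixed point of this map. The map depends $\C^1$ (indeed Fréchet differentiably) on $h$ as a map $B_{\C^3}\to\C^2$, and $\C^\omega$ on $I_3$. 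The implicit function theorem therefore gives the continuation $\gamma_n(h;I_3)$, depending differentiably on $h$ and analytically on $I_3$. Here $I_3$ is only a parameter in $H_0$, so the regularity in $I_3$ is inherited.

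\textbf{Step 2 (invariant manifolds as graphs).} Next I take local stable and unstable manifolds via the Lyapunov--Perron or graph-transform method. These are fixed points of a contraction on a space of graphs, and that contraction depends $\C^1$ on $h$ in the appropriate norms. To avoid the usual loss of derivatives, I plan to apply the fibre contraction theorem, or equivalently to work with $\C^{k}$ graphs while regarding the operator as $\C^1$ in $h$ from $\C^{k+1}$ perturbations. The manifolds are then globalized by flowing for a bounded time, which depends on $n$ and $\varepsilon$, up to the window $\varphi\in(\varphi^{(n)},5\pi/4)$ (resp.\ $(\pi/4,2\pi+\varphi^{(n)})$). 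On this window the unperturbed manifolds are graphs over $(\varphi,\tt)$ with uniform transversality, being $O(\varepsilon)$-close to the averaged pendulum separatrices. That graph property persists for small $h$, and the implicit function theorem makes the graph functions differentiable in $h$.

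\textbf{Step 3 (generating functions).} The manifolds are Lagrangian, so the one-form $J\,d\varphi+E\,d\tt$ restricted to each graph is closed. Its cohomology class is $\beta$, the action along the periodic orbit, which is the same for $W^s$ and $W^u$. Subtracting $\beta$ gives an exact form, and I define $S^\star(h)$ as its primitive normalized at a base point on the orbit. Integration is a bounded linear operation, so $h\mapsto S^\star(h)$ inherits Fréchet differentiability and $\beta(h)$ is differentiable.

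I expect the main obstacle to be the regularity bookkeeping. The graph functions are only $\C^2$ when $h\in\C^3$, and their primitives are $\C^3$, but Fréchet differentiability in $h$ with values in $\C^3$ typically costs a derivative. My first remedy would be to exploit that $S^\star$ is one derivative smoother than the graph, and to obtain differentiability of the graph as a map into $\C^2$ via the fibre contraction argument. If that fails to close, I would prove the result for $h\in B_{\C^r}$ with a larger $r$, which suffices for the later parametric transversality argument.
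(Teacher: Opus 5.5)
Your proposal follows essentially the paper's route: the paper obtains the $\C^2$ graph parametrizations \eqref{eq:paramproof} by persistence from the analytic case, cites standard smooth-dependence results for invariant manifolds to get Fréchet differentiability of $h\mapsto (J^\star(h),E^\star(h))$, and then passes to $S^\star(h)$ through the Lagrangian property exactly as in your Step~3. Your worry about losing a derivative, which the paper passes over by asserting even $\C^\infty$ dependence, is well founded: for $h$ only $\C^3$ the candidate derivative $T^u_f(h)$ solves the transport equation \eqref{eq:pdeinfdispl}, whose coefficients $\partial_J\mathcal H^{(n)}_\varepsilon(h)\circ z^u$ and $\partial_E\mathcal H^{(n)}_\varepsilon(h)\circ z^u$ are only $\C^2$, so $T^u_f(h)$ is generically not $\C^3$ and your first remedy cannot close (likewise, real-analytic dependence on $I_3$ is not inherited once $h$ is only finitely smooth); your fallback of taking $h\in B_{\C^r}$ with larger $r$ is the version that actually holds, and it is all that Proposition \ref{prop:surjective} uses.
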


\begin{proof}
  %The existence of a hyperbolic periodic orbit $\gamma_n(h)$ follows by the very same argument in the proof of Theorem \ref{thm:mainparametricoutline} since we already assume that the $C^3$ norm of the perturbation is uniformly bounded by $\varepsilon^2$. By the same argument, it admits a parametrization of the form \eqref{eq:parametrizationperiodic}. 

To avoid a cumbersome notation, during the proof we omit the dependence on $I_3$.
We consider Hamiltonians of the form 
\[
\mathcal H^{(n)}_\varepsilon (h)=H_\varepsilon^{(n)}+\varepsilon^2 h
\]
with $H_\varepsilon^{(n)}$ as in \eqref{eq:truncatedHamiltonianper} and $h\in B_{ \C^3}$. 
In Proposition \ref{prop:periodicorbits} we have shown that the real-analytic Hamiltonian $\mathcal H_\varepsilon^{(n)}$ admits a hyperbolic periodic orbit $\gamma_n$, and in Proposition \ref{prop:LagrangianW^su} we have shown that its stable/unstable manifolds can be parametrized as real-analytic Lagrangian graphs over the $(\varphi,\tau)$ coordinates (see the parametrization \eqref{eq:Laggraphparametrizations}). 
%By regular dependence of unstable/stable manifolds on parameters, since the vector field associated to the Hamiltonian $\mathcal H_\varepsilon^{(n)}(h)$ is given by a $O_{C^2}(\varepsilon^2)$ perturbation of that of $H_\varepsilon^{(n)}$, 
Therefore, we deduce the existence of $\C^2$-parametrizations
  \begin{equation}\label{eq:paramproof}
    \begin{split}
  W^u(\gamma_n(h))=&\{(\varphi,\tt,J^u(h)(\varphi,\tt),E^u(h)(\varphi,\tt))\colon \varphi^{(n)}(h)(\tt)<\varphi<5\pi/4,\ \tt\in \mathbb T\},\\
         W^s(\gamma_n(h))=&\{(\varphi,\tt,J^s(h)(\varphi,\tt),E^s(h)(\varphi,\tt))\colon  \pi/4<\varphi<2\pi+\varphi^{(n)}(h)(\tt),\ \tt\in \mathbb T\}
         \end{split}
  \end{equation}
for the unstable and stable manifold of the hyperbolic periodic orbit $\gamma_n(h)$ of the Hamiltonian $\mathcal H_\varepsilon^{(n)}(h)$. Moreover, the maps ($\star=u,s$) \begin{align*}
      \widetilde{\Omega}^\star :B_{\C^3}&\to \C^2(\mathbb T^2,\mathbb R^2)\\
      h&\mapsto (J^\star(h)(\varphi,\tt),E^\star(h)(\varphi,\tt))
  \end{align*}
  are Fréchet differentiable (see Appendix 1 in \cite{MR1237641}; in fact, these maps are $\C^\infty$ --- as maps between Banach spaces --- since the dependence of $\mathcal H_\varepsilon(h)$ on $h$ is $\C^\infty$). The conclusion follows since, as $\gamma_n(h)$ are Lagrangian, the functions $(J^\star(h)(\varphi,\tt),E^\star(h)(\varphi,\tt))$ in the parametrizations \eqref{eq:paramproof} correspond to the differential of the functions of the form $\mathcal S^\star(h)(\varphi,\tt)=\beta_\varphi\varphi+\beta_\tt \tt+S^\star(h)(\varphi,\tt)$ with $S^\star(h):\mathbb T^2\to \mathbb R$ and some $(\beta_\varphi,\beta_\tt)\in\mathbb R^2$.
\end{proof}

\begin{figure}
    \centering
    \includegraphics[scale=0.60]{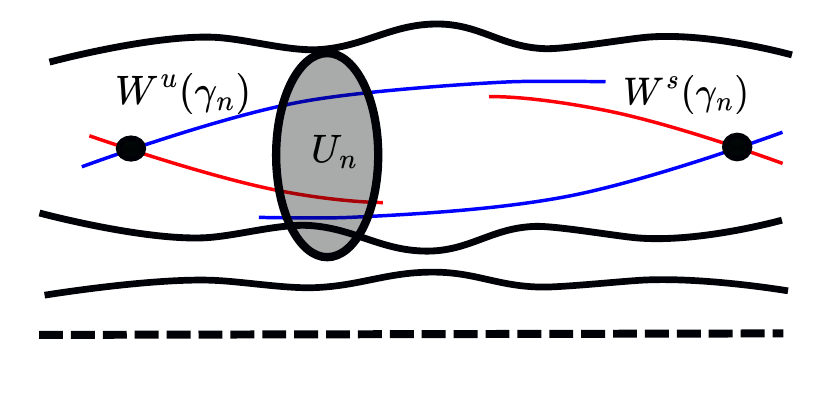}
    \caption{The stable and unstable manifolds of the hyperbolic periodic orbit $\gamma_n$ and the domain $U_n$ where we localize the perturbation. In dashed line we represent the invariant torus at $\{I=0\}$.}
    \label{fig:fig2}
\end{figure}
\medskip

Proposition \ref{prop:frechet} is the key to deduce Proposition \ref{prop:surjective} from the corresponding statement about smooth compactly supported perturbations, which we now describe. 
We will consider perturbations which are localized in (see Figure \ref{fig:fig2})  
\begin{equation}\label{eq:Un}
U_n=\phi_n([\pi/2,\pi]\times\mathbb T\times R_n)\subset\mathbb T^2\times \mathbb R^2,
\end{equation}
where $\phi_n$ is the coordinate change introduced in \eqref{eq:symplecticlatticechange} and $R_n\subset \mathbb R^2$ is a sufficiently small open subset (depending possibly on $n$ but not on $\varepsilon$) which contains the point $(\tilde a_J^{(n)}, \tilde a_E^{(n)})\subset\mathbb R^2$ defined in \eqref{eq:jn*}. 

Notice that in the variables $(\varphi,\tt)\in\mathbb T^2$ we only consider perturbations which are compactly supported on $\{\varphi\in[\pi/2,\pi]\}$. In particular, for any $f\in \C^\infty_c(U_n,\mathbb R)$ the Hamiltonian $\mathcal H_\varepsilon^{(n)}(h+f)$ still admits $\gamma_n(h)$ as a hyperbolic periodic orbit. We will now show that, using these type of perturbations, it is possible to displace the (global) invariant manifolds of the corresponding hyperbolic orbit by choosing a suitable $f\in \C^\infty_c(U_n,\mathbb R)$.

To be more precise, given $f\in \C^\infty_c(U_n,\mathbb R)$ and  $\mu\in\mathbb R$ let $g_\mu=h+\mu f$.  Denote  
\[
T^{\star}_f(h)=\frac{\mathrm d}{\mathrm{d}\mu} S^\star(g_\mu)|_{\mu=0}, \quad \star=s, u
\]
the Gateaux derivative of $S^\star$ at $h$ in the direction $f$. Then, since $S^u(g_\mu)$ is a solution of the partial differential equation
\begin{align*}
\begin{cases}
 \mathcal H_\varepsilon^{(n)}(g_\mu)(\varphi,\tt,\beta_\varphi+\partial_\varphi S^u(g_\mu)(\varphi,\tt),\beta_\tt+\partial_\tt S^u(g_\mu)(\varphi,\tt))=e_n\\
\lim_{\varphi\to \varphi(h)(\tt)} \beta_\varphi+\partial_\varphi S^u(g_\mu)(\varphi,\tt)=J^{(n)}(h)(\tt)\\
\lim_{\varphi\to \varphi(h)(\tt)} \beta_\tt+\partial_\tt S^u(g_\mu)(\varphi,\tt)=E^{(n)}(h)(\tt)
\end{cases} \qquad (\varphi,\tt)\in D^u
\end{align*}
with $\varphi^{(n)}(h)(\tt)$ as in \eqref{eq:paramproof},
\begin{equation}\label{eq:unstabledomain}
D^u=\{ \varphi^{(n)}(h)(\tt)<\varphi<5\pi/4,\ \tt\in \mathbb T\}
\end{equation} 
and 
$(J^{(n)}(h),E^{(n)}(h))$ as in \eqref{eq:parametrizationperiodic}, it is a straightforward computation to see that $T_f^u(h)$ satisfies the linear partial differential equation 
\begin{equation}\label{eq:pdeinfdispl}
\begin{cases}
{\mathcal L}^{u} [T^{u}_f(h)]=-f\\
\lim_{\varphi\to\varphi(h)(\tt)}\partial_\varphi T^{u}_f(h)(\varphi,\tt)=0
\\ \lim_{\varphi\to\varphi(h)(\tt)}\partial_\tt T^{u}_f(h)(\varphi,\tt)=0
\end{cases}
\end{equation}
with 
\begin{equation}\label{eq:diffoperatordistorted}
{\mathcal L}^{u}[\cdot]=\partial_J \mathcal H_\varepsilon^{(n)}(h)\partial_\varphi [\cdot]+\partial_E\mathcal H^{(n)}_\varepsilon(h)\partial_\tt[\cdot].
\end{equation}
On the other hand, it is clear from \eqref{eq:pdeinfdispl} that for any $f\in \C^\infty_c(U_n,\mathbb T)$ and  any $j\in\mathbb N$ we have 
\[
\partial_{\tt^j}^jT^s_f(h)(\varphi,\tt)=0
\]
at any $(\varphi,\tt)\in [\pi,2\pi ]\times \mathbb T$. 
Hence, it is enough to consider perturbations $f\in \C^\infty_c(U_n,\mathbb R)$ and observe the displacement $T^u_f(h)$ on the generating function $S^u(h)$. 

\begin{lem}\label{lem:surjectivetechnical}
    Let $U_n$ be of the form \eqref{eq:Un} where $R_n$ is a sufficiently small open neighborhood. 
    For any $\tt\in\mathbb T$ and $h\in\mathcal B_{\rho,\sigma}$,  there exists $f_1,f_2,f_3\in \C^\infty_c(U_n,\mathbb R)$ such that
    the vectors 
    \[
a_i=(\partial_\tt  T^{u}_{f_i}(\pi,\tt),\partial^2_{\tt^2}  T^{u}_{f_i}(\pi,\tt),\partial^3_{\tt^3} T^{u}_{f_i}(\pi,\tt)),\qquad\qquad i=1,2,3,
    \]
    form a basis of $\mathbb R^3$ and satisfy
    \[
    0=(\partial_\tt  T^{s}_{f_i}(\pi,\tt),\partial^2_{\tt^2}  T^{s}_{f_i}(\pi,\tt),\partial^3_{\tt^3} T^{s}_{f_i}(\pi,\tt)),\qquad\qquad i=1,2,3,
    \]
\end{lem}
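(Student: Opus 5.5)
We solve the linear transport problem \eqref{eq:pdeinfdispl} explicitly by the method of characteristics and then choose the $f_i$ as localized bumps. The characteristics of $\mathcal L^u$ are the projections to the $(\varphi,\tt)$-plane of the orbits of $\mathcal H^{(n)}_\varepsilon(h)$ lying on $W^u(\gamma_n(h))$. Indeed, $\mathcal L^u$ is the derivative along the Hamiltonian vector field restricted to the Lagrangian graph. Since $W^u$ is a graph over $D^u$ and $\dot\varphi=\partial_J\mathcal H^{(n)}_\varepsilon(h)>0$ on the part of $W^u$ with $\varphi\in[\pi/2,5\pi/4]$ (pendulum-like dynamics with $\dot\varphi\sim\sqrt{\varepsilon}$ away from the orbit), each point $(\pi,\tt)$ lies on a unique characteristic. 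This characteristic emanates from $\gamma_n(h)$ as $t\to-\infty$. Since $T^u_f$ vanishes asymptotically at $\gamma_n$, integrating $\frac{d}{dt}T^u_f=-f$ along it gives
\[
T^u_f(h)(\pi,\tt)=-\int_{-\infty}^{0} f\bigl(z^u(t;\tt)\bigr)\,dt,
\]
where $z^u(t;\tt)$ is the orbit on $W^u$ reaching $\{\varphi=\pi\}$ at the angle $\tt$ at time $0$. Because $f$ is supported in $\{\varphi\in[\pi/2,\pi]\}$, the integral only runs over a finite time window. There the backward orbit crosses $\{\varphi\in[\pi/2,\pi]\}$ once (this is where $R_n$ small is used: the support meets $W^u$ only along this single sheet near $\varphi\in[\pi/2,\pi]$, and not along a different branch or an earlier passage). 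The vanishing of the $\tt$-derivatives of $T^s_f$ at $\varphi=\pi$ was already observed: the stable characteristics through $[\pi,2\pi]$ run forward to $\gamma_n$ without meeting $\operatorname{supp} f$.

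Next we straighten the characteristics. The map $\Psi:(t,\tt)\mapsto \pi_{(\varphi,\tt)}z^u(t;\tt)$, for $t$ in the relevant window, is a local diffeomorphism onto a neighbourhood of the strip $[\pi/2,\pi]\times\mathbb T$ in $D^u$. This holds because $W^u$ is a graph and the flow is transverse to $\{\varphi=\pi\}$. Hence any smooth function $\psi(t,\tt)$ with compact support in $t\in(-t_1,-t_0)$ can be realized as $f\circ\Psi$ restricted to $W^u$. We then extend $f$ off $W^u$ to a function in $\C^\infty_c(U_n,\mathbb R)$ using a cutoff in the action variables, which is possible since $W^u$ over this region lies inside $\phi_n([\pi/2,\pi]\times\mathbb T\times R_n)$. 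With this realization,
\[
T^u_f(\pi,\tt)=-\int \psi(t,\tt)\,dt=:-\Psi_0(\tt),
\]
and $\Psi_0$ can be any smooth function of $\tt$ we like. Fix the point $\tt_0$ of the lemma and take $\Psi_0(\tt)$ equal to $\chi(\tt)\sin(\tt-\tt_0)$, $\chi(\tt)(1-\cos(\tt-\tt_0))$ and $\chi(\tt)(\tt-\tt_0)^3$, with $\chi$ a bump equal to $1$ near $\tt_0$. Their first three derivatives at $\tt_0$ form a triangular, hence invertible, matrix, so the vectors $a_i$ span $\mathbb R^3$.

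The main obstacle is justifying the characteristic representation rigorously. We need that the solution of \eqref{eq:pdeinfdispl} given by Proposition \ref{prop:frechet} coincides with the integral formula, and that $\Psi$ is a diffeomorphism whose image, lifted to $W^u$, stays inside $U_n$ without $\operatorname{supp}f$ meeting $W^u$ along other sheets. To settle both, we will use the local graph structure from Proposition \ref{prop:LagrangianW^su}, the uniform sign of $\dot\varphi$ along the pendulum-like separatrix of the averaged system (perturbatively valid for $\varepsilon\le\varepsilon_n$), and a choice of $R_n$ small enough that it only captures the relevant branch near $\varphi\in[\pi/2,\pi]$.
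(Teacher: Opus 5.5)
Your argument is correct. It is a streamlined variant of the paper's proof rather than a copy of it.

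\textbf{Shared idea.} Both proofs write $T^u_f$ as the integral of $-f$ along the unstable characteristics. Both take $f$ to be a bump in the time variable times a profile in the angle. The difference lies in the coordinates used.

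\textbf{The paper's route.} The paper first uses Lemma \ref{lem:straightening} to conjugate $\mathcal L^u$ on all of $D^u$ to $\partial_v+\tilde\omega_n\partial_\xi$. In these coordinates the characteristics are the sheared lines $\xi+\tilde\omega_n s$. Consequently $\tilde f_i=\eta_i(\xi)\chi_\rho(v)$ prescribes the $\xi$-jet only up to $O(\rho\lVert\eta_i\rVert_{\C^{j+1}})$, which forces $\rho\to 0$. Moreover, $\{v=0\}$ only approximately matches $\{\varphi=\pi\}$, since $\psi(0,\xi)=(\pi+O(\varepsilon),\xi+O(\varepsilon))$. This is why the paper needs the near-diagonal $9\times 9$ matrix $A$ and $\varepsilon$ small. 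It also tacitly needs $\C^3$ control of the conjugacy, whereas Lemma \ref{lem:straightening} is stated only in $\C^1$.

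\textbf{Your route.} You label each characteristic by the angle at which it crosses $\{\varphi=\pi\}$. This gives a flow box in which $\mathcal L^u$ is exactly $\partial_t$ and the section is $\{t=0\}$ itself. Hence $T^u_f(\pi,\tau)=c-\int\psi(t,\tau)\,dt$ holds exactly, so any smooth profile can be imposed. The basis property then follows immediately from your triangular matrix of jets. What you give up is the global picture. You must check directly that backward orbits from $\{\pi\}\times\mathbb T$ stay on the graph over $D^u$, tend to $\gamma_n(h)$, and cross the strip $\{\pi/2<\varphi<\pi\}$ during a single time interval. This does follow from $\dot\varphi\gtrsim\sqrt\varepsilon$ on the graph for $\varphi\in[\pi/4,5\pi/4]$ together with the definition of $W^u$. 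So Appendix \ref{sec:straightening} is not needed for this lemma.

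\textbf{Two small remarks.}

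First, the boundary conditions in \eqref{eq:pdeinfdispl} only make the gradient of $T^u_f$ vanish at $\gamma_n(h)$. So $T^u_f$ tends to a constant $c$, not to $0$. This is harmless, since only $\tau$-derivatives enter.

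Second, your parenthetical appeal to ``$R_n$ small'' cannot deliver what you ask of it. $R_n$ is fixed independently of $\varepsilon$ and contains $\tilde a^{(n)}$. Both separatrix branches over $\varphi\in[\pi/2,\pi]$ lie within $O(\sqrt\varepsilon)$ of that point, so $U_n$ does meet other sheets of the global $W^u$. It is also unnecessary. $T^u_f(\pi,\cdot)$ depends only on $f$ along the backward orbits of the graph sheet. $T^s_f$ on $[\pi,2\pi]\times\mathbb T$ depends only on forward orbits that never enter the strip. What you actually need is the single-crossing property, and that comes from the monotonicity of $\varphi$, not from the size of $R_n$.
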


In order to give a proof of this result, we first straighten the partial differential operator $\mathcal L^u$. 

\begin{lem}\label{lem:straightening}
Let $D^u$ be as in \eqref{eq:unstabledomain} and let $\mathcal L^u$ be the differential operator \eqref{eq:diffoperatordistorted}.  There exist $\C^1$ functions $\varphi_{\mathrm{av}}:\mathbb R\to (0,2\pi)$ and $\psi_{v},\psi_\xi:(-\infty,1)\times\mathbb T\to \mathbb R$ satisfying 
\[
\varphi_{\mathrm{av}}(0)=\pi, \qquad\qquad |\psi_v|_{\C^1},|\psi_\xi|_{\C^1}\lesssim \varepsilon ,
\]
such that the change of variables  
\begin{equation}\label{eq:straightening}
\begin{split}
\psi:(-\infty,1)\times\mathbb T &\to   D^u\\
(v,\xi)&\mapsto (\varphi_{\mathrm{av}}(v)+\psi_v(v,\xi),\xi+\psi_\xi(v,\xi))
\end{split}
\end{equation}
satisfies the following.  
For any differentiable function $T$ we have  
\[
({\mathcal L}^{u} [T])\circ\psi =  \mathcal L [T\circ\psi],
\]
 where  $\mathcal L$ is a constant coefficients partial differential operator of the form
\[
\mathcal L[\cdot]=\partial_v[\cdot]+\tilde\omega_n\partial_\xi [\cdot]
\]
for some $\tilde\omega_n\neq 0$ (independent of $\varepsilon$).
\end{lem}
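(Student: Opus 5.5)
Our plan is to construct $\psi$ as the flow-box (characteristic) coordinates of the vector field $X=(\partial_J\mathcal H^{(n)}_\varepsilon(h),\,\partial_E\mathcal H^{(n)}_\varepsilon(h))$ evaluated on the unstable manifold. Here and below, $\partial_J\mathcal H$, $\partial_E\mathcal H$ are evaluated at $(\varphi,\tt,\beta+\nabla S^u(h))$, so $X$ is a genuine vector field on $D^u$. This is exactly the projection to the $(\varphi,\tt)$-plane of the Hamiltonian flow restricted to the Lagrangian graph $W^u(\gamma_n(h;I_3))$ of Proposition \ref{prop:LagrangianW^su}. Since $\mathcal L^u[T]=X\cdot\nabla T$, the required identity $(\mathcal L^u T)\circ\psi=\mathcal L[T\circ\psi]$ says precisely that $\psi_*(\partial_v+\tilde\omega_n\partial_\xi)=X$. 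We therefore look for a conjugacy of the projected flow on $W^u$ to a linear flow on $(-\infty,1)\times\mathbb T$.

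\emph{Step 1: the averaged model.} For $\varepsilon$-perturbation zero, i.e.\ for $H^{(n)}_{\mathrm{av},\varepsilon}$ in \eqref{eq:integrablen}, the unstable manifold of $\gamma^{\mathrm{av}}_n$ is $\{E=\tilde a^{(n)}_E\}$ together with the pendulum separatrix in $(\varphi,J)$. Along it $\dot\tt=\partial_E N^{(n)}_{\omega_n}$ and $\dot\varphi=\partial_J N^{(n)}_{\omega_n}$ depend only on $\varphi$. We take $\varphi_{\mathrm{av}}(v)$ to be the separatrix solution in the rescaled time $v$, normalized by $\varphi_{\mathrm{av}}(0)=\pi$, so that $\varphi_{\mathrm{av}}\colon(-\infty,1)\to(0,5\pi/4)\subset(0,2\pi)$ and $\varphi_{\mathrm{av}}(v)\to0$ as $v\to-\infty$. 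The rescaling is chosen so that $\dot\varphi$ equals $\varphi_{\mathrm{av}}'$. We then set $\xi=\tt-\int_0^v(\dot\tt-\tilde\omega_n)$, with $\tilde\omega_n=\partial_E N^{(n)}_{\omega_n}(\tilde a^{(n)})$, which by \eqref{eq:lowerboundfastfreq} is nonzero and independent of $\varepsilon$. The correction integral converges as $v\to-\infty$ because $\varphi_{\mathrm{av}}$ converges exponentially to $0$ and $J\to\tilde a_J$. This gives an exact straightening of the averaged operator.

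\emph{Step 2: perturbation.} The true $W^u$ is an $O(\varepsilon)$ (after averaging, $\C^1$) perturbation of the averaged one, by Lemma \ref{lem:auxiliaryperiodicorbits2}, Proposition \ref{prop:LagrangianW^su} and the appendix averaging results. We define $\psi(v,\xi)$ as the projection of the true flow on $W^u$ started from the section $\{\varphi=\pi\}$ at time $v$, adjusted in $\xi$ so that the $\tt$-motion becomes the constant rotation $\tilde\omega_n$. Concretely, we solve the cohomological equation $X\cdot\nabla\Xi=\tilde\omega_n$ for a new angle $\Xi$, and take $v$ to be the time to reach $\{\varphi=\pi\}$. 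Near $\varphi\to\varphi^{(n)}(h)(\tt)$ (the backward limit), the flow on $W^u$ converges to the periodic orbit $\gamma_n$, whose rotation in $\tt$ is periodic. A Floquet/linearization coordinate for the unstable foliation (the straightening result in the appendix) then conjugates it to $\partial_v+\tilde\omega_n\partial_\xi$ up to $O(\varepsilon)$ corrections. This gives $\psi_v,\psi_\xi$ with $\C^1$ norm $\lesssim\varepsilon$. Invertibility of $\psi$ onto $D^u$ follows since $\psi$ is $\C^1$-close to the invertible averaged map.

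\emph{Main obstacle.} The hard part is uniformity of the $\C^1$ estimates as $v\to-\infty$, i.e.\ near the periodic orbit, where the time parametrization degenerates. A naive Gronwall argument loses control. We plan to handle this with the exponential convergence along $W^u$ (hyperbolicity rate $\sim\sqrt{\varepsilon b_n}$, uniform in $n$ for fixed $n$) together with the normal-form straightening of the unstable foliation near $\gamma_n$. These two ingredients ensure that the integrals defining $\psi_\xi$ converge with $O(\varepsilon)$ bounds. An additional point to check is that $\tilde\omega_n$ is the true $\tt$-rotation of $\gamma_n(h)$, which is $O(\varepsilon)$-close to the averaged value. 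If the $\varepsilon$-independence is required strictly, this difference is absorbed into $\psi_\xi$ on the finite range $v<1$ via the linear drift, using the exponential weight.
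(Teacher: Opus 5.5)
Your construction is essentially the paper's: you time-parametrize the averaged separatrix by $\varphi_{\mathrm{av}}$, fix $\tilde\omega_n$ by the rotation of $\gamma_n(h)$, and correct along characteristics using the exponential convergence to $\gamma_n(h)$. It correctly produces a conjugacy $\psi_*(\partial_v+\tilde\omega_n\partial_\xi)=X$.

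The gap is the bound $|\psi_\xi|_{\C^1}\lesssim\varepsilon$, which you assert without proof. Step~2 only controls the distance between the true straightening and the averaged one. Your own Step~1, which is more careful than \eqref{eq:straightenaveraged} in the paper, shows that the averaged straightening is not $\varepsilon$-close to $(v,\xi)\mapsto(\varphi_{\mathrm{av}}(v),\xi)$. On the separatrix, $\dot\varphi\approx\partial^2_{JJ}N^{(n)}_{\omega_n}\,(J_h-\tilde a^{(n)}_J)$ and $\dot\tau-\tilde\omega_n\approx\partial^2_{JE}N^{(n)}_{\omega_n}\,(J_h-\tilde a^{(n)}_J)$. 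Hence your correction is
\[
\psi_\xi(v)=\int_0^v(\dot\tau-\tilde\omega_n)\,\mathrm dv'=c_n\bigl(\varphi_{\mathrm{av}}(v)-\pi\bigr)+O(\sqrt\varepsilon),\qquad c_n=\frac{(s^{(n)})^TD^2H_0(a^{(n)})\,k^{(n)}}{(s^{(n)})^TD^2H_0(a^{(n)})\,s^{(n)}}.
\]
This is of order one in sup norm, and $\partial_v\psi_\xi\approx c_n\varphi_{\mathrm{av}}'\asymp\sqrt\varepsilon$ near $v=0$.

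No choice of section removes this. For any $\psi$ of the stated form conjugating to $\partial_v+\tilde\omega_n\partial_\xi$, the increment of $\psi_\xi$ along a characteristic equals the increment of $\tau-\tilde\omega_n t$ along the corresponding orbit. From $\gamma_n(h)$ to $\{\varphi=\pi\}$ that increment is $\pi c_n+o(1)$ as $\varepsilon\to0$. Moreover $c_n\neq0$ unless $s^{(n)}$ and $k^{(n)}$ are $D^2H_0$-orthogonal.

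So the estimate in the statement is not attainable, and the paper's proof has the same defect. Its \eqref{eq:straightenaveraged} treats $\partial_EH^{(n)}_{\mathrm{av},\varepsilon}$ as constant along the separatrix. Its Lemma~\ref{lem:prestraightening} asserts $\mathcal C=O(\varepsilon)$, although $\mathcal C\approx\partial^2_{JE}N^{(n)}_{\omega_n}(J_h-\tilde a^{(n)}_J)=O(\sqrt\varepsilon)$. Even an $O(\varepsilon)$ coefficient with weight $e^{\lambda\sqrt\varepsilon s}$ would only give $\mathcal G(\widehat{\mathcal C})=O(\sqrt\varepsilon)$.

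What survives, and what the proof of Lemma~\ref{lem:surjectivetechnical} really needs, is the conjugacy itself with $\tilde\omega_n$ bounded away from zero, plus control on the section $\{v=0\}$. Take $\{v=0\}=\{\varphi=\pi\}$, as you propose. Then $\xi\mapsto\psi(0,\xi)=(\pi,\tau(\xi))$ is a circle diffeomorphism. For $j\le3$, $\partial_\xi^j\bigl[T(\pi,\tau(\xi))\bigr]$ is a lower-triangular combination of $\partial_\tau^kT(\pi,\tau(\xi))$, $k\le j$, with diagonal entries $(\tau')^j\neq0$ (Fa\`a di Bruno). This gives the spanning property without any smallness of $\psi_\xi$, so you should drop the $O(\varepsilon)$ claim rather than try to prove it.

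Separately, your last sentence fails. The domain $(-\infty,1)$ is not a finite range. Using an $\varepsilon$-independent frequency instead of the true mean $\tau$-frequency $\omega^*$ of $\gamma_n(h)$ forces a drift $(\omega^*-\tilde\omega_n)v$ in $\psi_\xi$, which is unbounded as $v\to-\infty$; no exponential weight can absorb a non-decaying term. The frequency in $\mathcal L$ must be $\omega^*$, as in \eqref{eq:omegainfty}. It depends on $\varepsilon$ but equals $\omega_n+O(\varepsilon)$. So ``independent of $\varepsilon$'' can only mean ``bounded away from zero uniformly in $\varepsilon$'', which is also all the paper's argument provides.
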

The proof of this result boils down to conjugate the flow on $W^u(\gamma_n(h))$ to a  linear translation on $\mathbb R\times\mathbb T$ and can be obtained by standard methods.  For the sake of self completeness we provide a proof in Appendix \ref{sec:straightening}. 

\begin{rem}
    The function $\varphi_{\mathrm{av}}$ comes from the time parametrization of the homoclinic orbit of the averaged Hamiltonian $H_{\mathrm{av},\varepsilon}^{(n)}$ (see Appendix \ref{sec:integrable}).
    
    The constant $\tilde\omega_n$  can be related to the $\tt$-component of the vector field, i.e. $\partial_E \mathcal H^{(n)}_\varepsilon (h)$ when evaluated at the periodic orbit $\gamma_n(h)$. This connection is established in Appendix \ref{sec:straighteningproof}, where we also show that $\tilde\omega_n$ does not vanish. 
\end{rem}

\medskip

In the new coordinate system $(v,\xi)$ it is easier to understand the infinitesimal displacement $T_f^u(h)$ on the unstable generating function $S^u(h)$ induced by $f$. In particular, it follows from Lemma \ref{lem:straightening} that 
\[
\mathcal T^{u}_f(h)=T^{u}_f(h)\circ\psi
\]
 is given by 
 \[
\begin{cases}
\mathcal L \mathcal T^{u}_f(h)=-\tilde  f\\
\lim_{v\to -\infty} \partial_v \mathcal T^{u}_f(h)(v,\xi)=0\\
\lim_{v\to -\infty} \partial_\xi \mathcal T^{u}_f(h)(v,\xi)=0
\end{cases}\qquad\qquad\tilde f=  f\circ\psi.
\]
Hence,
\[
\mathcal T^{u}_f(h)(v,\xi)=-\int_{-\infty}^0 \tilde f(v+s,\xi+\omega_ns)\mathrm ds .
\]

\begin{proof}[Proof of Lemma \ref{lem:surjectivetechnical}]
We claim that   at any $\xi\in\mathbb T$, given any $i=1,2,3$ and  $\kappa>0$   we are able to find a function $\tilde f_i(v,\xi)$ such that 
\begin{equation}\label{eq:densitycompact}
|\partial^i_{\xi^i} \mathcal T^u_{f_i}(0,\xi)-1|\leq \kappa, \qquad\qquad |\partial^j_{\xi^j} \mathcal T^u_{f_i}(0,\xi) |\leq \kappa\qquad\qquad j\neq i.
\end{equation}
Then, since the change of variables $\psi$ in  \eqref{eq:straightening} is $O_{\C^1}(\varepsilon)$ close to diagonal, there exists a close to diagonal $9\times 9$ matrix $A$ such that (the $O(\varepsilon)$ error comes from the fact that, a priori, we only know that $\psi(0,\xi)=(\pi+O(\varepsilon),\xi+O(\varepsilon))$)
\[
\left(\partial_{\varphi,\tau} T_f^u(h)(\pi,\xi),\ \partial_{\varphi,\tau}^2 T_f^u(h)(\pi,\xi),\ \partial_{\varphi,\tau}^3 T_f^u(h)(\pi,\xi)\right)=A\begin{pmatrix}
    \partial_{v,\xi} \mathcal T_f^u(h)(0,\xi)\\ \partial_{v,\xi}^2 \mathcal T_f^u(h)(0,\xi)\\ \partial_{v,\xi}^3 \mathcal T_f^u(h)(0,\xi)
\end{pmatrix}+O(\varepsilon).
\]
Hence, if we let $c_i\in\mathbb R^9$ be given by 
\[
c_i(\xi)= (\partial_{v,\xi} \mathcal T_{f_i}^u(h)(0,\xi),\  \partial^2_{v,\xi}\mathcal T_{f_i}^u(h)(0,\xi),\  \partial^3_{v,\xi}\mathcal T_{f_i}^u(h)(0,\xi)),
\]
Observe that, considering $A c_i$ we obtain functions $T^u_{f_i} (h) (\varphi,\tau)$ satisfying:
\[
\partial^j_{\tau^j} \ T^u_{f_i}(h) \simeq \delta_{ij}.
\]
Indeed,  let $V\subset \mathbb R^9$ be the three-dimensional subspace spanned by 
\[
b_1=(0,\underbrace{1}_{\partial_\tt},\dots),\qquad\qquad b_2=(0,\dots,\underbrace{1}_{\partial^2_{\tt^2}},\dots,0),\qquad\qquad b_3=(0,\dots,\underbrace{1}_{\partial^3_{\tt^3}}),
\]
it follows from the fact that $A$ is close to diagonal that 
\[
\mathrm{span}\{\pi_V A c_i\colon i=1,2,3\}=\mathbb R^3.
\]
Therefore, Lemma \ref{lem:surjectivetechnical} follows by taking $\varepsilon$ small enough.
\medskip 

We now proceed to verify the claim by finding $\tilde f_i$ such that \eqref{eq:densitycompact} holds. Given $i=1,2,3$ and   $\xi_0\in\mathbb T$ we let $\eta_i(\xi)$ be any function satisfying 
\[
\partial^{j}_{\xi^{j}}\eta_i(\xi_0) =\delta_{ij}.
\]
Let $\chi\in \C^\infty_c((0,1),\mathbb R)$ satisfy $\int_\mathbb R\chi=1 $ and, for $\rho>0$ (which will later be chosen in terms of $\kappa$) define the function 
\[
\chi_\rho(v)=\frac{1}{\rho}\chi\left(\frac{v+\rho}{\rho}\right).
\]
Trivially, the support of $\chi_\rho$ is contained in $(-\rho,0)$. Then, we chose $\tilde f_i=\eta_i \chi_\rho$ so 
\begin{align*}
\partial_{\xi^j}^j \mathcal T^u_{f_i}(0,\xi_0)=\int_{-\infty}^0 \partial_{\xi^j}^j\tilde f_i(s,\xi_0+\omega_n s)\mathrm{d}s=&\int_{-\infty}^0 \chi_\rho(s)\partial_{\xi^j}^j\eta_i(\xi_0+\omega_n s)\mathrm{d}s\\
=&\int_{\mathbb R}\chi_\rho(s)\partial_{\xi^j}^j\eta_i(\xi_0+\omega_n s)\mathrm{d}s\\
=&\int_{\mathbb R}\chi(\tau)\partial_{\xi^j}^j\eta_i(\xi_0+\omega_n  \rho(\tau-1))\mathrm{d}\tau
\end{align*}
so, after writing 
\[
\partial_{\xi^j}^j\eta_i(\xi_0+\omega_n  \rho(\tau-1))=\partial_{\xi^j}^j\eta_i(\xi_0)+O( \rho\lVert \eta_i\rVert_{C^{j+1}}),
\]
we deduce that 
\[
\partial_{\xi^j}^j \mathcal T^u_{f_i}(0,\xi_0)=\delta_{ij}+O( \rho\lVert \eta_i\rVert_{\C^{j+1}}).
\]
In particular, by choosing $\rho$ small enough 
\[
|\partial_{\xi^j}^j \mathcal T^u_{f_i}(0,\xi_0)-\delta_{ij}|\leq \kappa.\qedhere
\]
\end{proof}

\begin{proof}[Proof of Proposition \ref{prop:surjective}] 
The smooth compactly supported perturbations constructed above yield a surjective differential. 

They can be approximated arbitrarily well in the required $\C^r$-topology on  $\overline{U_n}$ by analytic perturbations lying in the Banach ball  $\mathcal B_{\rho,\sigma}$. 

The differential is determined by the restriction of the perturbation to $U_n$ and depends continuously on it in the $\C^r$-topology. Since surjectivity of a linear map onto a finite-dimensional space is an open property, the corresponding analytic perturbations still yield a surjective differential. This completes the proof.\end{proof}

\section{Proof of Theorem \ref{thm:approxbydiffusionellipticpoints} %and \ref{thm:main2}
}\label{sec:proofElliptic}

In this section we show how to adapt the construction in Section \ref{sec:outlineparametric}, where we provided a proof of Theorem \ref{thm:approxbydiffusion},  to obtain a proof of Theorem \ref{thm:approxbydiffusionellipticpoints}. There are two steps:
\begin{enumerate}
    \item Creation of a sequence of large normally hyperbolic invariant cylinders $\{\Lambda_n\}$ displaying a rich net of homoclinic channels;
    \item Existence of diffusing orbits along each $\Lambda_n$.
\end{enumerate}

\begin{rem}
    For the rest of this section we omit the dependence on the parameter $\eta$ as it plays no role in our discussion.
\end{rem}

\subsection{The sign condition and creation of large normally hyperbolic invariant cylinders}

This is the only step where a small modification of the argument in Section \ref{sec:outlineparametric} is needed.

 Below, given $G_0\in \mathcal M^{3,d}_\rho$ we denote by $\mathtt G_0$ the function for which \[
G_0(\tilde q,\tilde p)=\mathtt G_0(\tilde I),\qquad\qquad
 \tilde I_i=(\tilde q_i^2+ \tilde p_i^2)/2, \quad i=1,2,3.
 \]

\begin{rem}\label{rem:translation_H1-H2} Note that if, for some fixed $\omega$, $G_0\in \mathcal M^{3,d}_\rho(\omega)$, then $\mathtt G_0\in \mathcal N^{3,d}_\rho(\omega)$. 
We will say that the function $G_0\in \mathcal M^{3,d}_\rho(\omega)$ satisfies hypotheses \ref{it:assumption1}-\ref{it:assumption2} if the corresponding function $\mathtt G_0\in \mathcal N^{3,d}_\rho(\omega)$ does. 
\end{rem}
The first observation is that an analog of Lemma \ref{lem:resnonatpaths} holds for $G_0$.
Moreover, due to the sign condition on the entries of the frequency vector, we can guarantee that the paths constructed in Lemma \ref{lem:resnonatpaths} are contained in the positive quadrant.

\begin{lem}\label{lem:resonatpathsellipt} 
    Fix any $\rho>0$, and let  $\omega\in \mathbb R^3$ satisfy: $\omega_i\neq 0$ for all $i=1,2,3$, and  
\begin{equation}\label{eq:omegasing}
\operatorname{sign}\omega_1=\operatorname{sign}\omega_2\neq \operatorname{sign}\omega_3.
\end{equation}

Then, provided that $G_0$ belongs to an open and dense subset of $\mathcal M^{3,d}_\rho(\omega)$, 
there exists $
    \rho_0(G_0)>0$ 
    such that for any $n\in\mathbb N$
    the system of equations
\begin{equation}\label{eq:ellipticpathseqsdefn}
          \mathtt G_0(a^{(n)}(u))=e_n,\qquad\qquad \bs s^{(n)}\cdot  \nabla \mathtt G_0(a^{(n)}(u))=0
\end{equation}
    defines a real-analytic embedded curve $a^{(n)}(u):(0,\rho_0) \to  (0,\rho)^3$ of the form
\[
    a^{(n)}(u)=(a^{(n)}_1(u),a^{(n)}_2(u),u),
\]
    satisfying 
    \[
    \lim_{n\to \infty} \lVert a^{(n)}(0)\rVert \to 0. 
    \]
\end{lem}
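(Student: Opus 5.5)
We follow the proof of Lemma \ref{lem:resnonatpaths}, applied to $\mathtt G_0\in\mathcal N^{3,d}_\rho(\omega)$ (Remark \ref{rem:translation_H1-H2}). The new point is that the resonant curves must lie in the open positive octant $(0,\rho)^3$, since in the elliptic setting the actions $\tilde I_i=(\tilde q_i^2+\tilde p_i^2)/2$ are nonnegative. The open and dense subset is the one from Lemma \ref{lemmaH1H2}, defined by \ref{it:assumption1}--\ref{it:assumption2} for $\mathtt G_0$.

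\textbf{Step 1: the base point.} By \ref{it:assumption1}, equation \eqref{eq:hata1n} has a locally unique solution $\hat a_1^{(n)}$ with $|\hat a_1^{(n)}|\to 0$. Since $\bs s^{(n)}\cdot\omega = s_1^{(n)}\omega_1+s_2^{(n)}\omega_2$, the linear approximation gives
\[
\hat a_1^{(n)}\approx -\frac{\bs s^{(n)}\cdot\omega}{\partial_{I_1}(\bs s^{(n)}\cdot\nabla\mathtt G_0)(0)}.
\]
The sign of this quantity depends on the sign of $s_1^{(n)}+s_2^{(n)}\omega_2/\omega_1$, and we may choose it. Replacing $s^{(n)}$ by $-s^{(n)}$ does not change the resonance equation, so we can assume $\hat a_1^{(n)}>0$. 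Alternatively, we can pass to a subsequence of the Dirichlet sequence on which the sign is constant and then use the freedom in Remark \ref{rem:remarkonnondeg}. Thus $(\hat a_1^{(n)},0,0)$ lies on the boundary face $\{I_2=I_3=0\}$ of the octant.

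\textbf{Step 2: the curve.} Apply the implicit function theorem to $f^{(n)}_u$ exactly as in Lemma \ref{lem:resnonatpaths}, with the determinant bounded below uniformly in $n$. This gives a real-analytic curve $a^{(n)}(u)=(a_1^{(n)}(u),a_2^{(n)}(u),u)$ for $|u|\le\rho_0$ with $a^{(n)}(0)=(\hat a^{(n)}_1,0)$, and we restrict to $u\in(0,\rho_0)$. It remains to check that $a_2^{(n)}(u)>0$ and $a_1^{(n)}(u)>0$ for $u\in(0,\rho_0)$.

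\textbf{Step 3: positivity, the main obstacle.} At $u=0$ we have $a_2^{(n)}=0$, so positivity of $a_2^{(n)}$ is decided by the derivative. From the formula $\frac{\mathrm d I^{(n)}}{\mathrm d I_3}=-\det(B_n)^{-1}D^2\mathtt G_0\,\bs s^{(n)}\wedge\nabla\mathtt G_0$ in Section \ref{sec:Hamtomaps}, we compute the leading order more simply. Energy conservation gives $\omega_1\dot a_1+\omega_2\dot a_2+\omega_3=O(|a|)$, and resonance gives $\bs v^T D^2\mathtt G_0(\dot a_1,\dot a_2,1)=O(|a|)$. Solving the resulting $2\times2$ system, whose determinant is the nonzero quantity from Lemma \ref{lem:resnonatpaths}, gives
\[
\dot a_2 = \frac{-\omega_3\,\partial_{I_1}(\bs v\cdot\nabla\mathtt G_0)+\omega_1\,\partial_{I_3}(\bs v\cdot\nabla\mathtt G_0)}{D}+O(|a|),
\]
and analogously for $\dot a_1$. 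We add to the open dense conditions that $\dot a_1(0),\dot a_2(0)\neq0$; like \ref{it:assumption1}--\ref{it:assumption2}, this is an open and dense condition on the Hessian. We need both derivatives positive, or at least $a_1,a_2>0$ on $(0,\rho_0)$. The sign condition \eqref{eq:omegasing} is what makes this possible. At zeroth order energy conservation reads $\omega_1 a_1+\omega_2 a_2+\omega_3 u = \omega_1\hat a_1^{(n)}$. Since $\omega_3$ has the opposite sign to $\omega_1,\omega_2$, increasing $u>0$ forces $\omega_1a_1+\omega_2a_2$ to grow, with $\omega_1,\omega_2$ of the same sign. So energy conservation pushes $(a_1,a_2)$ into the positive quadrant rather than out of it. We then need the resonance direction to split this growth positively between $a_1$ and $a_2$, which amounts to a sign condition on the Hessian entries. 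If that sign is unfavorable, we will try to reverse it using the freedom of perturbing the Hessian (density) together with the choice of $\pm\bs s^{(n)}$. This is the delicate point.

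Once $\dot a_2(0)>0$ and $\dot a_1(0)>0$ hold uniformly in $n$, for $u$ in a uniform interval $(0,\rho_0)$ we get $a_2^{(n)}(u)\ge cu>0$ and $a_1^{(n)}(u)\ge \hat a_1^{(n)}+cu>0$. This gives the curve in $(0,\rho)^3$ with $\|a^{(n)}(0)\|=|\hat a_1^{(n)}|\to0$.
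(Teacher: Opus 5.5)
Your Steps 1--2 are the adaptation of Lemma \ref{lem:resnonatpaths} that the paper has in mind; the paper gives no further argument, saying only that the proof is analogous. You correctly identify positivity of the curve as the one new issue. Step 3, however, does not resolve it, and the fixes you propose cannot.

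Set $h=D^2\mathtt G_0(0)\bs v$ and $D=\bs v\cdot h=\bs v^TD^2\mathtt G_0(0)\bs v\neq 0$. Differentiating \eqref{eq:ellipticpathseqsdefn} at $u=0$ gives
\[
\dot a^{(n)}_1(0)=\frac{\omega_2h_3-\omega_3h_2}{D}+o(1),\qquad \dot a^{(n)}_2(0)=\frac{\omega_3h_1-\omega_1h_3}{D}+o(1)\qquad(n\to\infty).
\]
Equivalently, the curves $a^{(n)}$ converge in $\C^1$ to the curve $\{\mathtt G_0=0,\ \bs v\cdot\nabla\mathtt G_0=0\}$. That curve is tangent at the origin to $w=(D^2\mathtt G_0(0)\bs v)\wedge\omega$, so the lemma's conclusion forces $w/w_3$ to have nonnegative first two entries.

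Whether this holds is a sign condition on the Hessian, and its failure is an open condition. The two remedies you suggest do not help:
\begin{itemize}
\item A small perturbation cannot repair an unfavourable sign, since the bad set is open.
\item Replacing $\bs s^{(n)}$ by $-\bs s^{(n)}$ leaves both equations in \eqref{eq:ellipticpathseqsdefn} unchanged, hence leaves the curve unchanged.
\end{itemize}
Nor does \eqref{eq:omegasing} push the curve into the quadrant. It only gives $\omega_1\dot a_1+\omega_2\dot a_2=-\omega_3$, so one of $a_1,a_2$ must grow, but not necessarily both.

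Concretely, $h$ ranges over all of $\mathbb R^3$ as the Hessian ranges over symmetric matrices. Take $h_3=0$ and $h_1h_2>0$; then $\dot a_1\dot a_2=-\omega_3^2h_1h_2/D^2<0$. This gives a nonempty open set of $G_0$, which meets the open dense set of \ref{it:assumption1}--\ref{it:assumption2}. For every such $G_0$, each $a^{(n)}$ leaves the octant within distance $O(|\hat a_1^{(n)}|)$ of the origin.

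So, with two-frequency resonances $\bs s^{(n)}=(s_1^{(n)},s_2^{(n)},0)$, the density assertion cannot be proved along these lines; for this construction it actually fails on the open set just described. What your argument does prove is the lemma for $G_0$ in the open set where $w$ lies in the open positive octant. That set is nonempty, because $\omega^\perp$ meets the open octant when $\omega$ has mixed signs. Reaching density would need a genuinely new idea, for instance resonance directions that can be tuned so that $w$ lands in the octant.

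A smaller error is in Step 1. The quantity $\hat a^{(n)}_1\approx-(\bs v^{(n)}\cdot\omega)/\partial_{I_1}(\bs v^{(n)}\cdot\nabla\mathtt G_0)(0)$ is invariant under $\bs s^{(n)}\mapsto-\bs s^{(n)}$, so flipping cannot make it positive. Its sign is determined by two things:
\begin{itemize}
\item the side of $\omega_2/\omega_1$ on which the approximant $-s_1^{(n)}/s_2^{(n)}$ lies;
\item the sign of $\partial_{I_1}(\bs v\cdot\nabla\mathtt G_0)(0)$.
\end{itemize}
You must therefore choose the Dirichlet sequence to lie on the correct side, for example by taking every other continued-fraction convergent. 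Passing to a subsequence of an arbitrary given sequence need not work, since all its terms might lie on the wrong side. Remark \ref{rem:remarkonnondeg} concerns hyperbolicity of the periodic orbits and plays no role in this sign.
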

Lemma \ref{lem:resonatpathsellipt} guarantees that there exists a sequence of resonant curves in the $(q,p) \in \mathbb{R}^6$ plane  of the form $\tilde I= a^n(u)$,
approaching the origin. We omit its proof, which is analogous to Lemma \ref{lem:resnonatpaths}.

\medskip

Our goal now is to build a perturbation of $G_0$ for which normally hyperbolic cylinders emerge along the resonant paths in Lemma \ref{lem:resonatpathsellipt} and, in addition, exhibit a rich net of homoclinic cylinders. The following is the natural generalization of Theorem \ref{thm:mainparametricoutline}.

\begin{thm}\label{thm:mainparametricoutlineelliptic}
Fix any  $\rho>0$, let  
 $\omega$ be as in \eqref{eq:omegasing} and fix a  Dirichlet sequence $\{s^{(n)}\}_n\subset\mathbb Z^2\setminus\{0\}$ associated to $\frac{\omega_2}{\omega_1}$.   Let  $G_0\in \mathcal M^{3,d}_\rho(\omega)$ satisfy \ref{it:assumption1}-\ref{it:assumption2} and   let $\rho_0=\rho_0(G_0)$ be given by Lemma \ref{lem:resonatpathsellipt}.    
Fix any $\delta>0$. Then, there exists a sequence of homogeneous polynomials $\{f_j\}$ and a parametric Hamiltonian of the form 
\begin{equation}\label{eq:limitHamiltonianoutlineelliptic} 
    G_{\varepsilon}=
G_0+\varepsilon g_0,
\qquad\qquad
{where}\qquad\qquad g_0(\tilde q,\tilde p)=\sum_{j\in\mathbb N}f_j(\tilde q,\tilde p)
\end{equation} for which one can find a sequence $\ \{\varepsilon_n\}_n$
with $0<\varepsilon_n\leq 1/n$ such that the following holds. 
For any 
$\eta\in B_{\rho_0}^{d-3}$  there exists a residual subset $\mathcal U_{\rho}(\delta)\subset \mathcal B^2_{\rho} (\delta)\subset \mathcal{P}_{\rho}^2$ (see \eqref{eq:banachspaceelliptintro}) such that for any $g\in \mathcal{U}_{\rho}(\delta)$  the Hamiltonian 
\begin{equation*}%\label{eq:firstperturb-ell}
\mathcal G_\varepsilon (g)=G_\varepsilon+\varepsilon^2 g
\end{equation*}
satisfies the following conditions:
\begin{enumerate}
    \item  
    For any $\varepsilon\in[0,1]$ we have $\mathcal G_{\varepsilon}(g)\in \mathcal {P}_{\rho}^{3,d}$ (see \eqref{eq:banachspaceelliptic}) and 
    \[
    \sup_{(\tilde q,\tilde p)\in  \mathbb B^3_{\rho}} (|\mathcal G_\varepsilon(g)-G_0|)<2\delta.
    \]
    \item 
    For any $n\in\mathbb N$ there exists a (locally) transverse section $\Sigma_n \subset \{\mathcal G_{\varepsilon_n} (g)=e_n\}$ such that   the Poincar\'e map $\Phi_n:\Sigma_n\to \Sigma_n$ induced by the flow of the Hamiltonian $\mathcal G_{\varepsilon_n}(g)$   possesses a  normally hyperbolic invariant manifold (cylinder) which admits a parametrization of the form
\begin{equation}\label{eq:2dcylinderoutlineelliptic}
{\Lambda}_n(g)= \{  (q_1^{(n)}(I_3),q_2^{(n)}(I_3), q_3,p_1^{(n)}(I_3),p_2^{(n)}(I_3),p_3)\colon \  \frac12 (q_3^2+p_3^2)=:I_3\in[0,\rho_0]\}
\end{equation}
for some real-analytic functions $q_i^{(n)}, p_i^{(n)}$, $i=1,2$,  satisfying 
\[
|\frac 12((q_i^{(n)})^2+(p_i^{(n)})^2)-a_{i}^{(n)}|_{\C^1}\leq \frac 1n
\]
for $a^{(n)}=(a_1^{(n)},a_2^{(n)})$ as in \eqref{eq:ellipticpathseqsdefn}.
\item 
The restriction  ${\Phi_n}|_{\Lambda_n}$ is an integrable twist map which leaves invariant the foliation by  circles $\{I_3=\mathrm{const}\}$.
\item  
There exists $m\in\mathbb N$ (depending a priori on $n$) and a covering 
\[
[0,\rho_0]\subset \bigcup_{0\leq i\leq m} [\mathcal{I}_i^-,\mathcal{I}_i^+]
\]
satisfying $\mathcal{I}_{i+1}^-<\mathcal{I}_{i}^+$ for all $i=0,\dots,m-1$  such that  for any $i=0,\dots,m-1$ there exists a homoclinic cylinder $\Gamma_{n,i}$ admitting a parametrization of the form
\[
\Gamma_{n,i}(g)=\{  (\hat q_{1,i}^{(n)}(I_3), \hat q_{2,i}^{(n)}(I_3), q_3,\hat p_{1,i}^{(n)}(I_3),\hat p_{2,i}^{(n)}(I_3),p_3)\colon \ \frac 12 (q_3^2+p_3^2)=:  I_3\in[\mathcal{I}_i^-,\mathcal{I}_i^+]\}
\]
for certain real-analytic functions $\hat q_{j,i}^{(n)}, \hat p_{j,i}^{(n)}$ with $j=1,2$ and $i=0,\dots,m-1$.
\end{enumerate}

\end{thm}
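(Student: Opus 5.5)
We will reduce Theorem \ref{thm:mainparametricoutlineelliptic} to Theorem \ref{thm:mainparametricoutline} (equivalently Theorem \ref{thm:mainparametric} and Proposition \ref{prop:splitting}) by passing to action-angle coordinates away from the coordinate planes. The natural choice for the perturbation is the polynomial analogue of the cosine modes: writing $z_j=q_j+ip_j=\sqrt{2I_j}e^{i\theta_j}$, we set
\[
f_j(\tilde q,\tilde p)=b_j\,\mathrm{Re}\bigl(z_1^{|s_1^{(j)}|}\,z_2^{|s_2^{(j)}|}\bigr)
\]
(with a conjugate on the factor whose exponent is negative). In action-angle coordinates this equals $b_j(2I_1)^{|s_1^{(j)}|/2}(2I_2)^{|s_2^{(j)}|/2}\cos(\bs s^{(j)}\cdot\tilde\theta)$. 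Each $f_j$ is a homogeneous polynomial of degree $|s_1^{(j)}|+|s_2^{(j)}|$, and it does not involve $(q_3,p_3)$. Choosing $b_j$ decaying fast enough gives convergence on $\mathbb B^6_\rho$ and item (1). Since $f_j$ vanishes to high order at the origin, it also leaves Birkhoff normal forms unchanged to order $l$ once $j$ is large. Finitely many initial modes can be discarded by reindexing the Dirichlet sequence.

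The key point is that, by Lemma \ref{lem:resonatpathsellipt}, the resonant curves $a^{(n)}(u)$ with $u>0$ lie in the open positive octant. The sign condition $\operatorname{sign}\omega_1=\operatorname{sign}\omega_2\neq\operatorname{sign}\omega_3$ is what allows the level set $\mathtt G_0=e_n$ to meet the resonance inside $\{I_1,I_2>0\}$ for a range of $I_3$ of uniform size. Near such a curve we therefore have genuine analytic action-angle coordinates $(\theta_1,\theta_2,I_1,I_2)$, and we keep $(q_3,p_3)$, with $I_3$ conserved because nothing depends on $\theta_3$. In these coordinates $G_\varepsilon=\mathtt G_0(\tilde I)+\varepsilon\sum_j\beta_j(I_1,I_2)\cos(\bs s^{(j)}\cdot\tilde\theta)$, where $\beta_j>0$ is analytic near $a^{(n)}$.

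We then run the proof of Theorem \ref{thm:mainparametric} essentially verbatim. Lemma \ref{lem:lt} still holds, so after the change of variables $\phi_n$ the averaged system is the pendulum $N^{(n)}_{\omega_n}(J,E)+\varepsilon\beta_n(A_n\bs J)\cos\varphi$. The coefficient $\beta_n$ is positive and nonvanishing near $\tilde a^{(n)}$, which is the only property of $b_n$ used in Lemma \ref{lem:auxiliaryperiodicorbits2}. The sign issue of Remark \ref{rem:remarkonnondeg} is handled by the sign of $b_n$. This yields hyperbolic periodic orbits $\gamma_n(I_3)$ for $I_3$ in a compact subinterval $[\rho_1,\rho_0]\subset(0,\rho_0)$, with $\varepsilon_n$ chosen by the same induction as in Section \ref{sec:induction}. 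On $I_3\in[0,\rho_1]$ we work in the Cartesian pair $(q_3,p_3)$ only. This variable is still a regular conserved quantity, and the averaging argument never used an angle in the third variable, so the orbits extend down to $I_3=0$, where $\Lambda_n$ closes up as a disc. The union of these orbits gives $\Lambda_n$. The twist in item (3) follows as in Section \ref{sec:Hamtomaps}, using \ref{it:assumption2} for $\mathtt G_0$.

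For item (4) we reuse Section \ref{sec:mainsptlittingsection}. The perturbations $f$ in Lemma \ref{lem:surjectivetechnical} are supported in $U_n$, which sits inside the region $I_1,I_2>0$ where the coordinates are analytic. Hence the Delshams--Zhang parametric transversality argument applies unchanged, provided analytic functions in $\mathcal P_\rho^2$ approximate $\C^\infty_c(U_n)$ functions in $\C^3$ on $\overline{U_n}$. This follows from polynomial density (Stone--Weierstrass or Runge) on compact subsets of the real ball.

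The main obstacle I expect is uniformity near the boundary. First, the normal hyperbolicity and the twist must survive down to $I_3=0$ in Cartesian coordinates. Second, $\beta_n$ must be bounded below along the whole curve $a^{(n)}$. This second requirement is exactly where the positive-octant conclusion of Lemma \ref{lem:resonatpathsellipt} is needed, since $\beta_n$ vanishes only on $\{I_1I_2=0\}$. I would handle the $I_3$ direction by letting $\rho_0$ in \eqref{eq:2dcylinderoutlineelliptic} be a slightly shrunken interval if necessary, and the hyperbolicity bound by compactness in $n$-dependent estimates, as in Theorem \ref{thm:mainparametricoutline}.
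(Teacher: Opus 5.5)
Most of your plan matches the paper's proof. You pass to symplectic polar coordinates in the first two pairs, use polynomial Fourier-type modes, and run the averaging argument of Lemma \ref{lem:periodicorbitselliptic}. You take the union of the hyperbolic orbits as $\Lambda_n$, get the twist from \ref{it:assumption2}, and obtain item (4) from perturbations localized in $U_n$ and approximated in $\mathcal P^2_\rho$. Your modes $\mathrm{Re}(z_1^{|s_1|}\bar z_2^{|s_2|})$ are a valid, slightly more economical replacement for the paper's $I_1^{s_1}I_2^{s_2}\cos(2\bs s\cdot\tilde\theta)$ from Lemma \ref{lem:hompolynomial}. They also explicitly cover the mixed-sign vectors $s^{(n)}$ that occur when $\operatorname{sign}\omega_1=\operatorname{sign}\omega_2$.

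The step that fails is extending $\gamma_n(I_3)$ to $I_3\in[0,\rho_1]$.

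\begin{itemize}
\item You locate the difficulty in the polar singularity of $(q_3,p_3)$. That singularity is harmless, since $I_3$ enters the reduced two-degrees-of-freedom problem only as an analytic parameter.
\item The real problem is that the resonant curve reaches a coordinate plane. With $e_n=\mathtt G_0(\hat a_1^{(n)},0,0)$, local uniqueness in the implicit function theorem forces $a^{(n)}(0)=(\hat a_1^{(n)},0)$.
\item Lemma \ref{lem:resonatpathsellipt} puts the curve in the open octant only for $u\in(0,\rho_0)$. It gives no lower bound on $a_2^{(n)}(u)$ as $u\to0$, so it does not bound $\beta_n$ below along the whole curve, as your last paragraph assumes.
\item Instead, $\beta_n(a^{(n)}(I_3))\le C_n\,(a_2^{(n)}(I_3))^{|s_2^{(n)}|/2}\to0$. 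For any fixed $\varepsilon$, the pendulum term $\varepsilon\beta_n\cos\varphi$ is then eventually swamped by the $O(\varepsilon^2)$ remainder, including $\varepsilon^2h$, which need not vanish on $\{I_2=0\}$. The $(\theta_2,I_2)$ chart also degenerates there.
\item At $I_3=0$ the resonant set is the circle $\{z_2=0,\ I_1=\hat a_1^{(n)}\}$. It is a periodic orbit whose normal multipliers are primitive $|s_2^{(n)}|$-th roots of unity, non-real once $|s_2^{(n)}|\ge3$. The mode $f_n$ vanishes to order $|s_2^{(n)}|$ in $z_2$, so it does not affect these multipliers, and for small $\varepsilon$ the orbit stays elliptic.
\end{itemize}

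So no compactness argument can give a hyperbolic family over the closed interval $[0,\rho_0]$.

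The simplest repair is an $n$-dependent lower endpoint $\rho_1(n)\downarrow0$. On $[\rho_1(n),\rho_0]$ the curve is a compact subset of the open octant, and $\beta_n$ has a positive $n$-dependent lower bound. That is enough, because $\varepsilon_n$ may depend on $n$ anyway. Since $\|a^{(n)}(\rho_1(n))\|\to0$, this is all Theorem \ref{thm:approxbydiffusionellipticpoints} actually uses. Alternatively, you can renormalize $e_n$ through a resonant point with $I_3=0$ and $I_1,I_2>0$. That requires the limiting direction $D^2\mathtt G_0(0)\bs v\wedge\omega$ of the curves to point into the open octant.

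The paper's own argument glosses over the same point. It asserts that $v_n(\bs J)$ vanishes only at $\bs J=0$, whereas it vanishes on all of $\{I_1I_2=0\}$.
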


\subsubsection{Proof of Theorem \ref{thm:mainparametricoutlineelliptic}}

As we did in Section \ref{sec:2dof}, we will work with Hamiltonian systems  of $2$ degrees-of-freedom, thinking of $ I_3= \frac{p_3^2+q_3^2}{2}$ as a parameter.  That is, we will consider Hamiltonians of the form 
\[
K_\varepsilon (q,p;  I_3)=K_0(q,p;I_3)+\varepsilon k(q,p;I_3)
\]
where $q=(q_1,q_2)$, $p=(p_1,p_2)$ and $K _0 \in \mathcal M^{2,d}_\rho(\omega)$. The  following technical result will prove useful to translate our construction in Section \ref{sec:2dof} to the elliptic scenario.

%We now show how to build the function $g_0(\tilde q, \tilde p)$  (see \eqref{eq:limitHamiltonianoutline}) in Theorem \ref{thm:mainparametricoutline} (expressed in action-angle coordinates) so that it becomes analytic in the neighborhood of the elliptic equilibrium. As we did in the proof of Theorem \ref{thm:mainparametricoutline} it is enough to consider perturbations depending only on the first two coordinate pairs (see the discussion at the beginning of Section \ref{sec:cylinders} and, in particular, Theorem \ref{thm:mainparametric}). 
\begin{lem}\label{lem:hompolynomial}
Let $(q,p)\in \mathbb R^4$ and $(\theta, I)\in \mathbb T^2\times \mathbb R^2_+$ be related by a symplectic coordinate change 
\begin{equation}\label{eq:symplecticpolar}
\psi:(\theta,I)\to (q,p),\qquad\qquad \begin{cases}
q_i=\sqrt{2I_i} \cos \theta_i \\
p_i=\sqrt{2I_i} \sin \theta_i ,
\end{cases}
\end{equation}
$i=1,2$. Then, for any $s_1,s_2\in\mathbb N$ the function $f$ defined by
\begin{equation}\label{def:f_j}
f = g \circ \psi^{-1}, \quad g (I, \theta)= I_1^{s_1} I_2^{s_2} \cos (2 (s_1 \theta_1 + s_2 \theta_2 )), 
\end{equation}
is a homogeneous polynomial of degree $2(s_1+s_2)$ in the variables $q_1,q_2,p_1,p_2 $. 
\end{lem}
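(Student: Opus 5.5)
We pass to complex coordinates $z_i=q_i+\mathrm{i}p_i=\sqrt{2I_i}\,e^{\mathrm{i}\theta_i}$ and $\bar z_i=q_i-\mathrm{i}p_i=\sqrt{2I_i}\,e^{-\mathrm{i}\theta_i}$, $i=1,2$. These are linear in $(q,p)$, so any polynomial in $z_i,\bar z_i$ that is homogeneous of degree $k$ is a homogeneous polynomial of degree $k$ in $(q_1,q_2,p_1,p_2)$. It is real-valued precisely when it is invariant under complex conjugation.

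The key identity is obtained as follows. Since $z_i^2=2I_i e^{2\mathrm{i}\theta_i}$, we have
\[
z_1^{2s_1}z_2^{2s_2}=(2I_1)^{s_1}(2I_2)^{s_2}e^{2\mathrm{i}(s_1\theta_1+s_2\theta_2)} .
\]
Taking real parts gives
\[
I_1^{s_1}I_2^{s_2}\cos\bigl(2(s_1\theta_1+s_2\theta_2)\bigr)=2^{-(s_1+s_2)}\,\mathrm{Re}\bigl(z_1^{2s_1}z_2^{2s_2}\bigr)=2^{-(s_1+s_2)-1}\bigl(z_1^{2s_1}z_2^{2s_2}+\bar z_1^{2s_1}\bar z_2^{2s_2}\bigr).
\]
Therefore $f=g\circ\psi^{-1}$ equals $2^{-(s_1+s_2)-1}\bigl((q_1+\mathrm{i}p_1)^{2s_1}(q_2+\mathrm{i}p_2)^{2s_2}+\text{c.c.}\bigr)$. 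Expanding by the binomial theorem, this is a real polynomial with only monomials of total degree $2s_1+2s_2$, so it is homogeneous of degree $2(s_1+s_2)$.

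A couple of details need checking. First, the factor $2$ inside the cosine is what makes the even powers $z_i^{2s_i}$ appear, so the square roots $\sqrt{2I_i}$ cancel. Second, if some $s_i$ is negative, the same computation applies with $\bar z_i^{2|s_i|}$ in place of $z_i^{2s_i}$ and $I_i^{|s_i|}$ in place of $I_i^{s_i}$. That is the version needed for Dirichlet vectors with mixed signs, though the statement only asks for $s_1,s_2\in\mathbb N$.

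The identity holds on $\{I_1I_2>0\}$, where $\psi$ is a diffeomorphism. The polynomial then gives the real-analytic extension across the coordinate planes, so no obstacle remains beyond this bookkeeping.
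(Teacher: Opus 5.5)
Your proof is correct and is essentially the paper's argument. The paper writes $I_i\cos 2\theta_i=(q_i^2-p_i^2)/2$ and $I_i\sin 2\theta_i=q_ip_i$, then applies the angle-addition formula and De Moivre's formula; this is the real-coordinate form of your identity $I_i e^{2\mathrm{i}\theta_i}=z_i^2/2$ raised to the power $s_i$. Your complex packaging gives the explicit formula $f=2^{-(s_1+s_2)}\,\mathrm{Re}\bigl((q_1+\mathrm{i}p_1)^{2s_1}(q_2+\mathrm{i}p_2)^{2s_2}\bigr)$, and through $\bar z_i$ it also covers mixed-sign $(s_1,s_2)$, which is the case that arises for resonances between frequencies of the same sign.
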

\begin{proof} Note that for each $i$ we have:
$$
2I_i \cos 2\theta_i = q_i^2-p_i^2, \quad I_i\sin 2\theta_i =  q_i p_i;
$$
$$
 \cos (2 (s_1 \theta_1 + s_2 \theta_2 )) =  \cos (2 s_1 \theta_1 ) \,  \cos (2  s_2 \theta_2 )-  \sin (2 s_1 \theta_1 ) \,  \sin (2 s_2 \theta_2 ),
$$
and by De Moivre's formula we have for each $s\in \mathbb N$:
$$
 \cos (2 s \theta ) =   P_{cos} ( \cos  (2  \theta ), \,  \sin (2  \theta ) ) , \quad   \sin (2 s \theta  ) =  P_{sin}( \cos (2  \theta ), \,  \sin (2  \theta  )),
$$
where $P_{cos}$ and $P_{sin}$  are homogeneous polynomials of degree $s$.
\end{proof}

Given $\omega\in\mathbb R^2$ with $\omega_1\omega_2<0$ and
$\omega_2/\omega_1\in\mathbb R\setminus\mathbb Q$, let
$\{s^{(n)}\}_{n}\subset\mathbb N^2\setminus\{0\}$ be a Dirichlet sequence
associated with the positive irrational number $-\omega_2/\omega_1$.

 Then for any $n\in\mathbb N$ we denote by 
$f_n(q,p):\mathbb R^4\to \mathbb R$ the homogeneous polynomial constructed in Lemma \ref{lem:hompolynomial} for the natural numbers $s_1^{(n)},s_2^{(n)}\in\mathbb N$, that is,
\begin{equation}\label{eq:fn}
f_n = g_n \circ \psi^{-1},\quad g_n (I, \theta)= I_1^{s^{(n)}_1} I_2^{s^{(n)}_2} \cos (2 (s_1^{(n)} \theta_1 + s_2 ^{(n)}\theta_2 )).
\end{equation}
We now divide the proof of Theorem \ref{thm:mainparametricoutlineelliptic} into two steps.
\medskip

\noindent\textbf{Step 1: Creation of a sequence of hyperbolic periodic orbits.}
The following is the analogue of Lemma \ref{lem:auxiliaryperiodicorbits2} for Hamiltonians displaying an elliptic fixed point.

\begin{lem}\label{lem:periodicorbitselliptic}
    Fix any $\delta, \rho>0$, and suppose that  $K_0\in \mathcal M^{2,d}_\rho(\omega)$ 
    satisfies 
  %  \footnote{With an abuse of notation, we %identify $H_0$ with its expression in %symplectic coordinates %\eqref{eq:symplecticpolar}, which, by %construction, is a function of $I_1,I_2$ for %$(I_1,I_2)\in \mathbb R^2_+$.}  
  \eqref{eq:nondegh} in the sense of Remark \ref{rem:translation_H1-H2}.
    Fix an arbitrary $n\in\mathbb N$, and consider a parametric Hamiltonian of the form 
    \begin{equation}\label{eq:truncatedHamiltonianperelliptic} 
    K_\varepsilon^{(n)} =K_0+\varepsilon k _0^{(n)},\qquad\qquad \text{with}\qquad\qquad k _0^{(n)}(q,p)=\sum_{j\leq n} b_j f_j(q,p),
    \end{equation}
    where $f_j$ are of the form \eqref{eq:fn}, and
     $\{b_j\}_{j\leq n}\subset \mathbb R_+$ are chosen so that  \begin{equation}\label{eq:smallnesstruncatedellipt}
     \sup_{(q,p)\in \mathbb B^2_\rho} |k_0^{(n)}(q,p)|<\delta.
   \end{equation}
   Then there exists $\varepsilon_n>0$
such that for all $0<\varepsilon\leq\varepsilon_n$ and all $\frac{q_3^2+p_3^2}{2}=:I_3\in [0,\rho_0]$ the Hamiltonian $K_{\varepsilon}^{(n)}(\cdot;I_3)$ in \eqref{eq:truncatedHamiltonianperelliptic} possesses a hyperbolic periodic orbit $ \gamma_n(I_3;\varepsilon)=\gamma_n(I_3)$ that in symplectic polar coordinates \eqref{eq:symplecticpolar} admits a parametrization of the form \eqref{eq:parametrizationperiodicproposition}. 
Moreover, $\gamma_n(I_3)\subset\{K^{(n)}_{\varepsilon_n}=e_n\}$,  and the dependence of $ \gamma_{n}(I_3)$ on  $I_3$ is real-analytic. The same statement holds for any Hamiltonian of the form 
\begin{equation}\label{eq:calkepsilon}
\mathcal{K}^{(n)}_\varepsilon(k):=K^{(n)}_\varepsilon+\varepsilon^2 k= K_0+\varepsilon k_0^{(n)}+\varepsilon^2 k,\qquad\qquad k \in \mathcal B^2_\rho(\delta),
\end{equation}
where $\mathcal B_\rho^{2}(\delta)\subset \mathcal P^2_\rho$ is the ball of radius $\delta$ centered around the origin.
\end{lem}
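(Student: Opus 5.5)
The plan is to reduce Lemma \ref{lem:periodicorbitselliptic} to Lemma \ref{lem:auxiliaryperiodicorbits2} by passing to symplectic polar coordinates \eqref{eq:symplecticpolar} on a region bounded away from the coordinate axes. By Lemma \ref{lem:resonatpathsellipt}, for fixed $n$ the resonant path $a^{(n)}(I_3)$, $I_3\in[0,\rho_0]$, has first two components $a_1^{(n)},a_2^{(n)}$ bounded below by some $c_n>0$. Hence in a neighbourhood $\{I_1,I_2>c_n/2\}$ the map $\psi$ is a real-analytic symplectic diffeomorphism onto its image. In it, $K_0\circ\psi=\mathtt K_0(I;I_3)$ is integrable and satisfies \eqref{eq:nondegh} by Remark \ref{rem:translation_H1-H2}. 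Moreover, by Lemma \ref{lem:hompolynomial}, $k_0^{(n)}\circ\psi=\sum_{j\le n} b_j I_1^{s_1^{(j)}}I_2^{s_2^{(j)}}\cos(2 s^{(j)}\cdot\theta)$.

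The main change with respect to Section \ref{sec:hypperorbs} is that the Fourier coefficients now depend on the actions, and the angle combination is $2s^{(j)}\cdot\theta$. I will use the lattice change \eqref{eq:symplecticlatticechange} with $A_n$ built from $s^{(n)}$, with the slow angle $\varphi=s^{(n)}\cdot\theta$ (the harmonic $\cos 2\varphi$ is harmless). The resonant part becomes $V^{(n)}=b_n (A_n\bs J)_1^{s_1^{(n)}}(A_n\bs J)_2^{s_2^{(n)}}\cos(2\varphi)$. As in Lemma \ref{lem:lt}, the terms $j\neq n$ have zero $\tt$-average, because $s^{(j)}$ and $s^{(n)}$ are not parallel (both are primitive and distinct). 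Since the sign is now $\omega_1\omega_2<0$ and $s^{(n)}\in\mathbb N^2$, the resonance $s^{(n)}\cdot\nabla\mathtt K_0=0$ is attainable in the positive quadrant, which is exactly Lemma \ref{lem:resonatpathsellipt}.

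The averaged Hamiltonian $H_{\mathrm{av},\varepsilon}=N^{(n)}(J,E)+\varepsilon \mathcal V(J,E)\cos 2\varphi$ has $\mathcal V>0$ near $\tilde a^{(n)}$. It therefore still has a hyperbolic equilibrium in $(\varphi,J)$ at a point near $(0,\tilde a_J^{(n)})$ or $(\pi/2,\tilde a_J^{(n)})$; I choose the maximum of $\mathcal V\cos2\varphi$, up to the sign choice of Remark \ref{rem:remarkonnondeg}. The $J$-dependence of $\mathcal V$ only shifts the equilibrium by $O(\varepsilon)$ through the implicit function theorem. This is so because $\partial_J^2 N^{(n)}\neq0$, which by the computation in Lemma \ref{lem:resnonatpaths} is equivalent to [H1]. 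I then fix the energy $e_n$ and use $E$ to adjust to the energy level, again by the implicit function theorem, using $\partial_E N^{(n)}\neq 0$ from \eqref{eq:lowerboundfastfreq}. This gives a hyperbolic periodic orbit $\gamma_n^{\mathrm{av}}(I_3)$ depending analytically on $I_3\in[0,\rho_0]$.

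Next I apply the averaging argument of Appendix \ref{sec:appendixhyperbolic}, exactly as in Lemma \ref{lem:auxiliaryperiodicorbits2}. One near-identity symplectic change removes $\varepsilon P^{(n)}$ up to $O(\varepsilon^2)$, using that the $\tt$-frequency is bounded away from zero depending only on $n$. Persistence of hyperbolic periodic orbits then gives $\gamma_n(I_3)$ for the full $K^{(n)}_\varepsilon$, in the parametrization \eqref{eq:parametrizationperiodicproposition}. Compactness of $[0,\rho_0]$ gives a uniform $\varepsilon_n$. The term $\varepsilon^2 k$ with $k\in\mathcal B^2_\rho(\delta)$ is $O(\varepsilon^2)$ in $\C^2$ on the fixed region. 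The Cauchy estimates on the complex neighbourhood of $\{I_i>c_n/2\}$, where $\psi^{-1}$ is analytic, give this with $n$-dependent constants that are absorbed into $\varepsilon_n$. So the same persistence argument yields $\gamma_n(k)(I_3)$.

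The expected obstacle is uniformity at the endpoint $I_3=0$, where the third pair $(q_3,p_3)$ degenerates. However, $I_3$ enters only as a parameter of the two-degrees-of-freedom system, and $\mathtt K_0$ is analytic in $I_3$ up to and including $0$, so the implicit function arguments extend to a neighbourhood of $[0,\rho_0]$. The only genuine care needed is that all estimates stay away from $I_1=0$ and $I_2=0$, which holds for fixed $n$.
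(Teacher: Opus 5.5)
Your proposal is correct and follows essentially the paper's own argument in Appendix \ref{sec:appendixhyperbolic}. You pass to symplectic polar and then lattice coordinates, isolate the resonant term $b_n v_n(\bs J)\cos 2\varphi$, obtain a hyperbolic periodic orbit of the averaged system $O(\varepsilon)$-close to $\tilde a^{(n)}$ via the implicit function theorem, remove the zero-$\tau$-mean terms by one averaging step, and continue the orbit exactly as in Lemma \ref{lem:auxiliaryperiodicorbits2}. You are in fact more careful than the paper on two points it glosses over: $v_n$ is bounded away from zero only where the path stays a fixed distance from $\{I_1=0\}\cup\{I_2=0\}$, not merely away from $\bs J=0$; and the saddle sits at $\varphi=0$ or $\varphi=\pi/2$ according to the sign of $\partial_J^2 N^{(n)}_{\omega_n}$. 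The one caveat is the uniform lower bound $a_1^{(n)},a_2^{(n)}\geq c_n>0$ on the closed interval $[0,\rho_0]$: both you and the paper read it into Lemma \ref{lem:resonatpathsellipt}, which as stated only covers the open interval $(0,\rho_0)$.
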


\begin {rem} \label{rem.kn} 
As in Remark \ref{rem.hn}, the coefficients $b_n$ can be chosen arbitrarily small, so that, despite the fact that the degree of $k_0^{(n)}$ tends to infinity, their analytic norms on the fixed ball can be made uniformly as small as desired. This will only affect the size of the corresponding hyperbolic eigenvalues.
\end{rem}

The proof of this result, which is entirely similar to that of Lemma \ref{lem:auxiliaryperiodicorbits2}, is presented in Appendix \ref{sec:appendixhyperbolic}. The next proposition is completely analogous to Proposition \ref{prop:LagrangianW^su}
\begin{prop}\label{prop:LagrangianW^su_elliptic}
For any $k\in\mathcal B_{\rho}^2$ the stable and the unstable invariant manifolds of the hyperbolic periodic orbit $\gamma_n(h)$ associated to the Hamiltonian \eqref{eq:calkepsilon}, when expressed in action-angle variables, admit a graph parametrization of the form \eqref{eq:Laggraphparametrizations}.
\end{prop}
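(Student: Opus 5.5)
The proof reduces Proposition \ref{prop:LagrangianW^su_elliptic} to Proposition \ref{prop:LagrangianW^su} by passing to the symplectic polar coordinates \eqref{eq:symplecticpolar}. The key geometric point is that, by Lemma \ref{lem:periodicorbitselliptic}, the hyperbolic periodic orbit $\gamma_n(k)(I_3)$ lies $O(\varepsilon)$-close to the torus $\{I=a^{(n)}(I_3)\}$. Since the resonant path of Lemma \ref{lem:resonatpathsellipt} sits in the open positive quadrant, both $a^{(n)}_1(I_3)$ and $a^{(n)}_2(I_3)$ are bounded away from zero for fixed $n$, uniformly for $I_3$ in compact subintervals. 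Hence there is a neighbourhood $\mathcal W_n$ of $\gamma_n(k)$, containing the portions of $W^{s,u}(\gamma_n(k))$ that we need, on which $I_1,I_2\geq c_n>0$. On $\mathcal W_n$ the map $\psi^{-1}$ is a real-analytic symplectic diffeomorphism onto its image in $\mathbb T^2\times\mathbb R^2_+$, and $\mathcal K^{(n)}_\varepsilon(k)\circ\psi$ is a real-analytic Hamiltonian in $(\theta,I)$ there.

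Next I write the pulled-back Hamiltonian in the form used in Section \ref{sec:hypperorbs}. By Lemma \ref{lem:hompolynomial}, $b_jf_j\circ\psi=b_j I_1^{s^{(j)}_1}I_2^{s^{(j)}_2}\cos(2 s^{(j)}\cdot\theta)$. After the lattice change \eqref{eq:symplecticlatticechange}, adapted to $2s^{(n)}$, the $n$-th term becomes the resonant potential $b_n (J,E)$-dependent amplitude times $\cos\varphi$. The amplitude is smooth and positive near $\tilde a^{(n)}$, and the remaining modes have zero $\tt$-average exactly as in Lemma \ref{lem:lt}. So, on $\mathcal W_n$, $\mathcal K^{(n)}_\varepsilon(k)\circ\psi\circ\phi_n$ equals a pendulum-like averaged Hamiltonian plus $\varepsilon\times$(non-resonant terms) plus $O(\varepsilon^2)$. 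This is the structure for which the appendix proof of Proposition \ref{prop:LagrangianW^su} applies.

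The remaining step is to run the argument of Proposition \ref{prop:LagrangianW^su} (Appendix \ref{sec:appendixhyperbolic}). The averaged system has a separatrix that is a Lagrangian graph over $\{\varphi\in(0,2\pi)\}\times\mathbb T$ and is given by a generating function of size $\sqrt{\varepsilon}$. Averaging and persistence of hyperbolic invariant manifolds then show that the unstable branch over $\varphi^{(n)}(\tt)<\varphi<5\pi/4$ and the stable branch over $\pi/4<\varphi<2\pi+\varphi^{(n)}(\tt)$ remain $\C^1$-close graphs. Being Lagrangian, they are differentials $\beta+\mathrm dS^{u,s}$. Real-analyticity in $I_3$ follows from the analytic dependence of the periodic orbit on $I_3$.

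I expect the main obstacle to be checking that these manifold pieces stay inside $\mathcal W_n$, where the action-angle chart is analytic. The $J$-excursion along the separatrix is $O(\sqrt{\varepsilon b_n})$, so choosing $\varepsilon_n$ small relative to $c_n$ and $b_n$ is enough. The non-uniformity in $n$ is harmless, because $\varepsilon_n$ is chosen inductively as in Remark \ref{rem.bn}. Near $I_3=0$ I would restrict to $I_3\in[\mathcal I,\rho_0]$ with $\mathcal I>0$, or use that $I_3$ enters only as a parameter.
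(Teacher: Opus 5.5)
Your route is the paper's: pull back by \eqref{eq:symplecticpolar}, bring the Hamiltonian to the form \eqref{eq:averagingelliptic}, and rerun the appendix proof of Proposition \ref{prop:LagrangianW^su}. Your check that $W^{s,u}$ stays in $\{I_1,I_2\geq c_n\}$, where the polar chart is analytic, is a useful addition the paper omits.

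\textbf{The gap.} The step ``lattice change adapted to $2s^{(n)}$, so the $n$-th term becomes amplitude$\times\cos\varphi$'' fails.
\begin{itemize}
\item Since $\gcd(s^{(n)}_1,s^{(n)}_2)=1$, the vector $2s^{(n)}$ is not primitive, so it is not a column of any unimodular matrix.
\item The map $\theta\mapsto(2s^{(n)}\cdot\theta,k^{(n)}\cdot\theta)$ is therefore a double cover of $\mathbb T^2$. In the genuine angle $\varphi=s^{(n)}\cdot\theta$ its deck transformation is $\varphi\mapsto\varphi+\pi$, i.e.\ $(q_1,p_1,q_2,p_2)\mapsto((-1)^{k^{(n)}_2}(q_1,p_1),(-1)^{k^{(n)}_1}(q_2,p_2))$.
\item $K_0+\varepsilon k_0^{(n)}$ is invariant under this involution. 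The term $\varepsilon^2 k$ is not for generic $k$ (e.g.\ a small multiple of $q_1$ or $q_2$), and $k$ must be generic for Proposition \ref{prop:splittingellipt}. So the Hamiltonian does not descend to your proposed coordinates.
\end{itemize}
In genuine coordinates the resonant term is $b_nv_n(\bs J)\cos 2\varphi$, exactly as in \eqref{eq:averagingelliptic}. The averaged system then has \emph{two} saddle orbits, at $\varphi=0$ and $\varphi=\pi$, exchanged by the symmetry above. Its separatrices are heteroclinic: the upper branch of $W^u$ of the orbit at $\varphi=0$ coincides with $W^s$ of the orbit at $\varphi=\pi$ and ends there. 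Consequences:
\begin{itemize}
\item $W^u(\gamma_n(k))$ is not a graph over $\varphi^{(n)}(\tau)<\varphi<5\pi/4$.
\item The branch you would produce over $\pi/4<\varphi<2\pi+\varphi^{(n)}$ belongs to the other periodic orbit.
\item So \eqref{eq:splittingpotential} would measure a heteroclinic rather than homoclinic splitting.
\end{itemize}
The paper's one-line reduction (``has the form \eqref{eq:averagingelliptic}, apply Proposition \ref{prop:LagrangianW^su}'') does not address this either, so mirroring it does not close the gap.

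\textbf{Repair.} The cleanest fix is to change the resonant polynomials. The function
\[
\mathrm{Re}\bigl[(q_1+ip_1)^{s^{(n)}_1}(q_2+ip_2)^{s^{(n)}_2}\bigr]=(2I_1)^{s^{(n)}_1/2}(2I_2)^{s^{(n)}_2/2}\cos(s^{(n)}\cdot\theta)
\]
(with $q_2-ip_2$ if $s^{(n)}_2<0$) is still a homogeneous polynomial. It gives a genuine $\cos\varphi$ potential whose amplitude is analytic and nonvanishing on your $\mathcal W_n$, and the other modes still have zero $\tau$-average. With this change your argument goes through verbatim. If you instead keep $\cos 2\varphi$, you must parametrize $W^u$ of one saddle orbit and $W^s$ of the other over overlapping ranges near $\varphi=\pi/2$, and obtain homoclinics from a transverse heteroclinic cycle. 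That changes both the statement and its downstream use.

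\textbf{The endpoint $I_3\to0$.} Since $e_n$ is given by \eqref{eq:energieleveln}, the path starts at $a^{(n)}(0)=(\hat a^{(n)}_1,0,0)$. Hence $a^{(n)}_2(I_3)\to 0$ as $I_3\to 0$, where the $(q_2,p_2)$ polar chart is singular and the resonant amplitude vanishes. Restricting to $[\mathcal I_n,\rho_0]$ is therefore necessary, not optional; treating $I_3$ as a parameter does not help, because the degeneration is in the $(q_2,p_2)$ direction. Moreover, $\mathcal I_n$ must tend to $0$ with $n$ so that the drifting orbits still start near the origin.
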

The proof of this proposition is presented in Appendix \ref{sec:appendixhyperbolic}.

\medskip

\noindent \textbf{Step 2: Existence of transverse homoclinics.} The next step is to establish a result analogous to Proposition \ref{prop:splitting}, that is,  to find a residual set $\mathcal U_\rho^{(n)}\subset  \mathcal P^2_\rho$ of suitable perturbations $k \in \mathcal U_\rho^{(n)}$, such that the periodic orbits of the Hamiltonian  $\mathcal{K}^{(n)}_\varepsilon$ (see \eqref{eq:calkepsilon}) obtained in Lemma \ref{lem:periodicorbitselliptic}, have transverse homoclinic intersections.

This part of the argument holds unchanged mutatis mutandis. 
To see why, let us recall that  the core of the proof of Proposition \ref{prop:splitting} (given in Section \ref{sec:mainsptlittingsection}) is to establish Proposition \ref{prop:surjective}. 
In the proof of that proposition we deduce that the  differential of the operator in \eqref{eq:frechetmaps} is surjective in two steps. First, we make use of compactly supported functions in $\C^\infty_c(U_n)$, where $U_n\subset \mathbb T^2\times B_\rho^2$ is a suitable small open set that intersects a compact piece of $W^u_{\mathrm{loc}}(\gamma_n)$. Second, we conclude, exactly as in the proof of Proposition  \ref{prop:surjective}, by approximating the smooth compactly supported perturbations in the required $\C^r$-topology on $\overline{U_n}$ by analytic ones.

For the case of elliptic equilibria let $\gamma_n'$ be any of the hyperbolic periodic orbits in Lemma \ref{lem:periodicorbitselliptic}. We will use the analog of functional $F$ in Proposition \ref{prop:surjective} but defined in $\mathcal{P}^2_\rho$.  
Let $U_n'\in B_\rho^4$ be an open set  intersecting a compact piece of $W^u_{\mathrm{loc}}(\gamma_n')$ (see Figure \ref{fig:fig3}).  
Since $\mathcal P^2_\rho$ is dense within $\C^\infty_c(U_n')$ the corresponding analog of Proposition \ref{prop:surjective} also holds in this case and we deduce a statement analogous to Proposition \ref{prop:splitting}.

\begin{prop}\label{prop:splittingellipt}
 Fix any  $\delta, \rho>0$, let $K_0\in\mathcal N_\rho^{2,d}(\omega)$ satisfy \eqref{eq:nondegh} with $\rho_0=\rho_0(K_0)$ given by Lemma \ref{lem:resonatpathsellipt}, and choose any sequence  $\{b_j\}_{j\leq n}\subset\mathbb R_+$ such that $k_0^{(n)}$, defined by \eqref{eq:truncatedHamiltonianperelliptic},  satisfies \eqref{eq:smallnesstruncatedellipt}.   
 There exists $\varepsilon_n>0$  such that for any $0<\varepsilon\leq \varepsilon_n$ there exists a residual subset $\mathcal U^{(n)}_\rho(\delta)\subset \mathcal B^2_\rho(\delta)\subset \mathcal P_\rho^2$ such that the following holds.  
 
For any  $h\in \mathcal U^{(n)}_\rho(\delta)$ there exists $m\in\mathbb N$ (depending a priori on $n$ and $h$) and a covering 
 \[
 [0,\rho_0]\subset \bigcup_{0\leq i\leq m} [\cI_i^-,\cI_i^+]
, \]
 satisfying $\cI_{i+1}^-<\cI_{i}^+$ for all $i=0,\dots,m-1$, such that  for any $i=0,\dots,m-1$ and $I_3\in[\cI_i^-,\cI_i^+]$ the Hamiltonian $\mathcal K^{(n)}_{\varepsilon}(k)(\cdot;I_3)$ in \eqref{eq:calkepsilon} admits a  transverse homoclinic orbit 
 $z_{n,i}(h)(I_3)$ to $\gamma_n(h)(I_3)$ which depends on $I_3$ in a real-analytic way for $I_3\in[\cI_i^-,\cI_i^+]$. 
\end{prop}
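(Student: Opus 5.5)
Since the resonant geometry near $\gamma_n(k)(I_3)$ stays away from the coordinate axes $\{I_1=0\}$, $\{I_2=0\}$ (the path $a^{(n)}$ of Lemma \ref{lem:resonatpathsellipt} lies in $(0,\rho)^3$), the plan is to work throughout in the symplectic polar coordinates \eqref{eq:symplecticpolar} on a neighborhood of $\gamma_n$ and its invariant manifolds. There the proof of Proposition \ref{prop:splitting} carries over verbatim, and the only new point is the density step. First, by Lemma \ref{lem:periodicorbitselliptic} and Proposition \ref{prop:LagrangianW^su_elliptic}, for $\varepsilon\le\varepsilon_n$ and every $k\in\mathcal B^2_\rho(\delta)$, $I_3\in[0,\rho_0]$, the orbit $\gamma_n(k)(I_3)$ exists, and its (left) stable and (right) unstable branches are Lagrangian graphs \eqref{eq:Laggraphparametrizations} with generating functions $S^{u,s}(k)$. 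I then define the splitting potential $\Delta(k)(\tau;I_3)$ exactly as in \eqref{eq:splittingpotential}, so that nondegenerate critical points of $\tau\mapsto\Delta$ give transverse homoclinic orbits inside the energy level.

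Second, I reproduce Proposition \ref{prop:frechet}: the maps $k\mapsto S^\star(k)$ are Fréchet differentiable. Indeed $k\mapsto k\circ\psi$ is a bounded linear map from $\mathcal P^2_\rho$ into $\C^3$ on the compact region (bounded away from the axes) containing the relevant pieces of $W^{u,s}$, and smooth dependence of invariant manifolds on the Hamiltonian does the rest. Third, I prove the analogue of Proposition \ref{prop:surjective}: at every $(\tau,I_3)$ the differential of $k\mapsto(\Delta',\Delta'',\Delta''')$ is onto $\mathbb R^3$. Lemmas \ref{lem:straightening} and \ref{lem:surjectivetechnical} concern only the dynamics in the set $U_n'=\psi(U_n)$, which is a neighborhood of a compact piece of $W^u_{\rm loc}(\gamma_n')$ disjoint from the left stable branch over $\varphi\in[\pi,2\pi]$. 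They therefore apply unchanged and yield $f_1,f_2,f_3\in\C^\infty_c(U_n')$ whose infinitesimal displacements span $\mathbb R^3$ while not moving $S^s$ at $\varphi=\pi$.

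The step I expect to be the main obstacle is approximating these $f_i$ by elements of $\mathcal P^2_\rho$. For this I would use polynomials in $(q,p)$, which are dense in $\C^r(\overline{U_n'})$ by Stone--Weierstrass/Whitney-type approximation on the compact set $\overline{U_n'}\subset B^4_\rho$, and which belong to $\mathcal P^2_\rho$ after scaling into the $\delta$-ball. The difficulty is that the polynomial approximants are not compactly supported, so they also displace $S^s$ and the periodic orbit itself. I would handle this as the paper does: the differential is a continuous linear functional of the restriction of the perturbation to a fixed compact neighborhood of the relevant pieces of $W^u$ and $W^s$ (and of $\gamma_n$). I therefore approximate the zero-extension of $f_i$ in $\C^r$ on that whole neighborhood, and since surjectivity onto $\mathbb R^3$ is an open condition, the approximating polynomials still give a surjective differential.

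Finally, since $\dim\mathbb A=2<3$, Theorem \ref{thm:parametrictransvthm} and Corollary \ref{cor:corollarytransv} give a residual set $\mathcal U^{(n)}_\rho(\delta)$ on which $(\Delta',\Delta'',\Delta''')$ never vanishes. Consequently the global extrema of $\Delta(k)(\cdot;I_3)$ are nondegenerate for every $I_3\in[0,\rho_0]$. The implicit function theorem makes each such nondegenerate critical point continue real-analytically in $I_3$ on an interval, and compactness of $[0,\rho_0]$ gives the finite overlapping covering $[\mathcal I_i^-,\mathcal I_i^+]$ and the orbits $z_{n,i}$, exactly as in the proof of Proposition \ref{prop:splitting}.
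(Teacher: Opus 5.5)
Your proposal is correct and follows essentially the same route as the paper. You work in symplectic polar coordinates away from the axes, repeat the parametric-transversality argument behind Proposition \ref{prop:splitting}, and obtain the surjectivity in the analogue of Proposition \ref{prop:surjective} by approximating the compactly supported $f_i\in\C^\infty_c(U_n')$ by polynomials in $(q,p)$, which lie in $\mathcal P^2_\rho$. Your version of that approximation step is slightly more precise than the paper's remark that the differential is determined by the restriction of the perturbation to $U_n$: you approximate the zero-extension of $f_i$ in the $\C^r$ topology on a compact neighbourhood of $\gamma_n$ and of the relevant pieces of $W^{u}$ and $W^{s}$, and then use that surjectivity onto $\mathbb R^3$ is an open condition.
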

 
\medskip

\noindent\textbf{Step 3: Completion of the proof of Theorem \ref{thm:mainparametricoutlineelliptic}.} The proof of Theorem \ref{thm:mainparametricoutlineelliptic} is obtained from Lemma \ref{lem:periodicorbitselliptic} and Proposition \ref {prop:splittingellipt} in the very same way we deduced Theorem \ref{thm:mainparametricoutline} from Lemma \ref{lem:auxiliaryperiodicorbits2} and Proposition  \ref{prop:splitting}.

\begin{figure}
    \centering
    \includegraphics[scale=0.65]{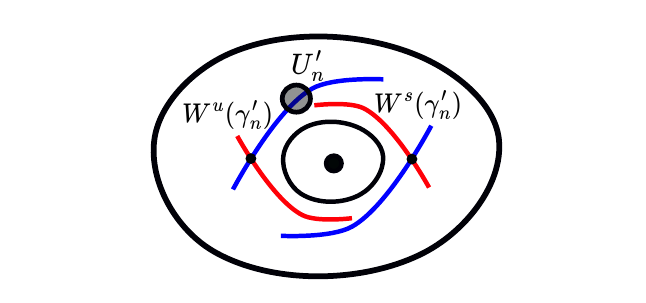}
    \caption{The invariant manifolds of the  hyperbolic periodic orbit $\gamma_n'$ and the domain $U_n'$ where we localize the perturbation.}
    \label{fig:fig3}
\end{figure}

\subsection{Drifting orbits and the proof of Theorem \ref{thm:approxbydiffusionellipticpoints}} \label{sec.82}
Again, no additional arguments are required in the construction of drifting orbits, and we obtain a result similar to Theorem \ref{thm:_apply_GToutline} with obvious modifications.

\begin{thm}\label{thm:_apply_GToutlineellipt} 
 Fix any  $\rho>0$; let $\omega$, $G_0$ and $\rho_0=\rho_0(G_0)$ be as in the assumption of Theorem \ref{thm:mainparametricoutlineelliptic}.
There exists a sequence $\{ \mu_n\}_n$,   $ 0<\mu_n\leq 1/n$  such that the following holds. Fix any $\delta>0$, any $\eta\in B^{d-3}_{\rho_0}$, let  
$\{\varepsilon_n\}_n$ and $\mathcal{U}_{\rho}(\delta)\subset \mathcal B^2_{\rho}(\delta)\subset \mathcal P^2_\rho$ be as in the conclusion of Theorem \ref{thm:mainparametricoutlineelliptic}  and denote 
\[
\mathcal G_{\varepsilon,\mu}(g_1,g_2)=\mathcal G_\varepsilon(g_1)+\mu g_2, \qquad\qquad g_1\in \mathcal U_{\rho}(\delta),
\quad g_2\in \mathcal B^3_{\rho}(\delta)\subset \mathcal P^3_\rho.
\]  
 Then for any $g_1\in \mathcal U_{\rho}(\delta)$ and any  $n\in\mathbb N$ there exists an open and dense set $\mathcal V^{(n)}_{\rho}(\delta) \subset \mathcal B^3_{\rho}(\delta)$ such that for any $g_2\in\mathcal V^{(n)}_{\rho}(\delta)$   
the Hamiltonian $\mathcal G_{\varepsilon_n,\mu_n}(g_1,g_2)$  satisfies the following:
\begin{enumerate}
\item it displays a normally hyperbolic invariant manifold $\Lambda_n(g_1,g_2)$ which is $\frac 1n$-close to $\Lambda_n(g_1)$ in Theorem \ref{thm:mainparametricoutlineelliptic} (see the parametrization  \eqref{eq:2dcylinderoutlineelliptic}).
    \item If $U^\pm_n$ is an open neighborhood of $\partial^{\pm} \Lambda_n(g_1,g_2)$ then there exists an orbit of this Hamiltonian that connects $U^-_n$ to $U^+_n$.
\end{enumerate} 
\end{thm}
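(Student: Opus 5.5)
The plan is to transfer the proof of Theorem \ref{thm:_apply_GToutline} essentially verbatim, working with the Poincaré maps $\Phi_n$ on the sections $\Sigma_n\subset\{\mathcal G_{\varepsilon_n}(g_1)=e_n\}$ provided by Theorem \ref{thm:mainparametricoutlineelliptic}. The key observation is that the whole construction takes place away from the elliptic point. Along the resonant paths of Lemma \ref{lem:resonatpathsellipt}, with $I_3\in[0,\rho_0]$, one has $I_1,I_2>0$ bounded away from zero for each fixed $n$. Restricting to a compact sub-interval $[\rho_0',\rho_0]$ with $\rho_0'>0$ small, or using that $(q_3,p_3)$ are smooth coordinates even at $I_3=0$, the symplectic polar change \eqref{eq:symplecticpolar} is a real-analytic diffeomorphism on a neighborhood of $\Lambda_n$ and of the homoclinic cylinders $\Gamma_{n,i}$. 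In these local action–angle coordinates the objects of Theorem \ref{thm:mainparametricoutlineelliptic} have exactly the structure used in Section \ref{sec:diffusion}.

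\textbf{Step 1: persistence.} I will prove the analogue of Lemma \ref{lem:openballcylinders}. Since $\Phi_n|_{\Lambda_n}$ is an integrable twist map preserving the circles $\{I_3=\mathrm{const}\}$ (item (3)), its differential is unipotent, so the tangential growth is at most polynomial. Fenichel/Hirsch–Pugh–Shub theory then gives, for $\mu\le\mu_n$ small enough depending only on $n$ and the uniform bound $\|g_2\|\le\delta$, a $\C^k$ continuation $\Lambda_n(g_1,g_2;\mu)$ together with continuations of the finitely many $\Gamma_{n,i}$. As in \cite{GelfreichTuraevChaotic}, I will pick boundaries on Diophantine circles to make the continuation unique. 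Choosing $\mu_n\le 1/n$ smaller if needed yields item (1).

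\textbf{Step 2: membership in $\mathcal V(m)$.} I will check that $\mathcal G_{\varepsilon_n}(g_1)$, written in the local coordinates, belongs to the class $\mathcal V_{\rho}(m)$, i.e.\ conditions (1), (2$'$)–(4$'$) and (5). Symmetric normal hyperbolicity follows from the parabolic inner dynamics after rescaling the norm on $N^c$. The twist condition is hypothesis \ref{it:assumption2}, computed for $\mathtt G_0$ exactly as in Section \ref{sec:Hamtomaps}. The KAM curves can be taken among the invariant circles $\{I_3=\mathrm{const}\}$ with Diophantine rotation number. Simplicity of each $\Gamma_{n,i}$ relative to the nested triples $\bA_i\subset\hA_i\subset\Lambda_i$ follows because the scattering maps preserve these circles, since $g_1$ does not depend on the third pair (it lies in $\mathcal P^2_\rho$). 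I would then apply Theorem \ref{thm:gelfreichturaevthm2}. The open dense set it produces is a subset of Hamiltonians near $\mathcal G_{\varepsilon_n}(g_1)$; its preimage under the affine map $g_2\mapsto\mathcal G_{\varepsilon_n}(g_1)+\mu_n g_2$ restricted to $\mathcal B^3_\rho(\delta)\subset\mathcal P^3_\rho$ defines $\mathcal V^{(n)}_\rho(\delta)$. This yields item (2), connecting neighborhoods of $\partial^\pm\Lambda_n$.

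\textbf{Main obstacle.} The delicate point is this last pullback, because Gelfreich–Turaev's genericity is stated in the Banach space $\mathcal Q^3_{\rho,\sigma}$ of functions analytic in action–angle variables. Here perturbations must instead come from $\mathcal P^3_\rho$, i.e.\ be analytic in $(q,p)$ near the origin. Openness transfers immediately by continuity. For density, I would note that the perturbations used in \cite{GelfreichTuraevChaotic} to break common invariant curves of the IFS (their Theorem 4) only need to be prescribed, in a $\C^r$ sense, on a compact neighborhood of $\Lambda_n\cup\bigcup_i\Gamma_{n,i}$, which lies in $\{I_1,I_2>0\}$. Polynomials in $(q,p)$ are dense there in $\C^r$, so the relevant open conditions can be met within $\mathcal P^3_\rho$. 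This is the same approximation argument used for Proposition \ref{prop:splittingellipt}, and I would phrase it as an open-condition-plus-local-approximation lemma.
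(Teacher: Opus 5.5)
Your proposal is correct in outline and follows the paper's route. The paper proves this theorem only by asserting that the argument for Theorem~\ref{thm:_apply_GToutline} carries over with obvious modifications, i.e.\ persistence as in Lemma~\ref{lem:openballcylinders}, then membership in $\mathcal V_{\rho,\sigma}(m)$, then Theorem~\ref{thm:gelfreichturaevthm2}, which is exactly your Steps~1--2; note that, as in the paper, your $\mu_n$ really depends on $\delta$ and on $g_1$ through the transversality of the $\Gamma_{n,i}(g_1)$, not only on $n$.

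The paper does not treat the extra issues you raise: working in $\{I_1I_2>0\}$, replacing the disk $\{I_3\le\rho_0\}$ by a sub-annulus $I_3\in[\rho_0',\rho_0]$, and transferring Gelfreich--Turaev genericity to $\mathcal P^3_\rho$. For the last one, $\C^r$-density of polynomials on a real compact set gives no control of the sup norm on $\mathbb B^6_\rho$. So density in $\mathcal B^3_\rho(\delta)$ must come from a scale-invariant, first-order (transversality-type) use of the localized perturbations, as in the proof of Proposition~\ref{prop:surjective}, not from $\C^r$ approximation alone.
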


We have completed the proof of Theorem \ref{thm:approxbydiffusionellipticpoints}.
\appendix

\section{Averaging and hyperbolic periodic orbits}\label{sec:appendixhyperbolic}

This section is devoted to the proof of:
\begin{enumerate}
    \item  Lemma \ref{lem:auxiliaryperiodicorbits2} about the existence of the hyperbolic periodic orbit $\gamma_n(h)$ for the Hamiltonians $\mathcal H_\varepsilon^{(n)}$ in \eqref{eq:epssquareHamiltonian} and its analogue for the case of elliptic equilibria, Lemma \ref{lem:periodicorbitselliptic}.
    \item Proposition \ref{prop:LagrangianW^su}.  about the existence of Lagrangian graph parametrizations of the un/stable invariant manifolds of $\gamma_n(h)$.
\end{enumerate}

\subsection{Proof of Lemma \ref{lem:auxiliaryperiodicorbits2} }\label{sec:proofaveraging}

Consider a Hamiltonian $\mathcal H_\varepsilon^{(n)}$ as in \eqref{eq:epssquareHamiltonian}. 
In this section, with an abuse of notation, we write $H$ instead of $\mathcal H^{(n)}_{\varepsilon}(h)\circ\phi_n$, drop the sub- and superscript $n$ from the functions $N_\omega,V,P, \phi$ and write $h$ instead of $ h_0\circ\phi_n$, where $\phi$ is the coordinate system \eqref{eq:symplecticlatticechange}. 
Recall that, in this coordinate system, the Hamiltonian $H$  reads: 
\begin{equation}\label{eq:simplifiedH}
H= \underbrace{N_\omega+ \varepsilon V}_{H_{\mathrm{av},\varepsilon}}+ \varepsilon  P +\varepsilon^2 h, \quad N_\omega (\bs J) =H_0\circ \phi_n =H_0(A_n \bs J).
\end{equation}
The proof is now  divided into a number of steps. 

\begin{rem}
 In the following, we omit the dependence on $I_3$. Note, however, that the dependence of all the objects on $I_3$ is real-analytic and all the estimates are uniform for $I_3\in [-\rho_0,\rho_0]$.
\end{rem}

\medskip

\noindent\textit{ Notation for the norms.}
%{Functional setting.} 
Given any open set $D\subset\mathbb C^2$ and a constant ${\tilde\sigma}>0$, we let 
\[
\lVert f\rVert_{D,{\tilde\sigma}}=\sum_{l\in\mathbb Z^2} \sup_{J\in D} |f^{[l]}(\bs J)| e^{|l|{\tilde\sigma}}.
\]
We simply write $\lVert f\rVert_{\tilde\sigma}$ if $f$ does not depend on $\bs J$.
\medskip

\noindent\textit{Estimates for $P$.} For $\sigma_n>0$  sufficiently small (depending only on $\{s^{(n)}\}_{j\leq n}$) we may assume that $P$ satisfies 
    \[
    \lVert  P\rVert_{\mathbb{B}_\rho ^2 , \sigma_n}\leq 1.
    \]
   Denote by $\mathcal Z$ the Fourier support of $P$, and let $L=\max\{|l|_1\colon l\in\mathcal Z\}$.
    \medskip
    
\noindent\textit{Small divisor estimates.}
Recall that 
\[
\nabla_{ \bs J} N_\omega ( \bs J)=\left(\nabla_I H_0(A_n \bs J)\cdot s^{(n)}, \ \nabla_I H_0(A_n \bs J)\cdot k^{(n)} \right).
 \]
Let $\rho_n>0$ be sufficiently small (depending only on $\{s^{(n)}\}_{j\leq n}$)  and denote:
\[
D^{(n)}_{\rho_n,\rho_0}=\{ \bs J\in  \mathbb C ^2 \colon |\nabla_I H_0(A_n \bs J)\cdot s^{(n)}|\leq \rho_n /L,\ \lVert A_n \bs J\rVert\leq \rho_0\}\subset\mathbb C^2.
\]
The second inequality in the brackets above only means that we look at the values of $\bs J$ for which $\lVert I\rVert\leq \rho_0$.
Observe that $D^{(n)}_{\rho_n,\rho_0}$ is a neighborhood of the point $\tilde a^{(n)}$ given in \eqref{eq:jn*}.
 It is now easy to check that for any $ \bs J\in D_{\rho_n,\rho_0}^{(n)}$ and any 
 %\color{blue} (I changed $(l_\varphi,l_t)$ by %$(l_1,l_2)$, as it seems more natural.)  \color{black}
 $l=(l_1,l_2)\in\mathcal  Z$ with $l_{2}\neq 0$ we have
\begin{align*}
|\nabla_{ \bs J} N_\omega( \bs J)\cdot l|\geq &|\nabla_I H_0(A_n \bs J)\cdot k^{(n)}|\ |l_2| -\rho_n|l_1|/L\\
\geq &\frac{1}{|s_1^{(n)}|} \partial_{I_2} H_0(A_n\bs J)\  |l_2|-O(\rho_n)\\
\geq &\frac{\omega_2}{2|s_1^{(n)}|}\  |l_2|,
\end{align*}
where in the first inequality we have used the fact that $ 
\nabla_I H_0(A_n \bs J)\cdot s^{(n)}=O(\rho_n/L)$, in the second inequality we have used \eqref{eq:sk}  and, in the third inequality we have shrinked (if necessary) the value of $\rho_n$ and used the fact that $\partial_{I_2} H_0(0)=\omega_2$ so that, possibly after diminishing $\rho_0$, we can assume that for any $\bs J\in D^{(n)}_{\rho_n,\rho_0}$ we have $|\partial_{I_2} H_0(A_n\bs J)|> \omega_2/2$.
\medskip

\noindent\textit{Notation.} Below we write $a\lesssim_n b$ if there exists a constant $C$, depending on $n$ but not on $\varepsilon$, such that $a\leq C b$.

\medskip

\noindent\textit{The averaging  conjugacy.}  
Denoting $e(x)=\operatorname{exp}(2\pi i x)$,  we define 
\[
G(\varphi,\tt,J,E)=  \frac{i}{2\pi }\sum_{l\in \mathcal Z} \frac{P^{[l]} }{\ \nabla_{\bs J}N_\omega(\bs J)\cdot l} \  e(l\cdot (\varphi, \tt  )),
\]
which solves 
\[
\{N_\omega,G\}+ P=0
\]
and satisfies: 
\[
\lVert G\rVert_{D^{(n)}_{\rho_n,\rho_0},\sigma_n}\lesssim_n 1.
\]
\medskip

\noindent\textit{Estimation of the error term}:  Let $\phi^t_{\varepsilon G}$ denote the time-$t$ flow of the Hamiltonian $\varepsilon G$, and let $\phi_{\varepsilon G}=\phi^1_{\varepsilon G}$.  Denote
\[
\widehat P=h\circ\phi_{\varepsilon G}+\int_0^1 \{ V+t P,G\}\circ\phi^t_{\varepsilon G}\mathrm{d}t.
\]     
From the definition of $G$, we have:
\begin{equation}\label{eq:onestepaveraging}
H\circ \phi_{\varepsilon G}=H_{\mathrm{av},\varepsilon}+\varepsilon^2 \widehat P .
\end{equation}
 Hence, since for $0\leq t\leq 1$ we have
\[
\lVert \{V+t P,G\}\rVert_{D^{(n)}_{\rho_n/2,\rho_0/2},\sigma_n/2}\lesssim_n 1,
\]
is a standard exercise to check that 
\[
\lVert \widehat P\rVert_{D^{(n)}_{\rho_n/4,\rho_0/4},\sigma_n/4}\lesssim_n 1.
\]
\medskip

\noindent\textit{The averaged system.} The averaged system $H_{\mathrm{av},\varepsilon}(\varphi,J)=N_\omega(\bs J)+\varepsilon V(\varphi)$ 
is integrable, and the equations of motion read
\begin{equation}\label{eq:averagedproofvfield}
    \begin{aligned}
\dot\varphi=&\partial_{J}H_{\mathrm{av},\varepsilon}=\partial_J N_\omega (\bs J)
\qquad\qquad 
&\dot J=&-\partial_\varphi H_{\mathrm{av},\varepsilon}=
-\varepsilon \partial_\varphi V=
\varepsilon b_n \sin \varphi\\
\dot \tt=&\partial_{E}H_{\mathrm{av},\varepsilon}=
\partial_E N_\omega (\bs J)\qquad\qquad &\dot E=&0.       
    \end{aligned}
\end{equation}
Take  $\rho_0=\rho_0(H_0)$ be given by Lemma \ref{lem:resnonatpaths}.
Then, the vector field \eqref{eq:averagedproofvfield} displays a hyperbolic periodic orbit which, in coordinates $(\varphi,\tt,J,E)$ can be parametrized as
\begin{equation}\label{eq:unperturbedperorbits,app}
\gamma^{\mathrm{av}}_{n}=\{(0,\tt, \tilde a^{(n)}_J,\tilde a^{(n)}_E) \colon \tt\in\mathbb T\}
\end{equation}
with $
\tilde a^{(n)} =A_n^{-1} a^{(n)}$ and $a^{(n)}$ is the resonant path given in \eqref{eq:anpaths0}.
The dynamics on $\gamma^{\mathrm{av}}_{n}$ is given by \eqref{eq:averagedproofvfield},
therefore it is a periodic orbit of period (see \eqref{eq:lowerboundfastfreq})
\[
\nu_n(I_3) =  \frac{s_1^{(n)}) } {\partial_{I_2}H_0(a^{(n)}(I_3))} \sim \frac {s_1^{(n)}}{\omega_2}=-\frac {s_2^{(n)}}{\omega_1}.
\]
\medskip

\noindent\textit{Application of the implicit function theorem}: The discussion in the previous step can be reinterpreted as follows. Let 
\[
\Phi_{\mathrm{av}}:(\varphi,J,E)\in \mathbb T_{\sigma_n/8}\times D_{\rho_n/8,\rho_0/8}^{(n)}\mapsto (\Phi_{\mathrm{av},\varphi}\ \ \mathrm{mod}\ 1,\Phi_{\mathrm{av},J},\Phi_{\mathrm{av},E})\in (\mathbb C/2\pi\mathbb Z)\times \mathbb C^2
\]
denote the Poincar\'e map induced by the flow of $H_{\mathrm{av},\varepsilon}$ on the section $\{\tt=0\}$ and define the map 
\begin{align*}
\Psi_{\mathrm{av}}: \mathbb T_{\sigma_n/8}\times D_{\rho_n/8,\rho_0/8}^{(n)} &\to \mathbb C\times (\mathbb C/2\pi\mathbb Z)\times \mathbb C\\
(\varphi,J,E)&\mapsto (H_{\mathrm{av}}-e_n,\Phi_{\mathrm{av},\varphi}-\varphi\ \ \mathrm{mod}\ 1,\Phi_{\mathrm{av},J}-J).
\end{align*}
By energy conservation, a zero of the map $\Psi_{\mathrm{av}}$ corresponds to a periodic orbit of the Hamiltonian $H_{\mathrm{av}}$ and vice versa. In particular, the point $(0,\bs{\widetilde J})$ with 
$
\bs{\widetilde J}=\tilde a^{(n)}$ 
is a solution of the equation $\Psi_{\mathrm{av}}(\varphi,J,E)=0$. We claim (and verify below) that the differential of $\Psi_{\mathrm{av}}$ at $(0,\bs{\widetilde J})$ is of the form
\begin{equation}\label{eq:claim} 
D\Psi_{\mathrm{av}}(0,\bs{\widetilde J})=\begin{pmatrix}
    0&0&\frac{1}{\nu}\\
    O(\varepsilon)& \nu \partial^2_{I^2} H_{0}(A_n\bs{\widetilde J}) s^{(n)}\cdot s^{(n)}+O(\varepsilon)&O(1)\\
   -\nu \varepsilon  \partial_{\varphi^2}^2V(0,\bs{\widetilde J})+O(\varepsilon^2)&O(\varepsilon)&O(1)
\end{pmatrix},
\end{equation}
where 
\begin{equation}\label{eq:returntime}
\nu=\frac{1}{\partial_I H_{\mathrm{av},\varepsilon}(0,\bs {\widetilde J})\cdot k^{(n)}}  =\nu _n(I_3). 
\end{equation}
\medskip

For the full Hamiltonian $H$, in the new coordinate system given by the averaging transformation induced by  the Hamiltonian $G$ constructed above, the vector field reads
\begin{align*}
\dot\varphi=&\partial_{J}H_{\mathrm{av},\varepsilon}+\varepsilon^2\partial_{J}\widehat P \qquad\qquad &\dot J=&-\partial_\varphi H_{\mathrm{av},\varepsilon}-\varepsilon^2\partial_\varphi \widehat P\\
\dot t=&\partial_{E}H_{\mathrm{av},\varepsilon}+\varepsilon^2\partial_{E}\widehat P \qquad\qquad &\dot E=&-\varepsilon^2\partial_t \widehat P .
\end{align*}
%
%\blue Observe that if we add a term $\eps^2 h$ to the original Hamiltonian, after the averaging procedure we will get  similar Hamiltonian with a different $\hat P$ \orange now the proof is done directly for the term with $O(\varepsilon^2)$, we can remove this. \black
Hence, the corresponding Poincaré map $\Phi:(\varphi,J,E)\in \mathbb T_{\sigma_n/8}\times D_{\rho_n/8,\rho_0/8}^{(n)}\mapsto (\Phi_\varphi,\Phi_{J},\Phi_{E})$  on the section $\{t=0\}$ is given by a $O_{\C^1}(\varepsilon^2)$ perturbation of the map $\Phi_{\mathrm{av}}$. In particular, the map 
\begin{align*}
\Psi: \mathbb T_{\sigma_n/8}\times D_{\rho_n/8,\rho_0/8}^{(n)} &\to \mathbb C\times (\mathbb C/2\pi\mathbb Z)\times \mathbb C\\
(\varphi,J,E)&\mapsto (H\circ\phi_{\varepsilon G}-e_n,\Phi_{\varphi}-\varphi\ \ \mathrm{mod}\ 1,\Phi_{J}-J),
\end{align*}
is also a $O_{C^1}(\varepsilon^2)$ perturbation of the map $\Psi_{\mathrm{av},\varepsilon}$. Again, by energy conservation, a zero of the map $\Psi$ corresponds to a periodic orbit of the Hamiltonian $H$ and vice versa. Finally, we easily deduce the existence of $(\hat \varphi,\bs{\widehat J})$ satisfying $\Psi(\hat \varphi,\bs{\widehat J})=0$ by means of \eqref{eq:claim} and the implicit function theorem. Moreover, from the construction it is not difficult to check that 
\[
|\hat\varphi|\lesssim_n \varepsilon,\qquad\qquad |\hat J-\tilde a_J^{(n)}|\lesssim_n \varepsilon,\qquad\qquad |\hat E-\tilde a_E^{(n)}|\lesssim_n\varepsilon .
\]

\medskip

\noindent\textit{Conclusion.}
We have proved\footnote{Throughout the proof we have omitted the dependence on $I_3$ in order to alleviate the notation. Notice, however, that the unique non-degeneracy conditions used in this proof,  are uniformly satisfied provided that $I_3$ varies in a compact small interval (independent of $n$) so all the objects depend on $I_3$ in a real-analytic fashion.} the existence of the hyperbolic periodic orbit $\gamma_{n}(I_3)$ admitting a parametrization  of the form \eqref{eq:parametrizationdistorted}.  Moreover, the $\C^1$ estimates in \eqref{eq:parametrizationdistortedestimates} are easily obtained by making use of Cauchy estimates (note that $\rho_n,\sigma_n$ do not depend on $\varepsilon$).
\medskip

\noindent\textit{Verification of claim \eqref{eq:claim}.} 
By definition, we have
\[
\Phi_{\mathrm{av}}(\varphi,J,E)=\phi^{\tilde \nu(\varphi,J,E)}_{H_{\mathrm{av},\varepsilon}}(\varphi,0,J,E),
\]
  where $\tilde \nu(\varphi,J,E)$ is the return time  to the section $\{\tt=0\}$. 
It is not difficult to check that
\[
 D\Phi_{\mathrm{av}}(0,\bs{\widetilde J})= \pi D\phi^{\tilde \nu(0,\bs{\widetilde J})}_{H_{\mathrm{av},\varepsilon}}(0,0,\bs{\widetilde J}),
\]
where $\pi$ is the projection to the $(\varphi,J,E)$ components. 
In particular, we only need to compute the differential of the time-$t$ map and evaluate it at $t=\tilde \nu(0,\bs{\widetilde J})$. 
Moreover, $\tilde \nu(0,\bs{\widetilde J})=\nu$ where $\nu$ is the return time in  \eqref{eq:returntime}. 
The differential of the time-$t$ map is the solution to the linear initial value problem
\[
\begin{cases}
    \frac{\mathrm d}{\mathrm d t} D\phi^t_{H_{\mathrm{av},\varepsilon}} = DX_{H_{\mathrm{av},\varepsilon}}(\phi^t_{H_{\mathrm{av},\varepsilon}}) D\phi^t_{H_{\mathrm{av},\varepsilon}}\\
    D\phi^0_{H_{\mathrm{av},\varepsilon}}=\mathrm{Id}
\end{cases},
\]
where $X_{H_{\mathrm{av},\varepsilon}}$ is the vector field \eqref{eq:averagedproofvfield}. 
Evaluated at the periodic orbit 
\eqref{eq:unperturbedperorbits,app}   the differential of the vector field reads:
\[
DX_{H_{\mathrm{av},\varepsilon}}(\phi^t_{H_{\mathrm{av},\varepsilon}}(0,\tt,\bs{\widetilde J}))     = \begin{pmatrix}
    0 & 0 &\partial_{J^2}^2 H_{\mathrm{av},\varepsilon} (0,A\bs{\widetilde J})& \partial_{J\,E}^2 H_{\mathrm{av},\varepsilon} (0,A\bs{\widetilde J}) \\
    0 & 0 &\partial_{E\,J}^2 H_{\mathrm{av},\varepsilon} (0,A\bs{\widetilde J})& \partial_{E^2}^2 H_{\mathrm{av},\varepsilon} (0,A\bs{\widetilde J}) \\
    \partial_{\varphi^2}^2 H_{\mathrm{av},\varepsilon} (0,A\bs{\widetilde J}) & 0&\partial_{\varphi\,J}^2 H_{\mathrm{av},\varepsilon} (0,A\bs{\widetilde J})& 0 \\
    0 & 0 & 0 & 0
\end{pmatrix}.
\]
Hence, if we denote  (note that $\alpha>0$ in view of assumption \eqref{eq:nondegh} and that  $\beta>0$)
\[
\alpha= \partial_{J^2}^2 H_{\mathrm{av},\varepsilon} (0,A\bs{\widetilde J})=  \partial_{I^2}^2 H_{0} (0,A\bs{\widetilde J}) s^{(n)}\cdot s^{(n)},
\quad\qquad \beta=\partial_{\varphi^2}^2 H_{\mathrm{av},\varepsilon} (0,A\bs{\widetilde J})=-\varepsilon \partial_{\varphi^2}^2V(0)
\]
and
\[
\lambda=\sqrt{\alpha\beta},
\]
it is not difficult to check (use that the $\varphi-J$ block in $DX_{H_{\mathrm{av},\varepsilon}}$ is independent)  that the matrix \eqref{eq:claim} is of the form 
\[
D\Psi_{\mathrm{av}}(0,\bs{\widetilde J})=\begin{pmatrix}
    0&0&\frac{1}{\nu}\\
  \cosh(\lambda\nu)-1&\frac{\alpha}{\lambda}\sinh(\lambda \nu)&O(1)\\
    \frac{\beta}{\lambda}\sinh(\lambda \nu)& \cosh(\lambda\nu)-1&O(1)
\end{pmatrix}.
\]
The claim follows making use of the Taylor expansion of the functions $\cosh x$ and $\sinh x$ for $|x|\ll 1$.
\medskip

\subsection{Proof of Lemma \ref{lem:periodicorbitselliptic}}

We express the Hamiltonian $\mathcal K_\varepsilon ^{(n)}$   in \eqref{eq:calkepsilon} in action-angle coordinates making use of the change of coordinates $\psi$ in Lemma \ref{lem:hompolynomial}, obtaining
\begin{equation}\label{eq:hamiltonianellipticaa}
\mathcal H_{\varepsilon,\mathrm{ell}} ^{(n)} = \mathcal K_\varepsilon ^{(n)}\circ \psi^{-1}, 
\end{equation}

The proof now follows from the very same argument in Section \ref{sec:proofaveraging}. We just indicate the minor modifications to the above discussion for the sake of completeness. 
\medskip

First we observe that, in the local coordinate system introduced in \eqref{eq:symplecticlatticechange}, the Hamiltonian $\mathcal H_{\varepsilon,\mathrm{ell}} ^{(n)}$ reads
\[
\mathcal H_{\varepsilon,\mathrm{ell}}^{(n)}\circ\phi_n=N_\omega^{(n)}+\varepsilon \underbrace{b_nf_n\circ\psi^{-1}\circ\phi_n}_{V_{\mathrm{ell}}^{(n)}}+ \varepsilon \underbrace{(k_0^{(n)}-b_nf_n)\circ\psi^{-1}\circ\phi_n+}_{P^{(n)}_{\mathrm{ell}}}\varepsilon ^2 k\circ\psi^{-1}\circ \phi_n.
\]
With an abuse of notation, we write $H$ instead of $\mathcal H^{(n)}_{\varepsilon,\mathrm{ell}}(h)\circ\phi_n$, drop the  superscript $n$ and subscript $\mathrm{ell}$ from the functions $N_\omega,V,P$ and write $k$ instead of $ h\circ\psi^{-1} \circ \phi_n$ so 
\begin{equation}\label{eq:averagingelliptic}
H=\underbrace{N_\omega+\varepsilon V}_{H_{\mathrm{av},\varepsilon}}+\varepsilon P+\varepsilon^2 h.
\end{equation}
The main difference with respect to the situation in Section \ref{sec:proofaveraging} is that now 
\[
V(\varphi,\bs J)=b_n  v_n(\bs J) \cos (2\varphi),
\]
where  $v_n(\bs J)$ is a polynomial in $\bs J$  (see  \eqref{eq:symplecticlatticechange} and \eqref{def:f_j}) 
which satisfies $v_n(\bs J)= 0$  if and only if $\bs J=0$. Hence, the averaged system $H_{\mathrm{av},\varepsilon}(\varphi,J)=N_\omega(\bs J)+\varepsilon V(\varphi,\bs J)$ 
is integrable, and the equations of motion read 
\begin{equation}\label{eq:averagedproofvfield-ell}
    \begin{aligned}
\dot\varphi=&\partial_{J}H_{\mathrm{av},\varepsilon}=\partial_J (N_\omega (\bs J)+ \varepsilon b_n v_n(\bs J) \cos (2\varphi))
\qquad\qquad 
&\dot J=&-\partial_\varphi H_{\mathrm{av},\varepsilon}=
-\varepsilon \partial_\varphi V=2
\varepsilon b_n v_n(\mathbf{J})\sin (2\varphi)\\
\dot \tt=&\partial_{E}H_{\mathrm{av},\varepsilon}=
\partial_E (N_\omega (\bs J)+\varepsilon b_n  v_n (\bs J) \cos (2\varphi) )\qquad\qquad &\dot E=&0.       
    \end{aligned}
\end{equation}
Observe that for any $\bs J$ we have $\partial_\varphi V(0,\bs J)=0$. 
On the other hand, it follows from   Lemma \ref{lem:resonatpathsellipt} 
that there exists a vector  $\bs{\widetilde J}=(\widetilde{ J},\widetilde{ E})\in D^{(n)}_{\rho_n,\rho_0}$ such that 
\[
H_{\mathrm{av},\varepsilon}(0,\bs{\widetilde J})=e_n \qquad\text{and}\qquad \partial_J H_{\mathrm{av},\varepsilon}(0,\bs{\widetilde J})=
\partial_J (N_\omega+ \varepsilon b_n v_n) =0.
\]
Moreover, it is easy to check that 
\[
|\tilde J-\tilde a_J^{(n)}|\lesssim_n \varepsilon \qquad\text{and}\qquad |\tilde E-\tilde a_E^{(n)}|\lesssim_n \varepsilon,
\]
with $\tilde a^{(n)}_J,\tilde a^{(n)}_E$ as in \eqref{eq:jn*}. Hence, we deduce that the averaged vector field \eqref{eq:averagedproofvfield-ell} displays a hyperbolic periodic orbit which can be parametrized as 
\begin{equation}\label{eq:averagedperiodicproof}
\gamma_n^{\mathrm{av}}=\{(0,\tt,\tilde J,\tilde E) \colon \tt\in \mathbb T\}.
\end{equation}
and  has period:
\[
\nu_n =\frac{1}{\partial_E (N_\omega (\bs{\widetilde J}) +\varepsilon b_n  v_n (\bs{\widetilde J}) )}.
\]
The rest of the proof of Lemma \ref{lem:periodicorbitselliptic} follows the same lines as in Section \ref{sec:proofaveraging}. Indeed, the perturbing function $P$ in \eqref{eq:averagingelliptic} is given by
\[
P= \sum _{j<n} b_j g_j\circ\phi (\varphi,\tau,J,E)    
\]
where the function $g_j$ are defined in \eqref{eq:fn}.
Consequently the function $P(\varphi,\tt,\bs J)$ has a (finite) Fourier series composed by terms which have zero mean with respect to $\tau$. Therefore, it is still possible to find an averaging conjugacy which brings \eqref{eq:averagingelliptic} into the form \eqref{eq:onestepaveraging} and conclude the proof by direct application of the implicit function theorem.

\medskip

\subsection{Proof of Propositions \ref{prop:LagrangianW^su} and \ref{prop:LagrangianW^su_elliptic}}
    We borrow the notation from Section \ref{sec:proofaveraging} and recall that we have constructed therein a local coordinate system in which $\mathcal H_\varepsilon^{(n)}(h)$ is given by a $O(\varepsilon^2)$ perturbation of the averaged Hamiltonian $H_{\mathrm{av},\varepsilon}$ in \eqref{eq:integrablen}. The latter  admits the hyperbolic periodic orbit $\gamma^{(n)}_{\mathrm{av}}$ in \eqref{eq:unperturbedperorbits}. Moreover, its stable and unstable manifolds coincide along a homoclinic manifold defined implicitly by the equations
\[
E = E_n:=  \tilde  a _E^{(n)} (I_3), \quad H_{\mathrm{av},\varepsilon}^{(n)} 
(J,E,\varphi) = H_{\mathrm{av},\varepsilon}^{(n) } (\tilde a^{(n)} (I_3),0)
\]
with $\tilde a^{(n)}$ as in \eqref{eq:jn*}. Taylor expanding the second equation, and using the fact that 
$ \partial_{J} N^{(n)}_{\omega_n}(\tilde a^{(n)} (I_3))=0$,
we deduce that the upper branch of this connection admits a parametrization of the form
\[
W_{\mathrm{h}}(\gamma_{\mathrm{av}}^{(n)})=\{(\varphi,\tau,J_h(\varphi),E_n)\colon (\varphi,\tau)\in(0,2\pi)\times\mathbb T\}
\]
with 
\begin{equation} \label{eq:homoclinicpendulum}
J_h(\varphi)=\tilde a_J^{(n)}+\sqrt{\frac{\varepsilon b_n}{C_n} (\cos \varphi -1) (1+ O(\varepsilon))}, \quad \varphi  \in [0, 2\pi],
\end{equation} 
and $C_n= \frac 1 2 \partial_{J^2} N^{(n)}_{\omega_n}(\tilde a^{(n)} (I_3))$. Therefore,  standard  perturbation theory (see for instance Chapter 2 in \cite{PalisMelo} or Appendix A in \cite{MR1237641}) shows that there exist parametrizations
\begin{equation}
  \begin{split}
	        W^u(\gamma_n(h))=&\{(\varphi,\tt,J^u(\varphi,\tau),E^u(\varphi,\tau))\colon \varphi^{(n)}(\tt)<\varphi<5\pi/4,\ \tt\in \mathbb T \},\\
         W^s(\gamma_n(h))=&\{(\varphi,\tt,J^s(\varphi,\tau),E^s(\varphi,\tau))\colon  \pi/4<\varphi<2\pi+\varphi^{(n)}(\tt),\ \tt\in \mathbb T\}
    \end{split}
\end{equation}
in terms of real-analytic functions $J^{u,s},E^{u,s}$ which are $O(\varepsilon)$ close to $J_h$ and the constant function with value $E_n$ respectively. Finally, the fact that $W^{u,s}$ is Lagrangian implies that $(J\mathrm d\varphi+E\mathrm d\tau)|_{W^{u,s}}$ is closed and the desired conclusion follows. The proof of Proposition \ref{prop:LagrangianW^su} is complete.

To prove Proposition \ref{prop:LagrangianW^su_elliptic} we transform the Hamiltonian $\mathcal{K} _\varepsilon^{(n)}$ to the action angle variables, obtaining \eqref{eq:hamiltonianellipticaa}. This Hamiltonian, when expressed in the variables $(J,E,\varphi,\tt)$  has the form \eqref{eq:averagingelliptic}. Then, applying Proposition \ref{prop:LagrangianW^su}, we obtain the result.

\section{Straightening the unstable foliation}\label{sec:straightening}
In this section we provide the proof of Lemma \ref{lem:straightening}. Namely, we show how to straighten the differential operator 
\begin{equation}\label{eq:distortedop}
{\mathcal L}^{u}[\cdot]=\partial_J \mathcal H_\varepsilon^{(n)}(h)\partial_\varphi [\cdot]+\partial_E\mathcal H^{(n)}_\varepsilon(h)\partial_\tt[\cdot],
\end{equation}
where the functions $\partial_J \mathcal H_\varepsilon^{(n)}(h)$ and $\partial_E \mathcal H_\varepsilon^{(n)}(h)$ are evaluated at
\begin{equation}\label{eq:zu}
z^u(\varphi,\tt)=(\varphi,\tt,\beta_\varphi+\partial_\varphi S^u(h)(\varphi,\tt),\beta_\tt+\partial_\tt S^u(h)(\varphi,\tt)).
\end{equation}
To do so, the first step is to obtain quantitative information on the coefficients of the differential operator $\mathcal L^u$. We do so in  Sections \ref{sec:integrable} and \ref{sec:goodcoordinates}, where we exploit the nearly-integrable structure to construct  a  coordinate system, tailored for the asymptotic analysis of these coefficients. Then,  in Section \ref{sec:straighteningproof} we complete the proof of Lemma \ref{lem:straightening}.

\subsection{The integrable Hamiltonian}\label{sec:integrable}
As in Section \ref{sec:cylinders}, we will take advantage of the fact that the Hamiltonian $\mathcal H_\varepsilon^{(n)}(h)$ can be seen as a fast time-periodic perturbation of an integrable Hamiltonian. Namely (see Section \ref{sec:cylinders}), in the coordinate system defined using the transformation $\phi_n$ in  \eqref{eq:symplecticlatticechange}, we have: 
\[
\mathcal H_\varepsilon^{(n)}(h)\circ\phi_n=H_{\mathrm{av},\varepsilon}^{(n)}+\varepsilon P^{(n)}+\varepsilon^2h\circ\phi_n.
\]
The Hamiltonian $H_{\mathrm{av},\varepsilon}^{(n)}$ possesses a hyperbolic periodic orbit $\gamma^{(n)}_{\mathrm{av}}$ in \eqref{eq:unperturbedperorbits} %$\gamma_{\mathrm{av}}=\{\varphi=0, (J,E)= \tilde a^{(n)}(I_3)\}$. 
The stable and the unstable invariant manifolds of $\gamma^{(n)}_{\mathrm{av}}$ coincide along two homoclinic manifolds (an upper and a lower branch). The upper branch admits a parametrization of the form 
\[
W(\gamma_{\mathrm{av}})=\{(\varphi,\tt,\partial_\varphi S_{\mathrm{av}}(\varphi),E_n)\colon (\varphi,\tt)\in(0,2\pi)\times\mathbb T\}
\]
in terms of some real-analytic function $S_{\mathrm{av}}:(0,2\pi)\to \mathbb R$ satisfying $\partial_\varphi S_{\mathrm{av}}(\varphi)\to 0$ as $\varphi\to 0$ or $\varphi\to 2\pi$ and $E_n=\tilde a^{(n)}_E(I_3)$. In fact $\partial_\varphi S_{\mathrm{av}}(\varphi)= J_h(\varphi)$, given in \eqref{eq:homoclinicpendulum}.
\medskip

As a preparation for the discussion that follows, suppose first that we want to straighten the averaged differential operator
\[
\mathcal L_{\mathrm{av}}[\cdot]=\partial_J H_{\mathrm{av},\varepsilon}^{(n)} \partial_\varphi[\cdot]+\partial_E H_{\mathrm{av},\varepsilon}^{(n)} \partial_\tt[\cdot],
\]
where the coefficients $\partial_J H_{\mathrm{av},\varepsilon}^{(n)}$, $\partial_J H_{\mathrm{av},\varepsilon}^{(n)}$ are evaluated at 
\[
z_{\mathrm{av}}(\varphi,\tt)=(\varphi,\tt, \partial_\varphi S_{\mathrm{av}}(\varphi), E_n).
\]
To do so, we just appeal to the time parametrization
\[
W(\gamma_{\mathrm{av}})=\{(\varphi_{\mathrm{av}}(s),\tt, \partial_\varphi S_{\mathrm{av}}(\varphi_{\mathrm{av}}(s)),E_n)\colon (s,\tt)\in\mathbb R\times\mathbb T\}
\]
in terms of the real-analytic function $\varphi_{\mathrm{av}}$ defined by  
\[
\varphi_{\mathrm{av}}'(s)=\partial_J H_{\mathrm{av},\varepsilon}^{(n)}(\varphi_{\mathrm{av}}(s), \partial_\varphi S_{\mathrm{av}}(\varphi_{\mathrm{av}}(s)),E_n)\qquad\qquad\text{and}\qquad\qquad
\lim_{s\to \pm\infty}\varphi_{\mathrm{av}}(s)=0.
\]
It is not difficult to check  that
\begin{enumerate}
\item $\varphi_{\mathrm{av}}'(s)\neq 0$ for all $s\in\mathbb R$,
    \item 
there exists $\lambda>0$ (depending possibly on $n\in\mathbb N$ but independent of $\varepsilon$) such that 
\[
\varphi_{\mathrm{av}}(s)\asymp \exp( \lambda \sqrt \varepsilon s)\qquad \text{as }s\to -\infty\qquad\qquad\text{and}\qquad\qquad  \varphi_{\mathrm{av}}(s)\asymp 2\pi-\exp( -\lambda \sqrt \varepsilon s)\qquad\text{as }s\to +\infty.
\]
\end{enumerate}
Moreover, as we have seen in Section \ref{sec:cylinders},
\begin{enumerate}[ start=3]
\item the constant
\begin{equation}\label{eq:averagedfreq}
\omega_n:=\partial_{E} H_{\mathrm{av},\varepsilon}^{(n)} (z_{\mathrm{av}}(0,\tt)),
\end{equation}
which gives the frequency of the periodic orbit $\gamma_{\mathrm{av}}$, does not vanish as $\varepsilon\to 0$ (see \eqref{eq:lowerboundfastfreq}).
\end{enumerate}
Define now the coordinate transformation
\begin{align*}
\psi_{\mathrm{av}}:\mathbb R\times\mathbb T&\to (0,2\pi)\times\mathbb R\\
(s,\tt)&\mapsto (\varphi_{\mathrm{av}}(s),\tt).
\end{align*}
Then, the chain rule shows that for any differentiable function $f$ 
\begin{equation}\label{eq:straightenaveraged}
\mathcal L_{\mathrm{av}}[f]\circ\psi_{\mathrm{av}}=\partial_s[f\circ\psi_{\mathrm{av}}]+\omega_n \partial_\tt [f\circ\psi_{\mathrm{av}}].
\end{equation}
That is, the change $\psi_{\mathrm{av}}$ sends the differential operator $\mathcal L_{\mathrm{av}}$ to constant coefficients.
\subsection{A suitable coordinate system}\label{sec:goodcoordinates}
     We now show how to adapt the above discussion to straighten operator \eqref{eq:distortedop}. We  define the coordinate transformation 
\begin{equation}\label{eq:integrablestraightening}
\begin{split}
 \psi_0:\mathbb R\times\mathbb T&\to D^u\\
 (s,\tt)&\mapsto (\varphi_{\mathrm{av}}(s)+\varphi^{(n)}(h)(\tt), \tt).
 \end{split}
\end{equation}
A straightforward application of the chain rule shows that for any differentiable function $f$ (compare with \eqref{eq:straightenaveraged} above)
\begin{equation}\label{eq:pdopauxiliar}
\mathcal L^u[f]\circ\psi_0= \frac{1}{\varphi_{\mathrm{av}}'}\ (A-(\varphi^{(n)}(h))'B)\circ\psi_0 \   \partial_s [f\circ\psi_0]+C\circ\psi_0\  \partial_\tt[f\circ\psi_0]:=\widetilde{\mathcal L}^u[f\circ\psi_0]
\end{equation}
where we have introduced the functions (the notation $z^u$ was introduced in \eqref{eq:zu})
     \[
     A(\varphi,\tt)=\partial_J \mathcal H_\varepsilon^{(n)}(h) (z^u(\varphi,\tt))\qquad\qquad   C(\varphi,\tt)=\partial_E \mathcal H_\varepsilon^{(n)}(h) (z^u(\varphi,\tt))
     \]
     In the following lemma we describe the behavior of the coefficients of the differential operator in the right hand side of \eqref{eq:pdopauxiliar}. We let 
     \[
     \varpi(\tt)=\partial_E \mathcal H_\varepsilon^{(n)}(h) ((\varphi^{(n)}(h)(\tt),\tt,J^{(n)}(h)(\tt),E^{(n)}(h)(\tt))
     \]
     describe the inner dynamics at the periodic orbit $\gamma_n(h)$.
 \begin{lem}\label{lem:prestraightening}
   Define the functions
     \[
    \mathcal A= \frac{1}{\varphi_{\mathrm{av}}'}\ (A-(\varphi^{(n)})'B)\circ\psi_0-1, \qquad\qquad \mathcal C=(C-\varpi)\circ\psi_0
     \]
     Then, for any  $|r|_1=0,1$ and any $(s,\tt)\in(-\infty,1)\times\mathbb T$, we have:
     \[
    |\exp(\lambda \sqrt\varepsilon s) \partial^r \mathcal A(s,\tt)|\lesssim \varepsilon , \qquad\qquad   |\exp(\lambda \sqrt\varepsilon s) \partial^r \mathcal C(s,\tt)|\lesssim \varepsilon.
     \]
 \end{lem}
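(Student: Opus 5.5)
The plan is to compare the coefficients of $\mathcal L^u$ along $W^u(\gamma_n(h))$ with those of the averaged operator $\mathcal L_{\mathrm{av}}$ along $W(\gamma_{\mathrm{av}})$. We measure everything in the time variable $s$ of the averaged homoclinic $\varphi_{\mathrm{av}}(s)$. First, a remark on notation: the chain rule computation leading to \eqref{eq:pdopauxiliar} uses $\partial_\tt[f\circ\psi_0]=(\partial_\tt f+(\varphi^{(n)})'\partial_\varphi f)\circ\psi_0$. Hence the coefficient $B$ appearing there is $C=\partial_E\mathcal H^{(n)}_\varepsilon(h)\circ z^u$, and I will use this identification throughout.

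\textbf{Step 1 (reduction to the averaged normal form).} Work in the averaging coordinates of Appendix \ref{sec:appendixhyperbolic}. There $H\circ\phi_{\varepsilon G}=H_{\mathrm{av},\varepsilon}+\varepsilon^2\widehat P$, and the conjugacy $\phi_{\varepsilon G}$ is $O_{\C^2}(\varepsilon)$-close to the identity on $D^{(n)}_{\rho_n/4,\rho_0/4}$. By Proposition \ref{prop:LagrangianW^su} and its proof, $\partial_\varphi S^u(h)$ and $\partial_\tt S^u(h)$ are $O_{\C^1}(\varepsilon)$-close to $J_h(\varphi)-\beta_\varphi$ and $E_n-\beta_\tt$ on compact pieces $\{\varphi\ge c\}$. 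Consequently $A$ and $C$ are $O_{\C^1}(\varepsilon)$-close to $\partial_JH_{\mathrm{av},\varepsilon}\circ z_{\mathrm{av}}$ and $\partial_EH_{\mathrm{av},\varepsilon}\circ z_{\mathrm{av}}$ there. Since $\varphi_{\mathrm{av}}'=\partial_JH_{\mathrm{av},\varepsilon}\circ z_{\mathrm{av}}\circ\psi_{\mathrm{av}}$ and $\varpi=\omega_n+O(\varepsilon)$, this gives the estimates for $\mathcal A$ and $\mathcal C$ for $s$ in any compact interval $[s_0,1]$, with $s_0$ fixed independent of $\varepsilon$. There the weight $e^{\lambda\sqrt\varepsilon s}$ is bounded above and below, and $\varphi'_{\mathrm{av}}$ is bounded below on $[s_0,1]$ (property (1)), so dividing by it is harmless up to $n$-dependent constants.

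\textbf{Step 2 (the local unstable manifold, $s\to-\infty$).} This is the main obstacle, because there both $\varphi'_{\mathrm{av}}$ and the numerator $A-(\varphi^{(n)})'C$ tend to zero. The key observation is that the numerator vanishes identically on $\gamma_n(h)$. Indeed, on the periodic orbit $\dot\varphi=A$ and $\dot\tt=C$, while $\varphi=\varphi^{(n)}(h)(\tt)$, so $A=(\varphi^{(n)})'C$ there. Likewise $C=\varpi$ there. I would therefore use the local straightening of $W^u_{\mathrm{loc}}(\gamma_n(h))$ given by the hyperbolic linearization near a periodic orbit (Floquet coordinates). In these coordinates $z^u$ at distance $\varphi-\varphi^{(n)}(\tt)$ from the orbit can be written as a $\C^1$ function of $(\varphi-\varphi^{(n)},\tt)$ vanishing linearly. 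Then
\[
A-(\varphi^{(n)})'C=\lambda_u(\tt)\,(\varphi-\varphi^{(n)}(\tt))\,(1+O(\varphi-\varphi^{(n)})),
\]
where $\lambda_u$ is the unstable multiplier rate. By the computation in \eqref{eq:claim}, $\lambda_u=\lambda\sqrt\varepsilon\,(1+O(\sqrt\varepsilon))$, i.e.\ $\sqrt{\alpha\beta}$ up to an $\varepsilon$-relative error. Comparing with $\varphi'_{\mathrm{av}}(s)=\lambda\sqrt\varepsilon\,\varphi_{\mathrm{av}}(s)(1+O(\varphi_{\mathrm{av}}))$ (property (2)), the ratio equals $1$ plus a relative error. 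I will bound this error by $\varepsilon e^{-\lambda\sqrt\varepsilon s}$, which is what the weighted estimate allows; the losses come from the mismatch of exponents and from the $O(\varepsilon)$ $\tt$-dependence of $\lambda_u$. The same argument gives $C-\varpi=O(\varepsilon)(\varphi-\varphi^{(n)})$ for $\mathcal C$.

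\textbf{Step 3 (derivatives and matching).} The $|r|_1=1$ bounds follow from the same expansions. They are differentiated using Cauchy estimates in the analytic setting, with $\rho_n,\sigma_n$ independent of $\varepsilon$, together with $\partial_s\varphi_{\mathrm{av}}\lesssim\sqrt\varepsilon\,\varphi_{\mathrm{av}}$. Finally, the local regime of Step 2 (valid for $\varphi_{\mathrm{av}}\le c$) and the compact regime of Step 1 are glued at a fixed $s_0$. The only delicate point I anticipate is keeping track of whether the implied constants depend on $\varepsilon$ through $\sqrt\varepsilon$ factors: the hyperbolicity is weak, of order $\sqrt\varepsilon$, so the Floquet straightening must be done with estimates uniform in this small rate. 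If that turns out to be too lossy, I would instead run Gronwall directly on the variational equation along $W^u$, starting from the orbit.
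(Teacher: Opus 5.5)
Your identification $B=C$ is correct, and so is your observation that $A-(\varphi^{(n)})'C$ vanishes on $\gamma_n(h)$. Your two-regime plan is also the same as the paper's two-line sketch. The details you supply, however, break down at concrete points.

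\textbf{Step 1.} You use that $\varphi'_{\mathrm{av}}$ is bounded below on a fixed interval $[s_0,1]$ independently of $\varepsilon$, and this is false. The averaged homoclinic is a pendulum separatrix, $\varphi'_{\mathrm{av}}(s)\approx 2\lambda\sqrt\varepsilon\,\sin(\varphi_{\mathrm{av}}(s)/2)$. So $\varphi'_{\mathrm{av}}\lesssim\sqrt\varepsilon$ everywhere, and the excursion takes $s$-time of order $\varepsilon^{-1/2}$; this is what property (2) encodes. Two consequences follow.
\begin{itemize}
\item A fixed $[s_0,1]$ only covers an $O(\sqrt\varepsilon)$-arc around $\varphi=\pi$. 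The region $\{\varphi_{\mathrm{av}}\ge c\}$ is $\{s\ge -C\varepsilon^{-1/2}\}$, so gluing with Step 2 at a fixed $s_0$ leaves a gap.
\item More seriously, dividing the $O(\varepsilon)$ discrepancy between $A-(\varphi^{(n)})'C$ and $\varphi'_{\mathrm{av}}$ by $\varphi'_{\mathrm{av}}$ only gives $|\mathcal A|\lesssim\sqrt\varepsilon$. That discrepancy is in general genuinely of order $\varepsilon$. The averaging map $\phi_{\varepsilon G}$ displaces $W^u$ by $O(\varepsilon)$ amounts that depend on $\varphi$ as well as on $\tau$, whereas $\psi_0$ only removes the displacement at $\varphi=0$, namely $\varphi^{(n)}(\tau)$.
\end{itemize}
For $\mathcal C$ you tacitly use $\partial_E H_{\mathrm{av},\varepsilon}\circ z_{\mathrm{av}}\equiv\omega_n$; the paper's formula for $\mathcal L_{\mathrm{av}}[f]\circ\psi_{\mathrm{av}}$ makes the same assumption. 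Along the separatrix, however, $\partial_E N_\omega(J_h(\varphi),E_n)-\omega_n\approx\partial^2_{JE}N_\omega\,(J_h(\varphi)-\tilde a^{(n)}_J)$. Here $\partial^2_{JE}N_\omega=s^{(n)}\cdot D^2H_0\,k^{(n)}$ is generically nonzero and $J_h-\tilde a^{(n)}_J\sim\sqrt\varepsilon$. So already for the averaged system $\mathcal C(0,\tau)$ has size $\sqrt\varepsilon$. Your Step 2 claim $C-\varpi=O(\varepsilon)(\varphi-\varphi^{(n)})$ should read $O(\sqrt\varepsilon)(\varphi-\varphi^{(n)})$, and no argument can give $\lesssim\varepsilon$ near $s=0$.

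\textbf{Step 2.} Your own expansion gives $\mathcal A(s,\tau)\to a(\tau)/(\lambda\sqrt\varepsilon)-1$ as $s\to-\infty$, where $a(\tau)$ is the instantaneous normal expansion rate along $\gamma_n(h)$. As you note, $a$ has a $\tau$-dependence of order $\varepsilon$, so this limit is a $\tau$-periodic function of size $\sqrt\varepsilon$ that does not vanish in general. You absorb it by reading the weight literally, i.e.\ aiming at $|\mathcal A|\lesssim\varepsilon e^{-\lambda\sqrt\varepsilon s}$, which permits growth. That fails in the range $\sqrt\varepsilon\ll\varphi_{\mathrm{av}}(s)\le c$, where $\varepsilon e^{-\lambda\sqrt\varepsilon s}\asymp\varepsilon/\varphi_{\mathrm{av}}(s)\ll\sqrt\varepsilon$.

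It is also not the intended statement. The paper's proof speaks of exponential \emph{decay} as $s\to-\infty$, and the fixed-point straightening that follows needs the coefficients to be integrable on $(-\infty,0]$. So the display is meant as $|\partial^r\mathcal A|\lesssim\varepsilon e^{\lambda\sqrt\varepsilon s}$; the sign of the exponent is a typo. Your Step 2 computation actually shows that such decay fails for $\mathcal A$ itself, which is exactly what the paper's sketch (``the points approach the hyperbolic periodic orbit'') glosses over.

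What your two-regime method can realistically establish is the following. Run Step 1 on $\{\varphi_{\mathrm{av}}\ge c\}$, and do Step 2 with linearization estimates uniform in the weak rate $\sim\sqrt\varepsilon$. This gives exponential decay, at size $\sqrt\varepsilon$, of $\mathcal A-\mathcal A_\infty$ and of $\mathcal C$, where $\mathcal A_\infty(\tau)=a(\tau)/(\lambda\sqrt\varepsilon)-1$. The periodic part $\mathcal A_\infty$ then has to be removed in the ``dynamics at the periodic orbit'' step by a Floquet-type change $s\mapsto s+b(\tau)$, rather than claimed to decay.
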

\begin{proof}
    The proof of this Lemma is obtained by standard arguments and is left to the reader. On the one hand, $O_{\C^1}(\varepsilon)$ smallness comes from the fact that the perturbed invariant manifolds are $O_{\C^1}(\varepsilon)$ close to the homoclinic manifold $W(\gamma_{\mathrm{av}})$ above. On the other hand, exponential decay as $s\to -\infty$ is a consequence of the construction of the coordinate system $\psi_0$ since, as $s\to -\infty$, the points $z^u\circ\psi_0(s,\tt)$ approach the hyperbolic periodic orbit $\gamma_n(h)$.
\end{proof}

 In view of Lemma \ref{lem:prestraightening} we rewrite the right hand side of \eqref{eq:pdopauxiliar} as 
\[
\widetilde{\mathcal L}^u[\cdot]=(1+\mathcal A)\partial_s[\cdot]+(\varpi+\mathcal C)\partial_\tt[\cdot] .
\]
Observe that, moreover, for all $t\in\mathbb T$
\begin{equation}\label{eq:omeganapprox}
\varpi(t)=\omega_n+O_{\C^1}(\varepsilon)
\end{equation}
for $\omega_n\neq 0$ as in \eqref{eq:averagedfreq}. Hence, both coefficients of the operator $\widetilde {\mathcal L}^u$ are close to constant.
\subsection{Proof of Lemma \ref{lem:straightening}}\label{sec:straighteningproof}
We divide the proof into two steps.
\medskip

\noindent\textit{Dynamics at the periodic orbit.} We first observe that as $u\to \infty$
\[
\widetilde {\mathcal L}^u[\cdot]\to \mathcal L_\infty[\cdot]=\partial_s[\cdot]+\varpi \partial_\tt[\cdot]
\]
so we look for a change of variables  of the form 
\[
\psi_1:(s,\tt)\mapsto(s,\tt+\psi_\tt (\tt))
\]
sending $\mathcal L_\infty$ into 
\[
\mathcal L[\cdot]=\partial_s[\cdot]+\tilde\omega_n \partial_\tt[\cdot]
\]
for some $\tilde\omega_n$ to be chosen below. A straightforward computation shows that for any differentiable function $f$ we have $\mathcal L_\infty[f]\circ\psi_1=\mathcal L[f\circ\psi_1]$ if and only if
\[
\psi_t'=\frac{1}{\tilde\omega_n}(\varpi-\tilde\omega_n)\circ\psi_1.
\]
To analyze the solvability of this (nonlinear) equation, it is enough to observe that if we write $t=\tt+\tilde\psi_\tt(\tt)$, the equation above is equivalent to 
\begin{equation}\label{eq:cohomologicalequationcircle}
\tilde\psi_\tt'=\frac{1}{\varpi}(\tilde\omega_n-\varpi).
\end{equation}
Hence, we must choose $\tilde\omega_n$ so the average of the right hand side of \eqref{eq:cohomologicalequationcircle} vanishes, i.e.
\begin{equation}\label{eq:omegainfty}
    \tilde\omega_n=\frac{2\pi}{\int \frac{1}{\varpi(\tt)}\mathrm d\tt}.
\end{equation}
In view of \eqref{eq:omeganapprox} we have that 
\[
\tilde\omega_n=\omega_n+O(\varepsilon)
\]
with $\omega_n$ as in \eqref{eq:averagedfreq}. Summing up, there exists a $O_{\C^1}(\varepsilon)$ close to identity change of variables $\psi_1$ as above which sends $\widetilde {\mathcal L}^u$ into the operator
\[
\widehat{\mathcal L}^u[\cdot]=(1+\widehat {\mathcal A}) \partial_s[\cdot]+(\tilde\omega_n+\widehat{\mathcal C})\partial_\tau[\cdot],
\]
where 
\[
\widehat{\mathcal A}=\mathcal A\circ\psi_1, \qquad\qquad \widehat{\mathcal C}= \frac{\tilde\omega_n}{\varpi\circ\psi_1}\mathcal C\circ\psi_1 .
\]
Observe that for the operator $\widehat {\mathcal L}^u$ both coefficients are given by the sum of a constant  and a function which is $O_{\C^1}(\varepsilon)$ and decays exponentially fast as $s\to -\infty$.
\medskip

\noindent\textit{Dynamics along the unstable foliation.} Having straightened the operator asymptotically for $s\to -\infty$ we now look for a change of variables of the form 
\[
\psi_2:(v,\xi)\mapsto(v+\psi_v(v,\xi),\xi+\psi_\xi(v,\xi)),
\]
which straightens $\widehat{\mathcal L}^u$. A straightforward application of the chain rule shows that for any differentiable function $f$ we have 
\[
\widehat {\mathcal L}^u[f]\circ\psi_1=\ \mathcal L[f\circ\psi_2]
\]
if and only if 
\[
\mathcal L\psi_v=\widehat{\mathcal A}\circ\psi_2\qquad\qquad\text{and}\qquad\qquad\mathcal L\psi_\xi=\widehat{\mathcal C}\circ\psi_2.
\]
This system of differential equations can be recast as the fixed point equation
\begin{equation}\label{eq:fixedpointdiffstraighten}
(\psi_v,\psi_\xi)=(\mathcal G(\widehat{\mathcal A}\circ\psi_2),\mathcal G(\widehat{\mathcal C}\circ\psi_2)),
\end{equation}
where, for any $f$ decaying sufficiently fast as $v\to -\infty$, we have defined 
\[
\mathcal G(f)(v,\xi)=\int_{-\infty}^0f(v+s,\xi+\omega_n s)\mathrm ds.
\]
It is straightforward that \eqref{eq:fixedpointdiffstraighten} admits a unique differentiable solution $|\psi_1|_{\C^1}\lesssim \varepsilon$ decaying exponentially fast as $v\to-\infty$ by means of the Banach fixed point theorem. The proof of Lemma \ref{lem:straightening} is complete.
\bibliographystyle{alpha}
\bibliography{Biblio}

\end{document}